\newcommand{\Id}{\operatorname{Id}}
\newcommand{\Ass}{\operatorname{Ass}}
\newcommand{\supp}{\operatorname{supp}}
\newcommand{\Spec}{\operatorname{Spec}}
\newcommand{\Coker}{\operatorname{Coker}}
\newcommand{\Tor}{\operatorname{Tor}}
\newcommand{\Rad}{\operatorname{Rad}}
\newcommand{\Soc}{\operatorname{Soc}}
\newcommand{\Top}{\operatorname{Top}}
\newcommand{\Ker}{\operatorname{Ker}}
\newcommand{\Hom}{\operatorname{Hom}}
\newcommand{\Ext}{\operatorname{Ext}}
\newcommand{\Mod}{\operatorname{Mod}}
\newcommand{\fgMod}{\operatorname{mod}}
\newcommand{\End}{\operatorname{End}}
\newcommand{\tr}{\operatorname{tr}}
\newcommand{\id}{\operatorname{id}}
\newcommand{\mattr}{\operatorname{Tr}}
\renewcommand{\dim}{\operatorname{dim}}
\newcommand{\Irr}{\operatorname{Irr}}
\newcommand{\wt}{\operatorname{wt}}
\newcommand{\Eu}{\operatorname{Eu}}
\newcommand{\gr}{\operatorname{gr}}
\newcommand{\C}{\mathbb C}
\newcommand{\Z}{\mathbb Z}
\newcommand{\A}{\mathcal A}
\newcommand{\bk}{\mathbf k}
\newcommand{\bh}{\mathbf h}
\renewcommand{\H}{\mathcal H}
\renewcommand{\L}{\mathcal L}
\renewcommand{\Im}{\operatorname{Im}}
\newtheorem{theorem}{Theorem}[section]
\newtheorem{corollary}[theorem]{Corollary}
\newtheorem{lemma}[theorem]{Lemma}
\newtheorem{proposition}[theorem]{Proposition}
\theoremstyle{definition}
\newtheorem{definition}[theorem]{Definition}
\newtheorem{remark}[theorem]{Remark}
\newtheorem{example}[theorem]{Example}
\begin{document}
\title{Finite dimensional Hecke algebras} 
\author{Susumu Ariki}

\begin{abstract}
This article surveys development on finite dimensional Hecke algebras
in the last decade.
In the first part, we explain results on canonical basic sets by 
Geck and Jacon and propose a categorification framework which
is suitable for our example of Hecke algebras.
In the second part, we review basics of Kashiwara crystal 
and explain the Fock space theory of cyclotomic Hecke algebras and 
its applications. In the third part, we explain Rouquier's theory 
of quasihereditary covers of cyclotomic Hecke algebras. We add detailed 
explanation of the proofs here. 
The third part is based on my intensive course given at Nagoya university 
in January 2007. 
\end{abstract}

\address{S.A.: Research Institute for Mathematical Sciences, 
Kyoto University, Kyoto 606-8502, Japan}
\email{ariki@kurims.kyoto-u.ac.jp}

\subjclass[2000]{Primary 17B37; Secondary 20C08,05E99}

\maketitle

\section{Introduction}

In this article, we will explain current views on the modular
representation theory of finite dimensional Hecke algebras.
In the last decade, I followed the approach by Dipper and James,
and, it has become clear that the language from solvable lattice models,
which uses terminology like Fock spaces, crystal bases, etc,
is one of the most natural. On the other hand, Geck followed Lusztig's
approach and applied his methods to the modular representation theory
of Hecke algebras. When we consider Hecke algebras of type $A$ and $B$,
or more generally, cyclotomic Hecke algebras of classical type,
the two approaches interact, and the study of various labelling sets of
irreducible modules has stimulated an interest on cellular
algebra structures on the Hecke algebras.
In type $A$, we have theory of Specht and dual Specht modules.
In type $B$, based on their earlier work \cite{BI} and \cite{GI},
Bonnaf\'e, Geck, Iancu and Lam \cite{BGIL} have conjectured in a very
precise manner how Kazhdan-Lusztig cells should give various cellular
algebra structures, and Geck, Iancu and Pallikaros \cite{GIP} showed
that the known cellular structure given by Dipper, James and Murphy is one of them.
This suggests us a categorification framework for integrable highest weight
$U_v(\hat{sl}_e)$-modules with two specializations at $v=0$ and
$v=1$.

In type $A$, we have $q$-Schur algebras, which has been an object of
intensive study in the last several decades. By Leclerc-Thibon \cite{LT} and
Varagnolo-Vasserot \cite{VV}, the algebras also fit well in the
categorification picture. Note that $q$-Schur algebras are
cellular algebras of quasihereditary type. When the base field
is $\C$, Rouquier has showed that the category $\mathcal O$ for
the rational Cherednik algebra associated with the symmetric group
is the $q$-Schur algebra \cite{R}, and quasihereditary structures of
$\mathcal O$ explains in some sense why we have the Specht and the dual Specht module
theory. The result depends on his earlier work, one with
Ginzburg, Guay and Opdam \cite{GGOR}, the other with Brou\'e and Malle \cite{BMR}.
Observe that every piece that appears in the above story
has its cyclotomic analogue. Hence, it is natural to
expect that cyclotomic analogue of the story would be true.
This is our current motivation of research, and
even in type $B$, we have not reached a complete understanding, yet.

The paper is organized as follows.
In the first part, we introduce the Hecke algebra and briefly explain
results on the canonical basic sets by Geck and Jacon and
the categorification framework in which the results sit in.
To know more about the canonical basic sets and Hecke algebras,
I recommend reading the survey \cite{G1}.
In the second part, we explain the Fock space
theory, mostly developed by the author and my collaborators,
after explaining Kashiwara crystal. Its applications to Hecke algebras
include the modular branching rule, the representation type, etc.
In the third part, we explain Rouquier's theory of quasihereditary
covers in terms of the category $\mathcal O$ for the rational 
Cherednik algebra. I reorganized the contents of \cite{GGOR} and 
\cite{R} and explain the shortest way to reach the Rouquier's
result. The reader who have read \cite{GGOR} and \cite{R} seriously 
would find that the proofs explained here are very reader friendly.
I hope that this part provides
a good preparation for reading \cite{GGOR} and \cite{R}.

The third part is based on my intensive course given at Nagoya
university in January 2007. At the time, Shoji requested some
written material of the lectures, and Kuwabara asked me if
it could be in English. The third part partially answers
their requests. I thank Shoji for inviting me to give
the course. 

\section{Hecke algebras with unequal parameters}
\subsection{The algebra}

Following \cite{L}, we introduce the Hecke algebra, our main
object of study.

\begin{definition}
We say that $(W,S,L)$ is a \emph{weighted Coxeter group} if 
\begin{itemize}
\item[(i)]
$(W,S)$ is a Coxeter group, $w\mapsto\ell(w)$, the length function.
\item[(ii)]
$L:W\rightarrow\Z$ is such that $L(ww')=L(w)+L(w')$ if 
$\ell(ww')=\ell(w)+\ell(w')$. 
\end{itemize}
\end{definition}

\begin{remark}
Recall that
$$
(st)^{m_{st}/2}=(ts)^{m_{st}/2}\;\text{if $m_{st}$ is even,}\;\; 
(st)^{(m_{st}-1)/2}s=(ts)^{(m_{st}-1)/2}t\;\text{if $m_{st}$ is odd,} 
$$
is part of the defining relations of $W$. 
Giving $L$ is the same as giving a set of values 
$\{L(s)\mid s\in S\}$ with the property that $L(s)=L(t)$ whenever $m_{st}$ is odd. 
\end{remark}

We say that $(W,S)$ is of finite type if $W$ is a finite group. $(W,S)$ 
is isomorphic to product of irreducible Coxeter groups of finite type, 
and the irreducible Coxeter groups are classified. By the above remark, 
if $(W,S,L)$ is a weighted Coxeter group such that $(W,S)$ is irreducible 
of finite type then $L$ takes at most $2$ different values on $S$. 

\begin{definition}
Let $\A=\Z[v,v^{-1}]$. Given weighted Coxeter group $(W,S,L)$, 
we define the associated Hecke algebra $\H(W,S,L)$, by 
generators $T_s$, for $s\in S$, and relations
$(T_s-v^{L(s)})(T_s+v^{-L(s)})=0$ and 
\begin{equation*}
\begin{split}
(T_sT_t)^{m_{st}/2}&=(T_tT_s)^{m_{st}/2} \;\text{if $m_{st}$ is even,} \\
(T_sT_t)^{(m_{st}-1)/2}T_s&=(T_tT_s)^{(m_{st}-1)/2}T_t\;\text{if $m_{st}$ is odd.}
\end{split}
\end{equation*}
\end{definition}

\begin{remark}
We may define multiparameter Hecke algebras when some of $m_{st}$ are even and not 
equal to $2$, 
but to handle the modular representation theory of Hecke algebras of finite type, 
the above definition suffices. However, we also note that the definition is for general 
weighted Coxeter groups, and affine cases are very interesting examples 
which we do not cover in this article. 
\end{remark}

Define $T_w=T_{s_{i_1}}\cdots T_{s_{i_{\ell(w)}}}$ when $w=s_{i_1}\cdots s_{i_{\ell(w)}}$, 
for $s_{i_1},\dots,s_{i_{\ell(w)}}\in S$. It is well-known that $T_w$ does not depend on 
the choice of the reduced expression. 

\begin{definition}
$\H(W,S,L)$ has an involutive $\A$-semilinear automorphism defined by 
$$
\overline{c(v)T_w}=c(v^{-1})T_{w^{-1}}^{-1},\quad(w\in W,\;c(v)\in\A).
$$
\end{definition}

\begin{theorem}[Kazhdan-Lusztig]
\label{KL thm}
Let $L_\infty=\oplus_{y\in W}\Z[v^{-1}]T_y$. 
For each $w\in W$, there exists a unique element $C_w\in L_\infty$ such that 
$$
\overline{C_w}=C_w\quad\text{and}\quad C_w\equiv T_w\!\!\mod v^{-1}L_\infty.
$$
Further, $\{C_w\mid w\in W\}$ is a free $\A$-basis of $\H(W,S,L)$. 
\end{theorem}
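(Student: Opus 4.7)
The plan is to proceed in three steps: establish a triangular form for the bar involution on the $T$-basis, construct $C_w$ by recursion on the Bruhat order, and then deduce uniqueness and the basis statement from triangularity.

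First, I would prove by induction on $\ell(w)$ that $\overline{T_w} = T_w + \sum_{y < w} R_{y,w}(v) T_y$ with $R_{y,w}(v) \in \A$ and $<$ the Bruhat order. The base case is immediate; for the inductive step, write $w = w's$ with $\ell(w) = \ell(w')+1$, use $T_w = T_{w'} T_s$ and the identity $\overline{T_s} = T_s^{-1} = T_s + (v^{-L(s)} - v^{L(s)})$ (which follows from the quadratic relation), and expand $\overline{T_{w'}} \cdot \overline{T_s}$ using the inductive hypothesis together with the standard formulas for $T_y T_s$ (giving $T_{ys}$ when $\ell(ys) > \ell(y)$, and $T_{ys} + (v^{L(s)} - v^{-L(s)}) T_y$ otherwise).

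Second, I would look for $C_w = T_w + \sum_{y < w} p_{y,w}(v) T_y$ with $p_{y,w} \in v^{-1}\Z[v^{-1}]$. Combining bar-invariance with step one, the coefficients must satisfy the triangular system
\[
p_{y,w} - \overline{p_{y,w}} = R_{y,w} + \sum_{y < z < w} R_{y,z}\, \overline{p_{z,w}}, \qquad y < w.
\]
I would solve this by downward induction on $\ell(y)$: any bar-antisymmetric Laurent polynomial $f \in \A$ (i.e.\ $f + \overline{f} = 0$) admits a unique presentation $f = p - \overline{p}$ with $p \in v^{-1}\Z[v^{-1}]$, so one need only verify that the right-hand side above is bar-antisymmetric, which follows from the involutivity $\overline{\overline{T_w}} = T_w$ of the bar operation.

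For uniqueness, suppose $x \in L_\infty$ satisfies $\overline{x} = x$ and $x \in v^{-1} L_\infty$. Writing $x = \sum c_y T_y$ and taking $y_0$ maximal in the Bruhat-order support of $x$, the triangular formula for $\overline{T_y}$ forces $\overline{c_{y_0}} = c_{y_0}$; but $c_{y_0} \in v^{-1}\Z[v^{-1}]$ makes $\overline{c_{y_0}} \in v\Z[v]$, whence $c_{y_0} = 0$, a contradiction, so $x = 0$. This proves uniqueness of $C_w$, and since the transition matrix from $\{T_w\}$ to $\{C_w\}$ is unipotent lower-triangular for the Bruhat order, $\{C_w\}$ is an $\A$-basis of $\H(W,S,L)$. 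The main obstacle is step one: carefully tracking how multiplication by $T_s$ (or $T_s^{-1}$) affects the Bruhat-order support when the quadratic relation introduces lower-order corrections. Once that triangular form is in place, the rest is a mechanical recursion in the spirit of the classical Kazhdan-Lusztig polynomial construction.
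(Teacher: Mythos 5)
Your argument is correct and is one of the two classical proofs. The paper itself gives no proof of Theorem \ref{KL thm}; it only records that ``Kazhdan and Lusztig proved [it] by inductively defining $C_w$,'' i.e.\ by induction on $\ell(w)$ through the products $C_sC_{w'}$, subtracting the bar-invariant correction terms $\mu^s_{z,w}C_z$ --- that route yields existence, uniqueness, and the multiplication formulas of the subsequent theorem all at once. You instead take the other standard route: establish the unitriangularity $\overline{T_w}=T_w+\sum_{y<w}R_{y,w}T_y$ of the bar involution, then solve the resulting triangular system for the $p_{y,w}$ by downward induction over the Bruhat interval $[e,w]$, using the fact that a bar-antisymmetric element of $\A$ is uniquely of the form $p-\overline{p}$ with $p\in v^{-1}\Z[v^{-1}]$. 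All the individual steps are sound in the unequal-parameter setting (the quadratic relation $(T_s-v^{L(s)})(T_s+v^{-L(s)})=0$ gives exactly the $\overline{T_s}$ and $T_yT_s$ formulas you use, and the lifting property of the Bruhat order keeps the support inside $\{y\le w\}$); the one point you compress is the verification that the right-hand side of your recursion is bar-antisymmetric, which requires the identity $\sum_{y\le z\le x}R_{y,z}\overline{R_{z,x}}=\delta_{y,x}$ coming from $\overline{\overline{T_x}}=T_x$ together with the already-constructed relations for $p_{z,w}$, $z>y$ --- a mechanical but nontrivial computation worth writing out. The trade-off between the two approaches: yours isolates the existence/uniqueness statement cleanly and needs no case analysis on the sign of $L(s)$, while the paper's (Kazhdan--Lusztig's) length induction is what actually produces the structure constants $\mu^s_{z,w}$ and hence the cell theory developed afterwards.
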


\begin{example}
Note that $\overline{T_s}=T_s-(v^{L(s)}-v^{-L(s)})$. 
\begin{enumerate}
\item[(i)]
If $s\in S$ then
$$
C_s=\begin{cases} T_s+v^{-L(s)}\quad&(L(s)>0)\\
T_s-v^{L(s)}\quad&(L(s)<0)\\
T_s\quad&(L(s)=0).\end{cases}
$$
\item[(ii)]
Assume that $L(s)>0$, for all $s\in S$, and that $W$ is finite. 
We denote the longest element of $W$ by $w_0$. Then 
$$
C_{w_0}=\sum_{y\in W}v^{-L(yw_0)}T_y.
$$
\end{enumerate}
\end{example}

The basis is called the \emph{canonical basis} of $\H(W,S,L)$ or 
the \emph{Kazhdan-Lusztig basis} (of second type). We write
$$
C_w=\sum_{y\in W}p_{y,w}T_y.
$$

\begin{remark}
Let $\mathcal K=\mathbb Q(v)$, 
$\A_0=\{c(v)\in\mathcal K\mid \text{$c(v)$ is regular at $v=0$}\}$ 
and $\A_\infty=\{c(v)\in\mathcal K\mid \text{$c(v)$ is regular at $v=\infty$}\}$. 
A $\mathcal K$-vector space $V$ is \emph{balanced} if there 
exist $\mathbb Q[v,v^{-1}]$-lattice $L$ of $V$, $\A_0$-lattice $\L_0$ and 
$\A_\infty$-lattice $\L_\infty$ of $V$ such that 
$E=L\cap \L_0\cap \L_\infty$ satisfies the following three properties.
\begin{enumerate}
\item[(1)]
Any $\mathbb Q$-basis of $E$ is a free $\mathbb Q[v,v^{-1}]$-basis of $L$. 
\item[(2)]
Any $\mathbb Q$-basis of $E$ is a free $\A_0$-basis of $\L_0$.
\item[(3)]
Any $\mathbb Q$-basis of $E$ is a free $\A_\infty$-basis of $\L_\infty$.
\end{enumerate}
It is easy to see that if $V$ is balanced then we have a canonical isomorphism 
of $\mathbb Q$-vector spaces 
$G:\L_\infty/v^{-1}\L_\infty\simeq E$. The Kazhdan-Lusztig theorem says that 
$\H(W,S,L)\otimes\mathcal K$ is balanced, and the basis 
$$
\{C_w:=G(T_w+v^{-1}\L_\infty)\mid w\in W\}
$$
is not only $\mathbb Q[v,v^{-1}]$-basis but $\A$-basis of $\H(W,S,L)$. 
\end{remark}

\begin{definition}
We write $C_xC_y=\sum_{z\in W}h_{x,y,z}C_z$, where $h_{x,y,z}\in\A$. 
Define $a(z)$, for $z\in W$, by
$$
a(z)=\min\{i\in\Z_{\geq0}\cup\{\infty\}\mid
v^ih_{x,y,z}\in\Z[v],\;\text{for all $x,y\in W$}\}.
$$
\end{definition}

This is Lusztig's $a$-function. In this subsection, we explain basics of 
the Kazhdan-Lusztig basis and the $a$-function following Lusztig and Geck.

Kazhdan and Lusztig proved Theorem \ref{KL thm} by inductively defining
$C_w$. Hence, they also showed the following theorem at the same time.

\begin{theorem}
\begin{enumerate}
\item[(1)]
Suppose that $L(s)=0$. Then $C_sC_w=C_{sw}$. 
\item[(2)]
Suppose that $L(s)>0$. Then
$$
C_sC_w=\begin{cases} C_{sw}+\sum_{z;sz<z<w}\mu^s_{z,w}C_z\;\;&(sw>w)\\
(v^{L(s)}+v^{-L(s)})C_w&(sw<w)\end{cases}
$$
where $\mu^s_{z,w}\in\A$ are bar invariant elements inductively defined by
$$
\sum_{z;y\leq z<w,sz<z}p_{y,z}\mu_{z,w}^s-v^{L(s)}p_{y,w}\in v^{-1}\Z[v^{-1}],
$$
for $y,w\in W$ such that $sy<y<w<sw$. 
\item[(3)]
Suppose that $L(s)<0$. Then
$$
C_sC_w=\begin{cases} C_{sw}+\sum_{z;sz<z<w}\mu^s_{z,w}C_z\;\;&(sw>w)\\
-(v^{L(s)}+v^{-L(s)})C_w&(sw<w)\end{cases}
$$
where $\mu^s_{z,w}\in\A$ are bar invariant elements inductively defined by
$$
\sum_{z;y\leq z<w,sz<z}p_{y,z}\mu_{z,w}^s+v^{-L(s)}p_{y,w}\in v^{-1}\Z[v^{-1}],
$$
for $y,w\in W$ such that $sy<y<w<sw$. 
\end{enumerate}
\end{theorem}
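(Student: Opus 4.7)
My plan is to induct on $\ell(w)$, treating the three cases in parallel. Case (1) is immediate: when $L(s)=0$, the quadratic relation gives $T_s^2=1$, so $T_sT_y=T_{sy}$ for every $y\in W$, and hence $T_sC_w$ lies in $L_\infty$, is bar-invariant, and is congruent to $T_{sw}$ modulo $v^{-1}L_\infty$; the uniqueness clause in Theorem~\ref{KL thm} forces $C_sC_w=C_{sw}$. Cases (2) and (3) are mirror images via the identity $C_s^2=\pm(v^{L(s)}+v^{-L(s)})C_s$, so I focus on case (2) with $L(s)>0$, so that $C_s=T_s+v^{-L(s)}$.

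For sub-case $sw<w$ of (2), a direct calculation from $T_s^2=(v^{L(s)}-v^{-L(s)})T_s+1$ yields $C_s^2=(v^{L(s)}+v^{-L(s)})C_s$. By induction applied to $sw$, for which $s(sw)=w>sw$, we have $C_sC_{sw}=C_w+\sum_{sz<z<sw}\mu^s_{z,sw}C_z$. Multiplying by $C_s$ on the left and using $C_s^2=(v^{L(s)}+v^{-L(s)})C_s$ together with the inductive formula $C_sC_z=(v^{L(s)}+v^{-L(s)})C_z$ for each $z$ in the sum (valid since $sz<z$ and $\ell(z)<\ell(w)$), the correction terms cancel, yielding $C_sC_w=(v^{L(s)}+v^{-L(s)})C_w$.

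The main work is sub-case $sw>w$. Take the $\mu^s_{z,w}$ as prescribed; existence and uniqueness of such bar-invariant elements of $\A$ is verified in parallel by downward induction on $\ell(z)$. Set
\[
Z:=C_sC_w-\sum_{sz<z<w}\mu^s_{z,w}C_z.
\]
Then $Z$ is bar-invariant, and by Theorem~\ref{KL thm} it suffices to show $Z\in L_\infty$ and $Z\equiv T_{sw}\pmod{v^{-1}L_\infty}$, for then $Z=C_{sw}$. Expanding via $C_s=T_s+v^{-L(s)}$ and the usual rule for $T_sT_y$, the coefficient of $T_y$ in $C_sC_w$ is $p_{sy,w}+v^{-L(s)}p_{y,w}$ when $sy>y$ and $p_{sy,w}+v^{L(s)}p_{y,w}$ when $sy<y$. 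The defining condition on $\mu^s_{z,w}$, imposed precisely for $y$ with $sy<y<w<sw$, is tailored to cancel the offending $v^{L(s)}p_{y,w}$ term modulo $v^{-1}\Z[v^{-1}]$ on that side. The coefficient of $T_{sw}$ in $Z$ reduces to $1+v^{-L(s)}p_{sw,w}\equiv 1\pmod{v^{-1}\Z[v^{-1}]}$, since $sw\not\le z$ in Bruhat order for any $z<w$ (as $w<sw$ when $sw>w$), so no $C_z$ in the correction sum contributes to that coefficient.

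The main obstacle will be controlling the remaining rows $y$ with $sy>y$ and $y\ne sw$, where the stated defining condition on $\mu^s_{z,w}$ says nothing directly. My plan is to exploit the already-established bar-invariance of $Z$: the bar operator interchanges positive and negative powers of $v$, so the ``good'' lower-triangular behavior verified on the $sy<y$ rows transfers, by bar-invariance of $Z$, to the required $\Z[v^{-1}]$ behavior on the $sy>y$ rows. Case (3) then follows verbatim from case (2) after the sign change carried by $C_s=T_s-v^{L(s)}$ and $C_s^2=-(v^{L(s)}+v^{-L(s)})C_s$, with $v^{L(s)}$ everywhere replaced by $-v^{-L(s)}$ in the parallel bookkeeping.
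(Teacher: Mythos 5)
The paper itself offers no proof of this theorem; it simply records that the formula falls out of Kazhdan--Lusztig's inductive construction of the $C_w$ (i.e.\ Lusztig's proof in \cite{L}). So your write-up has to stand on its own. Your case (1), your treatment of the sub-case $sw<w$ via $C_s^2=(v^{L(s)}+v^{-L(s)})C_s$, and your existence/uniqueness of the bar-invariant $\mu^s_{z,w}$ by downward induction on $\ell(z)$ are all correct and are exactly the standard route. The reduction of the main sub-case to showing that $Z=C_sC_w-\sum_{sz<z<w}\mu^s_{z,w}C_z$ is bar-invariant, lies in $L_\infty$, and is $\equiv T_{sw}$ mod $v^{-1}L_\infty$ is also right, as is your computation of the coefficient of $T_y$ in $C_sC_w$ in the two branches and your handling of the rows with $sy<y$.

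The gap is your treatment of the rows with $sy>y$. Bar-invariance of $Z$ is a single global identity $\overline{Z}=Z$; the bar operator is only triangular in the $T$-basis ($\overline{T_y}=T_{y^{-1}}^{-1}$ involves all $T_x$ with $x\le y$) and does not pair the row $y$ with the row $sy$, so knowing that the $sy<y$ coefficients lie in $v^{-1}\Z[v^{-1}]$ plus bar-invariance does \emph{not} transfer any bound to the $sy>y$ coefficients. (Bar-invariance is only usable once you already know \emph{all} coefficients off $T_{sw}$ lie in $v^{-1}\Z[v^{-1}]$ --- that is the uniqueness step, not a substitute for the estimate.) The missing ingredient is the identity $T_sC_z=v^{L(s)}C_z$ for $sz<z$, which is equivalent to the case $sz<z$ of part (2) and hence available by induction for all $z<w$; comparing coefficients it gives $p_{y,z}=v^{-L(s)}p_{sy,z}$ whenever $sy>y$ and $sz<z$. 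Substituting this into the coefficient of $T_y$ in $Z$ for $sy>y$, namely $p_{sy,w}+v^{-L(s)}p_{y,w}-\sum_z p_{y,z}\mu^s_{z,w}$, turns the sum into $v^{-L(s)}\sum_z p_{sy,z}\mu^s_{z,w}$, and the defining congruence applied at the element $sy$ (which satisfies $s(sy)<sy<w<sw$) shows this equals $p_{sy,w}$ modulo $v^{-L(s)-1}\Z[v^{-1}]$; the whole coefficient then lands in $v^{-1}\Z[v^{-1}]$ because $L(s)>0$. Without this step the proof is incomplete. (A minor further slip: for $y=sw$ one has $sy=w<y$, so the relevant branch is $p_{w,w}+v^{L(s)}p_{sw,w}=1$ because $p_{sw,w}=0$, not the $sy>y$ formula you quote; the conclusion is unaffected.)
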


$\H(W,S,L)$ has an $\A$-linear antiautomorphism $\tau$ defined by 
$\tau(T_s)=T_s$, for $s\in S$. Then, $\tau(T_w)=T_{w^{-1}}$ and $\tau(C_w)=C_{w^{-1}}$, 
thus $h_{x,y,z}=h_{y^{-1},x^{-1},z^{-1}}$ follows. In particular, we have 
$a(z)=a(z^{-1})$, for all $z\in W$. 
The next proposition is from \cite[8.4, 13.7, 13.8]{L}.

\begin{proposition}
Suppose that $L(s)>0$, for all $s\in S$. 
\begin{enumerate}
\item[(1)] 
$\oplus_{w;ws<w}\A C_w$ and $\oplus_{w;sw<w}\A C_w$ are left and right ideal of 
$\H(W,S,L)$, and $a(1)=0$ follows.  
\item[(2)]
If $1\neq z\in W$ then $a(z)\geq\min\{L(s)\mid s\in S\}>0$.
\item[(3)]
Suppose that $W$ is finite. Then
$a(w)\leq L(w_0)$, for all $w\in W$, and the equality holds if and only if 
$w=w_0$. 
\end{enumerate}
\end{proposition}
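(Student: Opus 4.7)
For part (1), I would use an eigenvector argument for right multiplication by $C_s$. Applying the antiautomorphism $\tau$ to the given formula for $C_sC_w$ yields the symmetric right-multiplication formula: $C_uC_s=(v^{L(s)}+v^{-L(s)})C_u$ when $us<u$, and $C_uC_s=C_{us}+\sum_{y;\,ys<y<u}\nu^s_{y,u}C_y$ (for some bar-invariant $\nu^s_{y,u}\in\A$) when $us>u$. Set $\lambda=v^{L(s)}+v^{-L(s)}$ and $E_s=\oplus_{w;\,ws<w}\A C_w$. Every element of $E_s$ trivially satisfies $YC_s=\lambda Y$. Conversely, if $Y=\sum_z a_z C_z$ satisfies $YC_s=\lambda Y$, reading off the $C_w$-coefficient of $YC_s-\lambda Y$ for $w$ with $ws>w$ leaves only the contribution $-a_w\lambda$, forcing $a_w=0$. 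Since left multiplication by any $X\in\H(W,S,L)$ preserves the eigenvalue equation, $E_s$ is a left ideal; the right-ideal claim is symmetric. For $a(1)=0$: for $y\neq 1$ pick $s$ with $ys<y$, so that $C_y\in E_s$; then $C_xC_y\in E_s$ excludes any $C_1$ component (since $1\cdot s>1$). Combined with $C_xC_1=C_x$, this gives $h_{x,y,1}=\delta_{x,1}\delta_{y,1}$, hence $a(1)=0$.

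For part (2), given $z\neq 1$ pick $s\in S$ with $zs<z$. The right-multiplication formula gives directly $h_{z,s,z}=v^{L(s)}+v^{-L(s)}$. Since $v^i(v^{L(s)}+v^{-L(s)})\in\Z[v]$ requires $i\geq L(s)$, one gets $a(z)\geq L(s)\geq\min\{L(s)\mid s\in S\}$.

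For part (3), the plan is a uniform degree bound via the $T$-basis. By induction on $\ell(x)$ using the quadratic relation $(T_s-v^{L(s)})(T_s+v^{-L(s)})=0$, one shows that left (resp.\ right) multiplication by $T_s$ increases the maximal $v$-degree of any $T_z$-coefficient by at most $L(s)$; iterating, every $T_z$-coefficient of $T_xT_y$ has $v$-degree at most $\min\{L(x),L(y)\}\leq L(w_0)$. Since $p_{y,w}\in v^{-1}\Z[v^{-1}]$ for $y<w$ and $p_{w,w}=1$, the same bound applies to $T_z$-coefficients of $C_xC_y$, and by reverse Bruhat induction (extracting $h_{x,y,w}$ from the $T_w$-coefficient of $C_xC_y$ modulo strictly lower-degree corrections $h_{x,y,w'}p_{w,w'}$ with $w'>w$) to every $h_{x,y,w}$. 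Under $L(s)>0$, $L$ is strictly monotone in Bruhat order, so $L(x)=L(w_0)$ forces $x=w_0$; hence $\min\{L(x),L(y)\}=L(w_0)$ only for $x=y=w_0$, giving $a(w)\leq L(w_0)$ in general and $\deg h_{x,y,w}<L(w_0)$ whenever $(x,y)\neq(w_0,w_0)$. For the remaining case, starting from $C_sC_{w_0}=(v^{L(s)}+v^{-L(s)})C_{w_0}$ and $T_s=C_s-v^{-L(s)}$ one obtains $T_sC_{w_0}=v^{L(s)}C_{w_0}$; iterating gives $T_wC_{w_0}=v^{L(w)}C_{w_0}$, so $\H(W,S,L)\cdot C_{w_0}=\A C_{w_0}$ and $C_{w_0}^2=f_{w_0}(v)C_{w_0}$ with $f_{w_0}(v)=\sum_y p_{y,w_0}v^{L(y)}=v^{-L(w_0)}\sum_{y\in W}v^{2L(y)}$, whose top degree is $L(w_0)$ (from $y=w_0$). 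Thus $h_{w_0,w_0,w}=0$ for $w\neq w_0$ and $a(w_0)=L(w_0)$, completing $a(w)\leq L(w_0)$ with equality iff $w=w_0$.

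The main obstacle will be the degree bound together with its strict-inequality sharpening in part (3). The bound itself is a clean induction, but pinning down where equality can be attained uses crucially that $L$ is strictly monotone under Bruhat extension, which in turn needs $L(s)>0$. A secondary subtle point in part (1) is verifying that the only $\lambda$-eigenvectors of right multiplication by $C_s$ lie in $E_s$; this rests on the nonvanishing of $\lambda$ in $\A$ and the fact that the explicit right-multiplication formula produces a unique $C_{zs}$-coefficient when $zs>z$.
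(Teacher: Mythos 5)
The paper offers no proof of this proposition; it simply cites \cite[8.4, 13.7, 13.8]{L}, and your argument follows essentially Lusztig's route. Parts (1) and (2) are correct as written: identifying $\oplus_{w;ws<w}\A C_w$ with the $(v^{L(s)}+v^{-L(s)})$-eigenspace of right multiplication by $C_s$ (using that $C_zC_s$ always lands in that span, and that $\A$ is a domain so the eigenspace is exactly that span) is the standard argument, and the computations $h_{x,y,1}=\delta_{x,1}\delta_{y,1}$ and $h_{z,s,z}=v^{L(s)}+v^{-L(s)}$ are right. The skeleton of (3) — bound the $T$-basis structure constants by $\min\{L(x),L(y)\}$, transfer to the $C$-basis via $\deg p_{y,w}\leq 0$ and downward Bruhat induction, then treat $(x,y)=(w_0,w_0)$ separately via $T_wC_{w_0}=v^{L(w)}C_{w_0}$ — is also Lusztig's, and the strictness analysis ($L$ strictly increasing along Bruhat order, plus $h_{w_0,w_0,w}=0$ for $w\neq w_0$) correctly isolates $w_0$.

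There is, however, one genuine missing link in (3). The quantity $a(z)$ is defined by $v^ih_{x,y,z}\in\Z[v]$, i.e.\ it measures the most \emph{negative} exponent of $v$ occurring in $h_{x,y,z}$, whereas your entire induction controls the most \emph{positive} exponent. As stated, "$\deg h_{x,y,w}\leq L(w_0)$" does not give "$v^{L(w_0)}h_{x,y,w}\in\Z[v]$". Nor can you simply run the same induction on the lowest exponent: the two-sided bound that holds for the coefficients of $T_{x'}T_{y'}$ is destroyed when you multiply by $p_{x',x}p_{y',y}$, since these lie in $v^{-1}\Z[v^{-1}]$ and push the lowest exponent down (e.g.\ the $T_1$-coefficient of $C_{w_0}^2$ has a term of degree $-2L(w_0)$). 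The repair is the observation, which must be stated, that each $h_{x,y,z}$ is bar-invariant: the bar involution is a ring automorphism fixing every $C_w$, so $\overline{h_{x,y,z}}=h_{x,y,z}$ and the Laurent polynomial is symmetric under $v\leftrightarrow v^{-1}$. Only then does the upper bound $\deg_v h_{x,y,w}\leq L(w_0)$ (resp.\ the strict bound for $(x,y)\neq(w_0,w_0)$) convert into the lower bound on exponents that $a(w)\leq L(w_0)$ (resp.\ $a(w)<L(w_0)$) actually requires. With that one line added, the proof is complete.
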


\begin{example}
Let $(W,S)$ be of type $B_2$, and set $a=L(s_1), b=L(s_2)$. We consider the case  
$a>b>0$. Write $T_1=T_{s_1}$ and $T_2=T_{s_2}$. Then
$$
C_{s_1}=T_1+v^{-a},\;\;C_{s_2}=T_2+v^{-b}.
$$
Further, nonexistence of $z$ with $s_1z<z<s_2$ implies that 
$$
C_{s_1s_2}=C_{s_1}C_{s_2}=T_1T_2+v^{-b}T_1+v^{-a}T_2+v^{-a-b}.
$$
Similarly, we have
$$
C_{s_2s_1}=C_{s_2}C_{s_1}=T_2T_1+v^{-b}T_1+v^{-a}T_2+v^{-a-b}.
$$
Explicit computation shows that 
$$
C_{s_1}C_{s_2s_1}=T_1T_2T_1+v^{-a}(T_1T_2+T_2T_1)+v^{-2a}T_2+
(v^{a-b}+v^{-a-b})C_{s_1}.
$$
Since $a-b>0$ we subtract $(v^{a-b}+v^{-a+b})C_{s_1}$ to obtain
$$
C_{s_1s_2s_1}=T_1T_2T_1+v^{-a}(T_1T_2+T_2T_1)+v^{-2a}T_2+
(v^{-a-b}-v^{-a+b})C_{s_1}
$$
and $C_{s_1}C_{s_2s_1}=C_{s_1s_2s_1}+(v^{a-b}+v^{-a+b})C_{s_1}$. 
Similarly, we have
$$
C_{s_2}C_{s_1s_2}=T_2T_1T_2+v^{-b}(T_1T_2+T_2T_1)+v^{-2b}T_1+
(v^{-a+b}+v^{-a-b})C_{s_2}
$$
and deduce $C_{s_2}C_{s_1s_2}=C_{s_2s_1s_2}$. 

Now, consider the longest element $w_0=s_1s_2s_1s_2=s_2s_1s_2s_1$. We have
\begin{gather*}
C_{w_0}=T_1T_2T_1T_2+v^{-b}T_1T_2T_1+v^{-a}T_2T_1T_2+v^{-a-b}(T_1T_2+T_2T_1)\\
\quad+v^{-a-2b}T_1+v^{-2a-b}T_2+v^{-2a-2b}.
\end{gather*}
Then we get $C_{s_2}C_{s_1s_2s_1}=C_{w_0}$ by explicit computation again. 
Applying $\tau$ we obtain $C_{s_1s_2s_1}C_{s_2}=C_{w_0}$. Thus 
$C_{s_1}C_{s_2s_1s_2}=(C_{s_1}C_{s_2})^2$ implies
\begin{equation*}
\begin{split}
C_{s_1}C_{s_2s_1s_2}&=C_{s_1s_2s_1}C_{s_2}+(v^{a-b}+v^{-a+b})C_{s_1}C_{s_2}\\
&=C_{w_0}+(v^{a-b}+v^{-a+b})C_{s_1s_2}.
\end{split}
\end{equation*}
To summarize, $C_sC_{w_0}=(v^{L(s)}+v^{-L(s)})C_{w_0}$, for $s\in S$, and
we have
\begin{gather*}
C_{s_1}C_{s_2}=C_{s_1s_2},\;C_{s_2}C_{s_1}=C_{s_2s_1},\;
C_{s_2}C_{s_1s_2}=C_{s_2s_1s_2}\\
C_{s_1}C_{s_2s_1}=C_{s_1s_2s_1}+(v^{a-b}+v^{-a+b})C_{s_1}
\end{gather*}
and
$$
C_{s_2}C_{s_1s_2s_1}=C_{w_0},\;
C_{s_1}C_{s_2s_1s_2}=C_{w_0}+(v^{a-b}+v^{-a+b})C_{s_1s_2}.
$$
We have also obtained $C_{s_1s_2}^2=C_{w_0}+(v^{a-b}+v^{-a+b})C_{s_1s_2}$. 
Using these formulas, we may further obtain the following.
\begin{align*}
C_{s_1s_2}C_{s_2s_1}&=(v^b+v^{-b})C_{s_1s_2s_1}+(v^b+v^{-b})(v^{a-b}+v^{-a+b})C_{s_1}\\
C_{s_1s_2}C_{s_1s_2s_1}&=(v^a+v^{-a})C_{w_0}\\
C_{s_1s_2}C_{s_2s_1s_2}&=(v^b+v^{-b})C_{w_0}+(v^b+v^{-b})(v^{a-b}+v^{-a+b})C_{s_1s_2}\\
C_{s_2s_1}^2&=C_{w_0}+(v^{a-b}+v^{-a+b})C_{s_2s_1}\\
C_{s_2s_1}C_{s_1s_2s_1}&=(v^a+v^{-a})C_{w_0}\\
C_{s_2s_1}C_{s_2s_1s_2}&=(v^b+v^{-b})C_{w_0}+(v^{a-b}+v^{-a+b})C_{s_2s_1s_2}\\
C_{s_1s_2s_1}^2&=(v^a+v^{-a})^2C_{w_0}-(v^a+v^{-a})(v^{a-b}+v^{-a+b})C_{s_1s_2s_1}\\
C_{s_1s_2s_1}C_{s_2s_1s_2}&=(v^a+v^{-a})(v^b+v^{-b})C_{w_0}\\
C_{s_2s_1s_2}^2&=(v^b+v^{-b})^2C_{w_0}+(v^b+v^{-b})(v^{a-b}+v^{-a+b})C_{s_2s_1s_2}
\end{align*}

Let $\Gamma_1=\{1\}$, $\Gamma_2=\{s_1,s_2s_1\}$, 
$\Gamma_3=\{s_2\}$, $\Gamma_4=\{s_1s_2,s_2s_1s_2\}$, $\Gamma_5=\{s_1s_2s_1\}$, $\Gamma_6=\{w_0\}$. 
We define $\mathcal I_{\leq1}=\H(W,S,L)$ and 
\begin{gather*}
\mathcal I_{\leq2}=\sum_{w\in \Gamma_2\sqcup\Gamma_5\sqcup\Gamma_6}\A C_w,\quad
\mathcal I_{\leq3}=\sum_{w\in \Gamma_3\sqcup\Gamma_4\sqcup\Gamma_6}\A C_w\\
\mathcal I_{\leq4}=\sum_{w\in \Gamma_4\sqcup\Gamma_6}\A C_w,\quad
\mathcal I_{\leq5}=\sum_{w\in \Gamma_5\sqcup\Gamma_6}\A C_w,\quad
\mathcal I_{\leq6}=\sum_{w\in \Gamma_6}\A C_w.
\end{gather*}
These are left ideals. Further,
$$
\H(W,S,L)=\mathcal I_{\leq1}\supseteq
\mathcal I_{\leq2}+\mathcal I_{\leq3}\supseteq
\mathcal I_{\leq2}+\mathcal I_{\leq4}\supseteq
\mathcal I_{\leq5}+\mathcal I_{\leq6}\supseteq
\mathcal I_{\leq6}
$$
is a filtration by two-sided ideals. The sets 
$\Gamma_1, \Gamma_2\sqcup\Gamma_4, \Gamma_3, \Gamma_5, \Gamma_6$ are called two-sided cells, 
and we may confirm that the $a$-function is constant on each two-sided cell. 
Namely, $a(z)=0$ if $z\in\Gamma_1$, $a(z)=a$ if $z\in\Gamma_2\sqcup\Gamma_4$, 
$a(z)=b$ if $z\in\Gamma_3$, $a(z)=2a-b$ if $z\in\Gamma_5$, $a(z)=2a+2b$ if $z\in\Gamma_6$. 
This is a general phenomenon. 
\end{example}

In the rest of the paper, we assume that
\begin{center}
{\bf $W$ is finite and $L(s)>0$, for all $s\in S$}. 
\end{center}

\begin{definition}
Let $R$ be a commutative domain, $\A \rightarrow R$ a ring homomorphism. Then 
we define $\H_R=\H(W,S,L)\otimes R$. We denote the image of $v\in\A$ by $q^{1/2}\in R$.
\end{definition}

It is known that $\H_R$ may be defined by the same defining relations as $\H(W,S,L)$. 

$\H_R$ has a trace map $\tr: \H\rightarrow \A$ 
defined by $\tr(T_w)=0$ if $w\neq1$, $\tr(T_1)=1$. For the symmetric bilinear form 
defined by $\langle h,h'\rangle=\tr(hh')$, we have 
$\langle T_{y^{-1}},T_w\rangle=\delta_{yw}$. Hence, if $R$ is a field then 
$\H_R$ is a symmetric algebra. 

Let $\mathcal K=\mathbb Q(v)$ as before. Then $\H_{\mathcal K}$ is 
split semisimple. The simple $\H_{\mathcal K}$-modules are in bijection with 
$\Irr(W)$, and we denote by $\{V^E\mid E\in\Irr(W)\}$ the complete set of 
simple $\H_{\mathcal K}$-modules. Then 
$$
\sum_{w\in W}\mattr(T_{w^{-1}},V^E)T_w
$$
is a central element of $\H_{\mathcal K}$, which acts on $V^E$ by the scalar 
$$
c_E=\frac{1}{\dim E}\sum_{w\in W}\mattr(T_{w^{-1}},V^E)\mattr(T_w,V^E).
$$

Observe that $\A$ is integrally closed in $\mathcal K$. The following is
proved in \cite[7.3.8]{GP}.

\begin{proposition}
$\mattr(T_w,V^E)\in\A$, for $w\in W$ and $E\in\Irr(W)$. 
\end{proposition}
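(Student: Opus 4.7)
My plan is to prove this by invoking integral closure, using that $\A$ is integrally closed in $\mathcal K$ (as noted just before the proposition). Thus it suffices to show that $\mattr(T_w, V^E)$, which a priori lies in $\mathcal K$, is integral over $\A$.

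First I would observe that $T_w \in \H(W,S,L)$ is integral over $\A$. Indeed, $\H(W,S,L)$ is a free $\A$-module of finite rank with basis $\{T_y \mid y \in W\}$, and left multiplication by $T_w$ is an $\A$-linear endomorphism of this free module whose matrix in the $T$-basis has entries in $\A$ (since products $T_w T_y$ expand over $\A$ in the $T$-basis, by the defining relations). The Cayley-Hamilton theorem applied to this matrix produces a monic polynomial $f(X) \in \A[X]$ with $f(T_w) = 0$ in $\H(W,S,L)$.

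Next I would pass to the semisimple algebra $\H_{\mathcal K}$. Since $f(T_w) = 0$ in $\H_{\mathcal K}$ as well, the operator $\rho_E(T_w) \in \End_{\mathcal K}(V^E)$ also annihilates $f$. Hence each eigenvalue of $\rho_E(T_w)$ (in an algebraic closure $\overline{\mathcal K}$) is a root of $f$ and is thus integral over $\A$. As $\mattr(T_w, V^E)$ is the sum of these eigenvalues and integral elements over $\A$ form a subring of $\overline{\mathcal K}$, the trace is integral over $\A$.

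Finally I would conclude: $\mattr(T_w, V^E) \in \mathcal K$ is integral over $\A$, and $\A$ is integrally closed in $\mathcal K$, so $\mattr(T_w, V^E) \in \A$. There is no real obstacle here; the only step that might deserve more than a line is the verification that multiplication by $T_w$ has an $\A$-valued matrix in the $T$-basis, but this is immediate from the inductive formulas $T_s T_y = T_{sy}$ if $sy > y$ and $T_s T_y = T_{sy} + (v^{L(s)} - v^{-L(s)}) T_y$ if $sy < y$, together with the fact that $T_w = T_{s_{i_1}} \cdots T_{s_{i_{\ell(w)}}}$ for a reduced expression.
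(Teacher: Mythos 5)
Your proof is correct and is essentially the argument the paper has in mind: the paper itself only cites \cite[7.3.8]{GP}, but the remark immediately preceding the proposition, that $\A$ is integrally closed in $\mathcal K$, is precisely the setup for the integrality argument you give (a monic annihilating polynomial for $T_w$ over $\A$ via Cayley--Hamilton on the finite free $\A$-module $\H(W,S,L)$, integrality of the eigenvalues and hence of the trace, and then integral closedness of $\Z[v,v^{-1}]$ in $\mathbb Q(v)$).
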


\begin{definition}
The \emph{$a$-invariant} $a_E$, for $E\in\Irr(W)$, is defined by
$$
a_E=\min\{i\in\Z_{\ge0}\mid v^i\mattr(T_w,V^E)\in\Z[v],\;
\text{for all $w\in W$.}\}
$$
\end{definition}

\begin{proposition}[Lusztig]
We may write 
$c_E=f_Ev^{-2a_E}+\text{(higher terms)}$, for some $f_E\in\Z_{>0}$. 
\end{proposition}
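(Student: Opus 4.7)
The plan is to split the claim into three sub-assertions and handle them in order: (a) $v^{2a_E}c_E\in\mathbb Q[v]$, so that $-2a_E$ is a lower bound on the $v$-adic valuation of $c_E$; (b) this bound is attained with a strictly positive rational leading coefficient $f_E$; (c) $f_E$ is in fact an integer.

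Assertion (a) follows by direct substitution. Since $v^{a_E}\mattr(T_w,V^E)\in\Z[v]$ for every $w\in W$, every summand in
$$v^{2a_E}\cdot\dim E\cdot c_E=\sum_{w\in W} v^{a_E}\mattr(T_{w^{-1}},V^E)\cdot v^{a_E}\mattr(T_w,V^E)$$
lies in $\Z[v]$, whence $v^{2a_E}c_E\in\mathbb Q[v]$. For (b), let $\beta_w\in\Z$ denote the constant term of $v^{a_E}\mattr(T_w,V^E)$; by the defining property of $a_E$, at least one $\beta_w$ is nonzero, and extracting the $v^{-2a_E}$-coefficient from the identity above gives $f_E\cdot\dim E=\sum_{w}\beta_{w^{-1}}\beta_w$. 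I would then establish the self-duality $\mattr(T_{w^{-1}},V^E)=\mattr(T_w,V^E)$ as an identity in $\A$: the $\tau$-twist of $V^E$ has character $w\mapsto\mattr(T_{w^{-1}},V^E)$ and is a simple $\H_{\mathcal K}$-module, and a Tits deformation argument identifies it with $V^{E^*}$, where $E^*$ is the dual of $E$ as a $W$-representation; since $W$ is a finite Coxeter group all of whose irreducible representations are defined over $\mathbb Q$, one has $E^*\cong E$, so $\mattr(T_{w^{-1}},V^E)=\mattr(T_w,V^E)$. Consequently $\beta_{w^{-1}}=\beta_w$, and therefore $f_E\cdot\dim E=\sum_w\beta_w^2>0$, giving $f_E\in\mathbb Q_{>0}$.

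The main obstacle is (c), upgrading $f_E\in\mathbb Q_{>0}$ to $f_E\in\Z_{>0}$. The natural tool is Lusztig's asymptotic algebra $J=\oplus_{w\in W}\Z t_w$, whose structure constants are the leading $v^{-a(z)}$-coefficients of the $h_{x,y,z}$, together with the associated homomorphism $\phi\colon\H\to J\otimes_{\Z}\A$ realizing $\H$ as an $\A$-deformation of $J$. Under the resulting bijection between simple $\H_{\mathcal K}$-modules and simple $J\otimes\mathcal K$-modules, $f_E$ should coincide with the Schur element of the $J$-simple corresponding to $V^E$, which is automatically a positive integer because $J$ is a $\Z$-algebra with a symmetrizing form. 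Verifying that $\phi$ and $J$ have the expected ring-theoretic properties is the core of Lusztig's program in the unequal-parameter setting and constitutes the genuinely hard step; modulo this structural input, the integrality of $f_E$ is formal.
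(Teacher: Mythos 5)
The paper itself gives no proof of this proposition --- it is stated as a known result of Lusztig (cf.\ [L, Ch.~20] and [GP, Ch.~9]) --- so your proposal has to be judged against the standard argument those sources intend. Your steps (a) and (b) are correct and are exactly that argument: from $\dim E\cdot c_E=\sum_w\mattr(T_{w^{-1}},V^E)\mattr(T_w,V^E)$ and the symmetry $\mattr(T_{w^{-1}},V^E)=\mattr(T_w,V^E)$ one gets $f_E\cdot\dim E=\sum_w\beta_w^2>0$, hence $f_E\in\mathbb Q_{>0}$. One small repair: it is not true that all irreducible representations of a finite Coxeter group are defined over $\mathbb Q$ (this fails for $H_3$, $H_4$ and the dihedral groups); what is true, and is all you need to conclude $E^*\simeq E$, is that every character of $W$ is real-valued, since every element of a finite Coxeter group is conjugate to its inverse.

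The genuine gap is in step (c), and the route you propose does not close it. First, in the unequal-parameter setting the asymptotic algebra $J$ and the homomorphism $\phi:\H\to J\otimes_\Z\A$ are constructed only under Lusztig's conjectures (P1)--(P15), which are open in general; so your argument would prove the proposition only conditionally, whereas the proposition is unconditional. Second, even granting $J$, the claim that its Schur elements are ``automatically'' positive integers because $J$ is a symmetric $\Z$-algebra is false as a general principle (for $\Z G$ with the standard trace the Schur elements are $|G|/\dim E$, whose integrality is a theorem of Frobenius, not a formality), and the identification of $f_E$ with the $J$-Schur element is itself one of the consequences of (P1)--(P15), not an input. The machinery is also unnecessary: integrality of $f_E$ follows at once from $c_E\in\A=\Z[v,v^{-1}]$, which is precisely what the sentence ``Observe that $\A$ is integrally closed in $\mathcal K$'' placed just before the proposition is there to deliver. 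Indeed, the central element $z_E=\sum_w\mattr(T_{w^{-1}},V^E)T_w$ lies in the $\A$-form $\H(W,S,L)$ by the preceding proposition and satisfies $z_E^2=c_Ez_E$; hence $c_E$ is an eigenvalue of left multiplication by $z_E$ on the free $\A$-module $\H(W,S,L)$, so it is a root of a monic polynomial over $\A$, is integral over $\A$, and therefore lies in $\A$. Its coefficient of $v^{-2a_E}$ is then an integer, and it is positive by your step (b).
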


$c_E$ are called the \emph{Schur elements} and we have 
$$
\tr=\sum_{E\in\Irr(W)}\frac{1}{c_E}\mattr(-,V^E).
$$
This result suggests that we may define $a_E$ for more general $\A$-algebras that 
has a trace map. In fact, Jacon developed 
a theory of $a$-invariants for the cyclotomic Hecke algebra of type $(d,1,n)$, 
which is also called the AK-algebra. 

\begin{definition}
A field $R$ is \emph{$L$-good} if $f_E$ is invertible in $R$, for all $E\in\Irr(W)$.
\end{definition}

The irreducible characters of generic Hecke algebras are explicitly known. Hence 
we may compute $f_E$ by substituting the parameters with $v^{L(s)}$ and expand 
$c_E$ into the Laurent series in $v$. 
When $L$ is the length function then we have ordinary notion of good primes.
The following result of Geck shows that most primes are $L$-good.

\begin{lemma}
Suppose that $L(s)>0$, for all $s\in S$. If the characteristic of $R$ is a good prime 
then $R$ is $L$-good. 
\end{lemma}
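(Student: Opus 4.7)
The plan is to reduce to the case of irreducible finite Coxeter groups and then invoke the explicit formulas for the Schur elements available in each type.

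First I would observe that if $(W, S, L)$ splits as a direct product $(W_1, S_1, L_1) \times (W_2, S_2, L_2)$, then $\H(W,S,L) \cong \H(W_1,S_1,L_1) \otimes_{\A} \H(W_2,S_2,L_2)$, so the generic irreducible modules factor as outer tensor products $V^{E_1} \boxtimes V^{E_2}$ and the Schur elements multiply: $c_{E_1 \boxtimes E_2} = c_{E_1} c_{E_2}$. Comparing leading terms in the Laurent expansion yields $f_{E_1 \boxtimes E_2} = f_{E_1} f_{E_2}$, so it suffices to prove the claim when $(W, S)$ is irreducible of finite type. By the remark following the definition of a weighted Coxeter group, $L$ then takes at most two distinct values on $S$, limiting the list of weighted systems that must be handled: the equal parameter case for each Cartan type, together with the genuinely unequal parameter cases in types $B_n$, $F_4$, $G_2$ and the dihedral $I_2(m)$ with $m$ even.

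In each of these cases the generic Schur element $c_E$ has an explicit product expression, and $f_E$ is obtained by substituting $v^{L(s)}$ into that formula and reading off the integer leading coefficient of the Laurent expansion at $v = \infty$, as indicated in the paragraph preceding the lemma. For type $A_{n-1}$ the hook length formula for $c_E$ yields $f_E = \pm 1$, consistent with the fact that no prime is bad for type $A$. For the remaining classical types and for the equal parameter exceptional types, this is the classical statement from Geck--Pfeiffer that the primes dividing $f_E$ lie among the bad primes of $W$. For the unequal parameter exceptional types $F_4$, $G_2$ and for $B_n$ with two parameters one appeals to the explicit Schur element formulas due to Malle, Geck, and others. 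Since the hypothesis gives that $\mathrm{char}(R)$ does not divide the bad primes of $W$, the leading coefficient $f_E$ becomes a unit in $R$ in every case.

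The main obstacle is the case-by-case verification itself, particularly in the unequal parameter cases. Although closed-form product formulas are available, extracting the integer coefficient $f_E$ requires tracking cancellations between monomials in $v^a$ and $v^b$ of different total weight, and one must verify in each case that only the bad primes of $W$ can survive as divisors of the surviving integer factor. Once this is settled for $B_n$, $F_4$, $G_2$ and the even dihedral systems, combining with the classical equal parameter cases completes the proof.
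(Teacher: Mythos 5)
The paper offers no proof of this lemma at all: it is a survey statement attributed to Geck, and the only indication of method is the preceding paragraph, which says that the irreducible characters of generic Hecke algebras are explicitly known, so one computes $f_E$ by substituting $v^{L(s)}$ for the parameters and expanding $c_E$ as a Laurent series. Your strategy --- reduce to irreducible $(W,S)$ via multiplicativity of Schur elements under direct products, then run through the explicit Schur element formulas type by type --- is exactly the method being alluded to, and the reduction step you give (that $c_{E_1\boxtimes E_2}=c_{E_1}c_{E_2}$, hence $f_{E_1\boxtimes E_2}=f_{E_1}f_{E_2}$ and $a_{E_1\boxtimes E_2}=a_{E_1}+a_{E_2}$) is correct.

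The gap is that your proposal stops precisely where the content of the lemma begins. The lemma \emph{is} the case-by-case verification that every prime dividing $f_E$ is bad, and you defer all of it: for the equal-parameter cases to a ``classical statement from Geck--Pfeiffer,'' and for the unequal-parameter cases ($B_n$, $F_4$, $G_2$, even dihedral) to ``explicit Schur element formulas due to Malle, Geck, and others,'' while explicitly naming the extraction of $f_E$ from those formulas as ``the main obstacle.'' This last point is not a formality: with two parameters $a=L(s_1)$, $b=L(s_0)$ the leading term of the Laurent expansion of $c_E$ at $v=\infty$ depends on arithmetic relations between $a$ and $b$ (the paper's own example shows $f_E=1$ for all $E$ in the asymptotic case $b>(n-1)a$, even though $2$ is bad), so $f_E$ is genuinely a function of the weight function $L$ and one must check that for \emph{every} choice of positive $L$ the surviving integer factor involves only bad primes. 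Nothing in your write-up establishes this; you have reproduced the correct proof architecture but not the proof. Since the paper itself only cites Geck, the honest completion is either to carry out the leading-coefficient computations from the tabulated Schur elements (Geck--Pfeiffer for equal parameters; Hoefsmit's formula for $B_n$; Lusztig's computations for $F_4$, $G_2$, $I_2(m)$) or to cite the precise results where this is done, rather than to assert that the verification ``completes the proof.''

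One small correction: in type $A_{n-1}$ the conclusion should be $f_E=1$, not $f_E=\pm1$, since the proposition of Lusztig quoted in the paper gives $f_E\in\Z_{>0}$.
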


In particular, if the characteristic of $R$ is different from $2, 3, 5$ then $R$ is 
$L$-good. The following example, which is called the asymptotic case,
was studied by \cite{BI}, \cite{GI} and \cite{GIP}.

\begin{example}
Let $W$ be the Weyl group of type $B_n$. The Coxeter generators are denoted by 
$s_0,\dots,s_{n-1}$ such that $s_1$,\dots,$s_{n-1}$ generate 
the symmetric group of degree $n$. Write $L(s_0)=b, L(s_1)=\cdots=L(s_{n-1})=a$ and 
suppose that $b>(n-1)a>0$. Then $f_E=1$, for all $E\in\Irr(W)$, and all fields are 
$L$-good. On the other hand, $2$ is a bad prime. 
\end{example}

\subsection{Cellularity}

\begin{definition}
Let $R$ be a commutative domain, $A$ an $R$-algebra. $A$ is \emph{cellular} if 
there exist a finite poset $\Lambda$, a collection of finite sets 
$\{M(\lambda)\mid \lambda\in\Lambda\}$ and an $R$-linear antiautomorphism of $A$ 
which is denoted by $a\mapsto a^*$, for $a\in A$, such that 
\begin{enumerate}
\item[(i)]
$A$ has a free $R$-basis $\sqcup_{\lambda\in\Lambda}\{C_{ST}^\lambda\mid S,T\in M(\lambda)\}$.
\item[(ii)]
$(C_{ST}^\lambda)^*=C_{TS}^\lambda$. 
\item[(iii)]
$A^{\geq\lambda}=\sum_{\mu\geq\lambda}\sum_{S,T\in M(\mu)}RC_{ST}^\mu$ is a two-sided ideal of $A$, for 
all $\lambda\in\Lambda$. 
\item[(iv)]
For each $h\in A$, $S,T\in M(\lambda)$, there exist $r(h,S,T)\in R$ such that we have 
$$
hC_{SU}^\lambda\equiv \sum_{T\in M(\lambda)}r(h,S,T)C_{TU}^\lambda\mod A^{>\lambda},
$$
for all $U\in M(\lambda)$. 
\end{enumerate}
\end{definition}

K\"onig and Changchang Xi \cite{KX1} showed that an $R$-algebra is cellular if and only if 
it is obtained from a particular construction which is called the iterated 
inflation of finitely many copies of $R$. 

\begin{definition}
Assume that an $R$-algebra $A$ is cellular. Define an $A$-module 
$C^\lambda=\oplus_{S\in M(\lambda)}RC_S^\lambda$ by
$$
hC_S^\lambda=\sum_{T\in M(\lambda)}r(h,S,T)C_T^\lambda,\quad\text{for $h\in A$.}
$$
$C^\lambda$ is called a \emph{cell module}. $C^\lambda$ is equipped with a bilinear from defined by 
$$
C_{US}^\lambda C_{TV}^\lambda\equiv \langle C_S^\lambda, C_T^\lambda\rangle C_{UV}^\lambda\mod A^{>\lambda}.
$$
The radical $\Rad_{\langle\hphantom{-},\hphantom{-}\rangle}C^\lambda$ of the bilinear form 
is an $A$-submodule. Define $D^\lambda$ by
$$
D^\lambda=C^\lambda/\Rad_{\langle\hphantom{-},\hphantom{-}\rangle}C^\lambda.
$$
\end{definition}

The following are basic results on cellular algebras. See \cite{GL}, \cite{KX2}
and \cite{XX}.

\begin{theorem}[Graham-Lehrer]
Let $R$ be a field, $A$ a cellular $R$-algebra. 
\begin{enumerate}
\item[(i)]
Nonzero $D^\lambda$'s form a complete set of simple $A$-modules.
Further, if $D^\lambda\neq0$ then it is absolutely irreducible and
the Jacobson radical $\Rad C^\lambda$ coincides with
$\Rad_{\langle\hphantom{-},\hphantom{-}\rangle}C^\lambda$.
In particular,
$\{C^\lambda\mid \lambda\in\Lambda\}$ is a complete set of simple $A$-modules if $A$ is semisimple.
\item[(ii)]
If $D^\lambda\neq0$, for all $\lambda\in\Lambda$, then $A$ is quasihereditary.
In particular, $A$ has finite global dimension in this case.
\item[(iii)]
Let $\Lambda^o=\{\lambda\in\Lambda\mid D^\lambda\neq0\}$,
$D=([C^\lambda:D^\mu])_{(\lambda,\mu)\in\Lambda\times\Lambda^o}$
the decomposition matrix.
Then $D$ is unitriangular: $d_{\lambda\mu}\neq0$
only if $\lambda\geq\mu$ and $d_{\mu\mu}=1$.
\item[(iv)]
The Cartan matrix of $A$ is of the form $C=\hphantom{}^tDD$.
\end{enumerate}
\end{theorem}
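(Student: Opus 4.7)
The plan is to leverage the multiplicative identity $C_{SU}^\lambda C_{TV}^\lambda \equiv \langle C_S^\lambda, C_T^\lambda\rangle C_{UV}^\lambda \pmod{A^{>\lambda}}$ built into the definition of the bilinear form, which ties the module structure on $C^\lambda$ directly to values of $\langle\cdot,\cdot\rangle$.

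For (i), first observe that $\Rad_{\langle,\rangle} C^\lambda$ is an $A$-submodule: the anti-involution $*$ gives $\langle hx, y\rangle = \langle x, h^* y\rangle$, so elements of the radical stay in the radical. Next, suppose $D^\lambda \neq 0$ and pick $x = \sum a_S C_S^\lambda$ with $\langle x, C_T^\lambda\rangle \neq 0$ for some $T$; the identity then yields $C_{XT}^\lambda \cdot x = \langle x, C_T^\lambda\rangle C_X^\lambda$ for every $X$, so $x$ generates $C^\lambda$. Hence every proper submodule of $C^\lambda$ lies in $\Rad_{\langle,\rangle}C^\lambda$, which shows $D^\lambda$ is simple and also gives $\Rad C^\lambda = \Rad_{\langle,\rangle}C^\lambda$. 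For completeness, let $M$ be a simple $A$-module and pick $\lambda$ maximal with $A^{\geq\lambda} M \neq 0$; then $A^{>\lambda}$ annihilates $M$, and choosing $m$ with $A^{\geq\lambda} m \neq 0$ produces a surjection $C^\lambda \twoheadrightarrow M$ factoring through $D^\lambda$, so $M \cong D^\lambda$. Absolute irreducibility then follows from the nondegenerate form on $D^\lambda$ together with the $*$-involution.

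For (ii), when every $D^\lambda$ is nonzero, a linear refinement of the partial order on $\Lambda$ produces a chain of two-sided ideals whose successive quotients are direct sums of copies of some $C^\lambda$, projective over the corresponding subquotient algebra; each layer squares to itself thanks to the nonvanishing of $\langle,\rangle$ on $C^\lambda$, so the chain is a heredity chain. Finite global dimension is a standard consequence. For (iii), $C^\lambda$ has simple head $D^\lambda$ with multiplicity one, giving $d_{\lambda\lambda} = 1$, and any composition factor $D^\mu$ of $C^\lambda$ satisfies $\mu \leq \lambda$ because $A^{>\lambda}$ annihilates $C^\lambda$ and hence $D^\mu$, which via the cell structure forces $\mu \leq \lambda$ in the poset. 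For (iv), the quasihereditary structure supplies each projective indecomposable $P(\mu)$ with a cell filtration satisfying $(P(\mu):C^\lambda) = [C^\lambda:D^\mu]$ by BGG-type reciprocity, so
\[
[P(\mu):D^\nu] = \sum_\lambda (P(\mu):C^\lambda)\,[C^\lambda:D^\nu] = \sum_\lambda d_{\lambda\mu}\,d_{\lambda\nu} = ({}^t\!D\,D)_{\mu\nu}.
\]

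The main obstacle is the absolute irreducibility assertion in (i): while simplicity falls out cleanly from the form identity, showing $\End_A(D^\lambda) = R$ requires carefully using that the nondegenerate induced pairing identifies $D^\lambda$ with its $*$-twisted dual, so that every endomorphism is determined by its action on a single generating vector and must agree with the scalar forced by the pairing.
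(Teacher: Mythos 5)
The paper itself gives no proof of this theorem (it is quoted from Graham--Lehrer with references), so I am measuring your sketch against the standard argument, which it follows in outline. Most of it is sound: the generation identity $C^\lambda_{XT}x=\langle C^\lambda_T,x\rangle C^\lambda_X$ for $x\notin\Rad_{\langle\,,\,\rangle}C^\lambda$ is exactly the right engine for (i), and the endomorphism computation for absolute irreducibility is the standard one. But two steps have genuine gaps. First, in (iii) your reason for $\mu\le\lambda$ --- that $A^{>\lambda}$ annihilates $C^\lambda$ and hence $D^\mu$ --- only yields $\mu\not>\lambda$, which is strictly weaker because $\Lambda$ is merely a poset: if $\mu$ and $\lambda$ are incomparable then $A^{\ge\mu}\not\subseteq A^{>\lambda}$ and no contradiction arises. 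The correct argument is that $[C^\lambda:D^\mu]\ne0$ forces some $C^\mu_{ST}$ to act nontrivially on $C^\lambda$, so $C^\mu_{ST}C^\lambda_{UV}$ has a nonzero component in the $\lambda$-layer of the cellular basis; since this product lies in the two-sided ideal $A^{\ge\mu}$, spanned by basis elements $C^\nu_{**}$ with $\nu\ge\mu$, one concludes $\lambda\ge\mu$. A similar refinement is needed in your completeness argument in (i): to see that the surjection $C^\lambda\twoheadrightarrow M$ factors through $D^\lambda$ you must first verify that the form on this particular $C^\lambda$ is nonzero, which follows from $0\neq C^\lambda_{ST}M=\operatorname{span}_U\{\langle C^\lambda_T,C^\lambda_U\rangle C^\lambda_{ST}m\}$.

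Second, your proof of (iv) invokes BGG reciprocity ``supplied by the quasihereditary structure,'' but (iv) is asserted for an arbitrary cellular algebra, and when some $D^\lambda=0$ the algebra need not be quasihereditary, so that structure is unavailable precisely in the cases not covered by (ii). The statement you actually need --- that each $P(\mu)$ with $\mu\in\Lambda^o$ has a filtration by cell modules satisfying $(P(\mu):C^\lambda)=[C^\lambda:D^\mu]$ --- does hold for every cellular algebra over a field, but it must be established directly from the cell filtration of ${}_AA$ (in which $C^\lambda$ occurs $|M(\lambda)|$ times) together with $\dim\Hom_A(P(\mu),C^\lambda)=[C^\lambda:D^\mu]$, rather than deduced from quasiheredity. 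With that substitution the computation $[P(\mu):D^\nu]=\sum_\lambda d_{\lambda\mu}d_{\lambda\nu}$ goes through as you wrote it.
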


\begin{theorem}[K\"onig-Xi]
Let $R$ be a field, $A$ a cellular $R$-algebra.
\begin{enumerate}
\item[(i)]
If $A$ is self-injective then it is weakly symmetric, that is, the head and
the socle of any indecomposable projective $A$-module are isomorphic.
\item[(ii)]
Assume that the characteristic of $R$ is odd. If another $R$-algebra $B$ is
Morita equivalent to $A$ then $B$ is cellular.
\end{enumerate}
\end{theorem}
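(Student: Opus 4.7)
For (i), the plan is to combine the symmetry of the Cartan matrix from Graham--Lehrer~(iv) with Nakayama duality to force the Nakayama permutation to be trivial.

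First, I would set up notation: let $\nu$ be the Nakayama permutation on simples (so that the socle of $P_L$ is $\nu(L)$), write $c_{ij}=[P_{L_j}:L_i]=\dim e_iAe_j$ for the Cartan entries, and let $e_i$ be a primitive idempotent attached to $L_i$; the goal is to show $\nu=\id$. Next, I would invoke Nakayama duality, which for self-injective $A$ supplies an isomorphism $\Hom_R(e_iA,R)\cong Ae_{\nu^{-1}(i)}$ of left $A$-modules. Extracting the $e_j$-weight part and comparing dimensions gives the identity $c_{i,j}=c_{j,\nu^{-1}(i)}$. Combined with the cellular symmetry $c_{ij}=c_{ji}$ inherent in Graham--Lehrer~(iv), I would conclude $c_{\nu^{-1}(i),j}=c_{i,j}$ for all $i,j$, so that the $i$-th and $\nu^{-1}(i)$-th rows of $C$ coincide.

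The heart of the argument is then a determinant computation: were $\nu$ nontrivial, two rows of $C$ would coincide and hence $\det C=0$. To derive a contradiction, I would apply the Cauchy--Binet formula to $C={}^tDD$: $\det C=\sum_S(\det D_S)^2$, where $S$ ranges over $|\Lambda^o|$-element subsets of $\Lambda$ and $D_S$ is the square submatrix of $D$ on rows in $S$. The summand for $S=\Lambda^o$ equals $1$, because $D_{\Lambda^o}$ is unitriangular by the constraints $d_{\mu\mu}=1$ and $d_{\lambda\mu}=0$ for $\lambda\not\geq\mu$ of Graham--Lehrer~(iii); hence $\det C\geq 1$. This forces $\nu=\id$, yielding weak symmetry.

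For (ii), the strategy would be to transport the cellular structure through the Morita equivalence. Writing $B=\End_A(P)^{\mathrm{op}}$ for a progenerator $P$ of left $A$-modules, the antiautomorphism $*$ of $A$ induces an antiautomorphism $\sigma$ of $B$; however, $\sigma$ is an involution only up to an inner automorphism, and the hard part will be to replace it by an honest involution. In odd characteristic I would accomplish this by modifying $\sigma$ via an inner automorphism, the crucial step being the extraction of a square root of a specific unit in $B$. With an involution on $B$ in hand, I would construct a cellular basis of $B$ by pulling the K\"onig--Xi iterated-inflation description of $A$ (which characterises cellular algebras, \cite{KX1}) across the Morita equivalence. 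The square-root step fails in characteristic $2$, and explicit counterexamples of K\"onig and Xi confirm that cellularity is not Morita invariant there.
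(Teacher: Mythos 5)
The survey states this theorem without proof, referring the reader to \cite{KX1} and \cite{KX2}, so there is no argument in the paper itself to compare against; I judge your proposal against those sources. Your part (i) is correct and complete, and is essentially the K\"onig--Xi argument: Nakayama duality yields $c_{ij}=c_{j,\nu^{-1}(i)}$, the symmetry of $C={}^tDD$ turns this into the coincidence of the columns of $C$ indexed by $i$ and $\nu^{-1}(i)$, and Cauchy--Binet together with the unitriangularity of $D$ (after refining the partial order on $\Lambda^o$ to a total order) gives $\det C=\sum_S(\det D_S)^2\geq(\det D_{\Lambda^o})^2=1$, which forces $\nu=\id$. The one point worth making explicit is that $\dim_R e_iAe_j$ computes the Cartan invariant $[P_j:L_i]$ because the $D^\lambda$ are absolutely irreducible; this is supplied by part (i) of the Graham--Lehrer theorem quoted just above.

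Part (ii), by contrast, is a strategy rather than a proof, and the steps you defer are precisely where the content of \cite{KX1} lies. First, to obtain even a single anti-automorphism of $B=\End_A(P)^{\rm op}$ from $*$ you must know that the $*$-twisted duality on $A\text{-}\fgMod$ fixes the isomorphism class of every indecomposable projective (equivalently, $(D^\lambda)^*\simeq D^\lambda$); otherwise $*$ only identifies $B^{\rm op}$ with $\End_A(P')$ for a possibly different progenerator $P'$. Second, ``extraction of a square root of a specific unit in $B$'' is not available in general even in odd characteristic ($-1$ is already a non-square in $\mathbb F_3$); what one can extract is a square root of a unit congruent to an idempotent modulo the radical, via the binomial series for $(1+n)^{1/2}$, whose coefficients have only powers of $2$ in their denominators --- this is how one produces a $*$-stable idempotent $f$ in a matrix algebra $M_n(A)$ (itself cellular, via transpose composed with $*$) with $M_n(A)f\simeq P$. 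You would need to say which unit you take and why it has this special form. Third, even with an honest involution $\sigma$ on $B$ in hand you are not done: an involution does not make an algebra cellular, and ``pulling the iterated-inflation description across the Morita equivalence'' is exactly the assertion to be proved, namely that $fM_n(A)f$ for a $\sigma$-stable full idempotent inherits a cell datum. As it stands, (ii) correctly names the obstruction (and the genuine failure in characteristic $2$) but does not overcome it.
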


Recall that the trivial extension $T(A)=A\oplus\Hom_R(A,R)$ is a symmetric algebra 
whose bilinear form is given by $\langle a\oplus f,b\oplus g\rangle=f(b)+g(a)$, 
for $a,b\in A$ and $f,g\in\Hom_R(A,R)$. 

\begin{theorem}[Xi-Xiang]
Let $R$ be a field, $A$ a cellular $R$-algebra. Then we may define 
an antiautomorphism of $T(A)$ by $(a\oplus f)^*=a^*\oplus f^*$, 
where $f^*(b)=f(b^*)$, for $b\in A$, such that $T(A)$ is a cellular 
$R$-algebra. In particular, any cellular $R$-algebra is a quotient of 
a symmetric cellular $R$-algebra. 
\end{theorem}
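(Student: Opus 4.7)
The plan has three parts: verify that $(a\oplus f)^*:=a^*\oplus f^*$ is an antiautomorphism of $T(A)$, construct a cellular datum on $T(A)$ extending that of $A$, and deduce the corollary. Write $A^\vee:=\Hom_R(A,R)$ for brevity. The bimodule structure of $A^\vee\subset T(A)$ is $(a\cdot f)(x)=f(xa)$ and $(f\cdot a)(x)=f(ax)$, and a short computation using $f^*(b)=f(b^*)$ and $(b^*)^*=b$ gives $(a\cdot f)^*=f^*\cdot a^*$ and $(f\cdot a)^*=a^*\cdot f^*$, so anti-multiplicativity on $T(A)$ reduces to anti-multiplicativity on $A$.

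For the cellular datum I enlarge $\Lambda$ to $\Lambda':=\Lambda\sqcup\Lambda^\circ$, where $\Lambda^\circ$ is a second copy of $\Lambda$ placed strictly above $\Lambda$ and equipped with the opposite order ($\mu^\circ\leq\lambda^\circ$ iff $\lambda\leq\mu$). I set $M'(\lambda)=M(\lambda)=M'(\lambda^\circ)$, and take the new basis to be $C^\lambda_{ST}$ (unchanged) for $\lambda\in\Lambda$ and the \emph{transposed} dual basis $C^{\lambda^\circ}_{ST}:=(C^\lambda_{TS})^\vee$ for $\lambda^\circ\in\Lambda^\circ$, where $(C^\mu_{UV})^\vee\in A^\vee$ is dual to $C^\mu_{UV}$. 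Axiom (i) is immediate from $T(A)=A\oplus A^\vee$, and axiom (ii) is the check $((C^\lambda_{TS})^\vee)^*(C^\mu_{UV})=(C^\lambda_{TS})^\vee(C^\mu_{VU})=\delta_{\lambda\mu}\delta_{SU}\delta_{TV}=(C^\lambda_{ST})^\vee(C^\mu_{UV})$.

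For axiom (iii), $(A^\vee)^2=0$ makes $A^\vee$ a two-sided ideal of $T(A)$, so $T(A)^{\geq\lambda}=A^{\geq\lambda}\oplus A^\vee$ is a two-sided ideal for $\lambda\in\Lambda$. For $\lambda^\circ$, $T(A)^{\geq\lambda^\circ}$ is the span of $(C^\mu_{ST})^\vee$ with $\mu\leq\lambda$; evaluating $(a\cdot(C^\mu_{ST})^\vee)(C^\rho_{XY})$ as the coefficient of $C^\mu_{ST}$ in $C^\rho_{XY}\cdot a\in A^{\geq\rho}$ shows it vanishes unless $\rho\leq\mu$, so $T(A)^{\geq\lambda^\circ}$ is closed under left (hence, by $*$, right) multiplication by $A$. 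Axiom (iv) for $\lambda\in\Lambda$ is routine ($h\in A$ reproduces cellularity of $A$; $h\in A^\vee$ falls into $T(A)^{>\lambda}$). The decisive case is $\lambda^\circ$: one evaluates $a\cdot(C^\lambda_{US})^\vee$ against $C^\rho_{XY}$, discards contributions with $\rho>\lambda$ (they vanish by the filtration) and with $\rho<\lambda$ (they lie in $T(A)^{>\lambda^\circ}$), and reads the $\rho=\lambda$ part off from the $*$-image $C^\lambda_{XY}\cdot a\equiv\sum_T r(a^*,Y,T)C^\lambda_{XT}\pmod{A^{>\lambda}}$ of the cellular axiom of $A$. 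Reassembling in the transposed basis gives $a\cdot C^{\lambda^\circ}_{SU}\equiv\sum_T r(a^*,T,S)C^{\lambda^\circ}_{TU}$ modulo $T(A)^{>\lambda^\circ}$, while $h\in A^\vee$ contributes zero by $(A^\vee)^2=0$.

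The main obstacle is ensuring that the structure constant $r'(a,S,T):=r(a^*,T,S)$ depends only on $a,S,T$ and not on the spectator index $U$: this is exactly what forces the transposition $C^{\lambda^\circ}_{ST}:=(C^\lambda_{TS})^\vee$ in the definition of the new basis (the naive untransposed choice produces constants depending on $U$ rather than $S$). Once cellularity of $T(A)$ is in place, the trailing sentence of the theorem is immediate: $T(A)$ is a symmetric algebra in the standard way, and $A=T(A)/A^\vee$ realizes every cellular $R$-algebra as a quotient of the symmetric cellular $R$-algebra $T(A)$ by the two-sided ideal $A^\vee$.
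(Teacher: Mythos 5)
Your construction is correct, and it is the standard one (the paper itself states this result without proof, citing Xi--Xiang \cite{XX}, whose argument is exactly this: adjoin a reversed copy of $\Lambda$ on top of $\Lambda$ and use the transposed dual basis on $A^\vee$). All the key points check out — the involution identity $(a\cdot f)^*=f^*\cdot a^*$, the need to place $\Lambda^\circ$ strictly above $\Lambda$ with the opposite order so that both families of ideals close up, and the transposition $C^{\lambda^\circ}_{ST}=(C^\lambda_{TS})^\vee$ which converts the right-action structure constants $r(a^*,\,\cdot\,,\,\cdot\,)$ into left-action constants independent of the spectator index.
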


Now we return to Hecke algebras and state a result by Geck \cite{G2}. 
In fact, it is proved 
under more general assumption that part of the Lusztig conjectures namely 
(P2)-(P8) and (P15') \cite[5.2]{G1} hold. These are conjectures are about 
the structure constants $h_{x,y,z}$, and do not involve the base ring $R$. 
It is known that 
the conjectures hold when $L$ is the length function. In this case, 
$\H(W,S,L)^{\geq a}=\sum_{w\in W, a(w)\geq a}\A C_w$ is a two-sided ideal of 
$\H(W,S,L)$, for all $a\in\Z$. Then each successive quotient is a direct sum of 
Lusztig's two-sided cells. By refining the ideal filtration, Geck has proved 
the following. Recall that $\H_R=\H(W,S,L)\otimes R$.

\begin{theorem}[Geck]
\label{Geck cellular}
Let $(W,S,L=\ell)$ be a weighted Coxeter group of finite type whose 
$L$ is the length function. Suppose that $R$ is $L$-good. Then 
$\H_R$ is cellular. 
\end{theorem}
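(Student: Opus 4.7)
The plan is to promote the Kazhdan-Lusztig basis $\{C_w\mid w\in W\}$ to a cellular basis of $\H_R$, using the antiautomorphism $\tau$ (which satisfies $\tau(C_w)=C_{w^{-1}}$) as the involution $*$. The poset $\Lambda$ will be the set of Lusztig's two-sided cells $\Gamma$, partially ordered by $\Gamma>\Gamma'\Leftrightarrow a(\Gamma)>a(\Gamma')$.

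First I would invoke the assumed Lusztig conjectures (P2)--(P8) and (P15') to establish the two-sided ideal filtration. These imply that each $\H(W,S,L)^{\geq a}=\sum_{a(w)\geq a}\A C_w$ is a two-sided ideal whose associated graded piece decomposes as a direct sum $\bigoplus_\Gamma\bigl(\bigoplus_{w\in\Gamma}\A C_w\bigr)$ indexed by the two-sided cells $\Gamma$ with $a(\Gamma)=a$. After base change to $R$, axiom (iii) follows, and axioms (i) and (ii) reduce to a labelling problem: for each two-sided cell $\Gamma$ one needs a finite set $M(\Gamma)$ and a bijection $\Gamma\leftrightarrow M(\Gamma)\times M(\Gamma)$ under which inversion $w\mapsto w^{-1}$ corresponds to transposition of the two factors. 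The natural source is the left/right cell decomposition of $\Gamma$: every $w\in\Gamma$ lies in a unique left cell $\mathcal L_w$ and a unique right cell $\mathcal R_w$, inversion exchanges the two collections, and (refining by Lusztig's distinguished involutions when necessary) one takes $M(\Gamma)$ to be the set of left cells in $\Gamma$ and sets $C_w=C^\Gamma_{ST}$ with $S=\mathcal L_w$, $T=\mathcal L_{w^{-1}}$, so that $\tau(C^\Gamma_{ST})=C^\Gamma_{TS}$ is automatic.

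The hardest step will be axiom (iv), the column-independence identity
$$
hC^\Gamma_{SU}\equiv\sum_{T\in M(\Gamma)}r(h,S,T)\,C^\Gamma_{TU}\pmod{\H_R^{>a(\Gamma)}}
$$
with $r(h,S,T)$ depending only on $h,S,T$ and not on $U$. This is precisely the matrix-unit behaviour of Lusztig's asymptotic algebra $J_\Gamma$: the leading coefficients $\gamma_{x,y,z^{-1}}$ of $h_{x,y,z}$ assemble $J_\Gamma$ into a direct sum of matrix algebras, and the required rule is the pullback of that matrix multiplication along the natural identification of $J_\Gamma$ with the corresponding graded piece of $\H(W,S,L)/\H(W,S,L)^{>a(\Gamma)}$. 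This is where the $L$-good hypothesis enters: for the idempotent decomposition witnessing the matrix structure on $J_\Gamma\otimes R$ to survive specialisation to $R$, the constants $f_E$ must be invertible in $R$, which is the defining condition of $L$-goodness. Carrying out this lift of the asymptotic matrix-unit structure back to the graded piece of $\H_R$, uniformly over all two-sided cells, is the technical heart of the argument; once completed, the cellular axioms follow.
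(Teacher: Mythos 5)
Your overall architecture is the right one --- the $a$-function filtration coming from Lusztig's conjectures (P2)--(P8), (P15'), the antiautomorphism $\tau$ with $\tau(C_w)=C_{w^{-1}}$ as the cellular involution, the asymptotic algebra $J$ and the leading coefficients $\gamma_{x,y,z^{-1}}$, and $L$-goodness entering through the invertibility of the $f_E$. (For calibration: the survey does not prove this theorem; it cites Geck's paper \cite{G2} and only hints that one must ``refine the ideal filtration''. So I am comparing your reconstruction with Geck's actual argument.)

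There is, however, a genuine gap in your choice of cell datum, and it is fatal outside type $A$. You take $\Lambda$ to be the set of two-sided cells and $M(\Gamma)$ the set of left cells in $\Gamma$, with $C_w=C^\Gamma_{ST}$, $S=\mathcal L_w$, $T=\mathcal L_{w^{-1}}$. This requires a bijection $\Gamma\simeq M(\Gamma)\times M(\Gamma)$, i.e. $|\Gamma|=(\#\{\text{left cells in }\Gamma\})^2$. That holds in type $A$ (via RSK), but fails whenever the family $\mathcal F_\Gamma\subseteq\Irr(W)$ attached to $\Gamma$ has more than one member, since $|\Gamma|=\sum_{E\in\mathcal F_\Gamma}(\dim E)^2$ while the left cells do not square-count this sum. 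Already for $W(B_2)$ with $L=\ell$ --- squarely within the hypotheses of the theorem --- the middle two-sided cell has $6$ elements ($1^2+1^2+2^2$, three characters in the family) but only $2$ left cells, so no labelling by pairs of left cells exists; appealing to distinguished involutions does not help, as these are in bijection with the left cells. Geck's construction instead takes $\Lambda=\Irr(W)$ (preordered by $a_E$, so each two-sided cell is broken into blocks indexed by the $E$ in its family), takes $|M(E)|=\dim E$, and --- crucially --- the cellular basis is \emph{not} the Kazhdan--Lusztig basis relabelled but a new basis obtained from $\{C_w\}$ by a base change built from the leading matrix coefficients of balanced representations of $V^E$; the hypothesis that $R$ is $L$-good is exactly what makes this base change invertible over $R$. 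Your own proposal is internally inconsistent on this point: if the cellular basis were literally $\{C_w\}$ relabelled, no base change would occur and $L$-goodness would play no role, whereas you correctly observe that it must enter through the idempotents splitting $J_\Gamma\otimes R$ into matrix blocks --- blocks which are indexed by the $E\in\mathcal F_\Gamma$, not by a single block per two-sided cell.
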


This opens a way to consider the possibilities to find analogues of many 
results that appeared 
in Specht module theory. For example, studying Young modules in
this setting is important.  

\subsection{Canonical basic set}

The first task in studying the modular representation theory of
$\H_R$ is to determine the set of simple $\H_R$-modules.
This very first task has already proven to be very interesting
and deep.

Let $R$ be a field, $\A\rightarrow R$ an algebra homomorphism.
Its kernel is a prime ideal $\mathfrak p$ of $\A$ and
we may consider modular reduction between $\H_{\A_{\mathfrak p}}$
and $\H_R$. For a simple $\H_R$-module, we denote by
$[E:S]$ the multiplicity of $S$ in the modular reduction of $V^E$.

\begin{definition}
For a simple $\H_R$-module $S$, the \emph{$a$-invariant} of $S$ is defined by
$$
a_S=\min\{a_E\mid E\in\Irr(W)\;\text{such that $[E:S]\neq0$.}\}
$$
\end{definition}

Geck and Rouquier used the $a$-invariant to label simple $\H_R$-modules.

\begin{definition}
We say that a subset $B$ of $\Irr(W)$ is a \emph{canonical basic set} if
\begin{enumerate}
\item[(i)]
There is a bijection $B\simeq\Irr(\H_R)$, which we denote $E\mapsto S^E$,
such that $[E:S^E]=1$ and $a_{S^E}=a_E$.
\item[(ii)]
If a simple $\H_R$-module $S$ satisfies $[E:S]\neq0$, for some $E\in\Irr(W)$,
then either $E\in B$ and $S\simeq S^E$ or $a_S<a_E$.
\end{enumerate}
\end{definition}

If a canonical basic set exists, then $E\not\in B$ implies that
$a_S<a_E$, for all $S$ with $[E:S]\neq0$, and we have
$$
B=\{E\in\Irr(W)\mid \text{$a_E=a_S$ and $[E:S]\neq0$, for some $S\in\Irr(\H_R)$.}\}.
$$
As the right hand side is independent of the choice of $B$, we have the uniqueness 
of the canonical basic set. Note however that it may not exist. Geck showed that 
under the assumption that the Lusztig conjectures hold, the canonical basic set
exists if $R$ is $L$-good. In particular, it implies the following result,
which was proved in the early stage of their research.

\begin{theorem}[Geck-Rouquier]
Suppose that $L$ is the length function and that $R$ is $L$-good.
Then the canonical basic set exists. 
\end{theorem}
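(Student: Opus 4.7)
My plan is to transfer the generic $a$-filtration of $\H_{\mathcal K}$ down to $\H_R$ using the asymptotic expansion $c_E = f_E v^{-2a_E} + \text{(higher terms)}$ of the Schur elements, exploiting the $L$-good hypothesis so that the leading scalars $f_E$ remain invertible after reduction modulo $\mathfrak p$.

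First I would work with rescaled central idempotents of $\H_{\mathcal K}$. Starting from
$$
e_E = \frac{1}{c_E} \sum_{w \in W} \mattr(T_{w^{-1}},V^E)\, T_w,
$$
I set $\tilde e_E := v^{2a_E} e_E$. By the definition of $a_E$ together with Lusztig's expansion, the coefficients of $\tilde e_E$ lie in the localization $\A_{\mathfrak p}$ and their reduction involves a factor of $f_E^{-1}$, which is a unit in $R$ by $L$-goodness. Specialization therefore produces well-defined elements $\bar{\tilde e}_E \in \H_R$, and the orthogonality relations $e_E e_F = \delta_{EF} e_E$ descend to partial orthogonality relations in $\H_R$ that are only spoiled by terms of strictly smaller $a$-invariant, giving a triangular idempotent structure graded by the $a$-function.

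Next, for each simple $\H_R$-module $S$ I would put $\Irr(W)_S := \{E \in \Irr(W) : [V^E:S] \neq 0\}$, set $a_S := \min\{a_E : E \in \Irr(W)_S\}$, and denote the top layer by $\Irr(W)_S^\circ := \{E \in \Irr(W)_S : a_E = a_S\}$. The heart of the argument is to prove that $|\Irr(W)_S^\circ| = 1$ and that the unique $E_S$ therein satisfies $[V^{E_S}:S] = 1$. I would extract this by pairing the trace identity
$$
\tr = \sum_{E \in \Irr(W)} \frac{1}{c_E}\, \mattr(-,V^E)
$$
against the idempotents $\bar{\tilde e}_E$: after rescaling by $v^{2a_S}$ and reducing modulo $\mathfrak p$ only top-layer contributions survive, and non-degeneracy of the Brauer character pairing together with invertibility of the $f_E$ forces both uniqueness in $\Irr(W)_S^\circ$ and multiplicity one. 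Setting $B := \{E_S : S \in \Irr(\H_R)\}$ and $S^{E_S} := S$, property (i) of the definition is built in, while property (ii) follows because $[V^E:S]\neq 0$ with $E \neq E_S$ forces $E \notin \Irr(W)_S^\circ$, hence $a_E > a_S$.

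The main obstacle is precisely the uniqueness and multiplicity-one statement at the top of the $a$-filtration for each simple $S$. Both hypotheses enter essentially here: without $L$-goodness the rescaled idempotents $\bar{\tilde e}_E$ degenerate upon specialization and the triangular structure collapses, and without the length-function hypothesis one loses the validity of Lusztig's conjectures (P1)--(P15) in crystallographic type, which underwrite the two-sided ideal structure of $\H(W,S,L)^{\geq a} = \sum_{a(w) \geq a} \A C_w$ and its clean specialization to $\H_R$. Granted the top-layer statement, the decomposition matrix becomes unitriangular with respect to the $a$-function, and the so-constructed $B$ acquires the desired universal property automatically.
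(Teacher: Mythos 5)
Your overall target---show that for each simple $\H_R$-module $S$ there is a unique $E$ with $a_E=a_S$ and that $[E:S]=1$---is exactly the content of the theorem, but the step on which you build everything is false: the rescaled elements $\tilde e_E=v^{2a_E}e_E$ do \emph{not} specialize to well-defined elements of $\H_R$. The expansion $c_E=f_Ev^{-2a_E}+(\text{higher terms})$ is a statement about the $v$-adic valuation of $c_E$ as a Laurent series, whereas the specialization $\A\rightarrow R$ is taken at an arbitrary prime $\mathfrak p$, typically sending $v$ to a root of unity. Multiplying by $v^{2a_E}$ normalizes the leading term of the series but does nothing about the vanishing of $c_E$ at $\mathfrak p$: already for $W=S_2$ with $q=v^2\mapsto -1$ one has $c_{\mathrm{triv}}=1+v^2\in\mathfrak p$ and $c_{\mathrm{sgn}}=v^{-2}(1+v^2)\in\mathfrak p$, so $v^{2a_E}/c_E$ has a pole at $\mathfrak p$ for both characters even though $f_E=1$ is invertible. (The idempotent $e_E$ survives specialization precisely when $V^E$ remains projective, i.e.\ in the semisimple situation, which is the uninteresting case.) $L$-goodness makes the integers $f_E$ invertible in $R$; it does not make $v^{2a_E}c_E$ invertible. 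Consequently the ``triangular idempotent structure graded by the $a$-function'' does not exist in $\H_R$, the subsequent ``pairing of the trace identity against $\bar{\tilde e}_E$'' has no meaning, and the top-layer uniqueness and multiplicity-one claim---which is the whole theorem---remains an assertion. You mention the two-sided ideal filtration $\H(W,S,L)^{\geq a}$ and Lusztig's conjectures only as motivation; they never actually enter your argument.

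For the record, the paper obtains the statement as a corollary of Theorem \ref{Geck cellular}: when $L=\ell$ and $R$ is $L$-good, $\H_R$ is cellular, with cell datum obtained by refining the filtration by the ideals $\H(W,S,L)^{\geq a}$ (this is where Lusztig's conjectures (P2)--(P8) and (P15') are used, and where $L$-goodness enters). Graham--Lehrer's unitriangularity of the decomposition matrix of a cellular algebra, with the cell modules identified with the $V^E$ and the cell order refining the order by $a$-invariants, then yields the canonical basic set as the index set of the nonzero $D^\lambda$. If you want to argue along the original Geck--Rouquier lines instead, the correct substitute for your idempotents is this ideal filtration together with the leading coefficients of the character values $\mattr(C_w,V^E)$; the constants $f_E$ enter through Lusztig's formulas for those leading coefficients, not through any specialization of $1/c_E$.
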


In fact, it is a corollary of Theorem \ref{Geck cellular} and
the canonical basic set is nothing but the index set of
simple $\H_R$-modules given by the cellular algebra structure.
In type $B_n$ we have a result for arbitrary $L$ by Geck and Jacon \cite{GJ}.

\begin{theorem}[Geck-Jacon]
Let $\H_R=\H(W,S,L)\otimes R$ be of type $B_n$. Assume that
the characteristic of $R$ is $0$. Then the canonical basic set exists.
\end{theorem}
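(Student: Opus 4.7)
\emph{Proof proposal.} The plan is to combine Ariki's Fock-space categorification of cyclotomic Hecke algebras with an explicit analysis of the $a$-function on bipartitions, following \cite{GJ}. Since the statement concerns simple modules of $\H_R$ and $\C$ is a splitting field, I may and do take $R = \C$. Write $a = L(s_1) = \cdots = L(s_{n-1})$ and $b = L(s_0)$. Under the standard isomorphism of $\H_R$ with an Ariki--Koike algebra of type $(2,1,n)$, $\Irr(W)$ is indexed by bipartitions of $n$, and after rescaling the quadratic relations read $q = v^{2a}$ and $Q = -v^{2b}$.

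First I dispose of the semisimple case: if $\H_R$ is semisimple then $B = \Irr(W)$ with $S^E = V^E$ trivially satisfies (i) and (ii), so I may assume $q$ is a primitive $e$-th root of unity for some integer $e \geq 2$.

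Second, I invoke Ariki's theorem. The level-$2$ Fock space $\mathcal{F}_{\mathbf{s}}$ for $U_v(\hat{sl}_e)$, charged by an integer vector $\mathbf{s} = (s_0, s_1)$ determined by $a, b, e$, carries a Kashiwara crystal whose connected component through the empty bipartition has a distinguished vertex set $\Phi$ (the Uglov, or FLOTW, multipartitions). Ariki's theorem, specialised through the Leclerc--Thibon--Uglov canonical basis, yields a bijection $\Phi \simeq \Irr(\H_R)$, $E \mapsto S^E$, together with a decomposition matrix $([E : S^{E'}])_{E \in \Irr(W),\, E' \in \Phi}$ that is unitriangular with respect to a suitable dominance-type order on bipartitions, with ones on the $\Phi$-diagonal.

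Third, and this is the technical heart of the argument, I establish that $\Phi$ is exactly the set of $a$-minimal bipartitions in each residue block, and that the unitriangularity above respects the $a$-order. The Schur element formula for type $B_n$ expresses $a_E$ as an explicit function of the contents of $E = (\lambda^{(0)}, \lambda^{(1)})$ and of $a, b$. A case analysis on Kashiwara arrows then shows that a single crystal operator weakly decreases $a_E$, with strict decrease outside $\Phi$; consequently, for any $E \notin \Phi$ and any $E' \in \Phi$ with $[E : S^{E'}] \neq 0$ one has $a_{E'} < a_E$.

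Fourth, steps two and three combine formally: for $E \in \Phi$, unitriangularity gives $[E : S^E] = 1$ and the $a$-compatibility gives $a_{S^E} = a_E$; for $E \notin \Phi$ and $S$ with $[E : S] \neq 0$ one has $S = S^{E'}$ for some $E' \in \Phi$ and $a_S = a_{E'} < a_E$. This verifies (i) and (ii), so $B := \Phi$ is a canonical basic set.

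The main obstacle is step three: matching the combinatorial description of $\Phi$ (via good-node removal on bipartitions) with the $a$-minimum condition coming from Lusztig's symbol theory for $B_n$. In the asymptotic regime $b > (n-1)a$ this matching reduces to classical symbol combinatorics, as in \cite{BI}, \cite{GI}, \cite{GIP}; the essential contribution of \cite{GJ} is to push the argument through for arbitrary $L$, where the interaction between the Uglov crystal and the content-theoretic formula for $a_E$ is considerably more subtle.
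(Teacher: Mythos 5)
The paper itself offers no proof of this statement: it is a survey result quoted from \cite{GJ}, and the surrounding discussion in 2.4 makes clear that the intended argument is exactly the one you outline, namely Ariki's theorem giving a unitriangular decomposition matrix indexed by Uglov (FLOTW/JMMO) bipartitions, followed by a comparison of the resulting order with Lusztig's $a$-function. So your overall strategy is the right one, and your step four correctly isolates the formal deduction of conditions (i) and (ii) from unitriangularity plus $a$-compatibility.

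There are, however, two genuine problems with the write-up as a proof. First, your reduction in step one is incomplete: for type $B_n$ with unequal parameters, $\H_R$ can fail to be semisimple even when $q=v^{2a}$ has infinite order, namely when $-Q=v^{2b}$ lies in $q^{\Z}$ with exponent in the critical range (this is precisely the asymptotic-type situation of \cite{BI}, \cite{GI}). Your dichotomy ``semisimple, or $q$ a primitive $e$-th root of unity'' silently drops this case; it must be handled separately, with the affine $\hat{sl}_e$ replaced by the $e=\infty$ (i.e.\ $gl_\infty$-type) Fock space, and the characteristic-zero hypothesis is what makes Ariki's theorem applicable in both regimes. Second, step three --- which you correctly identify as the technical heart --- is not an argument but an assertion, and the mechanism you propose (``a single crystal operator weakly decreases $a_E$, with strict decrease outside $\Phi$'') is not quite what is needed nor what \cite{GJ} proves: what is required is that whenever $d_{\lambda\mu}(v)\neq0$ with $\lambda\neq\mu$ and $\mu$ Uglov, one has $a_\mu<a_\lambda$, and this is established by comparing the order on symbols controlling Uglov's canonical-basis algorithm with the explicit symbol formula for $a_E$, not by monotonicity along individual Kashiwara arrows (which, as stated, would not even distinguish the weight-preserving comparisons between distinct bipartitions occurring in a fixed block). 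As it stands the proposal is a correct roadmap of the known proof with its essential lemma left unproved and one non-semisimple case omitted.
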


The existence is still conjectural in positive characteristics unless $L$
is (a positive multiple of) the length function.

\subsection{A categorification of integrable $U_v(\hat{sl}_e)$-modules}

The canonical basic set is determined by Geck and Jacon for all types.
Let us focus on Hecke algebras of type $B_n$. As we will explain
in the next section for more general cyclotomic Hecke algebras,
the author and Mathas used Kashiwara crystal for
labelling simple $\H_R$-modules. The set of bipartitions that
appeared in this labelling, which we call Kleshchev bipartitions, is a realization
of the highest weight crystal $B(\Lambda)$ whose highest weight $\Lambda$ is
determined by the parameters of $\H_R$.
On the other hand, there was a different realization of the crystal $B(\Lambda)$
by Jimbo-Misra-Miwa-Okado in a purely solvable lattice model context,
and Foda, Leclerc, Okado, Thibon and Welsh proposed
another labelling of simple $\H_R$-modules that uses it. Then, Jacon found that
the canonical basic set in type $B_n$ is precisely the set of
Jimbo-Misra-Miwa-Okado bipartitions.
Hence, the set of JMMO bipartitions gives module theoretic realization of the
labeling of simple $\H_R$-modules proposed by Foda et al.

In the labelling by Kleshchev bipartitions, we used Specht module theory which
was developed by Dipper, James and Murphy; they gave a cellular algebra structure on
$\H_R=\H(W(B_n),S,L)\otimes R$, for any $L$. We showed that
the labelling of simple $\H_R$-modules by Kleshchev bipartitions is nothing but
the labelling induced by the cellular structure.
Difference of Kleshchev and JMMO bipartitions is caused by
the difference of the orders given on the nodes of bipartitions, but
we seek for explanations in the representation theory of
Hecke algebras why two (or more) different labelling sets appear naturally.
The key seems to be various choices of the logarithm of
the parameters of $\H_R$.
It is now conjectured \cite{BGIL} that the choice 
would give a cellular algebra structure on $\H_R$ which is given by
Kazhdan-Lusztig cells,
and a parametrizing set of simple $\H_R$-modules, which we call Uglov bipartitions,
as the one induced by the cellular algebra structure.
There are two supporting evidences.
Geck, Iancu and Pallikaros \cite{GIP} showed that our labelling by Kleshchev bipartitions
may be considered as a special case of this scheme, and Geck and Jacon \cite{GJ} showed
that the set of Uglov bipartitions is the canonical basic set, for any $L$, when
the characteristic of $R$ is $0$.

This search for various cellular algebra structures may be viewed as a search for
a categorification of integrable highest weight
$U_v(\hat{sl}_e)$-modules $V_v(\Lambda)$ with two specializations at $v=0$ and
$v=1$. Here, by specialization at $v=0$ we mean the crystal $B(\Lambda)$, and
specialization at $v=1$ we mean the integrable highest weight $\hat{sl}_e$-module $V(\Lambda)$.
Let $\mathcal F(\Lambda)$ be the higher level Fock space with highest weight $\Lambda$.
It is the tensor product of $\mathcal F(\Lambda_m)$ which will be introduced in 3.2
below and the basis is given by multipartitions. Then we have the following diagram.
\begin{center}
$B(\Lambda)$ $\stackrel{v=0}{\longleftarrow}$
$V_v(\Lambda)$ $\stackrel{v=1}{\longrightarrow}$ $V(\Lambda)\subseteq\mathcal F(\Lambda)$.
\end{center}
We fix an embedding of $V_v(\Lambda)$ into one of various JMMO deformed Fock spaces
$\mathcal F_v(\Lambda)$\footnote{They are not tensor product of $\mathcal F_v(\Lambda_m)$.}
in the middle, and realize $B(\Lambda)$ on the set of
Uglov multipartitions on the left, and we categorify them. Then
our categorification is to replace each weight space $V_v(\Lambda)_\mu$ of $V_v(\Lambda)$ with
module category of a cellular algebra $A_\mu$ whose poset is the set of multipartitions
which belong to $\mathcal F(\Lambda)_\mu$ such that
\begin{itemize}
\item[(i)]
the set $B(\Lambda)_\mu$ of Uglov multipartitions on the left coincides
the parametrizing set of $\Irr(A_\mu)$ induced by the cellular algebra structure
on $A_\mu$,
\item[(ii)]
$V(\Lambda)_\mu$ on the right coincides $\Hom_\Z(K_0(A_\mu\text{-}mod),\C)$,
\item[(iii)]
the embedding $V(\Lambda)_\mu\subseteq\mathcal F(\Lambda)_\mu$ coincides the dual of the
decomposition map,
\item[(iv)]
the Chevalley generators $e_i, f_i$ lift to functors among the module categories.
\end{itemize}
Our candidates for $A_\mu$ are block algebras of cyclotomic Hecke algebras.

We expect to categorify the higher level Fock space $\mathcal F(\Lambda)$ itself by
Rouquier's theory of quasihereditary covers which uses 
rational Cherednik algebras associated with the complex reflection group
$G(d,1,n)$. As we will explain in the third part,
we have the category $\mathcal O$ of the rational Cherednik algebra, which
is equivalent to the module category of a quasihereditary algebra, and
the KZ functor from $\mathcal O$
to the module category of the Hecke algebra. We expect to have categorification 
such that (i) to (iv) and
\begin{itemize}
\item[(v)]
the quasihereditary structure and the cellular structure
induce the embedding of the set of Uglov multipartitions into the set of
multipartitions,
\item[(vi)]
$\mathcal F(\Lambda)$ coincides $\Hom_\Z(K_0(\mathcal O),\C)$,
\item[(vii)]
the dual of the KZ functor coincides the dual of the decomposition map,
\item[(viii)]
the dual basis of $\Irr(\mathcal O)$ coincides the Uglov canonical basis at $v=1$.
\end{itemize}
The last part is Yvonne's conjecture.
We said that the logarithm of the parameters of the Hecke algebra seems to
control the cellular structure. Here, the logarithm appears as the
parameters of the rational Cherednik algebra, and what we expect in (v) is
that the quasihereditary algebra structure should induce the cellular
algebra structure on the Hecke algebra.

\section{The category of crystals}
\subsection{Kashiwara crystal}

Let us recall the definition. 

\begin{definition}
Let $A=(a_{ij})_{i,j\in I}$ be a generalized Cartan matrix, 
$$
(A,\Pi=\{\alpha_i\}_{i\in I},\Pi^{\vee}=\{h_i\}_{i\in I},P,P^{\vee}=\Hom_{\Z}(P,\Z))
$$
a root datum. 
Let $\mathfrak g=\mathfrak g(A)$ be the Kac-Moody algebra associated with $A$. 
A set $B$ is a \emph{$\mathfrak g$-crystal} if it is equipped with  
maps $\wt: B\rightarrow P$, $\tilde e_i,\tilde f_i:B\rightarrow B\sqcup\{0\}$, 
$\epsilon_i,\varphi_i:B\rightarrow\Z\sqcup\{-\infty\}$ such that 
\begin{enumerate}
\item[(1)]
$\varphi_i(b)=\epsilon_i(b)+\langle h_i,\wt(b)\rangle$.
\item[(2)]
If $\tilde e_ib\in B$ then
$$
\wt(\tilde e_ib)=\wt(b)+\alpha_i,\;\epsilon_i(\tilde e_ib)=\epsilon_i(b)-1,\;
\varphi_i(\tilde e_ib)=\varphi_i(b)+1.
$$
\item[(3)]
If $\tilde f_ib\in B$ then
$$
\wt(\tilde f_ib)=\wt(b)-\alpha_i,\;\epsilon_i(\tilde f_ib)=\epsilon_i(b)+1,\;
\varphi_i(\tilde e_ib)=\varphi_i(b)-1.
$$
\item[(4)]
Let $b, b'\in B$. Then $\tilde f_ib=b'$ if and only if $\tilde e_ib'=b$. 
\item[(5)]
If $\varphi_i(b)=-\infty$ then $\tilde e_ib=0$ and $\tilde f_ib=0$.
\end{enumerate}
\end{definition}

$B$ may be viewed as an $I$-colored oriented graph by writing 
$b\overset{i}{\rightarrow}b'$ if $\tilde f_ib=b'$. We call this graph 
the \emph{crystal graph} of $B$. 

\begin{example}
\label{crystal examples}
\begin{enumerate}
\item[(1)]
Let $\mathfrak g=sl_2$, $\alpha$ the positive root, $\omega=\alpha/2$ the fundamental weight. 
Let $B(n\omega)=\{u_0,u_1,\dots,u_n\}$ and define 
$$
\wt(u_k)=n\omega-k\alpha,\;\epsilon(u_k)=k,\;\varphi(u_k)=n-k\;\;\text{and}
$$
$$
\tilde eu_k=\begin{cases}u_{k-1}&(k>0)\\0&(k=0)\end{cases}, \quad
\tilde fu_k=\begin{cases}u_{k+1}&(k<n)\\0&(k=n)\end{cases}.
$$
Next, let $B(\infty)=\{u_k\mid k\in\Z_{\geq0}\}$ and define
$$
\wt(u_k)=-k\alpha,\;\epsilon(u_k)=k,\;\varphi(u_k)=-k\;\;\text{and}
$$
$$
\tilde eu_k=\begin{cases}u_{k-1}&(k>0)\\0&(k=0)\end{cases}, \quad
\tilde fu_k=u_{k+1}.
$$
Then, $B(n\omega)$ and $B(\infty)$ are $\mathfrak g$-crystals. 
\item[(2)]
Let $B_i=\{b_i(a)\mid a\in\Z\}$. Define, for $a\in\Z$, 
$\wt(b_i(a))=a\alpha_i$,
$$
\epsilon_j(b_i(a))=\begin{cases} -a&(j=i)\\
                            -\infty&(j\neq i)\end{cases}, \quad
\varphi_j(b_i(a))=\begin{cases}  a&(j=i)\\
                            -\infty&(j\neq i)\end{cases}
$$
and
$$
\tilde e_j(b_i(a))=\begin{cases} b_i(a+1)&(j=i)\\
                            0&(j\neq i)\end{cases}, \quad
\tilde f_j(b_i(a))=\begin{cases}  b_i(a-1)&(j=i)\\
                             0&(j\ne i)\end{cases}.
$$
Then $B_i$ is a $\mathfrak g$-crystal.
\item[(3)]
Let $\Lambda\in P$ and $T_\Lambda=\{t_\Lambda\}$. Define
$$
\wt(t_\Lambda)=\Lambda,\;\;\epsilon_i(t_\Lambda)=\varphi_i(t_\Lambda)=-\infty,
\;\;
\tilde e_it_\Lambda=\tilde f_it_\Lambda=0.
$$
Then $T_\Lambda$ is a $\mathfrak g$-crystal.
\end{enumerate}
\end{example}

\begin{definition}
Let $B_1, B_2$ be $\mathfrak g$-crystals. A \emph{crystal morphism} 
is a map $$f:B_1\sqcup\{0\}\rightarrow B_2\sqcup\{0\}$$ such that 
\begin{itemize}
\item[(i)]
$f(0)=0$.
\item[(ii)]
Suppose that $b\in B_1$ and $f(b)\in B_2$. Then 
$$
\wt(f(b))=\wt(b),\;\epsilon_i(f(b))=\epsilon_i(b),\;\varphi_i(f(b))=\varphi_i(b).
$$
\item[(iii)]
Suppose that $b, b'\in B_1$ and $f(b), f(b')\in B_2$. If $\tilde f_ib=b'$ then 
$\tilde f_if(b)=f(b')$. 
\item[(iv)]
Suppose that $b, b'\in B_1$ and $f(b)=0, f(b')\neq0$. If $b=\tilde e_ib'$ 
(resp. $b=\tilde f_ib'$) then $\tilde e_if(b')=0$ (resp. $\tilde f_if(b')=0$). 
\end{itemize}
\end{definition}
If $f$ is injective, we say that $f$ is an \emph{embedding}. If $f$ is bijective then 
we say that $f$ is an \emph{isomorphism}. For example, the identity map is an isomorphism.

\begin{remark}
The definition of crystal morphism in \cite{HonKan}, \cite{Jo} and \cite{Kas}
are all different. In \cite{HonKan}, which follows \cite{K1}, (iv)
is dropped. In \cite{Kas}, $f$ is assumed to map $B_1$ to $B_2$.
Let us consider $B(2\omega)=\{u_0,u_1,u_2\}$ in
Example \ref{crystal examples}(1).
The map $f:B(2\omega)\sqcup\{0\}\rightarrow B(2\omega)\sqcup\{0\}$ defined
by $f(0)=0$, $f(u_i)=u_i$, for $i=0,1$, and $f(u_2)=0$ satisfies (i), (ii) and (iii) but
not (iv). As $B(2\omega)$ corresponds to the irreducible highest weight module
$V_v(2\omega)$, we would like that the identity map is the only crystal
endomorphism of $B(2\omega)$.
\end{remark}

Note that a crystal morphism $f$ may not commute with $\tilde e_i$ and $\tilde f_i$.
If $f$ commutes with them, we say that $f$ is a
\emph{strict crystal morphism}. The strictness further requires
\begin{enumerate}
\item[(v)]
Suppose that $\tilde e_ib=0$ (resp. $\tilde f_ib=0$). Then  
$\tilde e_if(b)=0$ (resp. $\tilde f_if(b)=0$). 
\end{enumerate}

\begin{example}
Let $B$ be a $\mathfrak g$-crystal. 
Define a new crystal $(B,\wt^\sigma,\tilde e_i^\sigma, \tilde f_i^\sigma, 
\epsilon_i^\sigma,\varphi_i^\sigma)$ by
\begin{gather*}
\wt^\sigma(b)=\wt(\sigma^{-1}(b)),\;\epsilon_i^\sigma(b)=\epsilon_i(\sigma^{-1}(b)),\;
\varphi_i^\sigma(b)=\varphi_i(\sigma^{-1}(b)),\\
\tilde e_i^\sigma b=\sigma\tilde e_i\sigma^{-1}(b),\;
\tilde f_i^\sigma b=\sigma\tilde f_i\sigma^{-1}(b),
\end{gather*}
where $\sigma:B\rightarrow B$ is a permutation. 
Then $f:B\sqcup\{0\}\rightarrow B\sqcup\{0\}$ defined by $f(0)=0$ and 
$f(b)=\sigma(b)$ $(b\in B)$ is an isomorphism, which is strict. 
Hence, if $B$ is given two crystal structures 
which are isomorphic, it does not mean that $\tilde e_i$ and $\tilde f_i$ of the two 
crystal structures coincide.  
\end{example}

\begin{example}
Let $\mathfrak g=sl_2$ and $(B(\infty),\wt,\tilde e,\tilde f,\epsilon,\varphi)$ as above. 
Define a new crystal
$$
B(\infty)\otimes T_{n\omega}=(B(\infty), \wt+n\omega, \tilde e, \tilde f, \epsilon, \varphi+n).
$$
\begin{enumerate}
\item[(1)]
The map $f:B(n\omega)\sqcup\{0\}\rightarrow B(\infty)\otimes T_{n\omega}\sqcup\{0\}$ defined by 
$f(0)=0$ and $f(u_k)=u_k$, for $0\leq k\leq n$, is a crystal morphism. However, 
$\tilde fu_n=0$ in $B(n\omega)$ and $\tilde fu_n=u_{n+1}\neq0$ in $B(\infty)\otimes T_{n\omega}$. 
Thus the morphism is not strict. 
\item[(2)]
The map $f:B(\infty)\otimes T_{n\omega}\sqcup\{0\}\rightarrow B(n\omega)\sqcup\{0\}$ defined by
$$
f(u_k)=\begin{cases}u_k &(k\leq n)\\0 &(k>n)\end{cases}
$$
is a strict crystal morphism. 
\end{enumerate}
\end{example}

Crystals and morphisms among them form a category, which are called
the category of $\mathfrak g$-crystals. We have the notion of tensor product 
in the category. 

\begin{definition}
Let $B_1, B_2$ be $\mathfrak g$-crystals. The tensor product $B_1\otimes B_2$ is 
the set $B_1\times B_2$ equipped with the crystal structure defined by
\begin{enumerate}
\item[(1)]
$\wt(b_1\otimes b_2)=\wt(b_1)+\wt(b_2)$.
\item[(2)]
$\tilde e_i(b_1\otimes b_2)=\tilde e_ib_1\otimes b_2$ if 
$\varphi_i(b_1)\geq\epsilon_i(b_2)$, $b_1\otimes\tilde e_ib_2$ otherwise. 
\item[(3)]
$\tilde f_i(b_1\otimes b_2)=\tilde f_ib_1\otimes b_2$ if 
$\varphi_i(b_1)>\epsilon_i(b_2)$, $b_1\otimes\tilde f_ib_2$ otherwise. 
\item[(4)]
$\epsilon_i(b_1\otimes b_2)=\max\{\epsilon_i(b_1),\epsilon_i(b_2)-\langle h_i,\wt(b_1)\rangle\}$.
\item[(5)]
$\varphi_i(b_1\otimes b_2)=\max\{\varphi_i(b_1)+\langle h_i,\wt(b_2)\rangle,\varphi_i(b_2)\}$.
\end{enumerate}
\end{definition}

\begin{example}
Let $B$ be a $\mathfrak g$-crystal, $\Lambda\in P$. Then 
$B\otimes T_\Lambda$ is a crystal with the same $\tilde e_i, \tilde f_i, 
\epsilon_i$ as $B$ but $\wt$ and $\varphi_i$ are shifted by $\Lambda$ and 
$\Lambda(h_i)$. 
\end{example}

Recall that a monoidal category $\mathcal C$ is a category with a 
bifunctor $\mathcal C\times\mathcal C\rightarrow\mathcal C$, an object $I$ called 
the unit object, such that natural isomorphisms 
\begin{gather*}
\alpha_{B_1B_2B_3}:(B_1\otimes B_2)\otimes B_3\simeq B_1\otimes(B_2\otimes B_3)\\
\lambda_B:I\otimes B\simeq B,\;\;\rho_B:B\otimes I\simeq B
\end{gather*}
satisfy axioms for $B_1\otimes B_2\otimes B_3\otimes B_4$, 
$B_1\otimes I\otimes B_2\simeq B_1\otimes B_2$ 
(the pentagon axiom and the triangle axiom) and 
$\lambda_I=\rho_I:I\otimes I\simeq I$. 

For a crystal morphism $f:B_1\rightarrow B_2$, we have crystal morphisms 
$B\otimes B_1\rightarrow B\otimes B_2$ and $B_1\otimes B\rightarrow B_2\otimes B$
given by $b\otimes b'\mapsto b\otimes f(b')$ and $b\otimes b'\mapsto f(b)\otimes b'$,  
and the tensor product defines a bifunctor. The identity map gives 
a natural isomorphism $(B_1\otimes B_2)\otimes B_3\simeq B_1\otimes(B_2\otimes B_3)$. 
We have natural isomorphisms $T_0\otimes B\simeq B$ and $B\otimes T_0\simeq B$
given by the identity maps $t_0\otimes b\mapsto b$ and $b\otimes t_0\mapsto b$, and 
it gives the same map on $T_0\otimes T_0$. 

\begin{lemma}
The category of $\mathfrak g$-crystals is a monoidal category whose unit object 
is $T_0$. 
\end{lemma}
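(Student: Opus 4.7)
The plan is to check, one piece at a time, that all the data required for a monoidal category — the bifunctor, the unit, the associator, the unitors, and the coherence axioms — are supplied by the definitions already in place, and that the only nontrivial compatibilities reduce to short case analyses with the tensor product rules for $\tilde e_i$ and $\tilde f_i$.

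First, I would verify that $B_1\otimes B_2$ is genuinely a $\mathfrak g$-crystal: axioms (1)–(5) follow mechanically from the formulas for $\wt$, $\epsilon_i$, $\varphi_i$, $\tilde e_i$, $\tilde f_i$ on $B_1\otimes B_2$, using $\varphi_i(b)=\epsilon_i(b)+\langle h_i,\wt(b)\rangle$ on each factor. Next, I would check that $\otimes$ is a bifunctor. Given a crystal morphism $f\colon B_1\to B_1'$ and a crystal $B$, define $f\otimes\id_B$ by $(b_1,b)\mapsto(f(b_1),b)$ (and $0$ if $f(b_1)=0$). One checks conditions (i)–(iv) of crystal morphism using the tensor product rules for weights and Kashiwara operators on both sides; the case $f(b_1)=0$ uses condition (iv) for $f$ to confirm that the required Kashiwara images vanish in $B_1'\otimes B$. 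Functoriality in the second argument is symmetric, and bifunctoriality $(f\otimes\id)(\id\otimes g)=(\id\otimes g)(f\otimes\id)$ is immediate.

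The heart of the proof is associativity. Set $\alpha_{B_1B_2B_3}$ to be the identity map on the underlying set $B_1\times B_2\times B_3$; I then need to show that the two crystal structures on the triple tensor product coincide. The formulas for $\wt$, $\epsilon_i$, $\varphi_i$ collapse to the same expressions by direct substitution, using $\varphi_i=\epsilon_i+\langle h_i,\wt\rangle$ to rewrite the nested $\max$'s. For $\tilde e_i$ (and $\tilde f_i$ analogously) the check is a case analysis: on $(b_1\otimes b_2)\otimes b_3$ one compares $\varphi_i(b_1\otimes b_2)=\max\{\varphi_i(b_1)+\langle h_i,\wt(b_2)\rangle,\varphi_i(b_2)\}$ with $\epsilon_i(b_3)$, then further splits on $\varphi_i(b_1)$ vs $\epsilon_i(b_2)$; on $b_1\otimes(b_2\otimes b_3)$ one compares $\varphi_i(b_1)$ with $\epsilon_i(b_2\otimes b_3)=\max\{\epsilon_i(b_2),\epsilon_i(b_3)-\langle h_i,\wt(b_2)\rangle\}$ and then splits. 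The three resulting outcomes $\tilde e_ib_1\otimes b_2\otimes b_3$, $b_1\otimes\tilde e_ib_2\otimes b_3$, $b_1\otimes b_2\otimes\tilde e_ib_3$ match under the two orderings after a short inequality juggling. This is the main technical obstacle, but it is purely combinatorial.

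For the unitors, note that $t_0$ has $\wt(t_0)=0$ and $\epsilon_i(t_0)=\varphi_i(t_0)=-\infty$. In the formula for $T_0\otimes B$, the inequality $\varphi_i(t_0)\geq\epsilon_i(b)$ is always false (unless $\epsilon_i(b)=-\infty$), so $\tilde e_i$ and $\tilde f_i$ act only on the $B$-factor; the weight and $\epsilon_i,\varphi_i$ on $T_0\otimes B$ reduce to those on $B$. Hence $t_0\otimes b\mapsto b$ is a strict crystal isomorphism $\lambda_B\colon T_0\otimes B\simeq B$; the analogous check gives $\rho_B\colon B\otimes T_0\simeq B$. On $T_0\otimes T_0$ both $\lambda_{T_0}$ and $\rho_{T_0}$ are the identification $t_0\otimes t_0\mapsto t_0$, so $\lambda_{T_0}=\rho_{T_0}$. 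Finally the pentagon and triangle axioms are automatic: all the structure isomorphisms are identities (or the obvious bijection on underlying sets), so both sides of each axiom reduce to the same set-theoretic identity. This completes the proof.
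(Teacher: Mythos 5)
Your proposal is correct and follows exactly the route the paper takes (the paper only sketches it in the paragraph preceding the lemma): the bifunctor is given by tensoring morphisms componentwise, the associator and unitors are the identity maps, and the coherence axioms are then automatic. Your filling in of the case analysis for associativity and the $-\infty$ bookkeeping for the unit $T_0$ is the right level of detail and contains no gaps.
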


\begin{remark}
Let $\mathcal C$ be a monoidal category with unit object $I$, 
$B, B'$ two objects of $\mathcal C$. Recall that 
$B'$ is the left dual of $B$ and $B$ is the right dual of $B'$ if 
there exist
$$
\epsilon:I\rightarrow B\otimes B',\;\;
\eta:B'\otimes B\rightarrow I
$$
such that the composition 
$$
B\simeq I\otimes B\overset{\epsilon\otimes\id_B}{\longrightarrow}(B\otimes B')\otimes B
\simeq B\otimes(B'\otimes B)\overset{\id_B\otimes\eta}{\longrightarrow}
B\otimes I\simeq B
$$
is equal to $\id_B$ and the composition 
$$
B'\simeq B'\otimes I\overset{\id_{B'}\otimes\epsilon}{\longrightarrow}
B'\otimes(B\otimes B')\simeq(B'\otimes B)\otimes B'
\overset{\eta\otimes\id_{B'}}{\longrightarrow}I\otimes B'\simeq B'
$$
is equal to $\id_{B'}$. $\mathcal C$ is called \emph{rigid} if every object has 
the left and the right duals. 
The category of $\mathfrak g$-crystals is not a 
rigid monoidal category. To see this, let $B(0)$ be the crystal $\{b_0\}$ with 
$\wt(b_0)=0$, $\epsilon_i(b_0)=\varphi_i(b_0)=0$, $\tilde e_ib_0=\tilde f_ib_0=0$. 
For any $B$, we have that $\varphi_i(b\otimes b_0)\neq-\infty$, which implies 
that there does not exist nonzero crystal morphism 
$B\otimes B(0)\sqcup\{0\}\rightarrow T_0\sqcup\{0\}$ nor 
$T_0\sqcup\{0\}\rightarrow B(0)\otimes B\sqcup\{0\}$. 
Hence, $B(0)$ does not have the dual. 
\end{remark}
\begin{remark}
The category of $\mathfrak g$-crystals is not a braided monoidal category. 
For example, 
$sl_2$-crystals $B(0)\otimes T_{n\omega}$ and $T_{n\omega}\otimes B(0)$ 
are not isomorphic if $n\neq0$. Below we introduce crystals 
which come from integrable $U_v(\mathfrak g)$-modules. For such crystals 
we have isomorphisms $B_1\otimes B_2\simeq B_2\otimes B_1$, 
but we have to choose them functorial and 
they must satisfy the commutativity of moving $B_1$ step by step to the right
$$B_1\otimes B_2\otimes B_3\rightarrow B_2\otimes B_1\otimes B_3\rightarrow 
B_2\otimes B_3\otimes B_1$$ with swapping $B_1$ and $B_2\otimes B_3$ at once. 
For $\mathfrak g=sl_2$ this is not satisfied. 
\end{remark}

\begin{remark}
In the case when $\mathfrak g$ is of affine type, we may consider 
$\mathfrak g'=[\mathfrak g,\mathfrak g]$-crystal, which is obtained by replacing 
$P$ with $P_{cl}$, which is $P$ modulo the null root, in the definition 
of $\mathfrak g$-crystal. Then we have other examples of 
$B_1\otimes B_2\simeq B_2\otimes B_1$ given by combinatorial $R$-matrices 
for the affinizations of finite $\mathfrak g'$-crystals. 
\end{remark}

\begin{definition}
A crystal $B$ is \emph{seminormal} if 
$$
\epsilon_i(b)=\max\{n\in\Z_{\geq0}\mid \tilde e_i^nb\neq0\}\;\;\text{and}\;\;
\varphi_i(b)=\max\{n\in\Z_{\geq0}\mid \tilde f_i^nb\neq0\}
$$
hold, for all $b\in B$.
\end{definition}

\begin{remark}
Let $U_v(\mathfrak l_{ij})$ be the subalgebra of $U_v(\mathfrak g)$ generated by 
$e_i, e_j, f_i, f_j$ and $v^h$, for $h\in P^{\vee}$. 
Let $\Lambda\in P$ be such that
$\langle h_i,\Lambda\rangle\geq0$ and $\langle h_j,\Lambda\rangle\geq0$.
Then, as we will explain below, we have the $\mathfrak l_{ij}$-crystal $B_{ij}(\Lambda)$
which is the crystal of the integrable highest weight $U_v(\mathfrak l_{ij})$-module
with highest weight $\Lambda$.

Let $B$ be a $\mathfrak g$-crystal and
consider it as a $\mathfrak l_{ij}$-crystal. If it is isomorphic to direct sum of 
$B_{ij}(\Lambda)$'s, for all $i, j\in I$ such that $\mathfrak l_{ij}$ is of
finite type, we say that $B$ is \emph{normal}.
The following is proved in \cite[5.2]{Jo}.
\end{remark}

\begin{lemma}
Let $B_1, B_2$ be seminormal (resp. normal) crystals. Then
\begin{enumerate}
\item[(1)]
$B_1\otimes B_2$ is a seminormal (resp. normal) crystal. 
\item[(2)]
Any crystal morphism $f:B_1\rightarrow B_2$ is a strict crystal morphism.
\end{enumerate}
\end{lemma}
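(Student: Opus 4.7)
The plan is to attack (1) by directly computing $\epsilon_i$ and $\varphi_i$ on a tensor $b_1\otimes b_2$ through iterated application of $\tilde e_i$ and $\tilde f_i$, and then to deduce (2) from the fact that under seminormality the condition $\tilde e_i b=0$ is detected by $\epsilon_i(b)=0$ (and symmetrically for $\tilde f_i$), which is preserved by a crystal morphism.

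For the seminormal part of (1), I would track, starting from $b_1\otimes b_2$, exactly when $\tilde e_i$ switches from acting on the second factor to the first. By the tensor product rule, as long as $\varphi_i(b_1)<\epsilon_i(\tilde e_i^k b_2)=\epsilon_i(b_2)-k$ one keeps acting on $b_2$, and this happens precisely for $k=0,1,\dots,\max(0,\epsilon_i(b_2)-\varphi_i(b_1))-1$. After that switch, $\epsilon_i$ of the second factor has dropped to $\min(\epsilon_i(b_2),\varphi_i(b_1))$, so from then on $\tilde e_i$ acts on the left, and by seminormality of $B_1$ this continues for exactly $\epsilon_i(b_1)$ further steps before $0$ is reached. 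Adding the two counts yields $\max\bigl(\epsilon_i(b_1),\,\epsilon_i(b_1)+\epsilon_i(b_2)-\varphi_i(b_1)\bigr)=\max\bigl(\epsilon_i(b_1),\,\epsilon_i(b_2)-\langle h_i,\wt(b_1)\rangle\bigr)$, which is exactly the value of $\epsilon_i$ on $B_1\otimes B_2$ prescribed by the definition. The argument for $\varphi_i$ is symmetric. For normality, I would restrict to any $\mathfrak l_{ij}$ of finite type: then $B_1|_{\mathfrak l_{ij}}$ and $B_2|_{\mathfrak l_{ij}}$ are each direct sums of the highest weight crystals $B_{ij}(\Lambda)$, the tensor bifunctor distributes over direct sums, and one invokes the classical fact that in finite type the tensor product of two $B_{ij}(\Lambda)$'s decomposes as a direct sum of $B_{ij}(\Lambda)$'s (this is the crystal counterpart of the Clebsch--Gordan decomposition for integrable $U_v(\mathfrak l_{ij})$-modules).

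For (2), the only new condition to verify beyond (i)--(iv) is the strictness axiom (v): if $\tilde e_i b=0$ (resp.\ $\tilde f_i b=0$), then $\tilde e_i f(b)=0$ (resp.\ $\tilde f_i f(b)=0$). If $f(b)=0$ this is immediate; otherwise by seminormality of $B_1$, $\tilde e_i b=0$ forces $\epsilon_i(b)=0$, and by axiom (ii) of a crystal morphism $\epsilon_i(f(b))=\epsilon_i(b)=0$, so seminormality of $B_2$ gives $\tilde e_i f(b)=0$. Then commutativity of $f$ with $\tilde e_i$ and $\tilde f_i$ in every case follows by combining (iii), (iv) and (v): when both $f(b)$ and $f(\tilde e_i b)$ are nonzero use (iii); when $f(b)\neq 0$ but $f(\tilde e_i b)=0$ use (iv); when $f(b)=0$ one argues that $f(\tilde e_i b)$ must also vanish, for otherwise setting $b''=\tilde e_i b$ and applying (iv) to $b=\tilde f_i b''$ would give $\tilde f_i f(b'')=0$, contradicting $\varphi_i(f(b''))=\varphi_i(b'')\geq 1$ via seminormality of $B_2$. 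The main technical point is really the bookkeeping in the seminormal case of (1); once the switching index $\epsilon_i(b_2)-\varphi_i(b_1)$ is identified, everything collapses to the given formula, and (2) is then essentially a one-line consequence of (v).
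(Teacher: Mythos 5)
Your proof is correct. Note that the paper itself does not prove this lemma at all: it simply cites \cite[5.2]{Jo}, so there is no in-paper argument to compare against. Your bookkeeping for part (1) is the standard one and is sound: the two points that make it work, both of which you state, are that once $\tilde e_i$ switches to the left factor it never switches back (since $\varphi_i$ of the left factor increases while $\epsilon_i$ of the right factor is frozen), and that seminormality guarantees $\varphi_i(b_1)\in\Z_{\geq0}$ so the switching index $\max(0,\epsilon_i(b_2)-\varphi_i(b_1))$ is well defined and does not exceed $\epsilon_i(b_2)$. For the normal case one should also remark that restriction to $\mathfrak l_{ij}$ commutes with the tensor product of crystals (immediate from the tensor product rule, which for each $i$ only involves $\epsilon_i,\varphi_i,\wt$), after which the Clebsch--Gordan decomposition in finite type does the rest. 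Your part (2) correctly isolates condition (v) as the only new content, derives it from axiom (ii) of a morphism together with seminormality of both crystals, and handles the remaining case $f(b)=0$, $f(\tilde e_ib)\neq0$ by the contradiction with $\varphi_i(f(\tilde e_ib))=\varphi_i(\tilde e_ib)\geq1$; this matches the way the statement is proved in Joseph's book.
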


\begin{corollary}
The category of seminormal (resp. normal) $\mathfrak g$-crystals 
is a monoidal category whose unit object is $B(0)$. 
\end{corollary}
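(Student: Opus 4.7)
The plan is to deduce the corollary from the preceding lemma together with the monoidal structure already established on the full category of $\mathfrak g$-crystals. First I would observe that part (1) of the lemma shows the tensor product restricts to the full subcategory of seminormal (resp.~normal) crystals, and crystal morphisms among them are automatically strict by part (2); so the bifunctor $\otimes$ is inherited from the ambient category. The associativity isomorphism $\alpha_{B_1B_2B_3}$, being the identity map in the ambient category, clearly restricts.

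Next I would address the key subtlety: the unit object $T_0$ of the ambient monoidal category is \emph{not} seminormal, since $\epsilon_i(t_0)=\varphi_i(t_0)=-\infty$ while seminormality would demand these be $0$. Hence one must replace $T_0$ by $B(0)=\{b_0\}$ with $\wt(b_0)=0$, $\epsilon_i(b_0)=\varphi_i(b_0)=0$ and $\tilde e_i b_0=\tilde f_i b_0=0$. This $B(0)$ is manifestly seminormal; for normality, one identifies it with the trivial integrable highest weight crystal $B_{ij}(0)$ of each rank-two Levi $\mathfrak l_{ij}$ of finite type.

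The main content is then to verify $\lambda_B:B(0)\otimes B\simeq B$ and $\rho_B:B\otimes B(0)\simeq B$, sending $b_0\otimes b\mapsto b$ and $b\otimes b_0\mapsto b$. I would use that seminormality forces $\epsilon_i(b),\varphi_i(b)\in\Z_{\geq0}$ for every $b\in B$. Combining this with $\epsilon_i(b_0)=\varphi_i(b_0)=0$ and the tensor product formulas gives, for instance,
\[
\tilde e_i(b\otimes b_0)=\tilde e_i b\otimes b_0,\qquad \tilde f_i(b\otimes b_0)=\tilde f_i b\otimes b_0,
\]
since $\varphi_i(b)\geq 0=\epsilon_i(b_0)$ (and similarly the strict inequality case yields the same conclusion because $\tilde f_i b_0=0$ anyway, so the other branch lands in $\{0\}$). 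For the statistics, the identity $\varphi_i(b)=\epsilon_i(b)+\langle h_i,\wt(b)\rangle$ together with $\varphi_i(b)\geq 0$ yields
\[
\max\{\epsilon_i(b),\epsilon_i(b_0)-\langle h_i,\wt(b)\rangle\}=\max\{\epsilon_i(b),\epsilon_i(b)-\varphi_i(b)\}=\epsilon_i(b),
\]
and symmetrically for $\varphi_i$. The computation for $B(0)\otimes B$ is analogous, using $\varphi_i(b_0)=0\leq\epsilon_i(b)$ with the opposite bias of the tensor product rule. The triangle and pentagon axioms, as well as $\lambda_{B(0)}=\rho_{B(0)}$, reduce to trivial verifications on single elements.

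I expect the main obstacle to be organizing the case analysis in the verification of $\lambda_B,\rho_B$ so as to make transparent where seminormality is used; in particular, one must be careful that the rule for $\tilde e_i,\tilde f_i$ in $b\otimes b_0$ and $b_0\otimes b$ indeed reproduces the original action on $B$ in both branches of the maximum in the tensor product formula, which is exactly where the hypothesis $\varphi_i(b),\epsilon_i(b)\geq 0$ is indispensable.
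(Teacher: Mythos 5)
Your proof is correct and follows exactly the route the paper intends (the corollary is stated without proof, as an immediate consequence of the preceding lemma): the lemma gives closure under $\otimes$ and strictness of morphisms, and the only real content is replacing the non-seminormal unit $T_0$ by $B(0)$ and checking the unit isomorphisms, which your case analysis using $\epsilon_i(b),\varphi_i(b)\ge 0$ does correctly.
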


Recall that $\mathcal K=\mathbb Q(v)$, 
$\A_0=\{c(v)\in\mathcal K\mid \text{$c(v)$ is regular at $v=0$}\}$.  
Assume that the Cartan matrix $A$ is symmetrizable. Then we have 
the $\mathcal K$-algebra $U_v(\mathfrak g)$, 
the quantized enveloping algebra associated with the root datum 
$(A,\Pi,\Pi^{\vee},P,P^{\vee})$. 
Let $\mathcal O_{\rm int}$ be the full category of the BGG category consisting of 
integrable modules. Namely, 
$\mathcal O_{\rm int}$ consists of those 
$M\in U_v(\mathfrak g)\text{-}\Mod$ that satisfies 
\begin{enumerate}
\item[(i)]
$M$ admits a weight decomposition $M=\oplus_{\lambda\in P}M_\lambda$ such that 
$\dim_{\mathcal K}M_\lambda<\infty$. 
\item[(ii)]
There exists a finite set $U\subseteq P$ such that if $M_\lambda\neq0$ then 
$$\lambda\in U-\sum_{i\in I}\Z_{\geq0}\alpha_i.$$
\item[(iii)]
The Chevalley generators $e_i$ and $f_i$ act locally nilpotently on $M$. 
\end{enumerate}

Let $M\in\mathcal O_{\rm int}$. Then we may define 
$\tilde e_i, \tilde f_i:M\rightarrow M$ by 
$\tilde e_if_i^{(n)}u=f_i^{(n-1)}u$ and 
$\tilde f_if_i^{(n)}u=f_i^{(n+1)}u$, for $u\in\Ker e_i$. Here, 
$f_i^{(n)}$ is the $n^{th}$ divided power. 

\begin{definition}
Let $M\in\mathcal O_{\rm int}$. 
An $\A_0$-submodule $\mathcal L=\oplus_{\lambda\in P}\mathcal L_\lambda$ 
is called a \emph{crystal lattice} of $M$ if 
$\mathcal L_\lambda\subseteq M_\lambda$ and 
$\mathcal L_\lambda\otimes\mathcal K=M_\lambda$, for all $\lambda\in P$, 
$\tilde e_i\mathcal L\subseteq\mathcal L$ and 
$\tilde f_i\mathcal L\subseteq\mathcal L$, for all $i\in I$. 
\end{definition}

\begin{definition}
Let $M\in\mathcal O_{\rm int}$. A \emph{crystal basis} of $M$ is a pair 
$(\mathcal L, B=\sqcup_{\lambda\in P} B_\lambda)$ such that 
\begin{enumerate}
\item[(i)]
$\mathcal L=\oplus_{\lambda\in P}\mathcal L_\lambda$ is a crystal lattice of $M$,
\item[(ii)]
$B_\lambda$ is a $\mathbb Q$-basis of $\mathcal L_\lambda/v\mathcal L_\lambda$, 
for all $\lambda\in P$.
\item[(iii)]
$\tilde e_iB\subseteq B\sqcup\{0\}$ and 
$\tilde f_iB\subseteq B\sqcup\{0\}$, for all $i\in I$. 
\item[(iv)]
Let $b, b'\in B$. Then $\tilde f_ib=b'$ if and only if $\tilde e_ib'=b$.
\end{enumerate}
\end{definition}

If $(\mathcal L,B)$ is a crystal basis of $M\in\mathcal O_{\rm int}$, 
then $B$ is a normal $\mathfrak g$-crystal. There are seminormal crystals 
which are not of this form. For normal crystals, no such example is known.
The following theorem was proved by the famous grand loop argument.

\begin{theorem}[Kashiwara]
Let $M\in \mathcal O_{\rm int}$. Then there exists a unique crystal basis up to 
automorphism of $M$. 
\end{theorem}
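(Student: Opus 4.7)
The plan is to follow Kashiwara's original grand loop argument. Since every $M\in\mathcal O_{\rm int}$ decomposes as a direct sum of integrable irreducible highest weight modules $V(\Lambda)$ with $\Lambda\in P^+$, and since a direct sum of crystal bases is visibly a crystal basis, it suffices to construct and uniquely characterize the crystal basis of each $V(\Lambda)$.

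First I would dispose of the rank one ($\mathfrak g = sl_2$) case by direct computation on each $V_v(n\omega)$: the lattice $\mathcal L = \bigoplus_{k=0}^n \A_0\, f^{(k)} v_\Lambda$ together with $B = \{f^{(k)} v_\Lambda \bmod v\mathcal L\}$ manifestly satisfies the axioms and reproduces the crystal $B(n\omega)$ of Example \ref{crystal examples}(1). For a general dominant $\Lambda$, define $\mathcal L(\Lambda)$ to be the smallest $\A_0$-submodule of $V(\Lambda)$ containing the highest weight vector $v_\Lambda$ and stable under every $\tilde f_i$, and let $B(\Lambda) \subseteq \mathcal L(\Lambda)/v\mathcal L(\Lambda)$ be the set of nonzero images of monomials $\tilde f_{i_1}\cdots\tilde f_{i_r} v_\Lambda$.

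The main obstacle is to verify that $(\mathcal L(\Lambda), B(\Lambda))$ is genuinely a crystal basis: one must show $\tilde e_i \mathcal L(\Lambda) \subseteq \mathcal L(\Lambda)$, that $B(\Lambda)$ is a $\mathbb Q$-basis of $\mathcal L(\Lambda)/v\mathcal L(\Lambda)$, and the reciprocity $\tilde f_i b = b' \Leftrightarrow \tilde e_i b' = b$ at $v=0$. The difficulty is that controlling the interaction of $\tilde e_i$ with $\tilde f_j$ for $i\neq j$ modulo $v\mathcal L(\Lambda)$ already requires knowing how the crystal structure behaves on tensor products, which in turn is only meaningful once crystal bases exist on the factors. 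Kashiwara's grand loop argument circumvents this by proving simultaneously, by an intricate induction on the height $\mathrm{ht}(\Lambda-\mu) = \sum_i \langle h_i, \Lambda-\mu\rangle$ of the relevant weight space, the following package: (A) the crystal basis axioms for $V(\Lambda)$ in weights $\mu$ of height $\le N$; (B) compatibility of the Kashiwara operators on tensor products $V(\Lambda_1)\otimes V(\Lambda_2)$ with the combinatorial tensor product rule introduced above, in total height $\le N$; and (C) certain polarization identities for the Shapovalov form that ensure $\tilde e_i \tilde f_i u \equiv u \bmod v\mathcal L$ on suitable vectors. Each item at level $N$ is deduced from all items at level $<N$, the essential reduction being a $U_v(\mathfrak l_{ij})$-decomposition of each weight space which allows one to invoke the rank one (and, at the boundary, rank two) computations together with the tensor product rule at strictly lower height. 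Closing this loop is the technically delicate portion.

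Uniqueness up to automorphism is then comparatively routine. Any crystal basis $(\mathcal L', B')$ of $V(\Lambda)$ must meet the one-dimensional space $V(\Lambda)_\Lambda$ in $\A_0\cdot v'$ for some nonzero scalar multiple $v'$ of $v_\Lambda$; rescaling by the $U_v(\mathfrak g)$-automorphism of $V(\Lambda)$ sending $v_\Lambda \mapsto v'$ reduces to the case $v' = v_\Lambda$, after which stability of $\mathcal L'$ under the $\tilde f_i$ forces $\mathcal L' \supseteq \mathcal L(\Lambda)$, while $B'$ being a basis of $\mathcal L'/v\mathcal L'$ and the condition $\dim_{\mathcal K} V(\Lambda)_\mu = \#B(\Lambda)_\mu$ (already available from existence) force equality $\mathcal L' = \mathcal L(\Lambda)$ and $B' = B(\Lambda)$. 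Summing over the isotypic decomposition of a general $M\in\mathcal O_{\rm int}$ yields the full statement.
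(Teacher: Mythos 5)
The paper itself offers no proof of this theorem: it is a survey, and the statement is quoted with the remark that it ``was proved by the famous grand loop argument.'' Your proposal correctly identifies that argument and describes its architecture accurately --- reduction to the irreducible $V_v(\Lambda)$ via semisimplicity of $\mathcal O_{\rm int}$, the rank-one computation, the definition of $\mathcal L(\Lambda)$ as the smallest $\tilde f_i$-stable $\A_0$-lattice through $v_\Lambda$, and the simultaneous induction on height coupling the crystal-basis axioms for $V_v(\Lambda)$ with the tensor product rule and polarization identities. So in spirit you are following exactly the route the paper points to.

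The difficulty is that you have not actually given a proof. The entire mathematical content of the existence statement is the grand loop induction, and your proposal explicitly sets it aside (``closing this loop is the technically delicate portion''). Stating the package (A)--(C) and the shape of the induction is a roadmap, not an argument; in particular nothing in the proposal establishes $\tilde e_i\mathcal L(\Lambda)\subseteq\mathcal L(\Lambda)$ or that $B(\Lambda)$ is a $\mathbb Q$-basis modulo $v$, which you yourself identify as the main obstacles. A second, smaller gap is the last sentence: for a general $M\in\mathcal O_{\rm int}$ with multiplicities, a crystal basis of $M$ is not a priori adapted to any chosen decomposition into irreducibles, so ``summing over the isotypic decomposition'' does not immediately give uniqueness up to automorphism of $M$. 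One must show that the subset $\bigcap_i\Ker\tilde e_i$ of a given crystal basis induces a basis of the space of highest weight vectors of each weight, and use this to build the automorphism of $M$ carrying the given crystal basis to the standard direct sum; this step deserves to be spelled out rather than absorbed into ``routine.''
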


Let $\Lambda$ be a dominant integral weight. Then the irreducible highest weight
$U_v(\mathfrak g)$-module 
$V_v(\Lambda)$ belongs to $\mathcal O_{\rm int}$. The crystal basis of 
$V_v(\Lambda)$ is denoted by $(\mathcal L(\Lambda),B(\Lambda))$. The highest vector 
$v_\Lambda\in V_v(\Lambda)$ defines the highest weight element $u_\Lambda\in B(\Lambda)$. 

\begin{remark}
As $\mathcal O_{\rm int}$ is a semisimple category, 
every object is a direct sum of $V_v(\Lambda)$'s, which corresponds to 
the direct sum of $B(\Lambda)$'s in the category of crystals. 
$$
\Hom(B(\Lambda),B(\Lambda'))=\begin{cases}
0&(\Lambda\neq\Lambda')\\
\{0,\id_{B(\Lambda)}\}&(\Lambda=\Lambda')\end{cases}
$$
Hence $\Hom_{\mathcal O_{\rm int}}(V_v(\Lambda),V_v(\Lambda'))$ is a 
\lq\lq linearization\rq\rq of 
$\Hom(B(\Lambda),B(\Lambda'))$\footnote{That we adopt the definition of crystal
morphism in \cite{Jo} is important here.}, and $\mathcal O_{\rm int}$ is well controlled 
by the category of crystals. However, it is no more true when we 
compare their monoidal structures. 

Recall that $\mathcal O_{\rm int}$ is a braided monoidal category. 
(It is not rigid in general as long as we adopt the usual definition of the dual for 
general Hopf algebras: 
the dual $V_v(-\Lambda)=\Hom_{\mathcal K}(V_v(\Lambda),\mathcal K)$ is 
the lowest weight module, which does not belong to $\mathcal O_{\rm int}$ 
unless $\mathfrak g$ is of finite type.)  
Hence we have a natural isomorphism
$V_v(\Lambda)\otimes V_v(\Lambda')\simeq V_v(\Lambda')\otimes V_v(\Lambda)$ and 
this implies that $B(\Lambda)\otimes B(\Lambda')$ and 
$B(\Lambda')\otimes B(\Lambda)$ are isomorphic. 
However, as is mentioned above, the subcategory of these crystals is not braided. 
When $\mathfrak g$ is of finite type, Henriques and Kamnitzer \cite{HK} gave 
the notion of commutator for the full category of crystals which consists of 
direct sums of $B(\Lambda)$'s, and showed that 
it gives a coboundary monoidal structure. 
\end{remark}

The crystal $B(\Lambda)$ produces a remarkable basis of $V_v(\Lambda)$.

\begin{theorem}[Kashiwara]
Let $\mathcal L_0=\mathcal L(\Lambda)$ such that $(\mathcal L_0)_\Lambda=\A_0v_\Lambda$. 
Define the bar operation on $V_v(\Lambda)$ by $\overline{v_{\Lambda}}=v_{\Lambda}$ 
and $\overline{f_iu}=f_i\overline u$, for $i\in I$ and $u\in V_v(\Lambda)$, 
and denote the Kostant-Lusztig form of $U_v(\mathfrak g)$ by $U_\A(\mathfrak g)$. Set 
$$
\mathcal L_\infty=\overline{\mathcal L(\Lambda)}\;\;\text{and}\;\;
L=U_\A(\mathfrak g)\mathbb Qv_\Lambda.
$$
Then $(L,\mathcal L_0,\mathcal L_\infty)$ is a balanced triple. In particular, 
we have the canonical basis $\{G(b)\mid b\in B(\Lambda)\}$ of $V_v(\Lambda)$. 
\end{theorem}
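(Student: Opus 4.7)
The plan is to construct, for each $b\in B(\Lambda)$, a distinguished bar-invariant element $G(b)\in L\cap\L_0$ whose image in $\L_0/v\L_0$ equals $b$, and then to check that $\{G(b)\mid b\in B(\Lambda)\}$ is at once a $\mathbb Q[v,v^{-1}]$-basis of $L$, an $\A_0$-basis of $\L_0$, and an $\A_\infty$-basis of $\L_\infty$. The preceding existence-and-uniqueness theorem already supplies $\L_0$ as a crystal lattice with $B(\Lambda)$ as a $\mathbb Q$-basis of $\L_0/v\L_0$, so the canonical basis is the only missing ingredient.

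First I would check the preliminaries. The bar involution on $U_v(\mathfrak g)$ sending $v\mapsto v^{-1}$, $e_i\mapsto e_i$, $f_i\mapsto f_i$, $v^h\mapsto v^{-h}$ preserves $U_\A(\mathfrak g)$ because divided powers are bar-invariant. Combined with $\overline{v_\Lambda}=v_\Lambda$ it descends to a well-defined $\mathbb Q$-antilinear involution on $V_v(\Lambda)$ satisfying $\overline{f_iu}=f_i\overline u$. Hence $L=U_\A(\mathfrak g)\mathbb Qv_\Lambda$ is bar-stable and a $\mathbb Q[v,v^{-1}]$-lattice, while $\L_\infty=\overline{\L_0}$ is automatically an $\A_\infty$-lattice.

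The heart of the argument is an induction on $\mathrm{ht}(\Lambda-\wt(b))$. The base case is $G(u_\Lambda)=v_\Lambda$, which lies in $L\cap\L_0\cap\L_\infty$, is bar-invariant, and reduces to $u_\Lambda$. For the inductive step, pick $i$ with $b':=\tilde e_ib\neq 0$, set $n=\varphi_i(b')$, and form $X:=f_i^{(n)}G(b')$; this lies in $L$ (since $f_i^{(n)}\in U_\A(\mathfrak g)$) and is bar-invariant. Kashiwara's analysis of quantized $\mathfrak{sl}_2$-strings applied inside $V_v(\Lambda)$ yields $X\equiv b+\sum_{b''}c_{b''}b''\pmod{v\L_0}$, the sum ranging over elements of $B(\Lambda)$ of the same weight as $b$. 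Organizing the construction in a suitable linear order at each weight, one subtracts a bar-invariant combination of previously built $G(b'')$'s from $X$ to obtain $G(b)\in L\cap\L_0$ with $\overline{G(b)}=G(b)$ and $G(b)\equiv b\pmod{v\L_0}$. The genuinely technical point, and the main obstacle, is the uniqueness of such a lift: one must show that a bar-invariant $y\in L\cap v\L_0$ is zero. Expanding $y$ coordinate-wise in a homogeneous $\mathbb Q[v,v^{-1}]$-basis of $L$ compatible with $\L_0$, bar-invariance forces each coordinate $c(v)$ to satisfy $c(v)=c(v^{-1})$, whereas $y\in v\L_0$ forces $c(v)\in v\mathbb Q[v]$; these two conditions jointly force $c=0$. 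This uniqueness pins down $G(b)$ and legitimizes the correction step.

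Finally, $G(b)=\overline{G(b)}\in\overline{\L_0}=\L_\infty$, so each $G(b)$ lies in $E:=L\cap\L_0\cap\L_\infty$. Its image in $\L_0/v\L_0$ is $b$, and since $B(\Lambda)$ is a $\mathbb Q$-basis of $\L_0/v\L_0$, the set $\{G(b)\}$ is $\mathbb Q$-linearly independent in $E$. A Nakayama-type lifting then promotes it to a free $\A_0$-basis of $\L_0$; the parallel argument at $v=\infty$, using the bar involution to identify $\L_\infty/v^{-1}\L_\infty\simeq\L_0/v\L_0$, gives a free $\A_\infty$-basis of $\L_\infty$; and a rank comparison weight by weight upgrades $\{G(b)\}$ to a free $\mathbb Q[v,v^{-1}]$-basis of $L$, spanning $E$ over $\mathbb Q$. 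This verifies conditions (1)--(3) of balancedness and exhibits $\{G(b)\mid b\in B(\Lambda)\}$ as the canonical basis of $V_v(\Lambda)$.
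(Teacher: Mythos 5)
The paper itself offers no proof of this statement---it is quoted as Kashiwara's theorem---so your proposal can only be measured against Kashiwara's globalization argument, whose overall shape (inductive construction of $G(b)$ via divided powers, a uniqueness lemma for bar-invariant lifts, Nakayama at $v=0$ and $v=\infty$) you have correctly identified. Two points, however, are genuine gaps.

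First, the weight bookkeeping in your inductive step is wrong. With $b'=\tilde e_ib$ and $n=\varphi_i(b')=\varphi_i(b)+1$, the element $X=f_i^{(n)}G(b')$ has weight $\wt(b')-n\alpha_i=\wt(b)-\varphi_i(b)\alpha_i$, which equals $\wt(b)$ only when $b$ sits at the bottom of its $i$-string; in general $X$ is not even in the weight space of $b$, so the congruence $X\equiv b+\cdots$ cannot hold. The correct move is to climb to the top of the $i$-string, taking $b'=\tilde e_i^{\,\epsilon_i(b)}b$ and applying $f_i^{(\epsilon_i(b))}$. Even then, the assertion that $f_i^{(n)}G(b')$ lies in $\L_0\cap\L_\infty$ and reduces to $b$ modulo $v\L_0$ is not formal---$f_i^{(n)}$ does not preserve the crystal lattice---and is precisely the kind of statement Kashiwara's grand loop is designed to supply; it cannot simply be cited as ``analysis of $sl_2$-strings.''

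Second, and more seriously, your uniqueness lemma is circular. To show that a bar-invariant $y\in L\cap v\L_0$ vanishes, you expand $y$ in ``a homogeneous $\mathbb Q[v,v^{-1}]$-basis of $L$ compatible with $\L_0$'' for which bar-invariance of $y$ becomes $c(v)=c(v^{-1})$ coordinatewise. A basis that is simultaneously a $\mathbb Q[v,v^{-1}]$-basis of $L$, an $\A_0$-basis of $\L_0$, and bar-compatible is exactly the canonical basis whose existence is being proved. For an arbitrary basis of $L$ neither of your two coordinate conditions follows. Kashiwara's route around this is structurally different: he first produces, by the divided-power construction, enough elements of $E=L\cap\L_0\cap\L_\infty$ to surject onto $\L_0/v\L_0$; he then shows by a separate induction that these elements span $L_\xi$ over $\mathbb Q[v,v^{-1}]$ (using that $L_\xi$, for $\xi\neq\Lambda$, is generated by the $f_i^{(n)}L_{\xi+n\alpha_i}$); and only then does injectivity follow, from $\mathbb Q[v,v^{-1}]\cap\A_0\cap\A_\infty=\mathbb Q$ applied to the coordinates with respect to the $G(b)$. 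Without an argument of this kind, the correction step that is supposed to pin down $G(b)$ is unjustified, and with it the well-definedness of the whole construction.
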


Let us consider $U_v^-(\mathfrak g)$. Then it may be viewed as a module over 
the Kashiwara algebra (the algebra of deformed bosons), and we may define 
its crystal basis by the similar recipe.
The crystal so obtained is the crystal $B(\infty)$ and we also have the 
canonical basis $\{G(b)\mid b\in B(\infty)\}$ of
$U_v^-(\mathfrak g)$.\footnote{As is well-known,
Lusztig constructed the basis by geometrizing Ringel's work when
the generalized Cartan matrix is symmetric.}
We have $G(b)v_\Lambda=G(b')$, for a unique $b'\in B(\Lambda)$, or $G(b)v_\Lambda=0$. 
This defines a strict crystal epimorphism 
$B(\infty)\otimes T_\Lambda\rightarrow B(\Lambda)$. 

\begin{theorem}[Kashiwara]
There is an embedding $B(\Lambda)\rightarrow B(\infty)\otimes T_\Lambda$, for 
each dominant integral weight $\Lambda$, such that 
\begin{itemize}
\item[(1)]
The morphisms $B(\Lambda)\otimes T_{-\Lambda}\rightarrow B(\infty)$ 
form an inductive system and 
$$
B(\infty)=\lim_{\Lambda\to\infty} B(\Lambda)\otimes T_{-\Lambda}.
$$
\item[(2)]
The embedding is the section of $B(\infty)\otimes T_\Lambda\rightarrow B(\Lambda)$. 
\end{itemize}
\end{theorem}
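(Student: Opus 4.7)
The plan is to exhibit $j_\Lambda$ as a set-theoretic section of the strict crystal epimorphism $\pi_\Lambda\colon B(\infty)\otimes T_\Lambda\twoheadrightarrow B(\Lambda)$ defined just before the theorem through $G(b)v_\Lambda=G(b')$ (or $0$), and then to verify (1) and (2). Set $B_\Lambda^+=\{b\in B(\infty)\mid G(b)v_\Lambda\neq 0\}$. The definition of $\pi_\Lambda$ shows that the rule $b\mapsto$ (unique $b'\in B(\Lambda)$ with $G(b)v_\Lambda=G(b')$) is a bijection $B_\Lambda^+\to B(\Lambda)$, and the inverse composed with $b\mapsto b\otimes t_\Lambda$ defines $j_\Lambda\colon B(\Lambda)\to B(\infty)\otimes T_\Lambda$. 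Tautologically $\pi_\Lambda\circ j_\Lambda=\id$, so property (2) follows once $j_\Lambda$ is shown to be a crystal morphism.

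Since $\epsilon_i(t_\Lambda)=-\infty$, the tensor-product rule gives $\tilde e_i(b\otimes t_\Lambda)=\tilde e_ib\otimes t_\Lambda$ and $\tilde f_i(b\otimes t_\Lambda)=\tilde f_ib\otimes t_\Lambda$; accordingly the axioms (ii)--(iv) of the definition of crystal morphism reduce, for paired elements $b\leftrightarrow b'$, to three matching statements: (a) the $\wt$, $\epsilon_i$ and $\varphi_i$ data of $b\otimes t_\Lambda$ agree with those of $b'$; (b) $\tilde e_i$ preserves $B_\Lambda^+\sqcup\{0\}$ and corresponds to $\tilde e_i$ on $B(\Lambda)$ under the pairing; (c) if $b\in B_\Lambda^+$ but $\tilde f_ib\notin B_\Lambda^+$ then $\tilde f_ib'=0$ in $B(\Lambda)$. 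All three are direct consequences of Kashiwara's theorem stated just above: the crystal basis $(\mathcal L(\Lambda),B(\Lambda))$ of $V_v(\Lambda)$ is inherited from that of $U_v^-(\mathfrak g)$ through the surjection $G(b)\mapsto G(b)v_\Lambda\pmod{v\mathcal L(\Lambda)}$, and this surjection intertwines the Kashiwara operators by construction.

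For (1), tensoring $j_\Lambda$ with $T_{-\Lambda}$ identifies the image of $B(\Lambda)\otimes T_{-\Lambda}\hookrightarrow B(\infty)$ with $B_\Lambda^+$. When $\Lambda\leq\Lambda'$ (that is, $\mu:=\Lambda'-\Lambda$ is dominant), the embedding $V_v(\Lambda')\hookrightarrow V_v(\Lambda)\otimes V_v(\mu)$ sending $v_{\Lambda'}\mapsto v_\Lambda\otimes v_\mu$ forces $G(b)v_{\Lambda'}\neq 0$ whenever $G(b)v_\Lambda\neq 0$; hence $B_\Lambda^+\subseteq B_{\Lambda'}^+$, so the $B(\Lambda)\otimes T_{-\Lambda}$ form an inductive system inside $B(\infty)$. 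For the limit: a given $b\in B(\infty)$ is reached from $u_\infty$ by finitely many applications of $\tilde f_i$'s involving only finitely many indices $i$, and taking $\Lambda$ with $\langle h_i,\Lambda\rangle$ large enough along this path ensures $G(b)v_\Lambda\neq 0$, i.e.\ $b\in B_\Lambda^+$. Thus $B(\infty)=\bigcup_\Lambda B_\Lambda^+=\varinjlim_\Lambda B(\Lambda)\otimes T_{-\Lambda}$.

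The decisive step is the crystal-morphism verification in the second paragraph, particularly the boundary clause (iv) when $\tilde f_i$ kicks $b$ out of $B_\Lambda^+$. This compatibility is exactly the content of the part of Kashiwara's theorem that extracts $B(\Lambda)$ from $B(\infty)$ via the grand loop argument, which I would invoke rather than reprove; the remainder of the plan is formal bookkeeping with the tensor rule and the asymptotic non-vanishing of $G(b)v_\Lambda$ as $\Lambda\to\infty$.
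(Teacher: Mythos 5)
The paper itself gives no proof of this theorem: it is quoted as Kashiwara's result, and the only preparation in the text is the construction of the strict epimorphism $B(\infty)\otimes T_\Lambda\rightarrow B(\Lambda)$ via $G(b)\mapsto G(b)v_\Lambda$. So there is no argument of the author's to compare yours against, and I can only assess your sketch on its own terms. The parts you actually argue are sound: the tensor-product rule with $\epsilon_i(t_\Lambda)=-\infty$ does reduce all operators on $B(\infty)\otimes T_\Lambda$ to the first factor (and once $\wt$ and $\epsilon_i$ match, $\varphi_i$ matches automatically by crystal axiom (1)); the coproduct computation showing that $G(b)v_\Lambda\neq0$ implies $G(b)v_{\Lambda'}\neq0$ when $\Lambda'-\Lambda$ is dominant is the standard and correct way to get the inductive system; and the union argument for the limit is fine because any $b\in B(\infty)$ survives in $V_v(\Lambda)$ once $\langle h_i,\Lambda\rangle$ is large enough (equivalently $\epsilon_i^*(b)\leq\langle h_i,\Lambda\rangle$, cf.\ the proposition quoted after the theorem).

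The one caveat concerns where you locate the ``decisive step.'' The facts you defer --- that $b\mapsto b'$ is a bijection $B_\Lambda^+\to B(\Lambda)$, that $B_\Lambda^+$ is stable under $\tilde e_i$, and that $\epsilon_i(b')=\epsilon_i(b)$ --- are not consequences of the crystal-basis existence theorem stated just above in the paper (that theorem only asserts existence and uniqueness of crystal bases for objects of $\mathcal O_{\rm int}$); they are precisely the content of Kashiwara's compatibility theorem relating $B(\infty)$ to the $B(\Lambda)$'s, i.e.\ of the statement being proved. Your argument is therefore best read as a correct reduction of the theorem to its original source plus the formal bookkeeping, not as an independent proof; that is an acceptable standard for a survey, but you should invoke the grand-loop theorem itself (Kashiwara, \emph{On crystal bases}, Theorem 5 and Section 8) rather than the existence theorem for crystal bases, since otherwise the argument is circular at exactly the point you call decisive.
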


We record two theorems which are useful to identify a crystal with $B(\Lambda)$.
The first is by Joseph \cite[6.4.21]{Jo} and the second is by Kashiwara and Saito \cite{KS}.

\begin{theorem}[Joseph]
Suppose that we are given a seminormal crystal $D(\Lambda)$, for each 
dominant integral weight $\Lambda$, such that 
\begin{itemize}
\item[(i)]
There exists an element $d_\Lambda\in D(\Lambda)$ of weight $\Lambda$ and 
all the other elements of $D(\Lambda)$ are of the form 
$\tilde f_{i_1}\cdots\tilde f_{i_N}d_\Lambda$, for some 
$i_1,\dots,i_N\in I$. 
\item[(ii)]
The subcrystal of $D(\Lambda)\otimes D(\Lambda')$ that is generated by 
$d_\Lambda\otimes d_{\Lambda'}$ is isomorphic to $D(\Lambda+\Lambda')$. 
\end{itemize}
Then $D(\Lambda)\simeq B(\Lambda)$, for all $\Lambda$. 
\end{theorem}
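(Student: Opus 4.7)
The plan is to construct a crystal isomorphism $\phi_\Lambda: D(\Lambda) \to B(\Lambda)$ for each dominant integral $\Lambda$, using the tensor product axiom (ii) to propagate an identification rooted at the highest weight elements.

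First, I would verify that $B(\Lambda)$ itself satisfies hypotheses (i) and (ii). Condition (i) follows from the fact that $V_v(\Lambda)$ is the cyclic module generated by $v_\Lambda$ together with integrability. Condition (ii) is the classical statement that $B(\Lambda+\Lambda')$ is precisely the connected component of $u_\Lambda\otimes u_{\Lambda'}$ in $B(\Lambda)\otimes B(\Lambda')$, reflecting the embedding $V_v(\Lambda+\Lambda')\hookrightarrow V_v(\Lambda)\otimes V_v(\Lambda')$ sending $v_{\Lambda+\Lambda'}$ to $v_\Lambda\otimes v_{\Lambda'}$. With this, the theorem becomes a uniqueness statement: any system $\{D(\Lambda)\}$ satisfying (i) and (ii) must be isomorphic to $\{B(\Lambda)\}$.

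Next, I would define $\phi_\Lambda$ by
$\phi_\Lambda(\tilde f_{i_1}\cdots\tilde f_{i_N} d_\Lambda)=\tilde f_{i_1}\cdots\tilde f_{i_N} u_\Lambda$,
which by (i) determines $\phi_\Lambda$ everywhere on $D(\Lambda)$ once well-definedness is established. The main tool is to sit $D(\Lambda)$ inside a larger tensor product via (ii): given any dominant weight $\Lambda_0$, seminormality together with the highest weight property give $\epsilon_i(d_{\Lambda_0})=0$ for all $i$, so the tensor product rule forces
$$\tilde f_{i_1}\cdots\tilde f_{i_k}(d_\Lambda\otimes d_{\Lambda_0}) = (\tilde f_{i_1}\cdots\tilde f_{i_k}d_\Lambda)\otimes d_{\Lambda_0}$$
as long as the first factor remains non-zero. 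By (ii), the connected component of $d_\Lambda\otimes d_{\Lambda_0}$ is $D(\Lambda+\Lambda_0)$, and the analogous assertion holds in $B(\Lambda)\otimes B(\Lambda_0)$. Iterating (ii) gives embeddings $D(\Lambda)\hookrightarrow D(\Lambda_{i_1})\otimes\cdots\otimes D(\Lambda_{i_k})$ whenever $\Lambda=\Lambda_{i_1}+\cdots+\Lambda_{i_k}$ is a decomposition into fundamental weights, and matching embeddings for $B$. Any identity among compositions of $\tilde f_i$'s applied to $d_\Lambda$ can be tested inside such a tensor product; coherence of the two systems of embeddings then pins down $\phi_\Lambda$ and shows it respects the Kashiwara operators.

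Finally, seminormality implies that once $\phi_\Lambda$ commutes with $\tilde e_i$ and $\tilde f_i$, it automatically preserves $\wt$, $\epsilon_i$, and $\varphi_i$, so it is a crystal morphism; running the symmetric construction with $D$ and $B$ interchanged produces an inverse, whence $\phi_\Lambda$ is an isomorphism. The principal obstacle is the well-definedness step: one must control what happens when some $\tilde f_i$ annihilates an intermediate element of $D(\Lambda)$, because inside the tensor product the same operator may instead act on the second factor. Seminormality, which allows one to detect \lq\lq$\tilde f_i b=0$\rq\rq\ from the value of $\varphi_i(b)$ uniformly in both $D(\Lambda)$ and $B(\Lambda)$, together with the canonicity of the embeddings supplied by (ii), is what ultimately closes the argument.
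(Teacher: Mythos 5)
The paper itself does not prove this statement; it is quoted from Joseph [Jo, 6.4.21], so your proposal can only be measured against the standard argument. Measured that way, it has a genuine gap at exactly the point you flag as ``the principal obstacle'': the well-definedness of $\phi_\Lambda$, i.e.\ the assertion that $\tilde f_{i_1}\cdots\tilde f_{i_N}d_\Lambda=\tilde f_{j_1}\cdots\tilde f_{j_M}d_\Lambda$ (or $=0$) holds in $D(\Lambda)$ exactly when the corresponding identity holds for $u_\Lambda$ in $B(\Lambda)$. This is not a technical verification to be closed by seminormality; it is the entire content of the theorem. The mechanism you propose cannot work as stated: every embedding you invoke --- $D(\Lambda+\Lambda')\hookrightarrow D(\Lambda)\otimes D(\Lambda')$, the iterated decomposition of $\Lambda$ into fundamental weights --- is internal to the family $\{D(\Lambda)\}$, and the ``matching'' embeddings are internal to $\{B(\Lambda)\}$. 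Nowhere does the argument actually compare the two families. Decomposing into fundamental weights provides no base case, since $D(\Lambda_i)$ for $\Lambda_i$ fundamental is just as unknown as $D(\Lambda)$; a priori two closed families could each be internally coherent and still differ, and ruling that out is what the proof must do.

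The standard way to supply the missing comparison is to pass to the limit $\Lambda\to\infty$. From (ii) and seminormality one first proves a stability statement: for fixed $N$ and any $\Lambda$ with $\langle h_i,\Lambda\rangle\geq N$ for all $i$, the part of $D(\Lambda)$ of depth at most $N$, with its partially defined operators, is canonically independent of $\Lambda$, because $b\mapsto b\otimes d_\mu$ identifies it with the corresponding part of $D(\Lambda+\mu)$; here your observation that $\tilde f_i$ stays in the first tensor factor as long as it does not vanish there is the right computation. This produces a limit crystal $D(\infty)$ with strict surjections $D(\infty)\otimes T_\Lambda\rightarrow D(\Lambda)$. The essential remaining step, which your sketch omits entirely, is to identify $D(\infty)$ with $B(\infty)$; this is where an external characterization must enter (for instance the recursive description of $B(\infty)$ via the embeddings $\Psi_i:B(\infty)\rightarrow B(\infty)\otimes B_i$, i.e.\ the Kashiwara--Saito type criterion quoted immediately after this theorem in the paper). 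Without that anchor the argument is circular. Note also that obtaining the inverse by ``running the symmetric construction with $D$ and $B$ interchanged'' presupposes well-definedness in both directions, so it cannot substitute for it.
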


\begin{theorem}[Kashiwara-Saito]
Let $B$ be a $\mathfrak g$-crystal, $b_0\in B$ an element of weight $0$. Suppose that 
\begin{enumerate}
\item[(i)]
$\wt(B)\subseteq\sum_{i\in I}\Z_{\leq0}\alpha_i$.
\item[(ii)]
$b_0$ is the unique element of $B$ of weight $0$ and $\epsilon_i(b_0)=0$, for 
all $i\in I$. 
\item[(iii)]
$\epsilon_i(b)$ is finite, for all $i\in I$ and $b\in B$.
\item[(iv)]
There exists a strict embedding 
$\Psi_i:B\rightarrow B\otimes B_i$, for all $i\in I$. 
\item[(v)]
$\Psi_i(B)\subseteq\{b\otimes b_i(a)\mid b\in B, a\in\Z_{\leq0}\}$.
\item[(vi)]
If $b\neq b_0$ then there exists $i\in I$ such that 
$\Psi_i(b)\in\{b\otimes b_i(a)\mid b\in B, a\in\Z_{<0}\}$.
\end{enumerate}
Then $B\simeq B(\infty)$. If there also exists a seminormal crystal $D$, a dominant 
integral weight $\Lambda$ and 
an element $d_\Lambda\in D$ of weight $\Lambda$ such that
\begin{enumerate}
\item[(v)]
$d_\Lambda$ is the unique element of $D$ of weight $\Lambda$.
\item[(vi)]
There is a strict epimorphism 
$\Phi:B\otimes T_\Lambda\rightarrow D$ such that 
$\Phi(b_0\otimes t_\Lambda)=d_\Lambda$. 
\item[(vii)]
$\Phi$ maps $\{b\otimes t_\Lambda\in B\otimes T_\Lambda\mid \Phi(b\otimes t_\Lambda)\neq0\}$ 
to $D$ bijectively. 
\end{enumerate}
Then $D\simeq B(\Lambda)$ and the section of $\Phi$ given 
by the bijective map in {\rm (vii)} gets identified with the embedding 
$B(\Lambda)\rightarrow B(\infty)\otimes T_\Lambda$. 
\end{theorem}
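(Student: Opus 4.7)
The plan is to first establish $B \simeq B(\infty)$ via a recursive construction using the Kashiwara embeddings, and then to deduce $D \simeq B(\Lambda)$ by comparing $\Phi$ with the standard surjection $B(\infty) \otimes T_\Lambda \twoheadrightarrow B(\Lambda)$.

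For the first assertion, recall that $B(\infty)$ itself carries strict Kashiwara embeddings $\Psi_i^\infty : B(\infty) \to B(\infty) \otimes B_i$ satisfying the same list of conditions (iv)--(vi), with unique highest weight element $u_\infty$ of weight $0$. I would build a crystal isomorphism $\psi : B \to B(\infty)$ by recursion on the depth of $-\wt(b)$ in $\sum_{i} \Z_{\geq 0}\alpha_i$. Set $\psi(b_0) := u_\infty$. For $b \neq b_0$, condition (vi) produces some $i$ with $\Psi_i(b) = b' \otimes b_i(a)$, $a < 0$; then $\wt(b') = \wt(b) + (-a)\alpha_i$ is strictly higher and $\psi(b')$ is defined by induction. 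Define
$$
\psi(b) := (\Psi_i^\infty)^{-1}\bigl(\psi(b') \otimes b_i(a)\bigr),
$$
which exists and is unique by the injectivity and image description of $\Psi_i^\infty$ (itself satisfying (iv), (v)).

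The main obstacle is showing that $\psi(b)$ is independent of the choice of $i$. When two different indices $i,j$ both satisfy the hypothesis, writing $\Psi_i(b) = b' \otimes b_i(a)$ and $\Psi_j(b) = b'' \otimes b_j(c)$, one must compare $\Psi_j(b')$ with $\Psi_i(b'')$ and iterate until the two paths meet at a common higher-weight element, then appeal to induction. The axioms (iv), (v), together with strictness of the $\Psi_i$, pin down the commutation relations between $\Psi_i$ and $\Psi_j$ on $B$ to match the analogous relations for $\Psi_i^\infty, \Psi_j^\infty$ on $B(\infty)$; indeed, this matching is the very content of the Kashiwara--Saito characterization. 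Once well-defined, $\psi$ is a strict crystal morphism by construction, injective because each $\Psi_i$ is, and surjective because its image contains $u_\infty$ and is closed under the inverse embeddings, which generate all of $B(\infty)$ from $u_\infty$.

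For the second assertion, compose $\psi \otimes \id_{T_\Lambda}$ with the standard strict surjection $\pi : B(\infty) \otimes T_\Lambda \twoheadrightarrow B(\Lambda)$ furnished by the preceding theorem to obtain a strict surjection $\pi' : B \otimes T_\Lambda \twoheadrightarrow B(\Lambda)$ sending $b_0 \otimes t_\Lambda$ to $u_\Lambda$ and restricting to a bijection on its support (the image of the Kashiwara embedding). Both $\Phi$ and $\pi'$ are then strict epimorphisms out of $B \otimes T_\Lambda$ whose targets are seminormal and generated from a unique element of weight $\Lambda$ (use condition (v) on $D$, and the analogous property of $B(\Lambda)$). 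By strictness, $\wt$, $\epsilon_i$, $\varphi_i$ are preserved on non-kernel elements, and by seminormality of the targets the support of each map is determined by the combinatorial rule \emph{follow $\tilde f_i$ from $b_0 \otimes t_\Lambda$ exactly when $\varphi_i > 0$}, which is the same rule for both. Combining this with condition (vii), which identifies $D$ bijectively with the support of $\Phi$, yields a crystal isomorphism $D \simeq B(\Lambda)$, and the section of $\Phi$ prescribed by (vii) is identified with the Kashiwara embedding $B(\Lambda) \hookrightarrow B(\infty) \otimes T_\Lambda$ under $\psi$.
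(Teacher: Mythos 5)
This theorem is quoted in the survey from \cite{KS} without proof, so there is no in-paper argument to compare against; your proposal has to stand on its own, and as written it does not close the central step. The recursion $\psi(b):=(\Psi_i^\infty)^{-1}(\psi(b')\otimes b_i(a))$ already has a gap before the well-definedness question arises: by Kashiwara's description (recorded in the paper right after the theorem), the image of $\Psi_i^\infty$ is $\{c\otimes b_i(a)\mid \epsilon_i^*(c)=0,\ a\leq0\}$, so the inverse exists only if $\epsilon_i^*(\psi(b'))=0$. Nothing in your sketch verifies this; one must first extract from axioms (i)--(vi) a ``star'' structure on $B$ (showing that whenever $b'\otimes b_i(a)\in\Im\Psi_i$ one also has $b'\otimes b_i(0)\in\Im\Psi_i$, etc.) and prove that $\psi$ intertwines it with $\epsilon_i^*$ on $B(\infty)$ as part of the induction. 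More seriously, you correctly identify independence of the choice of $i$ as ``the main obstacle'' and then dispose of it by asserting that the needed commutation relations between $\Psi_i$ and $\Psi_j$ ``are the very content of the Kashiwara--Saito characterization.'' That is circular: those relations are exactly what must be derived from (i)--(vi) alone, and deriving them (e.g.\ that the two reduction paths through $b'$ and $b''$ merge at a common element of strictly higher weight in a way compatible with the tensor-product rule) is the substance of the Kashiwara--Saito proof. Until that is done, $\psi$ is not known to be a map, let alone a strict isomorphism.

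The second half of your argument is essentially sound \emph{granted} the first: once $B\simeq B(\infty)$, both $\Phi$ and the standard surjection $\pi'$ are strict epimorphisms onto seminormal targets, strictness forces the support to be closed under passing to $\tilde e_i$-ancestors, and the rule ``apply $\tilde f_i$ exactly when $\varphi_i>0$'' then determines the support and the image crystal uniquely; combined with (vii) this gives $D\simeq B(\Lambda)$ and identifies the section with the embedding $B(\Lambda)\rightarrow B(\infty)\otimes T_\Lambda$. So the proposal should be regarded as a correct reduction of the second assertion to the first, together with an unproved (and non-trivially provable) first assertion.
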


Kashiwara constructed a strict embedding
$\Psi_i:B(\infty)\rightarrow B(\infty)\otimes B_i$ that satisfies 
$\Psi_i(b_0)=b_0\otimes b_i(0)$, for any $i\in I$, and showed that 
such an embedding is unique. 
Hence, $\Psi_i$ in the above theorem is identified with this embedding.

Recall that we have an anti-automorphism of $U_v^-(\mathfrak g)$ defined by 
$f_i^*=f_i$. 
It induces the star crystal structure on $B(\infty)$ defined by
\begin{gather*}
\wt^*(b)=\wt(b^*),\;\;
\epsilon_i^*(b)=\epsilon_i(b^*),\;\;\varphi_i^*(b)=\varphi_i(b^*),\\
\tilde e_i^*b=(\tilde e_ib^*)^*,\;\;\tilde f_i^*b=(\tilde f_ib^*)^*.
\end{gather*}

The next proposition is from \cite[8.1,8.2]{K1}.

\begin{proposition}[Kashiwara]
Let $\Psi_i$ and $\Lambda$ be as above. 
\begin{enumerate}
\item[(1)]
The image of the strict embedding $\Psi_i$ is given by
$$
\{b\otimes b_i(a)\mid b\in B(\infty), \epsilon_i^*(b)=0, a\leq0\}.
$$
\item[(2)]
The image of the embedding $B(\Lambda)\rightarrow B(\infty)\otimes T_\Lambda$ is given by
$$
\{b\otimes t_\Lambda\mid \epsilon_i^*(b)\leq\langle h_i,\Lambda\rangle, 
\text{for any }i\in I\}.
$$
\end{enumerate}
\end{proposition}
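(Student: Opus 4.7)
The plan is to establish (1) first via a combinatorial analysis using the $*$-crystal structure on $B(\infty)$, and then deduce (2) by combining (1) with the strict epimorphism $\pi: B(\infty)\otimes T_\Lambda \twoheadrightarrow B(\Lambda)$ coming from the action of $U_\A^-(\mathfrak g)$ on $V_v(\Lambda)$ via $v_\Lambda$. For (1), set $B_i^0 = \{b \in B(\infty) \mid \epsilon_i^*(b)=0\}$ and $S_i = \{b \otimes b_i(a) \mid b \in B_i^0,\; a \le 0\}$. First I would observe that every $b \in B(\infty)$ admits a unique presentation $b = (\tilde f_i^*)^n b'$ with $b'\in B_i^0$ and $n = \epsilon_i^*(b) \ge 0$; combined with $\wt(b_i(-n)) = -n\alpha_i$ this gives a weight-preserving bijection $B(\infty) \simeq S_i$. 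Since $\Psi_i$ is injective, a weight-by-weight count then reduces the problem to the inclusion $\Psi_i(B(\infty)) \subseteq S_i$. I would prove this by induction on the distance from $b_0$ in the crystal graph, with base case $\Psi_i(b_0) = b_0 \otimes b_i(0) \in S_i$. In the inductive step, if $\Psi_i(b) = b'\otimes b_i(a) \in S_i$, strictness of $\Psi_i$ gives $\Psi_i(\tilde f_j b) = \tilde f_j(b'\otimes b_i(a))$, and the tensor product rule yields either $\tilde f_j b'\otimes b_i(a)$ (when $j\ne i$, or when $j=i$ and $\varphi_i(b') > -a$) or $b'\otimes b_i(a-1)$ (when $j=i$ and $\varphi_i(b') \le -a$). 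The last case stays in $S_i$ trivially; for $j\ne i$ the commutation $\tilde e_i^*\tilde f_j = \tilde f_j\tilde e_i^*$ on $B(\infty)$ forces $\epsilon_i^*(\tilde f_j b')=0$.

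The main obstacle is the middle case, namely verifying that $\epsilon_i^*(\tilde f_i b')=0$ whenever $\epsilon_i^*(b')=0$ and $\varphi_i(b') > -a$. This is not a formal property of arbitrary crystals but a genuine feature of $B(\infty)$: once the left $f_i$-count $\varphi_i$ has grown enough, applying another $\tilde f_i$ on the left does not introduce a new rightmost $f_i$. The cleanest justification is via Kashiwara's analysis of the $*$-structure on $U_v^-(\mathfrak g)$, which can also be phrased as a consequence of the uniqueness of the strict embedding $\Psi_i$ satisfying $\Psi_i(b_0)=b_0\otimes b_i(0)$.

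For (2), the strict epimorphism $\pi$ satisfies $\pi(b\otimes t_\Lambda) \ne 0$ if and only if $G(b)v_\Lambda \ne 0$ in $V_v(\Lambda)$. The kernel of the projection $U_v^-(\mathfrak g) \twoheadrightarrow V_v(\Lambda),\; u \mapsto uv_\Lambda$, is the left ideal generated by $\{f_i^{\langle h_i,\Lambda\rangle+1} \mid i\in I\}$. By iterating (1), the number $\epsilon_i^*(b)$ records the maximal right $f_i$-power in $G(b)$, so membership of $G(b)$ in this left ideal is equivalent to $\epsilon_i^*(b) > \langle h_i,\Lambda\rangle$ for some $i\in I$. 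Therefore the nonzero image of $\pi$ is exactly the subset in (2), and the section of $\pi$ provided by the preceding theorem realizes the embedding $B(\Lambda) \hookrightarrow B(\infty)\otimes T_\Lambda$ onto this subset.
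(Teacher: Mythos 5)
The paper offers no proof of this proposition: it is quoted verbatim from Kashiwara, with the pointer ``from [8.1, 8.2] of \cite{K1}'' immediately preceding the statement. So there is nothing in the paper to compare your argument against, and I can only assess it on its own terms. Your skeleton --- a weight-preserving bijection $B(\infty)\simeq S_i$ via the unique factorization $b=(\tilde f_i^*)^{\epsilon_i^*(b)}b'$, a weight-by-weight count to reduce equality to the inclusion $\Psi_i(B(\infty))\subseteq S_i$, and an induction along the crystal graph using the tensor product rule --- is the right shape and is essentially how the result is proved in the literature.

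However, both load-bearing steps are asserted rather than proved, and each is essentially equivalent in strength to the statement being proved. In (1), the inclusion $\Psi_i(B(\infty))\subseteq S_i$ rests on two facts: that $\epsilon_i^*(\tilde f_j b')=0$ for $j\neq i$ when $\epsilon_i^*(b')=0$ (i.e.\ the commutation of $\tilde f_j$ with $\tilde e_i^*$), and that $\epsilon_i^*(\tilde f_i b')=0$ when $\epsilon_i^*(b')=0$ and $\varphi_i(b')>-a$. You explicitly defer the second to ``Kashiwara's analysis of the $*$-structure,'' but in Kashiwara's development these commutation properties are \emph{consequences} of Proposition 8.1, not inputs to it; citing them here is circular unless you supply an independent proof. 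The fallback via uniqueness of $\Psi_i$ does not close the gap either: uniqueness says there is at most one strict embedding with $b_0\mapsto b_0\otimes b_i(0)$, so to conclude that its image is $S_i$ you would have to verify that your bijection $b=(\tilde f_i^*)^n b'\mapsto b'\otimes b_i(-n)$ is itself a strict crystal morphism --- which requires exactly the same two combinatorial facts. In (2), the step ``membership of $G(b)$ in the left ideal generated by the $f_i^{\langle h_i,\Lambda\rangle+1}$ is equivalent to $\epsilon_i^*(b)>\langle h_i,\Lambda\rangle$ for some $i$'' is only half justified: the implication $\epsilon_i^*(b)>\langle h_i,\Lambda\rangle\Rightarrow G(b)v_\Lambda=0$ does follow from $G(b)\in U_v^-f_i^{\epsilon_i^*(b)}$, but the converse is a linear independence assertion (the ideal $\sum_i U_v^- f_i^{\langle h_i,\Lambda\rangle+1}$ contains sums, so it is strictly larger than the union of the sets $U_v^-f_i^{\langle h_i,\Lambda\rangle+1}$), and it needs a separate argument --- e.g.\ a character count against $V(\Lambda)$ or an induction using the already-established part (1). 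As written, the proposal is a correct road map with the two decisive lemmas left as citations.
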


\subsection{Realizations of crystals}

Kashiwara crystal has many realizations. Each realization has its own advantage 
and in the case when we may transfer a result in one realization to a result 
in the other realization, it would lead to a very nontrivial consequence. 
This is exactly the case when we apply the theory of crystals to the modular 
representation theory of Hecke algebras. We have obtained 
classification of simple modules, decomposition matrices, representation type of 
the whole algebra, the modular branching rule, so far. This is the aim of the 
next subsection, and as a preparation for this, we explain various realizations here. 

\begin{itemize}
\item[(1)]
Realization by crystal bases: This is already explained. When $\mathfrak g$ is of 
type $A^{(1)}_{e-1}$, it is closely related to soliton theory and 
solvable lattice models. 

Recall that the study of the Kadomtzev-Petviashvili equations by Sato school 
lead to the understanding of the Fock space as a 
$\mathfrak gl_\infty$-module, and then, by reduction to 
the Korteweg-de Vries equation etc, we obtain a $\mathfrak g$-module. 

Let $\lambda=(\lambda_1,\lambda_2,\dots)$ be a partition. 
Then, we assign its beta numbers, in which we have an ambiguity. 
However, this ambiguity precisely amounts to the choice of the coloring 
of the nodes of $\lambda$, or equivalently, the choice of the 
highest weight for the vacuum. Let us fix $m\in\Z/e\Z=I$. 
For a node $x\in\lambda$ which lies in the $a^{th}$ row and the $b^{th}$ column, 
we color $x$ with its residue $r(x)\in\Z/e\Z$ defined by $m-a+b$. 
Then $\mathcal F=\oplus_{\lambda} \mathbb Q\lambda$, the space of these 
colored partitions, becomes a $\mathfrak g$-module 
via
$$
e_i\lambda=\sum_{\mu:r(\lambda/\mu)=i}\mu,\;\;
f_i\lambda=\sum_{\mu:r(\mu/\lambda)=i}\mu
$$
and definitions for the Cartan part. We denote the module by 
$\mathcal F(\Lambda_m)$. The Fock space is 
$\oplus_{m\in\Z/e\Z}\mathcal F(\Lambda_m)$, although we call 
$\mathcal F(\Lambda_m)$'s also Fock spaces. 

Later, they studied the XXZ model with periodic 
boundary conditions. The XXZ model is one of 
the important models for spin chains. 
Then, they found $U_v(sl_2)$-symmetry in the model, and they introduced 
the deformed Fock space $\mathcal F_v=\oplus_{\lambda} \mathcal K\lambda$, 
which becomes a $U_v(\mathfrak g)$-module after a choice of the coloring 
of partitions. We denote the module by 
$\mathcal F_v(\Lambda_m)$. The vacuum has the weight $\Lambda_m$, 
and the $U_v(\mathfrak g)$-submodule generated by the vacuum is 
isomorphic to $V_v(\Lambda_m)$. The observation was that the space of states of 
half infinite spin chains looks like $\oplus_{m\in\Z/e\Z}\mathcal F_v(\Lambda_m)$. 

Misra and Miwa showed that 
$\oplus_{\lambda}\A_0\lambda$ is a crystal lattice of $\mathcal F_v(\Lambda_m)$, 
and that the set of partitions is its crystal. Thus, its connected component 
that contains the empty partition is isomorphic to $B(\Lambda_m)$. 
Note that $B(\Lambda_m)$ is realized as a subcrystal of the crystal of partitions. 

\item[(2)]
Realization by Young diagram/Young tableaux and Young walls: This gave new treatment 
of classical objects in algebraic combinatorics. For example, the set of 
$e$-restricted/$e$-regular colored partitions is a realization of $B(\Lambda_m)$. 
This is the crystal which appeared in (1) as the connected component which 
contains the empty partition. 

\item[(3)]
Path realization: This came from the same effort to understand the spin model. 
Let $\mathfrak g$ be of affine type. Following standard notation, 
we have a special node $0\in I$ and the Cartan subalgebra 
has the basis $\{h_i\mid i\in I\}\sqcup\{d\}$. The canonical central element 
$c=\sum_{i\in I}c_ih_i$ is defined by the requirement that 
${\rm gcd}\{c_i\in\Z_{>0}\mid i\in I\}=1$. 
Let $\Lambda$ be a dominant integral weight such that $\langle d,\Lambda\rangle=0$. 
Then the ground state 
$p_{\Lambda}=\cdots\otimes b_1\otimes b_0$, which corresponds to the highest weight 
element, and the other excited states 
$\cdots\otimes p_1\otimes p_0$ where $p_k=b_k$, for sufficiently large $k$, 
form the crystal $B(\Lambda)$, which explained the appearance of the crystal 
in the XXZ model. In the XXZ model, we have $b_k=+$ or $b_k=-$, for all $k$. 
In the crystal language, $\{+,-\}$ is a perfect crystal. 
\begin{definition}
Let $l$ be a positive integer, which is called a level. 
Let $B$ be a finite $\mathfrak g'$-crystal. 
We say that $B$ is a \emph{perfect crystal of level $l$} if 
it satisfies the following. 
\begin{enumerate}
\item[(i)]
There exists a finite dimensional $U_v(\mathfrak g')$-module with crystal basis 
$(\mathcal L,B)$, for some crystal lattice $\mathcal L$. 
\item[(ii)]
$B\otimes B$ is connected.
\item[(iii)]
There exists $\lambda_0\in P_{cl}$ such that
$\wt(B)\subseteq\lambda_0-\sum_{i\neq0}\Z_{\geq0}\alpha_i$.
\item[(iv)]
There is the unique element of weight $\lambda_0$ in $B$. 
\item[(v)]
$\langle c,\epsilon(b)\rangle:=\sum c_i\epsilon_i(b)\geq l$, for all $b\in B$. 
\item[(vi)]
For any dominant integral weight 
$\lambda=\sum_{i\in I}\lambda_i\Lambda_i\in P_{cl}=\oplus_{i\in I}\Z\Lambda_i$ 
with $\langle c,\lambda\rangle=l$, there exist unique vectors $b^\lambda\in B$ and 
$b_\lambda\in B$ such that $\epsilon_i(b^\lambda)=\lambda_i$ and 
$\varphi_i(b_\lambda)=\lambda_i$, for all $i\in I$. 
\end{enumerate}
\end{definition}
(v) implies that $B(\lambda)\otimes B$ has the unique highest weight element of weight $\lambda$, 
which is $u_\lambda\otimes b^\lambda$ by (vi). 
Further, every element of $B(\lambda)\otimes B$ 
is of the form $\tilde f_{i_1}\cdots\tilde f_{i_N}(u_\lambda\otimes b^\lambda)$. 

Let $\Lambda\in P$ be dominant with $\langle c,\Lambda\rangle=l$ and 
$\langle d,\Lambda\rangle=0$. Set $\lambda=\overline\Lambda\in P_{cl}$. 
Then Kang, Kashiwara, Misra, Miwa, Nakashima and Nakayashiki showed that 
there exists the unique crystal isomorphism 
$B(\lambda)\simeq B(\lambda')\otimes B$ defined by 
$u_\lambda\mapsto u_{\lambda'}\otimes b_\lambda$, where 
$\lambda'=\sum_{i\in I}\epsilon_i(b_\lambda)\Lambda_i$, and that 
its affinization gives an embedding 
$B(\Lambda)\rightarrow B(\Lambda')\otimes B^{\rm aff}$ defined by
$u_\Lambda\mapsto u_{\Lambda'}\otimes b_\lambda$. 
By iterating the procedure, we obtain the embedding 
$B(\Lambda)\rightarrow (B^{\rm aff})^\infty$. Here, $B^{\rm aff}=
\{z^nb\mid b\in B, n\in\Z\}$ is the seminormal $\mathfrak g$-crystal 
which is defined by setting $\wt(z)=\delta$ and defining $\tilde e_i$ and 
$\tilde f_i$ by $z^{\delta_{i0}}\tilde e_i$ and 
$z^{-\delta_{i0}}\tilde f_i$ respectively. 
This embedding is called the path realization of $B(\Lambda)$. 

\item[(4)]
Littelmann's path model: Littelmann considered piecewise linear paths in 
$P\otimes\mathbb R$. Let $W$ be the Weyl group of $\mathfrak g$. Let 
$\Lambda$ be a dominant integral weight. Given rational numbers 
$0=a_0<\cdots<a_s=1$ and weights $\nu_1,\dots,\nu_s\in W\Lambda$, let 
$\pi(t):[0,1]\rightarrow P\otimes\mathbb R$ be 
the piecewise linear path which goes to the $\nu_j$ direction 
during $t\in [a_{j-1},a_j]$, for $1\leq j\leq s$. We denote the path 
$(\nu_1,\dots,\nu_s;a_0,\dots,a_s)$. A Lakshmibai-Seshadri path is 
such a path which satisfies certain additional condition.
Joseph and Kashiwara showed that 
the set of Lakshmibai-Seshadri paths is a crystal which is isomorphic to 
$B(\Lambda)$. Kashiwara introduced different treatment of the path model, 
which is more useful for us. 
Let $B$ and $B'$ be crystals. A map $\psi:B\rightarrow B'$ is
called a \emph{crystal morphism of amplitude $h$} if
\begin{gather*}
\wt(\psi(b))=h\wt(b),\;\;\epsilon_i(\psi(b))=h\epsilon_i(b),\;\;
\varphi_i(\psi(b))=h\varphi_i(b),\\
\psi(\tilde e_ib)=\tilde e_i^h\psi(b),\;\;
\psi(\tilde f_ib)=\tilde f_i^h\psi(b), 
\end{gather*}
for all $b\in B$. Kashiwara showed that 
there is a unique crystal isomorphism of amplitude $h$ for  
$B(\Lambda)\rightarrow B(h\Lambda)\subseteq B(\Lambda)^{\otimes h}$, 
and that if $h$ is sufficiently divisible then the image stabilizes in the 
sense that there exist $b_1,\dots,b_s$ and $0=a_0<\cdots<a_s=1$ which are 
independent of $h$ such that 
$$
b\mapsto b_1^{\otimes ha_1}\otimes b_2^{\otimes h(a_2-a_1)}\otimes\cdots
\otimes b_s^{\otimes h(1-a_{s-1})},
$$
for sufficiently divisible $h$. Set $\nu_j=\wt(b_j)$, for $1\leq j\leq s$. 
Then the map $b\mapsto (\nu_1,\dots,\nu_s;a_0,\dots,a_s)$ gives 
the Littelmann path model realization. When $\mathfrak g$ is of type 
$A^{(1)}_{e-1}$ and $B(\Lambda)=B(\Lambda_m)$, which is realized on partitions, 
then $b_j$ are $e$-cores. Thus, a variant of the Littelmann path model is 
that $b\in B(\Lambda_m)$ is expressed by $(\nu_1,\dots,\nu_s;a_0,\dots,a_s)$, 
where $\nu_1\supseteq\cdots\supseteq\nu_s$ are $e$-cores. 

\item[(5)]
Polyhedral realization: This is the embedding $B(\infty)\rightarrow 
\cdots\otimes B_{i_2}\otimes B_{i_1}$ given by the 
strict morphisms $\Psi_i:B(\infty)\rightarrow B(\infty)\otimes B_i$, 
for $i\in I$. This induces the embedding
$$
B(\Lambda)\rightarrow \cdots\otimes B_{i_2}\otimes B_{i_1}\otimes T_\Lambda.
$$
\end{itemize}

There are other realizations in terms of irreducible components of Nakajima 
quiver varieties, Mirkovic-Vilonen cycles/polytopes, Nakajima monomials, etc. 

\subsection{Kashiwara crystals and Hecke algebras}

Let us return to our theme, Hecke algebras. We mainly focus on type $B$ or
its generalization to type $(d,1,n)$. 
Brou\'e, Malle and Rouquier introduced cyclotomic Hecke algebras. 
Let $W$ be a complex reflection group, $\mathcal A$ the arrangement 
consisting of reflection hyperplanes in the defining $W$-module $V$. 
Then they introduced the 
Knizhnik-Zamolodchikov equation on the complement 
$V\setminus\mathcal A$, 
and the cyclotomic Hecke algebra is defined as a quotient of the group algebra of
$\pi_1((V\setminus\mathcal A)/W)$. When $W=G(d,1,n)$, we have the AK-algebra over $\C$, 
which is obtained from the definition of $\H(W(B_n),S,L)$ by replacing 
the quadratic relation for $T_0$ with 
$(T_0-q^{\gamma_1})\cdots (T_0-q^{\gamma_d})=0$. Hecke algebras of type B 
are special cases of these algebras. Assume that $q$ is a primitive $e^{th}$ 
root of unity and $e\geq2$. Then we consider $\mathfrak g$ of type $A^{(1)}_{e-1}$ 
and the dominant integral weight $\Lambda=\sum_{j=1}^d\Lambda_{\gamma_j}$. 
We have $V_v(\Lambda_{\gamma_j})\subseteq\mathcal F_v(\Lambda_{\gamma_j})$, 
for $1\leq j\leq d$. By using the coproduct 
\begin{gather*}
\Delta(e_i)=1\otimes e_i+e_i\otimes v^{-h_i},\;\;
\Delta(f_i)=f_i\otimes 1+v^{h_i}\otimes f_i\\
\Delta(v^h)=v^h\otimes v^h,\;
\text{for $h\in P^{\vee}$},
\end{gather*}
we define 
$$
\mathcal F_v(\Lambda)=\mathcal F_v(\Lambda_{\gamma_d})\otimes\cdots\otimes
\mathcal F_v(\Lambda_{\gamma_1}).
$$
Denote $\lambda^{(d)}\otimes\cdots\otimes\lambda^{(1)}$ by 
$\underline\lambda=(\lambda^{(1)},\dots,\lambda^{(d)})$. The set of 
colored multipartitions becomes the $\mathfrak g$-crystal 
associated with $\mathcal F_v(\Lambda)$. The $U_v(\mathfrak g)$-submodule 
generated by the vacuum of $\mathcal F_v(\Lambda)$ is isomorphic to 
$V_v(\Lambda)$ and it defines the connected component of the crystal of 
the colored multipartitions. We say that $\underline\lambda$ is 
\emph{Kleshchev}  
if it belongs to the component. Hence, we have a realization of $B(\Lambda)$ 
by Kleshchev multipartitions, and we identify them hereafter. 
Now, we have the canonical basis $\{G(\underline\mu)\mid \underline\mu:\text{Kleshchev}.\}$ 
of $V_v(\Lambda)\subseteq\mathcal F_v(\Lambda)$. We may expand $G(\underline\mu)$ in 
$\mathcal F_v(\Lambda)$ and write 
$$
G(\underline\mu)=\sum_{\underline\lambda} 
d_{\underline\lambda,\underline\mu}(v)\underline\lambda.
$$
It is known that $d_{\underline\lambda,\underline\mu}(v)\in v\Z_{\geq0}[v]$ if 
$\underline\lambda\neq\underline\mu$, and 
$d_{\underline\lambda,\underline\lambda}(v)=1$. We specialize at $v=1$ and write
$G(\underline\mu)=\sum_{\underline\lambda} 
d_{\underline\lambda,\underline\mu}(1)\underline\lambda$ by abuse of notation. 
This is an equality in the nondeformed Fock space $\mathcal F(\Lambda)$. 
We denote the $\mathfrak g$-submodule of $\mathcal F(\Lambda)$ generated by 
the vacuum by $V(\Lambda)$. 

Denote by $\H_n^\Lambda(q)$ the cyclotomic Hecke algebra whose parameters are 
specified above. Dipper, James and Mathas showed that $\H_n^\Lambda(q)$ is 
a cellular $R$-algebra and the poset is the set of multipartitions. 
The cell modules are called \emph{Specht modules} and we denote them by 
$S^{\underline\lambda}$. $D^{\underline\lambda}$ is the module obtained 
by factoring out the radical of the bilinear form on $S^{\underline\lambda}$. 
An old theorem of mine then says the following. 
In (1), we see that the cellular algebra structure fits well in the 
crystal picture. 

\begin{theorem}[A]
Let $\mathfrak g$, $\Lambda$, $V(\Lambda)\subseteq\mathcal F(\Lambda)$ and 
$\H_n^\Lambda(q)$ be as above.
\begin{enumerate}
\item[(1)]
$D^{\underline\mu}\neq0$ if and only if $\underline\mu$ is Kleshchev. Hence, 
the union for $n\ge 0$ of the set of (isomorphism classes of) simple
$\H_n^\Lambda(q)$-modules has a structure
of $\mathfrak g$-crystal, which is isomorphic to $B(\Lambda)$.
\item[(2)]
Let $K_n(\Lambda)=K(\H_n^\Lambda(q)\text{\rm -mod})\otimes\mathbb Q$, or 
equivalently, $K^{\rm split}(\H_n^\Lambda(q)\text{\rm -proj})\otimes\mathbb Q$. Then 
$K(\Lambda)=\oplus_{n\geq0}K_n(\Lambda)$ becomes a $\mathfrak g$-module, which is 
isomorphic to the $\mathfrak g$-module $V(\Lambda)$. 
\item[(3)]
Identify $K(\Lambda)$ with $V(\Lambda)\subseteq\mathcal F(\Lambda)$ 
in the unique way by the requirement that 
$D^{\underline\emptyset}=P^{\underline\emptyset}$ corresponds to 
$\underline\emptyset$. 
Let $P^{\underline\mu}$ be the projective cover of 
$D^{\underline\mu}$. Then we have 
$$
[P^{\underline\mu}]=\sum_{\underline\lambda}
d_{\underline\lambda,\underline\mu}\underline\lambda,
$$
where $d_{\underline\lambda,\underline\mu}=[S^{\underline\lambda}:D^{\underline\mu}]$ 
are decomposition numbers. 
\item[(4)]
Suppose that the characteristic of $R$ is $0$. Then 
$[P^{\underline\mu}]=G(\underline\mu)$. In particular, we have 
$d_{\underline\lambda,\underline\mu}=d_{\underline\lambda,\underline\mu}(1)$. 
\end{enumerate}
\end{theorem}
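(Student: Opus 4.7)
The plan is to lift the Chevalley generators $e_i, f_i$ of $\mathfrak g=\hat{sl}_e$ to exact, biadjoint endofunctors on $\oplus_{n\ge 0}\H_n^\Lambda(q)\text{-}\fgMod$, and then to identify the resulting categorical module with the Fock space picture via the Dipper-James-Mathas cellular structure.

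First I would introduce $i$-induction and $i$-restriction functors $F_i, E_i$ between $\H_{n-1}^\Lambda(q)\text{-}\fgMod$ and $\H_n^\Lambda(q)\text{-}\fgMod$, obtained by decomposing the ordinary induction/restriction functors attached to the subalgebra inclusion according to the generalized eigenspaces of the Jucys-Murphy element. Using the cellular branching rule one computes that the operators $[F_i], [E_i]$ induced on $K(\Lambda)$ act on the Specht class $[S^{\underline\lambda}]$ exactly as $f_i, e_i$ act on $\underline\lambda\in\mathcal F(\Lambda)$; this is because $\mathrm{Ind}\,S^{\underline\lambda}$ and $\mathrm{Res}\,S^{\underline\lambda}$ carry Specht filtrations indexed by addable/removable nodes of $\underline\lambda$ of the prescribed residues. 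This step produces the $\mathfrak g$-module structure on $K(\Lambda)$.

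This already settles (2) and (3). The $\mathfrak g$-equivariant map sending $[D^{\underline\emptyset}]\mapsto\underline\emptyset$ embeds $K(\Lambda)$ into $\mathcal F(\Lambda)$, with image the submodule $V(\Lambda)$ generated by the vacuum, and the Graham-Lehrer formula $C={}^tDD$ applied to the cellular algebra $\H_n^\Lambda(q)$ yields $[P^{\underline\mu}]=\sum_{\underline\lambda}d_{\underline\lambda,\underline\mu}[S^{\underline\lambda}]$. For (1), I would define Kashiwara operators on the set of nonzero classes $[D^{\underline\mu}]$ by $\tilde f_i[D^{\underline\mu}]=[\mathrm{soc}\,F_iD^{\underline\mu}]$ and $\tilde e_i[D^{\underline\mu}]=[\mathrm{soc}\,E_iD^{\underline\mu}]$, where simplicity of these socles is deduced from the affine Hecke algebra structure underlying $\H_n^\Lambda(q)$ in the manner of Grojnowski-Vazirani. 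The resulting seminormal crystal is generated by the highest weight element $[D^{\underline\emptyset}]$ of weight $\Lambda$ and embeds into the Fock space crystal, so the Kashiwara-Saito criterion identifies it with $B(\Lambda)$; consequently $D^{\underline\mu}\neq 0$ precisely when $\underline\mu$ is Kleshchev.

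The main obstacle is (4), which asks that in characteristic $0$ the classes $[P^{\underline\mu}]$ coincide with the specialization at $v=1$ of Kashiwara's global basis $\{G(\underline\mu)\}\subseteq V_v(\Lambda)\subseteq\mathcal F_v(\Lambda)$. Upper unitriangularity $[P^{\underline\mu}]=\underline\mu+\sum_{\underline\lambda\neq\underline\mu}d_{\underline\lambda,\underline\mu}\underline\lambda$ is immediate from (3) together with the unitriangularity in the cellular decomposition matrix. What remains is bar-invariance: one defines a $\Z$-linear involution on $K(\Lambda)$ from a duality on projectives compatible with the Iwahori involution of $\H_n^\Lambda(q)$, and must verify that under $K(\Lambda)\hookrightarrow\mathcal F(\Lambda)$ this involution matches the $v=1$ specialization of the bar involution on $\mathcal F_v(\Lambda)$ and fixes $[P^{\underline\mu}]$. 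This last identification is the deep point and requires characteristic $0$ in an essential way. Once upper unitriangularity and bar-invariance are in place, the uniqueness statement for the canonical basis of $V_v(\Lambda)$ forces $[P^{\underline\mu}]=G(\underline\mu)$, and specialization at $v=1$ yields $d_{\underline\lambda,\underline\mu}=d_{\underline\lambda,\underline\mu}(1)$.
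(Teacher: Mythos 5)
The paper does not actually prove this theorem: it is quoted as ``an old theorem of mine'' and the only indication given of the proof is that it ``uses results of Lusztig and Ginzburg on affine Hecke algebras and Lusztig's construction of $U_v^-(\mathfrak g)$'' as the generic composition algebra of the Ringel--Hall algebra of the cyclic quiver. Your route for (1)--(3), via $i$-induction/restriction, Specht filtrations of induced cell modules, and the Grojnowski--Vazirani socle crystal identified by the Kashiwara--Saito criterion, is a legitimate alternative (essentially Grojnowski's algebraic approach), modulo one caveat: an abstract isomorphism between the crystal of simple modules and $B(\Lambda)$ does not by itself yield ``$D^{\underline\mu}\neq0$ iff $\underline\mu$ is Kleshchev,'' because crystal isomorphisms need not respect the Dipper--James--Mathas labelling; the paper itself emphasizes this point and notes that matching the socle-branching crystal with the cellular labelling is a separate nontrivial theorem.

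The genuine gap is in (4). You propose to characterize $[P^{\underline\mu}]$ inside $\mathcal F(\Lambda)$ by upper unitriangularity together with invariance under a bar involution, and then invoke the uniqueness of the canonical basis. This cannot work: the defining characterization of $G(\underline\mu)$ is bar-invariance \emph{together with} the lattice condition $G(\underline\mu)\equiv\underline\mu$ modulo $v\bigoplus\Z[v]\underline\lambda$, and the latter has no meaning in the ungraded Grothendieck group $K(\Lambda)$, which only sees $v=1$. At $v=1$ every specialized canonical basis vector $G(\underline\lambda)(1)$ is fixed by the (specialized) bar involution, so any element of the form $G(\underline\mu)(1)+\sum_{\underline\lambda\triangleright\underline\mu}c_{\underline\lambda}G(\underline\lambda)(1)$ with $c_{\underline\lambda}\in\Z$ is bar-invariant and still unitriangular with leading term $\underline\mu$; uniqueness fails, and no amount of care in constructing the involution on projectives can repair this. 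This is precisely why the actual proof is geometric: Ginzburg's classification of simple modules of the affine Hecke algebra over $\C$ expresses the multiplicities of simples in standard modules as dimensions of stalks of intersection cohomology complexes on varieties of representations of the cyclic quiver, Lusztig's perverse-sheaf construction of the canonical basis of $U_v^-(\mathfrak g)$ shows that the same stalk dimensions compute the coefficients $d_{\underline\lambda,\underline\mu}(v)$, and a further (nontrivial) comparison transports this from the affine Hecke algebra to the cyclotomic quotient $\H_n^\Lambda(q)$ --- a reduction step absent from your sketch. The hypothesis that $R$ has characteristic $0$ enters through this geometric classification, not through any bar-invariance argument. As written, your treatment of (4) restates the theorem rather than proving it.
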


The proof uses results of Lusztig and Ginzburg on affine Hecke algebras and 
Lusztig's construction of $U_v^-(\mathfrak g)$, which is the generic composition 
algebra of the Ringel-Hall algebra of the cyclic quiver. In \cite{A2} and \cite{A4}
we explained the materials which were used in the proof of the theorem.
Note that the Hall polynomials of the cyclic quiver were given by Jin Yun Guo.
A generalization of
the above theorem to affine Hecke algebras of type $B$ is attempted by Enomoto and
Kashiwara. This involves a new type of crystals, called symmetric crystals. 

The theorem also suggests how to label block algebras. Recall that the block 
algebras of Hecke algebras of type $A$ are labelled by $e$-cores. 
Recall also that if $\mathfrak g$ is of twisted type or 
simply-laced nontwisted type and 
$\Lambda$ has level $1$, then the weight poset of $V(\Lambda)$ 
has the form $\sqcup_{\nu\in W\Lambda}(\nu-\Z_{\geq0}\delta)$. 
Since the $W$-orbit through the empty partition is the set of $e$-cores, 
the set of partitions $\mu$ with $\wt(\mu)\in \nu-\Z_{\geq0}\delta$ 
has the unique $e$-core of weight $\nu$. 
Thus, two partitions $\lambda$ and $\lambda'$ have the same $e$-core 
if and only if $\wt(\lambda)$ and $\wt(\lambda')$ belong to the same 
$\nu-\Z_{\geq0}\delta$. However, 
${\rm ht}(\Lambda-\lambda)={\rm ht}(\Lambda-\lambda')=n$ implies that 
this is equivalent to $\wt(\lambda)=\wt(\lambda')$. In conclusion, 
the block algebras of $\H_n^{\Lambda_m}(q)$ are parametrized by 
the weight set of $V(\Lambda_m)$. This picture is proved to be true 
for general $\Lambda$ by Lyle and Mathas \cite{LM}.

\begin{theorem}[Lyle-Mathas]
The block decomposition of $\H_n^\Lambda(q)$ is the same as the weight space 
decomposition $\oplus_{\mu:{\rm ht}(\Lambda-\mu)=n}V(\Lambda)_\mu$. 
In particular, the block algebras are labelled by 
$$\{\mu\in P\mid {\rm ht}(\Lambda-\mu)=n,\;V(\Lambda)_\mu\neq0\}.$$ 
\end{theorem}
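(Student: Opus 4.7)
For a multipartition $\underline\lambda$ write $c(\underline\lambda)=\sum_{x\in\underline\lambda}\alpha_{r(x)}$, so the weight of $\underline\lambda$ in $\mathcal F(\Lambda)$ is $\Lambda-c(\underline\lambda)$. The statement amounts to the assertion that two simples $D^{\underline\mu},D^{\underline{\mu}'}$ of $\H_n^{\Lambda}(q)$ lie in the same block if and only if $c(\underline\mu)=c(\underline{\mu}')$. Since $\H_n^{\Lambda}(q)$ is a finite-dimensional algebra, same-block is equivalent to same central character, so my strategy is to find central elements that distinguish contents, and then connect up Kleshchev labels of a fixed content inside a single block.

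For the easy direction (same block $\Rightarrow$ same content), I would use the Jucys--Murphy elements $L_1,\dots,L_n$ of $\H_n^{\Lambda}(q)$, whose symmetric functions are central. On the seminormal/cellular basis of a Specht module $S^{\underline\lambda}$ indexed by standard $\underline\lambda$-tableaux, the $L_i$ act upper-triangularly with diagonal entries $q^{r(x)}$ as $x$ ranges over the nodes of $\underline\lambda$. Consequently every symmetric polynomial in the $L_i$ acts on $S^{\underline\lambda}$ as the scalar obtained by evaluation on the multiset $\{q^{r(x)}\mid x\in\underline\lambda\}$, and the elementary symmetric polynomials jointly recover this multiset, hence recover $c(\underline\lambda)$. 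Different contents therefore produce different central characters, forcing different blocks.

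For the hard direction (same content $\Rightarrow$ same block), let $\underline\mu,\underline\mu'$ be Kleshchev with $c(\underline\mu)=c(\underline{\mu}')$. My approach is to combine the categorification of Theorem~A with connectivity of weight spaces under the categorified $\mathfrak g$-action. The $i$-induction and $i$-restriction functors $F_i,E_i$ between $\H_n^{\Lambda}(q)$-mod and $\H_{n\pm 1}^{\Lambda}(q)$-mod are exact, send projectives to projectives, and categorify the Chevalley generators acting on $K(\Lambda)\simeq V(\Lambda)$; at the level of blocks each $F_i,E_i$ maps an individual block into a single block, shifting content by $\pm\alpha_i$. Starting from any Kleshchev $\underline\mu$ of content $c(\underline\mu)$ and using the crystal operators to descend to $\underline\emptyset$ and then ascend back to $\underline{\mu}'$, together with the Lyle--Mathas combinatorial analysis, one produces a chain $\underline\mu=\underline\nu_0,\underline\nu_1,\dots,\underline\nu_k=\underline{\mu}'$ of Kleshchev labels of the same content such that consecutive $\underline\nu_{j-1},\underline\nu_j$ occur as composition factor labels of a common Specht module $S^{\underline\lambda_j}$, which places them in a common block.

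The main obstacle is this hard direction: showing that each content class forms a \emph{single} block, rather than merely a union of blocks. The easy direction is a routine Jucys--Murphy calculation, while the hard one requires a genuine connectivity argument inside the cellular structure of $\H_n^{\Lambda}(q)$; Lyle and Mathas handle this by an intricate analysis of the Dipper--James--Murphy cellular basis together with the dominance order on multipartitions. A more conceptual alternative would be to invoke the (separately substantial) theorem that the centre of $\H_n^{\Lambda}(q)$ is generated by the symmetric functions in the $L_i$; with that in hand, equality of central characters on JM-symmetric polynomials would directly force common blockhood and the theorem would follow from Step~1 alone.
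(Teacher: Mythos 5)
There is a genuine gap, and you have in fact flagged it yourself. Your ``easy direction'' is fine: the symmetric polynomials in the Jucys--Murphy elements $L_1,\dots,L_n$ are central in $\H_n^\Lambda(q)$, they act triangularly on the standard basis of $S^{\underline\lambda}$ with constant diagonal given by evaluation at the residue multiset $\{q^{r(x)}\mid x\in\underline\lambda\}$, and that multiset recovers $c(\underline\lambda)=\sum_x\alpha_{r(x)}$; hence blocks refine residue-content classes. But the substance of the theorem is the converse, that each content class is a \emph{single} block, and at that point your argument bottoms out in the phrase ``together with the Lyle--Mathas combinatorial analysis, one produces a chain $\underline\nu_0,\dots,\underline\nu_k$\dots''. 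That analysis \emph{is} the theorem; invoking it is circular in a blind proof. Moreover, the functorial half of your linkage sketch does not do the work you want: the $i$-induction and $i$-restriction functors preserve residue content and hence permute the content classes, but a priori they only refine the block decomposition within each class --- they give you no mechanism for merging two distinct blocks that happen to share a content, and ``descending to $\underline\emptyset$ and ascending back'' does not return you to a module linked to the one you started from. What is actually needed (and what Lyle and Mathas supply) is a concrete combinatorial linkage argument --- built on the Specht-module/cellular structure and, in their treatment, a Jantzen-type sum formula --- showing that any two multipartitions with the same residue content are connected by a chain of pairs each occurring in a common indecomposable. Your proposal names the right target but does not produce that argument.

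For comparison: the survey itself does not prove this statement either; it only gives the level-one heuristic (blocks of type $A$ Hecke algebras labelled by $e$-cores, i.e.\ by the $W$-orbit $W\Lambda_m$ and the imaginary-root direction) and then cites \cite{LM} for general $\Lambda$. So your reduction of the theorem to ``blocks refine weight spaces'' plus ``each weight space is connected'' is the correct architecture, and your first half is a complete proof of the first assertion; the second assertion remains unproved in your write-up.
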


Denote by $B^\Lambda_\mu(q)$ the block algebra labelled by $\mu$. Then 
the set of simple $B_\mu^\Lambda(q)$-modules is 
$\{D^{\underline\lambda}\mid \underline\lambda\in B(\Lambda)_\mu\}$ and 
if the characteristic of $R$ is $0$, we can compute the decomposition 
matrix of $B_\mu^\Lambda(q)$. 

We explain four more applications of the theory of crystals and 
the canonical bases. The first is the modular 
branching rule. Let $D^{\underline\lambda}$ be a simple 
$\H_n^\Lambda(q)$-module. Its restriction to $\H_{n-1}^\Lambda(q)$ is 
not semisimple in general. The modular branching rule 
is an explicit formula to describe which simple 
$\H_{n-1}^{\Lambda}(q)$-modules appear in the socle. 
This is a suitable generalization of the ordinary branching 
problem. In \cite{GV}
Grojnowski and Vazirani proved that 
$\Soc(D^{\underline\lambda}|_{\H_{n-1}^{\Lambda}(q)})$ is multiplicity free. 
They also showed that the set of simple $\H_n^\Lambda(q)$-modules becomes 
a crystal, which is isomorphic to $B(\Lambda)$ again. Note however that 
crystal isomorphisms do not respect the labelling of simple 
$\H_n^\Lambda(q)$-modules. In the next theorem from \cite{A5}, we proved that 
the isomorphism in question is the identity map. 

\begin{theorem}[A]
The modular branching rule is given by
$$\Soc D^{\underline\mu}|_{\H_{n-1}^{\Lambda}(q)}\simeq
\bigoplus_{i\in I=\Z/e\Z} D^{\tilde e_i\underline\mu}.$$ 
\end{theorem}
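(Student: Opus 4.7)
The plan is to compare two $\mathfrak g$-crystal structures on the set $\bigsqcup_{n\geq 0}\Irr(\H_n^\Lambda(q))$---the one constructed by Grojnowski and Vazirani from the socle branching, and the one transferred from Kleshchev multipartitions via the labeling $\underline\mu\mapsto D^{\underline\mu}$ of Theorem A(1)---and to show that they coincide literally under the identity map on $\Irr$, not just up to crystal isomorphism. Once this is done, the stated formula is merely a reformulation of the Grojnowski-Vazirani construction.

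First, I would set up the $i$-restriction functors. Using the central characters arising from the Jucys-Murphy elements of $\H_n^\Lambda(q)$, one obtains exact endofunctors $e_i$ on $\H_n^\Lambda(q)\text{-mod}$, for $i\in I=\Z/e\Z$, which take the generalized residue-$i$ eigenspace of the appropriate Jucys-Murphy element and satisfy $\mathrm{res}^{\H_n^\Lambda(q)}_{\H_{n-1}^\Lambda(q)}=\bigoplus_{i\in I}e_i$. By the mechanism underlying Theorem A(2,3), these functors lift the Chevalley generators $e_i$ of $\mathfrak g$ acting on $V(\Lambda)\subseteq\mathcal F(\Lambda)$; in particular the class $[\mathrm{soc}(e_iD^{\underline\mu})]$ in $K(\Lambda)$ is controlled by the action of $e_i$ on $[D^{\underline\mu}]$ in the Fock-space realization.

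Second, I would invoke the Grojnowski-Vazirani theorem cited in the paragraph preceding the statement: $\mathrm{soc}(e_iD^{\underline\mu})$ is simple whenever it is nonzero, and the rule $D^{\underline\mu}\mapsto \mathrm{soc}(e_iD^{\underline\mu})$ defines operators $\tilde e_i^{\mathrm{GV}}$ endowing $\bigsqcup_n\Irr(\H_n^\Lambda(q))$ with a connected normal $\mathfrak g$-crystal structure $\mathcal C_{\mathrm{GV}}$ isomorphic to $B(\Lambda)$. The weight of $D^{\underline\mu}$ in $\mathcal C_{\mathrm{GV}}$ is determined by the $e_i$-categorification of step one and therefore equals $\wt(\underline\mu)$ in the Kleshchev realization of $B(\Lambda)$ furnished by Theorem A(1).

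Third, I would conclude by the rigidity of $B(\Lambda)$. Let $\mathcal C_{\mathrm{Kl}}$ denote the crystal structure on $\Irr$ transported from Kleshchev multipartitions. Both $\mathcal C_{\mathrm{GV}}$ and $\mathcal C_{\mathrm{Kl}}$ are connected normal crystals isomorphic to $B(\Lambda)$, with the same weight function on $\Irr$, and with the trivial module $D^{\underline\emptyset}$ as their unique element of highest weight $\Lambda$. Choose any crystal isomorphism $\phi:\mathcal C_{\mathrm{Kl}}\to\mathcal C_{\mathrm{GV}}$; since $\phi$ preserves weights, $\phi(D^{\underline\emptyset})=D^{\underline\emptyset}$. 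Viewing $\phi$ as a crystal automorphism of $B(\Lambda)$, it is strict (normal crystals), commutes with all $\tilde f_i$, and thus is determined by its value on $u_\Lambda$; since it fixes the highest weight element, $\phi=\id$. Hence $\mathcal C_{\mathrm{GV}}=\mathcal C_{\mathrm{Kl}}$, which gives $\tilde e_i^{\mathrm{GV}}(D^{\underline\mu})=D^{\tilde e_i\underline\mu}$ and the desired socle formula.

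The main obstacle is the first step, namely the construction of the $e_i$ functors together with the verification that they categorify the correct Chevalley generators on $V(\Lambda)\subseteq\mathcal F(\Lambda)$ in a way which is faithful with respect to the cellular (Dipper-James-Mathas) labeling. This compatibility is what ultimately pins down the two crystal structures to the same weight function and so enables the rigidity argument to close the proof.
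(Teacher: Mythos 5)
Your steps 1 and 2 are a reasonable summary of the setup (the $i$-restriction functors cut out of restriction via Jucys--Murphy eigenvalues, the Grojnowski--Vazirani crystal, and the matching of weight functions), but step 3 --- the ``rigidity'' argument --- is where the proof breaks, and it breaks at exactly the point the paper flags. An isomorphism $\phi:\mathcal C_{\mathrm{Kl}}\to\mathcal C_{\mathrm{GV}}$ intertwines $\tilde f_i^{\mathrm{Kl}}$ with $\tilde f_i^{\mathrm{GV}}$; it is not an automorphism of a single crystal structure, so the fact that a crystal automorphism of $B(\Lambda)$ fixing $u_\Lambda$ is trivial does not apply. What you actually get from $\phi(D^{\underline\emptyset})=D^{\underline\emptyset}$ and strictness is $\phi(\tilde f_{i_1}^{\mathrm{Kl}}\cdots\tilde f_{i_N}^{\mathrm{Kl}}D^{\underline\emptyset})=\tilde f_{i_1}^{\mathrm{GV}}\cdots\tilde f_{i_N}^{\mathrm{GV}}D^{\underline\emptyset}$, i.e.\ that $\phi$ is the \emph{unique} isomorphism; but $\phi=\id$ is equivalent to the theorem itself, not a consequence of uniqueness. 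Two connected normal crystal structures on the same set, with the same weight function and the same highest weight element, can perfectly well differ: twist one by a weight-preserving permutation that moves elements inside a weight space of multiplicity at least $2$ and fixes the highest weight element. The paper makes this point explicitly twice: in the Example of 3.1 (``if $B$ is given two crystal structures which are isomorphic, it does not mean that $\tilde e_i$ and $\tilde f_i$ of the two crystal structures coincide'') and in the paragraph immediately preceding the theorem (``Note however that crystal isomorphisms do not respect the labelling of simple $\H_n^\Lambda(q)$-modules'').

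Closing this gap is the actual content of \cite{A5}: one must compare the operators themselves, not the isomorphism classes of the crystals. Concretely, one has to show that the module-theoretic quantity $\max\{k\mid \Soc(e_i^{(k)}D^{\underline\mu})\neq0\}$ equals the combinatorial $\epsilon_i(\underline\mu)$ of the Kleshchev multipartition, and that the socle of $e_iD^{\underline\mu}$ (resp.\ the top of $f_iD^{\underline\mu}$) is labelled by $\tilde e_i\underline\mu$ (resp.\ $\tilde f_i\underline\mu$) in the Dipper--James--Mathas cellular labelling. This requires the canonical basis description of $[P^{\underline\mu}]$ from Theorem A(3)--(4) (the leading behaviour of $f_iG(\underline\mu)$ in the Fock space), adjointness of the divided-power restriction and induction functors, and an argument to pass from characteristic $0$ to positive characteristic. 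None of this is supplied by your argument, and the weight-function comparison in your step 2 is too coarse to substitute for it.
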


Thus, the modular branching rule has a very crystal theoretic description. 

The second is about representation type \cite{A3}. 
For Hecke algebras of type $A$, it was settled by Erdmann and Nakano 
by different methods. We have obtained the result for any $L$, but instead of 
preparing further notations, we state the result only in the case when 
$L$ is the length function. Blockwise determination is in progress. 

\begin{theorem}[A]
Let $W$ be a finite Weyl group without exceptional components, 
$P_W(x)=\sum_{w\in W}x^{\ell(w)}$ the Poincar\'e polynomial of $W$. 
We suppose that $R$ is an algebraically closed field. 
Then $\H_R=\H(W,S,L=\ell)\otimes R$ is
\begin{enumerate}
\item[(i)]
semisimple if $P_W(q)\neq0$.
\item[(ii)]
of finite type but not semisimple if 
$$
\max\{k\in\Z_{\geq0}\mid \text{$(x-q)^k$ divides $P_W(x)$.}\}=1.
$$ 
\item[(iii)]
of tame type but not of finite type if $q=-1\neq 1$ and 
$$
\max\{k\in\Z_{\geq0}\mid \text{$(x-q)^k$ divides $P_W(x)$.}\}=2. 
$$
\item[(iii)]
of wild type otherwise. 
\end{enumerate}
\end{theorem}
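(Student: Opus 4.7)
The plan is to reduce to block algebras via Lyle--Mathas, identify each block with a cellular algebra indexed by Kleshchev multipartitions of a fixed weight via Theorem~A, and then read off the representation type from this combinatorial data together with classical results on symmetric and biserial algebras. First, $W$ is a product of its irreducible classical components, so $\H_R$ decomposes as a tensor product of Hecke algebras of types $A_{n-1}$, $B_n$, $D_n$, and each block of $\H_R$ is a tensor product of blocks of these factors. Part~(i) is the classical Gyoja--Uno criterion: the Schur elements $c_E$ specialize to nonzero scalars in $R$ precisely when $P_W(q)\ne 0$, and simultaneous invertibility of all Schur elements of the symmetric algebra $\H_R$ is equivalent to its semisimplicity.

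For the non-semisimple cases, the first key step is the combinatorial identity that, when $q$ is a primitive $e$-th root of unity, the largest $k$ with $(x-q)^k\mid P_W(x)$ equals the maximum total $e$-weight (summed over the irreducible factors) attained by a block of $\H_R$. In type $A$ this follows from $P_{S_n}(x)=\prod_{i=1}^n [i]_x$ and the fact that $\lfloor n/e\rfloor$ is the maximal partition weight of a block; in types $B$ and $D$ one unfolds $P_W(x)$ into products of quantum integers and matches exponents with the weight set of $V(\Lambda)$ provided by Lyle--Mathas, and multiplicativity of $P_W$ and block labels handles products. The second key step is to pin down the representation type of each tensor product of blocks by its total weight $w$ and by $e$: $w=0$ gives a matrix algebra; $w=1$ gives a Brauer tree algebra on a line with $e$ edges, hence finite type; $w=2$ at $e=2$ gives a tame symmetric algebra (either a special biserial weight-$2$ block or a tensor of two weight-$1$ blocks, both tame in this case); all other non-semisimple possibilities are wild. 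The crystal-theoretic Theorem~A is the bridge here: it supplies the number of simples, the dimensions of the Specht modules, and the decomposition matrix of each individual block in terms of Kleshchev multipartitions of weight $w$, and from this combinatorial data Gabriel's theorem and Erdmann's classification of tame symmetric algebras fix the type.

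The hard part is the detailed bookkeeping in types $B$ and $D$, especially in the tame case. Morita reductions of Dipper--Mathas type split a block of the cyclotomic Hecke algebra into a tensor product of type-$A$ blocks whose weights sum to the total, but to invoke them one must know the precise partition of weight that occurs; this is controlled by the position of $\mu$ in $V(\Lambda)\subseteq\mathcal F(\Lambda)$ and is exactly where the categorification picture of Theorem~A is needed rather than purely classical bookkeeping. For type $D$ one realizes $W(D_n)$ as an index-two subgroup of $W(B_n)$ and transfers finite/tame/wild via Clifford theory, which requires that the restriction and extension functors preserve representation type, itself a standard but non-trivial check. Finally, wildness in case~(iv) is proved block-locally by exhibiting a block (or tensor of blocks) of total weight $\ge 2$ with $e\ne 2$ or of total weight $\ge 3$, and by showing directly that its separated quiver contains a wild subquiver; the count of simples furnished by Theorem~A is what guarantees this subquiver is big enough.
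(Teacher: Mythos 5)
This survey does not actually prove the theorem: it is quoted from \cite{A3} with no argument, and the author even remarks that the blockwise determination was still in progress, so there is no in-paper proof to compare against. Measured against the actual proof in \cite{A3}, your outline reproduces its architecture correctly at the level of strategy --- Gyoja--Uno for (i), block weights versus the order of vanishing of $P_W$ at $q$, Dipper--Mathas Morita reductions in type $B$, Clifford theory for type $D$, Erdmann's classification of tame symmetric algebras, and quiver arguments for wildness --- but several load-bearing steps are asserted rather than established.

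Concretely: (1) the identity between $\max\{k : (x-q)^k\mid P_W(x)\}$ and the maximal block weight is not a one-line ``matching of exponents'' in positive characteristic. If $p=\operatorname{char}R$ divides a relevant index $i$, then $[i]_x$ acquires the factor $(x-q)$ to multiplicity $p^{\nu_p(i)}$ rather than $1$ (for instance $[p]_x\equiv(x-1)^{p-1}$ when $q=1$), and this is exactly the bookkeeping that separates the group-algebra case, governed by Sylow subgroups, from $q\neq1$; your argument would, as written, misclassify $\mathbb{F}_3 S_3$. (2) You invoke Theorem A for ``the decomposition matrix of each individual block,'' but part (4) of that theorem holds only in characteristic $0$; in positive characteristic Theorem A yields the number of simples per block and nothing more, so the $\Ext^1$-data needed both to place a tame block in Erdmann's classification and to exhibit a wild separated subquiver must be computed by other means --- this computation is the bulk of \cite{A3} and is absent here. (3) The type $B$ blocks at non-generic parameter ratios (precisely where Dipper--Mathas fails, e.g.\ $e=2$ with $L=\ell$, when the ratio of the two eigenvalues of $T_0$ is a power of $q$) are the genuinely hard cases, including the tame block of $B_2$ at $q=-1$; saying their structure ``is controlled by the position of $\mu$ in $V(\Lambda)$'' identifies neither the Morita class nor the quiver of such a block. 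Until these three points are supplied, the proposal is a correct table of contents for the proof rather than a proof.
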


The third is about an old conjecture of Dipper, James and Murphy. 
When they started the study of $\H_n=\H(W(B_n),S,L)\otimes R$ motivated 
by classification of simple modules of finite groups of Lie type in the 
non-defining characteristic case, Kashiwara crystal was not available.
The Specht module theory they constructed
is the special case of the cellular structure we explained above, 
and they conjectured  
when $D^{\underline\lambda}$ was nonzero. The idea resembles the 
highest weight theory. 
Define the Jucys-Murphy elements $t_1,\dots,t_n$ by 
$t_1=T_0$ and $t_{i+1}=T_it_iT_i$, for $1\leq i\leq n-1$. 
They generate a commutative subalgebra $\mathcal T_n$, 
which plays the role of the Cartan subalgebra. 
\begin{enumerate}
\item[(i)]
One dimensional representations of $\mathcal T_n$ are called \emph{weights}.
\item[(ii)]
For an $\H_n$-module, 
the generalized simultaneous eigenspace decomposition of the module 
is called the \emph{weight decomposition}.
\end{enumerate}
Weights are labelled by bitableaux. 
$\underline\lambda$ is \emph{$(Q,e)$-restricted} if there exists a weight 
which corresponds to a bitableau of shape $\underline\lambda$ 
such that it appears in $S^{\underline\lambda}$ but does not appear in 
$S^{\underline\mu}$, for all $\underline\mu\triangleleft\underline\lambda$. 
The following statement was the conjecture made by 
Dipper, James and Murphy. Jacon and I proved this conjecture in \cite{AJ}
by using one of the main results of \cite{AKT}.

\begin{theorem}[A-Jacon]
$D^{\underline\lambda}\neq0$ if and only if 
$\underline\lambda$ is $(Q,e)$-restricted. 
\end{theorem}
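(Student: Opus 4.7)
The plan is to combine Theorem A(1), which identifies the set of $\underline\lambda$ with $D^{\underline\lambda}\neq 0$ with the set of Kleshchev multipartitions, with a combinatorial matching between Kleshchev and $(Q,e)$-restricted multipartitions. The task therefore reduces to: $\underline\lambda$ is Kleshchev if and only if $\underline\lambda$ is $(Q,e)$-restricted.

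First I would do the easy direction. Suppose $\underline\lambda$ is $(Q,e)$-restricted, witnessed by a weight $\chi$ appearing in $S^{\underline\lambda}$ but in no $S^{\underline\mu}$ with $\underline\mu \triangleleft \underline\lambda$. Since generalized eigenspaces for the commuting Jucys--Murphy family $t_1,\dots,t_n$ are preserved under composition factors, and since the cellular structure gives
$$[S^{\underline\lambda}] = [D^{\underline\lambda}] + \sum_{\underline\mu \triangleleft \underline\lambda} d_{\underline\lambda,\underline\mu}[D^{\underline\mu}],$$
the weight $\chi$ must occur in some $D^{\underline\nu}$ with $\underline\nu \leq \underline\lambda$. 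If $\underline\nu \triangleleft \underline\lambda$ then $\chi$ would occur in $S^{\underline\nu}$ (as $D^{\underline\nu}$ is a quotient of $S^{\underline\nu}$), contradicting the hypothesis. Hence $\underline\nu = \underline\lambda$ and $D^{\underline\lambda} \neq 0$, i.e., $\underline\lambda$ is Kleshchev.

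For the reverse direction, I would exploit the crystal picture. A Kleshchev $\underline\lambda$ is realised in $B(\Lambda)$ by a good-node path $\underline\emptyset \to \underline\lambda^{(1)} \to \cdots \to \underline\lambda^{(n)} = \underline\lambda$; reading off residues of successively added good nodes yields a residue sequence $\mathbf i = (i_1,\dots,i_n)$, which is the weight $\chi_T$ of a standard bitableau $T$ of shape $\underline\lambda$. I claim $\chi_T$ witnesses the $(Q,e)$-restrictedness of $\underline\lambda$: it appears in $S^{\underline\lambda}$ by construction, and the assertion to establish is that it appears in no $S^{\underline\mu}$ for $\underline\mu \triangleleft \underline\lambda$. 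This is precisely the content invoked from \cite{AKT}, which compares residue sequences generated by crystal paths against the dominance order on multipartitions; combined with the Grojnowski--Vazirani modular branching rule made explicit in the previous theorem (which identifies the crystal on simples with the combinatorial one), it produces the required incompatibility.

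The main obstacle is exactly this last incompatibility. The dominance order on multipartitions and the combinatorics of good nodes are both delicate in higher level, and one must ensure that the DJM dominance order matching the cellular filtration is compatible with the crystal ordering arising from the Fock space theory of Section 3. Concretely, one needs to argue inductively that if a bitableau of shape $\underline\mu \triangleleft \underline\lambda$ had residue sequence $\mathbf i$, then running the good-node procedure in reverse (via $\tilde e_{i_n}, \tilde e_{i_{n-1}}, \dots$) would lead to a contradiction with the fact that the path was built from good (rather than merely normal) nodes at each step of the Kleshchev construction. Packaging this carefully, using the weight/block labelling $V(\Lambda)_\mu$ to control the shape, is the heart of the AKT input and the main nontrivial step.
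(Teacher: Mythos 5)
Your proposal follows the same route the paper takes: both reduce the theorem, via Theorem A(1) (nonvanishing of $D^{\underline\lambda}$ iff $\underline\lambda$ is Kleshchev), to the equivalence ``Kleshchev $\Leftrightarrow$ $(Q,e)$-restricted,'' and both defer the hard direction of that equivalence to the main result of \cite{AKT} as packaged in \cite{AJ}. The paper gives no further detail beyond this reduction and the citations, and your sketch of the easy direction (via the unitriangular cellular filtration and the invariance of generalized Jucys--Murphy eigenspaces) together with your identification of where the \cite{AKT} input enters is consistent with it.
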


As we already know that $D^{\underline\lambda}\neq0$ if and only if 
$\underline\lambda$ is Kleshchev, what we actually proved is the 
assertion that $\underline\lambda$ is Kleshchev if and only if 
$\underline\lambda$ is $(Q,e)$-restricted.

The fourth is a remark on the Mullineux map for the symmetric
group and the Hecke algebra of type $A$. The algebras have the
involution $T_s\mapsto -T_s^{-1}$ and the involution induces
a permutation of simple modules.
The permutation is described by the transpose of partitions
when the algebra is semisimple, but it was considered to be
difficult to describe the permutation when the algebra is not
semisimple. The Mullineux map was its conjectural description and
it took long time before Kleshchev proved the
conjecture. In \cite{AKT}, we have also proved that
the Mullineux map is always given by transpose of
partitions, if we work in the path model.

\section{Quasihereditary covers of Hecke algebras}
\subsection{The rational Cherednik algebra}
As our main object of study is $\H_n^\Lambda(q)$, 
we focus on the rational Cherednik algebra associated with 
$G(d,1,n)$. To introduce the algebra, we need many notations. Let 
$$
V=\C^n=\C e_1\oplus\cdots\oplus\C e_n,
$$
where $e_1,\dots,e_n$ are standard basis vectors, that is, 
the $i^{th}$ entry of $e_k$ is $\delta_{ki}$. We write elements 
of $V$ by $y=\sum_{i=1}^n y_ie_i$, where $y_i\in\C$, for $1\leq i\leq n$. 
Similarly, let 
$$
V^*=\Hom_\C(V,\C)=\C x_1\oplus\cdots\oplus\C x_n,
$$
where $\langle x_i,e_j\rangle=\delta_{ij}$. We write elements 
of $V^*$ by $\xi=\sum_{i=1}^n \xi_ix_i$, where $\xi_i\in\C$, for $1\leq i\leq n$.
In the rest of the paper, we denote $G(d,1,n)$ by $W$.

\begin{definition}
Let $\zeta=\exp(\frac{2\pi\sqrt{-1}}{d})$. We define $t_i\in W$, for $1\leq i\leq n$, by
$$
t_ie_k=\begin{cases} e_k\quad&(k\neq i)\\
                    \zeta e_k\quad&(k=i),\end{cases}
$$
and $s_{ij;\alpha}\in W$, for $1\leq i<j\leq n$ and $0\leq\alpha<d$, by
$$
s_{ij;\alpha}e_k=\begin{cases} e_k\quad&(k\neq i,j)\\
                    \zeta^{-\alpha}e_j\quad &(k=i)\\
                    \zeta^{\alpha}e_i\quad&(k=j).\end{cases}
$$
\end{definition}
Then the set of complex reflections in $W$ is
$$
S=\{t_i^a \mid 1\leq i\leq n,\; 1\leq a<d\}\sqcup
\{s_{ij;\alpha}\mid 1\leq i<j\leq n,\;0\leq\alpha<d\}.
$$

Let $S_n$ be the symmetric group of degree $n$. $S_n$ acts on 
$V$ by $we_k=e_{w(k)}$, and we may identify 
$S_n$ with $G(1,1,n)\subseteq G(d,1,n)=W$. 
Let $T=\langle t_1,\dots,t_n\rangle$ be the subgroup of $W$ generated by 
$t_1,\dots,t_n$, which is isomorphic to $(\mathbb Z/d\mathbb Z)^n$. 
$W$ is the semidirect product of $T$ (which is a normal subgroup of $W$) and $S_n$. 

We denote the action of $w\in W$ on $V$ by $w:y\mapsto w(y)$, and 
the action of $w\in W$ on $V^*$ by $w:\xi\mapsto w(\xi)$. Note that 
\begin{itemize}
\item[(i)]
if $t=t_1^{a_1}\cdots t_n^{a_n}$ then $t(x_k)=\zeta^{-a_k}x_k$, 
\item[(ii)]
if $w\in S_n$ then $w(x_k)=x_{w(k)}$.
\end{itemize}

For each $s\in S$, we have the reflection hyperplane
$H_s=\{ y\in V\mid s(y)=y\}$. Let $H_i=\Ker(x_i)$ and 
$H_{ij;\alpha}=\Ker(x_i-\zeta^\alpha x_j)$. 
If $s=t_i$ then $H_s=H_i$, and if $s=s_{ij;\alpha}$ then 
$H_s=H_{ij;\alpha}$. We denote the hyperplane arrangement $\{ H_s\mid s\in S\}$ 
by $\mathcal A$.  

\begin{definition}
\begin{itemize}
\item[(1)]
We define the set of roots $\Phi=\{ \alpha_H\mid H\in\mathcal A\}\subseteq V^*$ 
by $\alpha_{H_i}=x_i$ and $\alpha_{H_{ij;\alpha}}=x_i-\zeta^\alpha x_j$. 
\item[(2)]
We define the set of coroots $\Phi^{\vee}=\{ v_H\mid H\in\mathcal A\}\subseteq V$ 
by $v_{H_i}=e_i$ and $v_{H_{ij;\alpha}}=e_i-\zeta^{-\alpha}e_j$.
\end{itemize}
\end{definition}

For each $H\in\mathcal A$, define 
$W_H=\{w\in W\mid w(y)=y, \text{for all $y\in H$.}\}$ and 
$e_H=|W_H|$. If $H=H_i$ then $W_H=\langle t_i\rangle$ 
and $e_H=d$. If $H=H_{ij;\alpha}$ then 
$W_H=\langle s_{ij;\alpha}\rangle$ and $e_H=2$. 

As $V=H\oplus \C v_H$ is a decomposition into a direct sum of 
$W_H$-modules, and $W_H$ acts trivially on $H$, $\C v_H$ affords  
a faithful representation of $W_H$. Thus 
$$
W_H\simeq \{\exp(2\pi\sqrt{-1}a/e_H)\mid 0\leq a<e_H\}\subseteq GL(\C v_H),
$$
and we may define a generator of the cyclic group $w_H\in W_H$ by 
$w_H\leftrightarrow \exp(2\pi\sqrt{-1}/e_H)$. In other words, we define 
$w_H=t_i$ if $H=H_i$ and $w_H=s_{ij;\alpha}$ if $H=H_{ij;\alpha}$. 

\begin{definition}
$\epsilon_{H,k}=\frac{1}{e_H}\sum_{a=0}^{e_H-1}
\exp(2\pi\sqrt{-1}ak/e_H)w_H^a$, for $0\leq k<e_H$. 
\end{definition}

The $W_H$-module $\C\epsilon_{H,k}$ affords the representation 
$w_H\mapsto \exp(-2\pi\sqrt{-1}k/e_H)$. 

\begin{definition}
Let $R$ be a commutative $\C$-algebra. We suppose that parameters 
$\kappa_1,\dots,\kappa_{d-1}, h\in R$ are given. Define 
$$
\underline\kappa=(\kappa_i)_{i\in{\mathbb Z}/d{\mathbb Z}}\;\;
\text{and}\;\;
\underline{h}=(h_i)_{i\in{\mathbb Z}/2{\mathbb Z}}
$$
by extending the kappa parameters by $\kappa_0=\kappa_d=0$ and 
by $h_1=h$, $h_0=h_2=0$. 
Then, for $0\leq k<e_H$, we define 
$$
c_{H,k}=\begin{cases} \kappa_k\;\;&(H=H_i)\\
h_k\;\;&(H=H_{ij;\alpha}).\end{cases}
$$
\end{definition}

\begin{definition}
For $H\in\mathcal A$, we define
\begin{itemize}
\item[(1)]
$\gamma_H=e_H\sum_{k=0}^{e_H-1}(c_{H,k+1}-c_{H,k})\epsilon_{H,k}\in RW_H$. 
\item[(2)]
$a_H=e_H\sum_{k=1}^{e_H-1}c_{H,k}\epsilon_{H,k}\in RW_H$.
\end{itemize}
\end{definition}
If $H=H_{ij;\alpha}$ then $\gamma_H=2hs_{ij;\alpha}$ and 
$a_H=h(1-s_{ij;\alpha})$. 

Now, we are ready to introduce the rational Cherednik algebra. 
Let $R$ be a commutative $\C$-algebra such that parameters 
$\kappa_1,\dots,\kappa_{d-1}, h\in R$ are given. 
The $W$-action on $V$ and $V^*$ naturally defines $W$-action 
on $T(V\oplus V^*)$, and we have the smash product $T(V\oplus V^*)\sharp W$. 
We have the relations $wyw^{-1}=w(y)$ and $w\xi w^{-1}=w(\xi)$, for 
$y\in V$, $\xi\in V^*$ and $w\in W$.

\begin{definition}
The \emph{rational Cherednik algebra} $H_R(\underline\kappa,h)$
(associated with $W$) is the $R$-algebra obtained from the
$R$-algebra $T(V\oplus V^*)\sharp W\otimes_\C R$
by factoring out the two-sided ideal generated by
$$
[y,\xi]-\langle \xi,y\rangle-\sum_{H\in\mathcal A}
\frac{\langle \xi,v_H\rangle\langle\alpha_H,y\rangle}{\langle\alpha_H,v_H\rangle}
\gamma_H,\;\;[y,y'], \;[\xi,\xi'],
$$
where $y,y'$ run through $V$ and $\xi,\xi'$ run through $V^*$. 
\end{definition}

Let $\mathcal D$ be the sheaf of algebraic differential operators on 
$V_{reg}$. Let us consider the trivial $W$-equivariant bundle 
$V_{reg}\times \C W$ on $V_{reg}$ and let $\mathcal O(V_{reg},\C W)$ 
be the global sections. 
$W$ acts on the space by $(w\cdot f)(y)=wf(w^{-1}(y))$ as usual. 
$\mathcal D(V_{reg})$ also acts on the space and we have 
$w\partial_yw^{-1}=\partial_{w(y)}$ and $w\xi w^{-1}=w(\xi)$, where 
$\xi$ is considered as a multiplication operator, and 
$\partial_y=\sum_{i=1}^n y_i\frac{\partial}{\partial x_i}$, for 
$y=\sum_{i=1}^n y_ie_i$. In particular, 
$\mathcal O(V_{reg},\C W)$ is a $\mathcal D(V_{reg})\sharp W$-module. 

\begin{definition}
The operator 
$$
T_y=\partial_y+\sum_{H\in\mathcal A}\frac{\langle\alpha_H,y\rangle}{\alpha_H}a_H
\in \mathcal D(V_{reg})\sharp W\otimes_\C R,
$$
which acts on $\mathcal O(V_{reg},\C W)\otimes_\C R$, is 
called the \emph{Dunkl operator}.
\end{definition}

The following lemma from \cite{EG} is not difficult to prove.

\begin{lemma}[Etingof-Ginzburg]
\item[(1)]
We have an $R$-algebra monomorphism 
$$
H_R(\underline\kappa,h)\longrightarrow \mathcal D(V_{reg})\sharp W\otimes_\C R
$$
given by $\xi\mapsto\xi$, $y\mapsto T_y$ and $w\mapsto w$.
\item[(2)]
$H_R(\underline\kappa,h)=S(V^*)\otimes_\C S(V)\otimes_\C RW$ as 
an $R$-module. In particular, $H_R(\underline\kappa,h)$ is $R$-free of 
infinite rank with basis given by 
$$
\{x_1^{\alpha_1}\cdots x_n^{\alpha_n}e_1^{\beta_1}\cdots e_n^{\beta_n}w\mid
\alpha_i\geq0, \beta_i\geq0, w\in W\}.
$$
\item[(3)]
If we localize the monomorphism in (1), we have 
$$
\mathcal O(V_{reg})\otimes_{\mathcal O(V)}H_R(\underline\kappa,h)
\simeq \mathcal D(V_{reg})\sharp W\otimes_\C R.
$$
In particular, 
$\mathcal O(V_{reg})\otimes_{\mathcal O(V)}H_R(\underline\kappa,h)$ 
is an $R$-algebra. 
\end{lemma}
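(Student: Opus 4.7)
The plan is to realize $H_R(\underline\kappa,h)$ inside $\mathcal D(V_{reg})\sharp W\otimes_\C R$ via the Dunkl operators, deduce the PBW-type decomposition (2) from an order-filtration comparison, and then obtain (3) by localizing.

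First I would verify that the assignment $\xi\mapsto\xi$, $y\mapsto T_y$, $w\mapsto w$ respects all the defining relations of $H_R(\underline\kappa,h)$. The relations $[\xi,\xi']=0$ and $w\xi w^{-1}=w(\xi)$ are automatic since $V^*$ acts by multiplication. The conjugation rule $wT_yw^{-1}=T_{w(y)}$ reduces, via $w\partial_yw^{-1}=\partial_{w(y)}$, to the $W$-equivariance of $\sum_H \frac{\langle\alpha_H,y\rangle}{\alpha_H}a_H$, which follows because $W$ permutes $\mathcal A$ and conjugates $w_H$ to $w_{w(H)}$, hence $\epsilon_{H,k}$ to $\epsilon_{w(H),k}$. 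The mixed relation $[T_y,\xi]=\langle\xi,y\rangle+\sum_H\frac{\langle\xi,v_H\rangle\langle\alpha_H,y\rangle}{\langle\alpha_H,v_H\rangle}\gamma_H$ is a direct computation: $[\partial_y,\xi]=\langle\xi,y\rangle$, and $\bigl[\frac{\langle\alpha_H,y\rangle}{\alpha_H}a_H,\xi\bigr]$ is evaluated by splitting $\xi$ into its $W_H$-fixed part and its multiple of $\alpha_H$, then telescoping the coefficients $c_{H,k}$ in $a_H$; the consecutive differences $c_{H,k+1}-c_{H,k}$ appearing in $\gamma_H$ are exactly what is needed for the pole $1/\alpha_H$ to cancel.

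The most delicate point is $[T_y,T_{y'}]=0$. Writing $T_y=\partial_y+D_y$ with $D_y=\sum_H\frac{\langle\alpha_H,y\rangle}{\alpha_H}a_H$, the terms $[\partial_y,D_{y'}]-[\partial_{y'},D_y]$ are visibly symmetric in $(y,y')$ since $\partial_y\bigl(\frac{\langle\alpha_H,y'\rangle}{\alpha_H}\bigr)=-\frac{\langle\alpha_H,y\rangle\langle\alpha_H,y'\rangle}{\alpha_H^2}$ plus a symmetric correction, so they cancel. The hard part is the double sum $[D_y,D_{y'}]$: for a single hyperplane $H$ it vanishes because $a_H\in\C W_H$ commutes with itself, and for distinct pairs $(H,H')$ the rational-function contributions cancel by the classical Dunkl--Opdam identity for complex reflection groups, whose input is the geometry of the rank-two subarrangements generated by $H$ and $H'$. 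I expect this commutativity to be the main obstacle; everything else is bookkeeping.

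Granting Steps above, one obtains an $R$-algebra map $\phi:H_R(\underline\kappa,h)\to\mathcal D(V_{reg})\sharp W\otimes_\C R$. For (2), the defining relations let one move all $\xi$'s to the left, all $y$'s between them and $W$, and all $w$'s to the right, because $[T_y,\xi]\in RW$ and $[T_y,T_{y'}]=0$, $[\xi,\xi']=0$; induction on total degree expresses any element as an $R$-linear combination of the claimed monomials. For linear independence I would compare leading symbols: in the order filtration on $\mathcal D(V_{reg})\sharp W\otimes_\C R$ one has $\phi(x^\alpha e^\beta w)=x^\alpha\partial^\beta w+(\text{strictly lower order})$, and the family $\{x^\alpha\partial^\beta w\}$ is manifestly $R$-free in $\gr\bigl(\mathcal D(V_{reg})\sharp W\otimes_\C R\bigr)=\mathcal O(V_{reg})\otimes_\C S(V)\otimes_\C RW$. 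This proves (2) and the injectivity of $\phi$ simultaneously, giving (1). Finally, for (3), on $V_{reg}$ each $\alpha_H$ is invertible, so $\partial_y=T_y-\sum_H\frac{\langle\alpha_H,y\rangle}{\alpha_H}a_H$ lies in $\mathcal O(V_{reg})\otimes_{\mathcal O(V)}\phi(H_R)$. Since this ring contains $\mathcal O(V_{reg})$, all $\partial_y$, and all $w\in W$, it equals $\mathcal D(V_{reg})\sharp W\otimes_\C R$; injectivity after localization follows by tensoring the PBW basis of (2) with the $\mathcal O(V)$-flat module $\mathcal O(V_{reg})$.
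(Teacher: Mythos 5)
The paper itself offers no proof of this lemma, deferring entirely to \cite{EG} with the remark that it ``is not difficult to prove''; your proposal correctly reconstructs the standard argument from that source: check the relations on the Dunkl operators (with the commutativity $[T_y,T_{y'}]=0$ as the only genuinely nontrivial point, reduced as usual to rank-two subarrangements), obtain spanning by straightening and linear independence by comparing principal symbols in the order filtration on $\mathcal D(V_{reg})\sharp W\otimes_\C R$, and then localize, using flatness of $\mathcal O(V_{reg})$ over $\mathcal O(V)$ for injectivity. This is essentially the approach the paper is implicitly citing, and the outline is sound.
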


\begin{definition}
Let $z=\sum_{H\in\mathcal A}a_H\in RW$ and define 
$$
\Eu=-z+\sum_{i=1}^n x_ie_i\in H_R(\underline\kappa,h).
$$
The element $\Eu$ is called the \emph{Euler element}. 
\end{definition}

We have $[\Eu, y]=-y$, $[\Eu, \xi]=\xi$ and $[\Eu, w]=0$. Hence 
$H_R(\underline\kappa,h)$ is a $\Z$-graded $R$-algebra. 

\begin{definition}
Let $\mathcal O$ be the full subcategory of $H_R(\underline\kappa,h)\text{-}\Mod$ 
consisting of the objects that satisfy
\begin{itemize}
\item[(i)]
finitely generated as an $H_R(\underline\kappa,h)$-module, i.e. 
$\mathcal O\subseteq H_R(\underline\kappa,h)\text{-}\fgMod$,
\item[(ii)]
locally nilpotent as a $S(V)$-module.
\end{itemize}
\end{definition}

Recall that an $R$-algebra $A$ is \emph{filtered} if there exists a family 
of $R$-submodules $\{F_p(A)\}_{p\in\mathbb Z_{\geq0}}$ such that
\begin{itemize}
\item[(1)]
$1\in F_0(A)$.
\item[(2)]
$F_p(A)\subseteq F_{p+1}(A)$ and $\cup_{p\geq0} F_p(A)=A$. 
\item[(3)]
$F_p(A)F_q(A)\subseteq F_{p+q}(A)$.
\end{itemize}
An $A$-module $M$ is \emph{filtered} if there exists a family 
of $R$-submodules $\{F_p(M)\}_{p\in\mathbb Z}$ such that
\begin{itemize}
\item[(1)]
$F_p(M)=0$, for sufficiently small $p$.
\item[(2)]
$F_p(M)\subseteq F_{p+1}(M)$ and $\cup_{p\in\mathbb Z} F_p(M)=M$. 
\item[(3)]
$F_p(A)F_q(M)\subseteq F_{p+q}(M)$.
\end{itemize}
If $N$ is an $A$-submodule of $M$, then $N$ is also filtered by 
$F_p(N)=F_p(M)\cap N$. The following is well-known. 

\begin{lemma}
Let $A$ be a filtered algebra. Then an $A$-module $M$ is finitely 
generated if and only if $M$ is filtered 
such that $\gr(M)$ is finitely generated as a $\gr(A)$-module. 
\end{lemma}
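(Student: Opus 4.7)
The plan is to prove the two directions separately, with the backward direction requiring an induction on filtration degree that relies essentially on the boundedness-below condition in the definition of a filtered module.

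For the forward direction, suppose $M$ is generated over $A$ by finitely many elements $m_1,\dots,m_k$. I would choose integers $p_1,\dots,p_k$ (for instance all equal to $0$) and put $F_p(M)=\sum_{i=1}^k F_{p-p_i}(A)\,m_i$, with the convention $F_q(A)=0$ for $q<0$. A quick check shows this is a module filtration in the sense of the excerpt: $F_p(M)\subseteq F_{p+1}(M)$, $F_p(M)=0$ for $p<\min_i p_i$, the union exhausts $M$, and $F_p(A)F_q(M)\subseteq F_{p+q}(M)$ by associativity together with $F_p(A)F_{q-p_i}(A)\subseteq F_{p+q-p_i}(A)$. Then the symbols $\overline{m_i}\in F_{p_i}(M)/F_{p_i-1}(M)\subseteq\gr(M)$ generate $\gr(M)$ over $\gr(A)$, because any class $\overline{a\,m_i}$ with $a\in F_{p-p_i}(A)$ is by construction $\sigma(a)\cdot\overline{m_i}$ in $\gr(M)$, where $\sigma(a)$ is the principal symbol of $a$.

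For the backward direction, suppose $M$ is filtered so that $\gr(M)$ is finitely generated over $\gr(A)$. Choose homogeneous generators $\overline{m_1},\dots,\overline{m_k}$ of $\gr(M)$, with $\overline{m_i}\in F_{p_i}(M)/F_{p_i-1}(M)$, and lift them to elements $m_i\in F_{p_i}(M)$. I claim $M=\sum_i A\,m_i$. By hypothesis there exists $p_0$ with $F_p(M)=0$ for $p<p_0$, so it suffices to prove by induction on $p$ that $F_p(M)\subseteq\sum_i A\,m_i$. The base $p<p_0$ is vacuous. For the inductive step, fix $m\in F_p(M)$ and consider its image $\overline{m}\in F_p(M)/F_{p-1}(M)$. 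Write $\overline{m}=\sum_j \sigma(a_j)\cdot\overline{m_{i_j}}$ with $a_j\in F_{p-p_{i_j}}(A)$, where $\sigma(a_j)$ denotes the principal symbol viewed in $\gr(A)$. Then $m-\sum_j a_j\,m_{i_j}$ lies in $F_{p-1}(M)$, and by induction it belongs to $\sum_i A\,m_i$; hence so does $m$.

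I expect no serious obstacle: the content is bookkeeping with filtrations, and the only delicate point is ensuring that the induction in the backward direction actually terminates, which is guaranteed by the bounded-below condition $F_p(M)=0$ for $p\ll 0$. This condition is exactly what distinguishes a well-behaved filtered module from an arbitrary one, and dropping it would break the argument. The forward direction requires a small choice (the offsets $p_i$) that is essentially free, but must be compatible with the algebra filtration starting at $F_0(A)\ni 1$.
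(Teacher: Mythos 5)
Your proof is correct; the paper itself offers no argument here (it simply records the lemma as well-known), and what you have written is the standard one. Both directions are sound: the forward direction's filtration $F_p(M)=\sum_i F_{p-p_i}(A)\,m_i$ does satisfy all three axioms and makes $\gr(M)$ visibly generated by the symbols $\overline{m_i}$, and the backward direction's descending induction on $p$ is legitimately grounded by the condition $F_p(M)=0$ for $p\ll 0$, which you correctly identify as the essential hypothesis.
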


The rational Cherednik algebra is a filtered algebra by the filtration
$$
F_p(H_R(\underline\kappa,h))=S(V^*)\otimes_\C S(V)_{\leq p}\otimes_\C RW. 
$$
As $\gr(H_R(\underline\kappa,h))=S(V\oplus V^*)\sharp W$ is a Noetherian $R$-algebra, 
an $H_R(\underline\kappa,h)$-submodule of a finitely generated 
$H_R(\underline\kappa,h)$-module is again finitely generated. 
Hence $\mathcal O$ is an Abelian category, and indecomposable objects in 
$\mathcal O$ are indecomposable $H_R(\underline\kappa,h)$-modules. 
In fact, $\mathcal O$ is a Serre subcategory of $H_R(\underline\kappa,h)\text{-}\Mod$. 

\begin{example}
Let $E\in \Irr W$ and let $I=(e_1,\dots,e_n)$ be the augmentation ideal of $S(V)$. 
Then $(S(V)/I^{r+1}\otimes_\C E)\otimes_\C R$ is an 
$S(V)\sharp W\otimes_\C R$-module. Define
$$
\Delta_r(E)=H_R(\underline\kappa,h)\otimes_{S(V)\sharp W\otimes_\C R}
(S(V)/I^{r+1}\otimes_\C E)\otimes_\C R.
$$
Then $\Delta_r(E)\in\mathcal O$. If $r=0$ we denote it $\Delta(E)$ and 
call them \emph{standard modules}. 
\end{example}

The following values are important. They will determine the highest weight 
category structure of $\mathcal O$. 

\begin{definition}
Let $E\in\Irr W$. Then $z$ acts on $E\otimes_\C R$ by a scalar multiple. 
We denote the value by $c_E\in R$. 
\end{definition}

\subsection{The existence of a progenerator}
As there are some confusions in \cite{GGOR}, we prove the existence theorem 
when the base ring $R$ is a local ring. I do not know whether 
it holds for arbitrary Noetherian ring. One difficulty is that 
the rational Cherednik algebra is not module-finite over $R$.

\begin{lemma}
\label{decomp}
Suppose that $R$ is a local ring such that the residue field $F$ contains $\C$. 
For each $a\in F$, define 
$$
R(a)=\{\alpha\in \cup_{E\in\Irr W}(-c_E+\Z_{\geq0})\mid \bar\alpha=a\}.
$$
Let $f_a(z)=\prod_{\alpha\in R(a)}(z-\alpha)$ be a monic polynomial in $R[z]$. 
\begin{itemize}
\item[(1)]
Let $M\in\mathcal O$. Then $M=\oplus_{a\in F}M_a$ where
$$
M_a=\{m\in M\mid f_a(\Eu)^Nm=0, \text{for sufficiently large $N$}\}.
$$
\item[(2)]
$M\mapsto M_a$ is an exact functor from $\mathcal O$ to $RW\text{-}\Mod$. 
\item[(3)]
$M_a$ is a finitely generated $R$-module. 
\end{itemize}
\end{lemma}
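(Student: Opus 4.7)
The plan is to reduce $M$ to a \emph{universal} module of the form $\widetilde M:=H_R\otimes_{S(V)\sharp W\otimes R}V'$ and compute the $\Eu$-action directly on a finite filtration of $\widetilde M$. Choose generators $m_1,\dots,m_s$ of $M$ as an $H_R$-module, and pick $N$ with $I^Nm_i=0$ for all $i$; then $V':=\sum_iRWm_i$ is $R$-finitely generated, $W$-stable, and killed by $I^N$ (because $I$ is $W$-stable), hence becomes an $S(V)\sharp W\otimes R$-module via $S(V)\twoheadrightarrow S(V)/I^N$, and the obvious map $\widetilde M\twoheadrightarrow M$ is surjective. Since properties (1)--(3) descend to $\Eu$-stable quotients (the eigenspace decomposition of $\widetilde M$ passes to $M$ because $K=\Ker(\widetilde M\to M)$ is $\Eu$-stable), it suffices to prove the lemma for $\widetilde M$.

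Filter $V'$ by the $S(V)\sharp W$-submodules $V'\supseteq IV'\supseteq\cdots\supseteq I^NV'=0$; on each subquotient $I^kV'/I^{k+1}V'$ the ideal $I$ acts by zero. By Lemma 3.1.(2), $H_R$ is free as a right $S(V)\sharp W\otimes R$-module, so $H_R\otimes_{S(V)\sharp W\otimes R}(-)$ is exact and transports this to a finite filtration of $\widetilde M$ whose subquotients are modules of the form $\Delta(M'):=H_R\otimes_{S(V)\sharp W\otimes R}M'\cong S(V^*)\otimes_\C M'$, where $M'$ is a finitely generated $RW$-module with trivial $S(V)$-action. Using $[\Eu,\xi]=\deg(\xi)\,\xi$ for $\xi\in S(V^*)$, the fact that $z\in RW\subseteq S(V)\sharp W\otimes R$ slides across the tensor product, and $e_im=0$ in $M'$, a direct computation gives
\[
\Eu\cdot(\xi\otimes m)=\deg(\xi)\,\xi\otimes m-\xi\otimes zm.
\]
Since $z$ is $W$-central in $RW$ (as $\sum x_ie_i$ commutes with $W$ while $\Eu$ does) and acts on the $E$-isotypic part $M'_E$ by $c_E\in R$, this yields the $\Eu$-eigenspace decomposition
\[
\Delta(M')=\bigoplus_{E\in\Irr W}\bigoplus_{n\geq 0}S(V^*)_{=n}\otimes M'_E
\]
with eigenvalue $-c_E+n$ on the $(E,n)$-summand. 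In particular, for any $\alpha$ the generalized eigenspace at $\alpha$ is the \emph{finite} sum over those $E$ with $\alpha+c_E\in\Z_{\geq 0}$ of the finitely generated $R$-modules $S(V^*)_{=\alpha+c_E}\otimes M'_E$.

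Induction on the filtration length now shows that $\Eu$ acts locally finitely on $\widetilde M$ with generalized eigenvalues in $\bigcup_{E\in\Irr W}(-c_E+\Z_{\geq 0})$ and with each generalized eigenspace $\widetilde M(\alpha)$ finitely generated over $R$ (inductive step: for the extension $0\to F_{k-1}\to F_k\to\Delta(M')\to0$, any $m\in F_k$ satisfies $p(\Eu)m\in F_{k-1}$ for a polynomial $p$ killing its image, and local finiteness on $F_{k-1}$ then produces a polynomial equation for $m$; the induced sequence $0\to F_{k-1}(\alpha)\to F_k(\alpha)\to\Delta(M')(\alpha)\to 0$ is short exact and preserves finite generation). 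Because $F\supseteq\C$ has characteristic zero, distinct elements of $-c_E+\Z_{\geq 0}$ have distinct residues in $F$, so $R(a)$ is finite and $f_a(z)\in R[z]$ has finite degree; grouping the generalized eigenspaces by residue gives $\widetilde M=\bigoplus_{a\in F}\widetilde M_a$ with $\widetilde M_a=\bigoplus_{\alpha\in R(a)}\widetilde M(\alpha)=\bigcup_{k\geq 1}\Ker f_a(\Eu)^k$, which descends to $M$, establishing (1). The functor $M\mapsto M_a$ is exact because $M_a$ is the direct summand of $M$ cut out by this canonical $\Eu$-equivariant decomposition, giving (2); and $M_a$ is the finite direct sum of the $M(\alpha)$ with $\alpha\in R(a)$, each a quotient of the finitely generated $R$-module $\widetilde M(\alpha)$, so $M_a$ is finitely generated over $R$, giving (3). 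The main technical obstacle is the explicit commutation identity for $\Eu(\xi\otimes m)$ on $\Delta(M')$ and the verification that the induction preserves finite generation of eigenspaces; the rest is formal dévissage using exactness of the induction functor.
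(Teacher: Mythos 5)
Your proof is correct and follows exactly the route the paper indicates (reduce to the standard-type modules $\Delta_r(-)$ via a surjection from $H_R\otimes_{S(V)\sharp W\otimes R}V'$, compute the $\Eu$-eigenvalues $-c_E+n$ there, and descend). The only point worth making fully explicit is why the decomposition passes to $\Eu$-stable submodules and quotients over a general local $R$: for $a\neq b$ the reductions of $f_a$ and $f_b$ modulo $\mathfrak m$ are powers of $(z-a)$ and $(z-b)$, so their resultant is a unit in $R$ and the CRT projectors onto the $a$-components are polynomials in $\Eu$, which is what makes your descent and the exactness in (2) work.
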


For the proof, consider the case $M=\Delta_r(E)$ first. 

\begin{definition}
For $M\in\mathcal O$, define $M^{prim}=\{m\in M\mid Vm=0\}$.
\end{definition}
Note that $M^{prim}\neq0$ whenever $M\neq0$, and 
$M^{prim}\subseteq \sum_{E\in\Irr W}M_{-\overline{c_E}}$. 

\begin{lemma}
Suppose that $R$ is a Noetherian local ring such that the residue field $F$ 
contains $\C$. Then any $M\in\mathcal O$ is 
a direct sum of finitely many indecomposable objects of $\mathcal O$. 
\end{lemma}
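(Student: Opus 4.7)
The plan is to reduce a direct sum decomposition of $M$ in $\mathcal{O}$ to a direct sum decomposition of the much smaller $R$-module $M^{prim}$, where Nakayama's lemma will supply an a priori bound on the number of nonzero summands, and then to iterate.

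First I would show that $M^{prim}$ is a finitely generated $R$-module. By the remark immediately preceding the lemma, $M^{prim}\subseteq\sum_{E\in\Irr W}M_{-\overline{c_E}}$, a finite sum since $\Irr W$ is finite. Each $M_{-\overline{c_E}}$ is a finitely generated $R$-module by Lemma \ref{decomp}(3), so $M^{prim}$ is an $R$-submodule of a finitely generated $R$-module. Since $R$ is Noetherian, $M^{prim}$ is itself finitely generated, and if $\mathfrak m\subset R$ is the maximal ideal, Nakayama's lemma yields that $\bar M^{prim}:=M^{prim}/\mathfrak m M^{prim}$ is a finite-dimensional $F$-vector space.

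Next I would check that any direct sum decomposition of $M$ in $\mathcal{O}$ induces one of $M^{prim}$ with the same number of nonzero summands. Suppose $M=\bigoplus_{i\in I}N_i$ with each $N_i\neq 0$. Each $N_i$ lies in $\mathcal{O}$ since $\mathcal{O}$ is a Serre subcategory, and is $V$-stable as an $H_R$-submodule of $M$, so $M^{prim}=\bigoplus_{i\in I}N_i^{prim}$. By the observation $N^{prim}\neq 0$ whenever $N\in\mathcal{O}$ is nonzero, every summand $N_i^{prim}$ is nonzero. Reducing modulo $\mathfrak m$ and applying Nakayama to each finitely generated $R$-module $N_i^{prim}$, we obtain $\bar M^{prim}=\bigoplus_{i\in I}\bar N_i^{prim}$ with every $\bar N_i^{prim}\neq 0$. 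Therefore $|I|\leq\dim_F\bar M^{prim}<\infty$.

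Finally I would iterate: if $M$ is indecomposable we are done; otherwise write $M=M_1\oplus M_2$ nontrivially and recurse on each summand. Each refinement strictly increases the number of nonzero summands in a decomposition of $M$, and by the previous paragraph that number is bounded by $\dim_F\bar M^{prim}$. Hence the process terminates in finitely many steps, producing the required decomposition into indecomposables. The only point demanding more than routine verification is the finite generation of $M^{prim}$ over $R$; without this, Nakayama could not be invoked to bound the number of summands. This input is supplied directly by Lemma \ref{decomp}(3) together with the Noetherian hypothesis on $R$, so the argument is rather short once the preceding lemma is in hand.
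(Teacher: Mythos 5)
Your proof is correct and follows the paper's strategy: both arguments reduce everything to the finitely generated $R$-module $M^{prim}$ (via Lemma \ref{decomp}(3) and the observation that $N^{prim}\neq0$ for nonzero $N\in\mathcal O$). The only, harmless, difference is that you bound the number of summands by $\dim_F M^{prim}/\mathfrak m M^{prim}$ using Nakayama's lemma, whereas the paper derives a contradiction from the ascending chain condition on the Noetherian module $M^{prim}$.
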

In fact, if we had a strictly increasing infinite sequence of submodules 
$M_1\subseteq M_1\oplus M_2\subseteq\cdots\subseteq M$ then 
we have a strictly increasing sequence
$$
M_1^{prim}\subseteq M_1^{prim}\oplus M_2^{prim}\subseteq\cdots\subseteq M^{prim},
$$
which is a contradiction since $M^{prim}$ is a Noetherian $R$-module. 
Suppose that $R$ is a complete regular local ring. Then 
the uniqueness of the decomposition follows from the existence of a progenerator 
which we will prove in this subsection. However, the case when $R$ is a field 
is an easy case, and we record here the following basic results.
The proof is by standard arguments.

\begin{proposition}[Dunkl-Opdam]
\label{Dunkl-Opdam}
Let $R$ be a field of characteristic $0$.
\begin{itemize}
\item[(1)]
$\Delta(E)$ admits the eigenspace decomposition with respect to $\Eu$. 
Further, the set of eigenvalues that appear in the eigenspace decomposition 
coincides with $-c_E+\Z_{\geq0}$.
\item[(2)]
$\Delta(E)$ has a unique maximal $H_R(\underline\kappa,h)$-submodule 
and the eigenvalue $-c_E$ does not 
appear in its eigenspace decomposition with respect to $\Eu$. 
\item[(3)]
$\End_{\mathcal O}(\Delta(E))=R$, for $E\in\Irr W$. 
\item[(4)]
Let $L(E)=\Top\Delta(E)$, 
which is an irreducible $H_R(\underline\kappa,h)$-module. Then we have
$[\Delta(E):L(E)]=1$ and if $[\Rad\Delta(E):L(E')]\neq0$, for some $E'\in\Irr W$, 
then $-c_{E'}\in -c_E+\Z_{>0}$.
\item[(5)]
$\{L(E)\mid E\in\Irr W\}$ is a complete set of irreducible objects of $\mathcal O$. 
\item[(6)]
Any $M\in\mathcal O$ has a finite length as a $H_R(\underline\kappa,h)$-module.
\item[(7)]
$\mathcal O$ is a Krull-Schmidt category.  
\end{itemize}
\end{proposition}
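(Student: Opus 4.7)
The plan is to leverage the PBW-type decomposition $\Delta(E)\cong S(V^*)\otimes_\C E$ from the Etingof-Ginzburg lemma together with the commutators $[\Eu,\xi]=\xi$ and $[\Eu,w]=0$. First I would observe that $\Eu\cdot(1\otimes v)=-c_E(1\otimes v)$ for $v\in E$: this uses $e_i\cdot(1\otimes v)=0$ in $\Delta(E)$, so the $\sum x_ie_i$ summand vanishes, and the $W$-centrality of $z$ forces it to act on $E$ by the scalar $c_E$. Combined with $[\Eu,x_{i_1}\cdots x_{i_k}]=k\,x_{i_1}\cdots x_{i_k}$, this shows $\Eu$ acts on $S^k(V^*)\otimes E$ as multiplication by $k-c_E$, giving (1) and identifying the lowest $\Eu$-eigenspace with $1\otimes E$. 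Part (3) is then immediate: any endomorphism preserves this grading and commutes with $W$, so by Schur's lemma (absolute irreducibility of $E$ in characteristic $0$ over $R\supseteq\C$) it is a scalar on $1\otimes E$, and hence a scalar on the whole module, since $1\otimes E$ generates $\Delta(E)$.

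For (2) and (4), any $H_R(\underline\kappa,h)$-submodule $N\subseteq\Delta(E)$ is both $\Eu$-stable and $W$-stable, and $\Eu$ acts semisimply, so $N$ decomposes compatibly with the eigenspace decomposition. If $N$ meets $1\otimes E$ nontrivially then by $W$-irreducibility of $E$ it contains all of $1\otimes E$, which generates $\Delta(E)$, so $N=\Delta(E)$. Therefore the sum of all proper submodules is still proper, giving a unique maximal $\Rad\Delta(E)$ whose $\Eu$-spectrum lies in $-c_E+\Z_{>0}$. Consequently $L(E):=\Top\Delta(E)$ is simple with $-c_E$-eigenspace equal to the single copy $E$, forcing $[\Delta(E):L(E)]=1$, and any $L(E')$ appearing in $\Rad\Delta(E)$ must have its lowest $\Eu$-eigenvalue $-c_{E'}$ in $-c_E+\Z_{>0}$. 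For (5), if $L\in\mathcal O$ is simple then $L^{prim}=\{m:Vm=0\}$ is nonzero by local $S(V)$-nilpotence and is $W$-stable; choosing an irreducible $W$-submodule $E'\subseteq L^{prim}$, the universal property of the induced module yields a nonzero, hence surjective, morphism $\Delta(E')\twoheadrightarrow L$, forcing $L\cong L(E')$.

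The subtle point is (6). Any $M\in\mathcal O$ is a quotient of some $\Delta_r(F)$: take a finite-dimensional $W$-stable generating subspace $F\subseteq M$ annihilated by $I^{r+1}$, which exists since $V$ acts locally nilpotently and $M$ is finitely generated. Now $\Delta_r(F)$ inherits a finite filtration from the $I$-adic filtration of $S(V)/I^{r+1}\otimes F$, whose subquotients are of the form $\Delta(E)$ after decomposing each $S^k(V)\otimes F$ into $W$-isotypics. It therefore suffices to bound the length of $\Delta(E)$. Since $\Irr W$ is finite and every composition factor $L(E')$ satisfies $c_E-c_{E'}\in\Z_{\geq0}$, only finitely many $E'$ can occur; for each such $E'$, the multiplicity $[\Delta(E):L(E')]$ is bounded by the $W$-multiplicity of $E'$ in the finite-dimensional space $S^{c_E-c_{E'}}(V^*)\otimes E$, which is the $-c_{E'}$-eigenspace of $\Delta(E)$. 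This gives finite length. Part (7) then follows by Fitting's lemma, since in an Abelian category of finite length the endomorphism rings of indecomposables are local. The main obstacle lies in (6), specifically in packaging the reduction to the standard modules and verifying that the $W$-multiplicity in the $\Eu$-eigenspaces genuinely upper-bounds composition multiplicities; once this is in place, the remaining parts are essentially bookkeeping.
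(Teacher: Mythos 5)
Your argument is correct and is exactly the ``standard argument'' the paper invokes without detail: the $\Eu$-grading $\Delta(E)=\oplus_k S^k(V^*)\otimes E$ with eigenvalue $k-c_E$, the lowest-weight space $1\otimes E$ controlling submodules, primitive vectors for completeness of the $L(E)$, eigenspace-dimension counting for finite length, and Fitting's lemma for Krull--Schmidt (the last step being precisely the paper's own remark after the proposition). The only points to tighten are the distinctness of the $L(E)$ in (5) (compare the lowest eigenvalue and the $W$-module structure of the lowest eigenspace) and, in (6), phrasing the eigenspace count as a bound on lengths of chains of submodules rather than on composition multiplicities, so as not to presuppose that a composition series exists.
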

Recall that an algebra $A$ is local if $A/\Rad A$ is a division ring. 
(7) says that $\End_{\mathcal O}(M)$ is local, for any 
indecomposable object $M$ in $\mathcal O$. In fact, we may apply the Fitting lemma 
to $M$ by (6), and (7) follows. 

\begin{definition}
Let $R$ be a ring which contains $\Z$. 
We introduce a partial order on $R$ by $a\leq b$ if 
$a+\Z_{\geq0}\subseteq b+\Z_{\geq0}$. (Thus, $a\leq a-1$.) 
The order induces a preorder on $\Irr W$ by 
$E_1\leq E_2$ if $-\overline{c_{E_1}}\leq -\overline{c_{E_2}}$. 
\end{definition}

\begin{proposition}
\label{projective object}
Let $R$ be a local ring such that the residue field $F$ contains $\C$. 
For $a\in F$, we define the full subcategory $\mathcal O^{\leq a}$ of 
$\mathcal O$ by
$$
\mathcal O^{\leq a}=
\{M\in\mathcal O \mid M_b=0, \text{for $b\not\leq a$.}\}.
$$
Suppose that $\Delta(E)\in\mathcal O^{\leq a}$ and that
$-\overline{c_{E'}}\not\leq a$ 
when $-\overline{c_{E'}}>-\overline{c_E}$. Then 
$\Delta(E)$ is a projective object of $\mathcal O^{\leq a}$. 
\end{proposition}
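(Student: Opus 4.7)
The plan is to verify the lifting property. By Frobenius reciprocity applied to $\Delta(E)=H_R(\underline\kappa,h)\otimes_{S(V)\sharp W\otimes_\C R}E_R$ with $E_R=E\otimes_\C R$ and trivial $V$-action, one has $\Hom_{H_R(\underline\kappa,h)}(\Delta(E),M)\simeq\Hom_{RW}(E_R,M^V)$ with $M^V=\{m\in M\mid Vm=0\}$. On any $\phi\in\Hom_{RW}(E_R,M^V)$ one computes $z\phi(e)=c_E\phi(e)$ and $\sum_i x_ie_i\phi(e)=0$, so $\Eu\phi(e)=-c_E\phi(e)$; in particular $\phi(E_R)\subseteq M_{-\overline{c_E}}$ automatically.

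Fix a short exact sequence $0\to N\to X\xrightarrow{p}Y\to 0$ in $\mathcal O^{\leq a}$ and $\phi\in\Hom_{RW}(E_R,Y^V)$. Since $R\supseteq\C$, the order $|W|$ is invertible in $R$ and $E_R$ is projective as an $RW$-module, so $\phi$ lifts to a $W$-equivariant $R$-linear map $\tilde\phi_0\colon E_R\to X$. Because $z\in RW$ is central and $\sum_i x_ie_i$ is $W$-invariant in $H_R(\underline\kappa,h)$, the element $\Eu$ commutes with $W$, so the generalized $\Eu$-eigenspace decomposition of Lemma~\ref{decomp}(1) is $W$-stable and, by Lemma~\ref{decomp}(2), respected by $p$. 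Projecting $\tilde\phi_0$ to the $-\overline{c_E}$-component yields a $W$-equivariant lift $\tilde\phi\colon E_R\to X_{-\overline{c_E}}$ of $\phi$ (using $\phi(E_R)\subseteq Y_{-\overline{c_E}}$). From $[\Eu,y]=-y$ one has $V\cdot X_{-\overline{c_E}}\subseteq X_{-\overline{c_E}-1}$, and since $p(V\tilde\phi(e))=V\phi(e)=0$ we obtain $V\tilde\phi(e)\in N_{-\overline{c_E}-1}$. The whole argument thus reduces to the key vanishing $N_{-\overline{c_E}-1}=0$ for every $N\in\mathcal O^{\leq a}$.

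By Lemma~\ref{decomp}(3), $N_{-\overline{c_E}-1}$ is finitely generated over the local ring $R$, so by Nakayama's lemma it suffices to verify the vanishing of $(\bar N)_{-\overline{c_E}-1}$ for $\bar N=N\otimes_R F$. Proposition~\ref{Dunkl-Opdam}(6) provides a finite composition series of $\bar N$, and since $\mathcal O^{\leq a}$ is a Serre subcategory (immediate from the exactness of the decomposition in Lemma~\ref{decomp}(2)), every composition factor $L_F(E''')$ satisfies $-\overline{c_{E'''}}\leq a$. A simple $L_F(E''')$ can contribute to $(\bar N)_{-\overline{c_E}-1}$ only if the eigenvalue $-\overline{c_E}-1$ occurs on it; since the $\Eu$-eigenvalues of $L_F(E''')$ lie in $-c_{E'''}+\Z_{\geq0}$, this forces $\overline{c_{E'''}}=\overline{c_E}+k+1$ for some $k\geq 0$, which in the order of the paper reads $-\overline{c_{E'''}}>-\overline{c_E}$. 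The standing hypothesis then forces $-\overline{c_{E'''}}\not\leq a$, contradicting $L_F(E''')\in\mathcal O^{\leq a}$. Hence no composition factor contributes and the claim follows.

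The main obstacle I anticipate is the order-theoretic bookkeeping: the partial order on $F$ is the reverse of the natural integer order on $-c_E+\Z_{\geq 0}$, so one must confirm carefully that the $-1$ shift of eigenvalues produced by the $V$-action matches exactly the representations $E'''$ excluded by the hypothesis. Once that is in place, the remaining ingredients---Frobenius reciprocity, the Maschke-style lift, projection to the correct generalized eigenspace, and the reduction to composition factors over $F$---are routine.
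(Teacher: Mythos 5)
Your proof is correct, and its first half (Frobenius reciprocity, the Maschke-style $W$-equivariant lift, and projection onto the generalized $\Eu$-eigenspace for $-\overline{c_E}$) is exactly the justification the paper compresses into ``by Lemma \ref{decomp}(2) we may choose $m\in M_{-\overline{c_E}}$ satisfying (i) and (ii).'' Where you diverge is in the final step, showing that $V$ kills the lift. The paper stays entirely over $R$: it takes $r$ maximal with $I^rm\neq 0$, observes that $I^rRWm$ then consists of primitive vectors sitting in $M_{-\overline{c_E}-r}$, so its nonzero $E'$-isotypic components force $-\overline{c_{E'}}=-\overline{c_E}-r\leq a$ with $-\overline{c_{E'}}\geq-\overline{c_E}$, and the hypothesis gives $r=0$ directly. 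You instead prove the stronger vanishing $N_{-\overline{c_E}-1}=0$ for \emph{every} $N\in\mathcal O^{\leq a}$, by Nakayama (via Lemma \ref{decomp}(3) and the identification $N_b\otimes_RF\simeq(N\otimes_RF)_b$) followed by a run through the composition factors of $N\otimes_RF$ using Proposition \ref{Dunkl-Opdam}. Both routes hinge on the same order-theoretic point, and your bookkeeping with the reversed order is right; the trade-off is that your argument imports finite length over the residue field and the behaviour of eigenspaces under reduction, whereas the paper's uses only local nilpotency of the $S(V)$-action and is therefore a bit more self-contained. On the other hand, your intermediate statement that the $(-\overline{c_E}-1)$-eigenspace vanishes on all of $\mathcal O^{\leq a}$ is a clean reusable fact (it is essentially the mechanism behind Lemma \ref{ext} as well).
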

\begin{proof}
Let $M\rightarrow N\rightarrow 0$ in $\mathcal O^{\leq a}$ 
and take $0\neq f\in\Hom(\Delta(E),N)$. 
Fix $0\neq v\in E$. Then, to show that 
$\Hom(\Delta(E),M)\rightarrow\Hom(\Delta(E),N)$ is surjective, 
it suffices to prove that there exists $m\in M$ 
such that (i) $m$ maps to $f(v)$, (ii) $RWm\simeq E\otimes_\C R$, 
(iii) $Vm=0$. By Lemma \ref{decomp}(2), we may choose $m\in M_{-\overline{c_E}}$ 
such that $m$ satisfies (i) and (ii). Suppose that $I^rm\neq 0$ and $I^{r+1}m=0$. 
Then
$$
S^r(V)RWm\simeq I^rRWm\subseteq M_{-\overline{c_E}-r}.
$$
Choose $E'\in\Irr W$ 
such that $E'$ appears in $S^r(V)\otimes_\C E$. Then we have
$-\overline{c_{E'}}\leq a$ by $M\in\mathcal O^{\leq a}$ and 
$-\overline{c_{E'}}=-\overline{c_E}-r\geq -\overline{c_E}$.
Thus, $r=0$ by the assumption and (iii) is also satisfied. 
\end{proof}

A similar argument shows the following.

\begin{lemma}
\label{ext}
Suppose that $R$ is a local ring such that the residue field $F$ contains $\C$. 
Let $E, E'\in\Irr W$ be such that $E\not<E'$. Then we have 
$\Ext^1_{\mathcal O}(\Delta(E),\Delta(E'))=0$. 
\end{lemma}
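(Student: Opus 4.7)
The plan is to construct an explicit splitting of any extension
\(0 \to \Delta(E') \to M \to \Delta(E) \to 0\) in \(\mathcal O\) by lifting the
generating subspace \(1 \otimes E \subseteq \Delta(E)\) into \(M^{prim}\) and
invoking the universal property of the induced module. The key tool is the
generalized \(\Eu\)-eigenspace decomposition of Lemma~\ref{decomp}, applied with
\(a = -\overline{c_E}\). Because \([\Eu,w]=0\) for \(w\in W\), each \(M_a\) is
an \(RW\)-submodule, and the exactness of \(M\mapsto M_a\) produces a short
exact sequence \(0\to\Delta(E')_a\to M_a\to\Delta(E)_a\to 0\) of
\(RW\)-modules. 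Since the residue field has characteristic \(0\), the only
\(k\in\Z_{\ge 0}\) with \(\overline{-c_E+k}=-\overline{c_E}\) is \(k=0\), so
\(\Delta(E)_a=1\otimes E\otimes_{\C}R\), a free \(R\)-module of finite rank on
which \(W\) acts through the irreducible representation \(E\).

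Because \(\C\) is contained in the residue field, \(|W|\) is a unit in the
local ring \(R\), so Maschke's theorem furnishes an \(RW\)-equivariant section
\(\sigma\colon 1\otimes E\otimes_{\C}R\to M_a\) of the surjection above. The
crucial step is to show \(\sigma(v)\in M^{prim}\) for every \(v\). Since \(V\)
lowers \(\Eu\)-residues by \(1\), we have \(V\sigma(v)\subseteq M_{a-1}\); on
the other hand its image in \(\Delta(E)\) equals \(V(1\otimes v)=0\), so
\(V\sigma(v)\subseteq\Delta(E')_{a-1}\). The \(\Eu\)-eigenvalues on
\(\Delta(E')\) are \(-c_{E'}+k\) for \(k\in\Z_{\ge 0}\), hence
\(\Delta(E')_{a-1}\neq 0\) would force
\(\overline{c_{E'}}-\overline{c_E}=k+1\) for some \(k\ge 0\), i.e.\
\(\overline{c_{E'}}-\overline{c_E}\in\Z_{>0}\), which is precisely the
relation \(E<E'\) excluded by hypothesis. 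Therefore \(V\sigma(v)=0\) and
\(\sigma\) factors through \(M^{prim}\).

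Finally, since \(\Delta(E)=H_R\otimes_{S(V)\sharp W\otimes_{\C}R}(E\otimes_{\C}R)\)
with \(V\) acting trivially on \(E\), Frobenius reciprocity extends the
\(RW\)-equivariant map \(\sigma\colon E\otimes_{\C}R\to M^{prim}\) to an
\(H_R\)-module homomorphism \(\Phi\colon\Delta(E)\to M\). Because
\(\Phi(1\otimes v)=\sigma(v)\) projects back to \(1\otimes v\) under
\(M\to\Delta(E)\) and \(1\otimes E\otimes_{\C}R\) generates \(\Delta(E)\), the
composition \(\Delta(E)\xrightarrow{\Phi}M\to\Delta(E)\) is the identity and
the extension splits. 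The only place the assumption \(E\not< E'\) is used is
in the vanishing \(\Delta(E')_{a-1}=0\); this is the lone technical point, and
the rest of the argument is formal once one notices that
\(\overline{c_{E'}}-\overline{c_E}\in\Z_{>0}\) is the literal definition of
\(E<E'\).
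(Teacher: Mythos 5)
Your proof is correct and follows essentially the same route as the paper's: apply the generalized $\Eu$-eigenspace decomposition of Lemma~\ref{decomp} at $a=-\overline{c_E}$, lift the $W$-isotypic generating piece of $\Delta(E)$ to $M_a$ (the paper phrases this as choosing $m\in M_{-\overline{c_E}}$ mapping to $v$; your Maschke argument makes that choice explicit), observe that $V$ sends it into $\Delta(E')_{a-1}$, which vanishes precisely because $E\not<E'$, and then split via the universal property of the induced module. No gaps.
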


In fact, if $0\rightarrow \Delta(E')\rightarrow M\rightarrow \Delta(E)\rightarrow 0$ 
is given, take $0\neq v\in\Delta(E)_{-\overline{c_E}}$. Then we may choose 
$m\in M_{-\overline{c_E}}$ such that (i) $m$ maps to $v$, (ii) $Vm=0$. 
Hence the exact sequence splits. 

\begin{lemma}
\label{induction}
Suppose that $R$ is a local ring whose residue field $F$ contains $\C$. 
Let $a\in F$ and let $\{\Delta(E_1),\dots,\Delta(E_m)\}$ be all of the standard 
modules that belong to $\mathcal O^{\leq a}$. If we have projective 
objects $P_i$ of $\mathcal O^{\leq a}$ such that 
$P_i\rightarrow \Delta(E_i)\rightarrow 0$, for $1\leq i\leq m$, 
then $P=\oplus_{i=1}^m P_i$ is a progenerator of $\mathcal O^{\leq a}$. 
\end{lemma}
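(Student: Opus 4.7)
To prove that $P=\bigoplus_{i=1}^m P_i$ is a progenerator of $\mathcal{O}^{\leq a}$, I must establish that $P$ is projective and a generator, i.e.\ that every $M\in\mathcal{O}^{\leq a}$ is a quotient of some direct sum of copies of $P$. Projectivity is immediate. The core of the argument is to exhibit, for each $M$, a surjection $\bigoplus_{i=1}^m P_i^{n_i}\twoheadrightarrow M$ with nonnegative integers $n_i$.

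The strategy is to first solve the problem at the residue field $F$ of $R$, and then deform. Let $\bar M = M\otimes_R F$. By Proposition \ref{Dunkl-Opdam}, $\bar M$ has finite length, and since $\mathcal{O}^{\leq a}$ is a Serre subcategory and $L(E)\in\mathcal{O}^{\leq a}$ if and only if $E\in\{E_1,\dots,E_m\}$, every composition factor of $\bar M$ is one of the $L(E_j)$. I would induct on the length of $\bar M$ to build a surjection $\bar\pi:\bar Q\twoheadrightarrow\bar M$ with $\bar Q=\bigoplus_{i=1}^m \bar P_i^{n_i}$, where $\bar P_i=P_i\otimes_R F$ is projective in $\mathcal{O}^{\leq a}$ over $F$ by adjunction. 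For the inductive step, pick a simple submodule $L(E_j)\subseteq\bar M$, apply the induction hypothesis to $\bar M/L(E_j)$ to obtain $\bar\pi':\bar Q'\twoheadrightarrow\bar M/L(E_j)$, lift through projectivity of $\bar Q'$ to a map $\tilde\pi:\bar Q'\to\bar M$, and adjoin one further summand $\bar P_j$ whose map factors as $\bar P_j\twoheadrightarrow\Delta(E_j)\twoheadrightarrow L(E_j)\hookrightarrow\bar M$. As $\tilde\pi(\bar Q')+L(E_j)=\bar M$ and $L(E_j)$ is simple, the combined map surjects onto $\bar M$.

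To deform back to $R$, use projectivity of $Q=\bigoplus P_i^{n_i}$ and surjectivity of $M\twoheadrightarrow\bar M$ to lift $\bar\pi$ to $\pi:Q\to M$. Let $C=\Coker(\pi)$; by right-exactness of $-\otimes_R F$, we get $C\otimes_R F=0$, and $C\in\mathcal{O}^{\leq a}$. By Lemma \ref{decomp}, $C=\bigoplus_b C_b$ with each $C_b$ finitely generated over $R$, and the decomposition is cut out by polynomials in $\Eu$ over $R$, so base change to $F$ commutes with taking eigenspaces. Hence $C_b\otimes_R F=0$, and Nakayama's lemma applied to each finitely generated $R$-module $C_b$ forces $C_b=0$, giving $C=0$ and $\pi$ surjective.

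The most delicate step I expect is the induction over $F$, where one must exploit simplicity of $L(E_j)$ to force either $L(E_j)\subseteq\tilde\pi(\bar Q')$ or $\bar M=\tilde\pi(\bar Q')\oplus L(E_j)$, so that a single extra copy of $\bar P_j$ always completes the surjection. The subsequent deformation is routine once the compatibility of the generalized eigenspace decomposition with base change is verified, after which Nakayama applies term by term.
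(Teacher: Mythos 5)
Your proof is correct, but it follows a genuinely different route from the paper's. The paper argues directly over $R$: any $M\in\mathcal O^{\leq a}$ receives a surjection from $\Delta_r(\C W)^{\oplus N}$, which has a finite $\Delta$-filtration; a standard subquotient $\Delta(E')$ with $-\overline{c_{E'}}\not\leq a$ must have zero image in $M$ (a nonzero quotient of $\Delta(E')$ is generated by its $-\overline{c_{E'}}$-component, so it cannot lie in $\mathcal O^{\leq a}$), and the surviving pieces $\Delta(E_i)$ are then replaced by the $P_i$ using their projectivity. You instead reduce modulo $\mathfrak m$, invoke the finite-length theory and the classification of simples over the residue field (Proposition \ref{Dunkl-Opdam}), build the surjection over $F$ by induction on length, lift it by projectivity of $Q$ in $\mathcal O(R)^{\leq a}$, and finish with Nakayama applied to the finitely generated $R$-modules $C_b$ supplied by Lemma \ref{decomp}(3). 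Both arguments are sound. The paper's version buys something extra that is reused in the proof of Theorem \ref{progenerator}: it exhibits $M$ as a quotient of an object with a finite $\Delta$-filtration, and it avoids any appeal to the residue-field structure theory. Your version is the more standard ``a projective hitting every simple is a generator'' argument, at the cost of the reduction/Nakayama machinery. Two small remarks: the step you flag as most delicate is in fact automatic --- since $\tilde\pi(\bar Q')$ surjects onto $\bar M/L(E_j)$, one always has $\tilde\pi(\bar Q')+L(E_j)=\bar M$, with no case distinction and no use of simplicity beyond $L(E_j)$ being the chosen submodule; and the compatibility of the generalized eigenspace decomposition with base change is not actually needed, because $0=C\otimes_RF=\bigoplus_b(C_b\otimes_RF)$ already forces each $C_b\otimes_RF$ to vanish, after which Nakayama applies to each finitely generated $C_b$.
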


In fact, for any $M\in\mathcal O^{\leq a}$, we have 
$\Delta_r(\C W)^{\oplus N}\rightarrow M\rightarrow 0$, for some $r$ and $N$. 
Note that $\Delta_r(\C W)^{\oplus N}$ has a finite $\Delta$-filtration. 
Suppose that $\Delta(E')$ with $-\overline{c_{E'}}\not\leq a$ appears in the 
$\Delta$-filtration. As any subquotient of $M\in\mathcal O^{\leq a}$ belongs 
to $\mathcal O^{\leq a}$, the image of $\Delta(E')$ vanishes. Hence, 
we have $P^{\oplus N'}\rightarrow M\rightarrow 0$, for some $N'$. 

\begin{theorem}
\label{progenerator}
Suppose that $R$ is a Noetherian local ring whose residue field contains $\C$. 
Then $\mathcal O$ has a progenerator which has a finite $\Delta$-filtration.
\end{theorem}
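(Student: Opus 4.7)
The plan is first to use Lemma \ref{decomp} to split $\mathcal{O}$ into blocks indexed by cosets in $F/\Z$, so that it suffices to construct a progenerator with a $\Delta$-filtration of each $\mathcal{O}^{\leq a}$. By Lemma \ref{induction}, this reduces in turn to constructing, for every $E$ in the finite set $\mathcal{E}(a) := \{E' \in \Irr W : \Delta(E') \in \mathcal{O}^{\leq a}\}$, a projective object $P(E) \in \mathcal{O}^{\leq a}$ with a $\Delta$-filtration and a surjection $P(E) \twoheadrightarrow \Delta(E)$.

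I would build the $P(E)$ by induction along the finite preorder on $\mathcal{E}(a)$. Enumerate $\mathcal{E}(a) = \{E_1, \ldots, E_m\}$ so that $E_j > E_i$ (in the preorder introduced before Proposition \ref{projective object}) implies $j < i$; in particular $E_1$ is maximal. The base case is immediate: Proposition \ref{projective object} gives $\Delta(E_1)$ projective in $\mathcal{O}^{\leq a}$, its hypothesis being precisely the maximality of $E_1$ in $\mathcal{E}(a)$, so set $P(E_1) := \Delta(E_1)$. For the inductive step, begin with the generalized standard module $\Delta_r(E_i)$ for $r$ sufficiently large; the $I$-adic filtration on $S(V)/I^{r+1}$ induces a $\Delta$-filtration of $\Delta_r(E_i)$ with top $\Delta(E_i)$ and other sections $\Delta(E'')$ indexed by the irreducible $W$-constituents $E''$ of $S^k(V) \otimes E_i$ for $1 \leq k \leq r$. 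Restrict to the piece inside $\mathcal{O}^{\leq a}$ by quotienting out the submodule generated by the eigenspaces $(\Delta_r(E_i))_b$ with $b \not\leq a$: the result is a $\Delta$-filtered object in $\mathcal{O}^{\leq a}$ whose top is still $\Delta(E_i)$ and whose other sections lie in $\{\Delta(E_j) : j < i\}$. This module is not yet projective in general, but by Lemma \ref{ext} the only $\Ext^1$-obstructions against standards involve $E_j$ with $E_j > E_i$, hence $j < i$ by the chosen enumeration, and each such obstruction can be killed by pulling back along the already constructed surjection $P(E_j) \twoheadrightarrow \Delta(E_j)$, the pullback preserving $\Delta$-filtration since extensions of $\Delta$-filtered modules are again $\Delta$-filtered. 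Iterating until no obstructions remain yields $P(E_i)$, and Lemma \ref{induction} then packages $\bigoplus_i P(E_i)$ into a progenerator of $\mathcal{O}^{\leq a}$; assembling across the blocks produces the desired progenerator of $\mathcal{O}$.

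The main technical obstacle is verifying that the iterative Ext-repair procedure actually terminates in finitely many steps and produces a genuine projective in $\mathcal{O}^{\leq a}$, not merely an object which is $\Ext^1$-orthogonal to the standards. Because $R$ is only a Noetherian local ring, the finite-length and Krull--Schmidt tools available in Proposition \ref{Dunkl-Opdam} do not apply directly, and I would replace them with finite generation of the primitive part $M^{\mathrm{prim}}$ over $R$, coming from Lemma \ref{decomp}(3) applied to each eigenspace $M_{-\overline{c_{E_j}}}$, in order to bound the number of pullback steps at each stage. A secondary subtlety is the dévissage required to propagate $\Ext^1(P(E_i), \Delta(E_j)) = 0$ to $\Ext^1(P(E_i), N) = 0$ for arbitrary $N \in \mathcal{O}^{\leq a}$ (reducing first to $N$ finitely generated via finiteness of $P(E_i)$, and then using a $\Delta$-filtration of $N$), together with checking that the truncation of $\Delta_r(E_i)$ by the submodule generated by $\Eu$-eigenspaces with $b \not\leq a$ really carries a clean $\Delta$-filtration of the claimed form.
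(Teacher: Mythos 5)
Your global strategy (block decomposition via Lemma \ref{decomp}, reduction to $\mathcal O^{\leq a}$, Lemma \ref{induction} to assemble the progenerator) matches the paper's, but your construction of the projective objects is different, and it is there that the argument has a genuine gap. You build $P(E_i)$ by iterated universal extensions so as to achieve $\Ext^1(P(E_i),\Delta(E_j))=0$ for all $j$, and then need to conclude that $P(E_i)$ is projective in $\mathcal O^{\leq a}$. This requires lifting maps along arbitrary epimorphisms onto every $N\in\mathcal O^{\leq a}$, i.e. $\Ext^1(P(E_i),N)=0$ for all $N$, and your proposed d\'evissage \lq\lq using a $\Delta$-filtration of $N$\rq\rq\ is not available: a general object of $\mathcal O^{\leq a}$ is not $\Delta$-filtered, and resolving $N$ by a $\Delta$-filtered object $M'$ with kernel $K$ only yields an embedding $\Ext^1(P,N)\hookrightarrow\Ext^2(P,K)$ for the again arbitrary $K$, so the reduction never closes up. At this stage $\mathcal O$ is not yet known to be a highest weight category or to have enough projectives, so the standard equivalence between \lq\lq $\Delta$-filtered and Ext-orthogonal to all standards\rq\rq\ and \lq\lq projective\rq\rq\ cannot be invoked; it is part of what is being proved. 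Termination of the repair procedure is a second unresolved point: over a Noetherian local ring one would want $\Ext^1_{\mathcal O}(X,\Delta(E_j))$ to be a finitely generated $R$-module in order to form a single universal extension, and this finiteness is itself not established, again for lack of projective resolutions; your appeal to finite generation of $M^{prim}$ does not obviously control these Ext groups.

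The paper avoids both problems by a different construction: for each generalized $\Eu$-eigenvalue $a_i$ occurring in a generating set of the progenerator $Q$ of $\mathcal O^{<a}$, it exhibits an explicit quotient $P_i$ of $\Delta_N(\C W)$ (cut down by the relation $f_{a_i}(\Eu)^{e+1}1=0$ and by the submodule generated by the eigenspaces not $\leq a$) together with a natural isomorphism $\Hom_{\mathcal O}(P_i,M)\simeq M_{a_i}$ for $M\in\mathcal O^{\leq a}$. Projectivity is then immediate from the exactness of $M\mapsto M_{a_i}$ in Lemma \ref{decomp}(2) --- no Ext computation is needed --- and the surjection $P=\oplus_i P_i\twoheadrightarrow Q$ comes from choosing the $a_i$ to be the eigenvalues of generators of $Q$; the new standard modules with $-\overline{c_E}=a$ are handled by Proposition \ref{projective object}. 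The $\Delta$-filtration of the resulting progenerator is recovered only at the very end, from the splitting of the surjection from a $\Delta$-filtered object onto a projective one and the fact that direct summands of $\Delta$-filtered objects are $\Delta$-filtered, so the paper never needs your truncation of $\Delta_r(E_i)$ to carry a clean $\Delta$-filtration. If you want to keep your route, you must supply a direct proof that $\Hom(P(E_i),-)$ is exact on all of $\mathcal O^{\leq a}$; representing an exact functor, as the paper does, is the cleanest way to achieve that.
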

\begin{proof}
For $\gamma\in F/\Z$, define 
$\mathcal O^\gamma=\{M\in\mathcal O\mid \text{$M_a=0$, for $a\not\in\gamma$.}\}$. 
Then we have $\mathcal O=\oplus_{\gamma\in F/\Z}\mathcal O^\gamma$. 
We show the existence of the desired progenerator for 
each $\mathcal O^\gamma$. For this purpose, we assert that $\mathcal O^\gamma$ 
has a progenerator which is a quotient of an object with 
a finite $\Delta$-filtration. Let $\gamma=a_0+\Z$. If $k$ is sufficiently small 
then $\mathcal O^{\leq a_0+k}=\mathcal O^\gamma$, and if $k$ is sufficiently large then 
$\mathcal O^{\leq a_0+k}=\{0\}$. Hence we prove the assertion by induction on $k$. 
Suppose that we have the desired progenerator $Q$ for $\mathcal O^{<a}$. 
By Proposition \ref{projective object} and Lemma \ref{induction}, it suffices 
to show the existence of a projective object $P$ of $\mathcal O^{\leq a}$ 
such that $P\rightarrow Q\rightarrow 0$ and that $P$ is a quotient 
of an object with a finite $\Delta$-filtration. 
Write 
$Q=\sum_{i=1}^l H_R(\underline\kappa,h)m_i$ such that 
$m_i\in Q_{a_i}$. Fix $N$ so that 
$$
a_i-N-1\not\in \bigcup_{E'\in\Irr W}(-\overline{c_{E'}}+\Z_{\geq0}),
$$
for $1\leq i\leq l$. This implies that $M_{a_i-N-1}=0$, for 
any $M\in\mathcal O$, by Lemma \ref{decomp}. 

Moreover, since $Q$ is a quotient of an object with a finite $\Delta$-filtration and 
$f_{a_i}(\Eu)$ acts as $0$ on $\Delta(E')_{a_i}$, for all $E'\in\Irr W$, 
there exists $e$ such that $f_{a_i}(\Eu)^em=0$, for $m\in Q_{a_i}$. 

\medskip
\noindent
Claim 1: Let $M\in\mathcal O^{\leq a}$. Then 
$f_{a_i}(\Eu)^{e+1}m=0$, for $m\in M_{a_i}$. 

\smallskip
In fact, since $M_a$ is a finitely generated $R$-module, there is 
$$
\varphi:\bigoplus_{E':-\overline{c_{E'}}=a}\Delta(E')^{\oplus m_{E'}}
\longrightarrow M
$$
such that $\Coker\varphi\in\mathcal O^{<a}$. In particular, there exists 
$Q^{\oplus m_Q}\rightarrow \Coker\varphi\rightarrow 0$. 
Hence if $m\in M_{a_i}$ then $f_{a_i}(\Eu)^em\in(\Im\varphi)_{a_i}$ and 
$f_{a_i}(\Eu)^{e+1}m=0$ follows. 

\medskip
Note that $\Delta_N(\C W)$ is the tensor product 
$\mathcal O(V)\otimes_\C \C W\otimes_\C S(V)/I^{N+1}\otimes_\C R$. 
Hence we have $1:=1\otimes 1\otimes 1\otimes 1\in \Delta_N(\C W)$. Define
$$
R_i=\Delta_N(\C W)/H_R(\underline\kappa,h)f_{a_i}(\Eu)^{e+1}1
$$
and define $R_i'$ to be the $H_R(\underline\kappa,h)$-submodule of $R_i$ 
generated by 
$\oplus_{b\not\leq a}(R_i)_b$. Then we let $P_i=R_i/R_i'$. 
Note that $P_i\in\mathcal O^{\leq a}$. We denote the image of $1$ in $P_i$ 
by $1_i$. 

\medskip
\noindent
Claim 2: For $M\in\mathcal O^{\leq a}$, we have a natural isomorphism 
$\Hom_{\mathcal O}(P_i,M)\simeq M_{a_i}$.

\medskip
The isomorphism is given by $\varphi\mapsto \varphi(1_i)$. As 
$P_i=H_R(\underline\kappa,h)1_i$, the injectivity is clear. On the other hand, 
if $m\in M_{a_i}$ then $I^{N+1}m\subseteq M_{a_i-N-1}=0$. Hence 
we have $S(V)/I^{N+1}\rightarrow M$ defined by $1\mapsto m$, which induces 
$\Delta_N(\C W)\rightarrow M$. Since $f_{a_i}(\Eu)^{e+1}m=0$ by Claim 1, 
we obtain $R_i\rightarrow M$. We conclude that there exists 
$\varphi:P_i\rightarrow M$ such that $\varphi(1_i)=m$. 

\medskip
By Lemma \ref{decomp}(2) and Claim 2, $P_i$ is a projective object of 
$\mathcal O^{\leq a}$. $P_i$ is a quotient of $\Delta_N(\C W)$, and $\Delta_N(\C W)$ 
has a finite $\Delta$-filtration. Thus, 
$P=\oplus_{i=1}^l P_i$ has the required properties, and the assertion is 
proved. 

Now we have a progenerator of $\mathcal O^\gamma$ which is a quotient of 
an object with a finite $\Delta$-filtration. 
That this progenerator has a finite $\Delta$-filtration comes from 
the following claim, which is not difficult to prove. 

\medskip
\noindent
Claim 3: If $M_1\oplus M_2$ has a finite 
$\Delta$-filtration, then so does $M_1$ and $M_2$. 
\end{proof}

\begin{proposition}
\label{key prop}
Suppose that $R$ is regular. That is, $R$ is Noetherian and $R_{\mathfrak p}$ 
is a regular local ring, for all $\mathfrak p\in\Spec R$. 
Let $H$ be an $R$-algebra, $\mathcal C$ 
a full subcategory of $H\text{-}\fgMod$. Suppose that if $M\in\mathcal C$ and 
$M\rightarrow N\rightarrow 0$ in $H\text{-}\fgMod$ then $N\in\mathcal C$,  
and that there exists a projective object $P$ of $\mathcal C$ such that 
\begin{itemize}
\item[(i)]
$P$ is a finitely presented $H$-module,
\item[(ii)]
$P$ is a projective $R$-module,
\item[(iii)]
$\End_{\mathcal C}(P)$ is a finitely generated $R$-module.
\end{itemize}
Then $A=\End_{\mathcal C}(P)^{\rm op}$ is a projective $R$-module. 
If $Q\in\mathcal C$ satisfies
\begin{itemize}
\item[(a)]
$Q$ is a projective $R$-module,
\item[(b)]
$\Hom_{\mathcal C}(P,Q)$ is a finitely generated $R$-module,
\end{itemize}
then $\Hom_{\mathcal C}(P,Q)$ is a projective $R$-module.
\end{proposition}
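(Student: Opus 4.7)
The plan is to prove freeness of $\Hom_H(P,Q)_\mathfrak{m}$ over each regular local ring $R_\mathfrak{m}$ by producing a regular sequence of length $\dim R_\mathfrak{m}$ and then applying the Auslander--Buchsbaum formula; since $\Hom_H(P,Q)$ is finitely generated over the Noetherian ring $R$, locally free will then imply projective. Fix a maximal ideal $\mathfrak{m}$, set $d = \dim R_\mathfrak{m}$, and choose $t_1,\dots,t_d \in R$ whose images in $R_\mathfrak{m}$ form a regular system of parameters. Because $P$ is finitely presented over $H$ and $R \to R_\mathfrak{m}$ is flat, the natural map $\Hom_H(P,Q) \otimes_R R_\mathfrak{m} \to \Hom_{H_\mathfrak{m}}(P_\mathfrak{m},Q_\mathfrak{m})$ is an isomorphism, so it is enough to prove that $\Hom_{H_\mathfrak{m}}(P_\mathfrak{m},Q_\mathfrak{m})$ has $R_\mathfrak{m}$-depth equal to $d$.

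To transport a regular sequence from $Q$ to the Hom module, define $Q_j = Q/(t_1,\dots,t_j)Q$ and $J_j = t_j Q_{j-1} \subseteq Q_{j-1}$. Closure of $\mathcal{C}$ under quotients places each $Q_j$ in $\mathcal{C}$, and likewise $J_j$, because it is a quotient of $Q_{j-1}$ via the surjection $q \mapsto t_j q$. Apply $\Hom_\mathcal{C}(P,-)$ to the short exact sequence $0 \to J_j \to Q_{j-1} \to Q_j \to 0$ in $\mathcal{C}$: projectivity of $P$ in $\mathcal{C}$ yields a short exact sequence of $R$-modules. Localizing at $\mathfrak{m}$, the $R_\mathfrak{m}$-flatness of $Q_\mathfrak{m}$ (which follows from $Q$ being a projective $R$-module) makes $t_j$ a nonzerodivisor on $Q_{j-1,\mathfrak{m}}$, so $J_{j,\mathfrak{m}} \simeq Q_{j-1,\mathfrak{m}}$ and the inclusion $J_j \hookrightarrow Q_{j-1}$ localizes to multiplication by $t_j$. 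Combined with the commutation of $\Hom_H(P,-)$ with localization (again from finite presentation of $P$), we obtain
$$
0 \to \Hom_{H_\mathfrak{m}}(P_\mathfrak{m}, Q_{j-1,\mathfrak{m}}) \xrightarrow{t_j} \Hom_{H_\mathfrak{m}}(P_\mathfrak{m}, Q_{j-1,\mathfrak{m}}) \to \Hom_{H_\mathfrak{m}}(P_\mathfrak{m}, Q_{j,\mathfrak{m}}) \to 0.
$$

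An induction on $j$ identifies $\Hom_{H_\mathfrak{m}}(P_\mathfrak{m},Q_{j,\mathfrak{m}})$ with $\Hom_H(P,Q)_\mathfrak{m}/(t_1,\dots,t_j)\Hom_H(P,Q)_\mathfrak{m}$ and shows that $t_j$ acts as a nonzerodivisor on the $(j-1)$-st successive quotient. Hence $t_1,\dots,t_d$ is a regular sequence on $\Hom_H(P,Q)_\mathfrak{m}$, so its depth is $d$, and Auslander--Buchsbaum over the regular local ring $R_\mathfrak{m}$ of dimension $d$ forces $\operatorname{pd}_{R_\mathfrak{m}}\Hom_H(P,Q)_\mathfrak{m} = 0$, i.e.\ freeness. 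This yields the second assertion; the first, that $A = \End_\mathcal{C}(P)^{\rm op}$ is a projective $R$-module, follows by taking $Q = P$ and invoking hypotheses (ii) and (iii). The main subtlety is that $\mathcal{C}$ is assumed closed only under quotients and not under subobjects, so one cannot apply projectivity of $P$ to the naive sequence $0 \to Q_{j-1} \xrightarrow{t_j} Q_{j-1} \to Q_j \to 0$, which need not be left exact globally; passing to the quotient $J_j$ and localizing only at the end is what makes the argument go through.
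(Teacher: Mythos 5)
Your proof is correct, and its skeleton is the same as the paper's: reduce modulo elements $t_1,\dots,t_d$ restricting to a regular system of parameters, use freeness of $Q$ over the local ring to get the short exact sequences $0\to Q_{j-1}\xrightarrow{t_j}Q_{j-1}\to Q_j\to 0$, push them through $\Hom_{\mathcal C}(P,-)$ using projectivity of $P$ in $\mathcal C$ together with closure of $\mathcal C$ under quotients, and conclude freeness over the regular local ring. Two points differ. First, the paper localizes at the outset (via $\Hom_{\mathcal C}(P,Q)_{\mathfrak p}\simeq\Hom_{H_{\mathfrak p}}(P_{\mathfrak p},Q_{\mathfrak p})$, using that $P$ is finitely presented and $\Hom_{\mathcal C}(P,Q)$ is finitely presented over $R$) and then works over a regular local ring, where multiplication by $z_{i+1}$ is injective on $Q/(z_1,\dots,z_i)Q$ outright; your localize-last variant, with the auxiliary quotient $J_j=t_jQ_{j-1}\in\mathcal C$, is a legitimate way to sidestep the fact that $t_j$ need not be a nonzerodivisor globally, though the issue evaporates once one localizes first as the paper does. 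Second, the concluding step is different: you produce a regular sequence of length $d$ on $\Hom_H(P,Q)_{\mathfrak m}$ and invoke Auslander--Buchsbaum, whereas the paper runs a descending induction using the local criterion for flatness (with $z_{i+1}$ a regular non-invertible element of $R_i$, the module $Q_i$ is $R_i$-flat iff $z_{i+1}$ is injective on $Q_i$ and $Q_{i+1}=Q_i/z_{i+1}Q_i$ is $R_{i+1}$-flat), ending with flat plus finitely generated over a Noetherian local ring implies free. The two finishes encode the same homological information and both are complete; yours trades the flatness bookkeeping for the standard depth formula.
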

\begin{proof}
As $R$ is Noetherian and by (b), $\Hom_{\mathcal C}(P,Q)$ is a 
finitely presented $R$-module. Hence 
$$
\Ext_R^1(\Hom_{\mathcal C}(P,Q),?)_{\mathfrak p}=
\Ext^1_{R_{\mathfrak p}}(\Hom_{\mathcal C}(P,Q)_{\mathfrak p},
?_{\mathfrak p}).
$$
We also have $\Hom_{\mathcal C}(P,Q)_{\mathfrak p}\simeq 
\Hom_{H_{\mathfrak p}}(P_{\mathfrak p},Q_{\mathfrak p})$ by (i). 
Hence, $\Hom_{\mathcal C}(P,Q)$ is a projective $R$-module if and only if 
$\Hom_{H_{\mathfrak p}}(P_{\mathfrak p},Q_{\mathfrak p})$ is a free 
$R_{\mathfrak p}$-module. Hence we may assume that $R$ is a regular local ring 
from the beginning. Let $d$ be the Krull dimension of $R$, $(z_1,\dots,z_d)$ 
be a system of parameters. Let $\mathcal F=\Hom_{\mathcal C}(P,-)$ and define 
$Q_i=\mathcal F(Q/(z_1,\dots,z_i)Q)$. By (a), we have the exact sequence
$$
0\rightarrow Q/(z_1,\dots,z_i)Q\overset{z_{i+1}\cdot}{\rightarrow}
Q/(z_1,\dots,z_i)Q\rightarrow Q/(z_1,\dots,z_{i+1})Q\rightarrow 0.
$$
Thus, $0\rightarrow Q_i\overset{z_{i+1}\cdot}{\rightarrow}
Q_i\rightarrow Q_{i+1}\rightarrow 0$. In particular, all the $Q_i$ are finitely 
generated $R$-modules since $Q_0$ is so by (b). Let $R_i=R/(z_1,\dots,z_i)$. 
We claim that $Q_i$ is a flat $R_i$-module. In fact, this is obvious when 
$i=d$. Suppose that $i<d$ and that $Q_{i+1}$ is a flat $R_{i+1}$-module. 
Then, since 
\begin{itemize}
\item[(i)]
$Q_i$ is a finitely generated module over the Noetherian local ring $R_i$, 
\item[(ii)]
$z_{i+1}\in R_i$ is a non-invertible regular element, 
\end{itemize}
$Q_i$ is a flat $R_i$-module if and only if 
$0\rightarrow Q_i\overset{z_{i+1}\cdot}{\rightarrow}Q_i$ and 
$Q_{i+1}\simeq Q_i/z_{i+1}Q_i$ is a flat $R_{i+1}$-module. Hence,  
$Q_0$ is a flat $R$-module, which is a free $R$-module by (b). 
\end{proof}

\begin{theorem}
\label{reduction to A-mod}
Suppose that $R$ is a regular local ring whose residue field $F=R/\mathfrak m$ 
contains $\C$. 
We denote by $\mathcal O(R)$ the category $\mathcal O$ for 
$H_R(\underline\kappa,h)$, and we identify 
$\mathcal O(F)$, the category $\mathcal O$ for $H_F(\underline\kappa,h)$, 
with the full subcategory of $\mathcal O(R)$ consisting of 
$M$ with $\mathfrak m M=0$. 
Then, there exists a module-finite $R$-algebra $A$ such that we have 
\begin{itemize}
\item[(1)]
$A$ is a projective $R$-algebra, i.e. $A$ is projective as an $R$-module,
\item[(2)]
$\mathcal O(R)\simeq A\text{-}\fgMod$,
\item[(3)]
$\mathcal O(F)$ corresponds to 
$A\otimes_R F\text{-}\fgMod$ under the equivalence in (2),
\item[(4)]
$\Delta(E)$ corresponds to an $A$-module which is a finitely generated projective 
$R$-module under the equivalence. 
\end{itemize}
\end{theorem}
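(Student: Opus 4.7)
The plan is to take $P$ to be the progenerator of $\mathcal{O} = \mathcal{O}(R)$ with finite $\Delta$-filtration provided by Theorem \ref{progenerator}, to set $A = \End_{\mathcal{O}}(P)^{\mathrm{op}}$, and to derive (1)--(4) by combining Proposition \ref{key prop} with a standard Morita-equivalence argument.

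First I would verify the hypotheses of Proposition \ref{key prop} for $P$ and for each $\Delta(E)$. Since $\gr H_R(\underline\kappa,h) = S(V\oplus V^*)\sharp W \otimes_\C R$ is Noetherian, so is $H_R(\underline\kappa,h)$, and hence $P$ and each $\Delta(E)$ are finitely presented. The Etingof--Ginzburg PBW decomposition gives $\Delta(E) \cong S(V^*)\otimes_\C E\otimes_\C R$ as an $R$-module, so each $\Delta(E)$ is $R$-free; since $P$ has a finite $\Delta$-filtration and extensions of $R$-free modules split over $R$, $P$ itself is a projective (indeed free) $R$-module. Using the adjunction $\Delta(E) = H_R\otimes_{S(V)\sharp W\otimes_\C R}(E\otimes_\C R)$ together with the identification of the lowest $\Eu$-eigenspace $\Delta(E')^{prim} = E'\otimes_\C R$, one computes $\Hom_{\mathcal{O}}(\Delta(E),\Delta(E')) \cong \Hom_W(E,E')\otimes_\C R$, which is a finitely generated free $R$-module. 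A d\'evissage along the $\Delta$-filtration of $P$ then yields that $\End_{\mathcal{O}}(P)$ and each $\Hom_{\mathcal{O}}(P,\Delta(E))$ are finitely generated $R$-modules, and Proposition \ref{key prop} concludes that $A$ and each $\Hom_{\mathcal{O}}(P,\Delta(E))$ are projective $R$-modules, giving (1).

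Next I would establish (2) via the functor $F := \Hom_{\mathcal{O}}(P,-) \colon \mathcal{O}(R) \to A\text{-}\fgMod$ with quasi-inverse $G := P\otimes_A -$. The functor $F$ is exact by projectivity of $P$, and lands in finitely generated $A$-modules because $P$ is a generator: any $M\in\mathcal{O}(R)$ admits a surjection $P^{\oplus n} \twoheadrightarrow M$, whence $F(M)$ is a quotient of $A^{\oplus n}$. The unit and counit of the adjunction are isomorphisms when evaluated on $A$, respectively on $P$, and the five-lemma applied to a finite $P$-presentation of $M$ (or a finite $A$-presentation of $N$) extends this to arbitrary objects. Under this equivalence $\Delta(E)$ corresponds to $\Hom_{\mathcal{O}}(P,\Delta(E))$, which is $R$-projective by the previous paragraph, delivering (4).

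Property (3) then follows from the $R$-linearity of the equivalence: an object $M \in \mathcal{O}(R)$ satisfies $\mathfrak{m}M = 0$ if and only if $\mathfrak{m}F(M) = 0$, which is exactly the condition that $F(M)$ is a module over $A/\mathfrak{m}A = A\otimes_R F$. The main obstacle is verifying in the first step the finite generation of $\End_{\mathcal{O}}(P)$ over $R$ without already knowing its projectivity; the d\'evissage along the $\Delta$-filtration of $P$, combined with the explicit computation of $\Hom$ between standard modules through primitive vectors, is the crucial input, after which Noetherianness of $H_R$ supplies the finite presentation needed to invoke Proposition \ref{key prop}.
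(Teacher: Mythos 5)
Your overall strategy is exactly the paper's: take the progenerator $P$ with finite $\Delta$-filtration from Theorem \ref{progenerator}, set $A=\End_{\mathcal O}(P)^{\rm op}$, verify the hypotheses of Proposition \ref{key prop}, and conclude by the standard Morita argument. The verifications of finite presentation (via Noetherianity of $H_R(\underline\kappa,h)$, which the paper gets instead from the presentation of $P$ as a quotient of $\Delta_N(\C W)$ by a finitely generated submodule) and of $R$-projectivity of $P$ are fine, as are the deductions of (2), (3), (4).

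There is, however, one incorrect intermediate claim: the identification $\Hom_{\mathcal O}(\Delta(E),\Delta(E'))\cong\Hom_W(E,E')\otimes_\C R$. The adjunction gives $\Hom_{\mathcal O}(\Delta(E),N)\cong\Hom_{RW}(E\otimes_\C R,\,N^{prim})$ where $N^{prim}=\{m\in N\mid Vm=0\}$, and $N^{prim}$ is \emph{not} the lowest $\Eu$-eigenspace in general: for special parameters $\Delta(E')$ has singular vectors in positive degree (this is precisely what makes $\mathcal O$ non-semisimple), so there exist nonzero maps $\Delta(E)\rightarrow\Delta(E')$ with $E\neq E'$ and your formula would force these to vanish. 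Fortunately your use of the formula is only to get finite generation over $R$, and that survives: $N^{prim}\subseteq\sum_{E''\in\Irr W}N_{-\overline{c_{E''}}}$, which is a finitely generated $R$-module by Lemma \ref{decomp}(3), so $\Hom_{\mathcal O}(\Delta(E),N)$ embeds in a finitely generated module over the Noetherian ring $R$. This weaker statement (finite generation of the generalized $\Eu$-eigenspaces $P_a$ and $Q_a$) is exactly what the paper invokes for hypotheses (iii) and (b) of Proposition \ref{key prop}, and with that correction your d\'evissage along the $\Delta$-filtration of $P$ goes through.
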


For the proof, we take the progenerator $P$ constructed in Theorem \ref{progenerator}, 
and define $A=\End_{\mathcal O}(P)^{\rm op}$. 
Then we check the assumptions in Proposition \ref{key prop}. (ii) and (a) are obvious. 
Recall that if $0\rightarrow L\rightarrow M\rightarrow N\rightarrow 0$ such that 
$L$ is finitely generated and $M$ is finitely presented then $N$ is finitely 
presented. Hence, that $\Delta_N(\C W)$ is a finitely presented 
$H_R(\underline\kappa,h)$-module implies that (i) holds. (iii) and (b) follow 
from the fact that $P_a$ and $Q_a$ are finitely generated $R$-modules,  
for $a\in F$, since $R$ is Noetherian. 

\begin{corollary}
Suppose that $R$ is a complete regular local ring. Then the category $\mathcal O$ 
is a Krull-Schmidt category. 
\end{corollary}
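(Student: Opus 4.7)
The plan is to reduce the statement to the classical fact that finitely generated modules over a module-finite algebra over a complete Noetherian local ring form a Krull--Schmidt category. By Theorem \ref{reduction to A-mod}, $\mathcal O$ is equivalent to $A\text{-}\fgMod$ for some module-finite $R$-algebra $A$, and the lemma preceding Proposition \ref{Dunkl-Opdam} already provides a decomposition of every $M\in\mathcal O$ into a finite direct sum of indecomposables. So the only remaining task is to show that $\End_{\mathcal O}(M)$ is a local ring for every indecomposable $M\in\mathcal O$.

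Translated through the equivalence, this amounts to showing that $B:=\End_A(M)$ is local for every indecomposable $M\in A\text{-}\fgMod$. First I would note that, since $A$ is finitely generated as an $R$-module and $M$ as an $A$-module, $M$ is finitely generated over $R$; Noetherianity of $R$ then forces $\End_R(M)$, and hence its $R$-submodule $B$, to be finitely generated over $R$. Thus $B$ is itself a module-finite $R$-algebra.

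Let $\mathfrak m$ denote the maximal ideal of $R$. Since $B$ is $R$-module-finite and $R$ is $\mathfrak m$-adically complete, $B$ is $\mathfrak m B$-adically complete; moreover, for any $x\in\mathfrak m B$, the series $\sum_{n\ge 0}(-x)^n$ converges in $B$ and inverts $1+x$, so $\mathfrak m B\subseteq\Rad(B)$. The quotient $B/\mathfrak m B$ is a finite-dimensional algebra over the residue field, hence Artinian (in particular semiperfect), and the standard successive-approximation argument through the tower $B/\mathfrak m^{n}B$ lifts any idempotent from $B/\mathfrak m B$ to $B$. If $M$ is indecomposable, then $B$ has only the trivial idempotents $0$ and $1$, so $B/\Rad(B)$ is an Artinian semisimple ring with no nontrivial idempotents, i.e.\ a division ring. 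Hence $B$ is local. The only substantive point is verifying that $B$ is module-finite over $R$; once that is in hand, completeness of $R$ does everything else via classical idempotent lifting, and I do not anticipate further obstacles.
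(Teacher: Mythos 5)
Your proof is correct and follows the route the paper intends: the corollary is drawn from Theorem \ref{reduction to A-mod}, reducing to the classical fact that finitely generated modules over a module-finite algebra over a complete Noetherian local ring form a Krull--Schmidt category, which is exactly your argument via module-finiteness of $\End_A(M)$ and idempotent lifting. No gaps.
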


Suppose that $R$ is a Noetherian commutative ring and $A$ is a module-finite projective 
$R$-algebra. Then a finitely generated $A$-module $M$ is projective if and only if 
the following hold. 
\begin{itemize}
\item[(i)]
$M$ is a projective $R$-module,
\item[(ii)]
for each closed point $x\in\Spec R$, 
$M\otimes_R k(x)$ is a projective $A\otimes_R k(x)$-module.
\end{itemize}
By Theorem \ref{reduction to A-mod}, if $M$ is $\Delta$-filtered 
then $\Hom_{\mathcal O}(P,M)\otimes_R F\simeq\Hom_{\mathcal O}(P,M\otimes_R F)$. 
Hence, we have the following corollary.

\begin{corollary}
Let $R$ be a regular local ring whose residue field $F$ 
contains $\C$. Then a $\Delta$-filtered object $M\in\mathcal O(R)$ is a 
projective object if and only if $M\otimes_R F$ is a projective object of 
$\mathcal O(F)$. 
\end{corollary}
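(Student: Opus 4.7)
The plan is to reduce everything to the module-finite projective $R$-algebra $A$ given by Theorem \ref{reduction to A-mod} and then apply the projectivity criterion recalled just before the corollary. The key simplification from the hypothesis that $R$ is local is that $\Spec R$ has a unique closed point, namely the maximal ideal with residue field $F$. Hence the criterion becomes: a finitely generated $A$-module $N$ is projective if and only if (i) $N$ is projective as an $R$-module and (ii) $N\otimes_R F$ is projective as an $A\otimes_R F$-module. Under the equivalences $\mathcal O(R)\simeq A\text{-}\fgMod$ and $\mathcal O(F)\simeq A\otimes_R F\text{-}\fgMod$, both conditions translate to statements intrinsic to the categories $\mathcal O(R)$ and $\mathcal O(F)$.

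The forward direction is immediate: a projective object of $\mathcal O(R)$ corresponds to a projective $A$-module, and base change along $R\to F$ sends projective $A$-modules to projective $A\otimes_R F$-modules, so $M\otimes_R F$ is projective in $\mathcal O(F)$. For the converse, assume $M\in\mathcal O(R)$ is $\Delta$-filtered and $M\otimes_R F$ is projective in $\mathcal O(F)$. By the criterion, it suffices to verify condition (i), that $M$ is projective (equivalently, finitely generated free, since $R$ is local) as an $R$-module. Here I would use Theorem \ref{reduction to A-mod}(4), which tells us that each standard module $\Delta(E)$ is a finitely generated projective $R$-module. Given a $\Delta$-filtration $0=M_0\subseteq M_1\subseteq\cdots\subseteq M_k=M$ with $M_i/M_{i-1}\simeq\Delta(E_i)$, induct on $i$: the short exact sequence
\[
0\longrightarrow M_{i-1}\longrightarrow M_i\longrightarrow \Delta(E_i)\longrightarrow 0
\]
splits over $R$ because $\Delta(E_i)$ is $R$-projective, so $M_i\simeq M_{i-1}\oplus\Delta(E_i)$ is again a finitely generated projective $R$-module. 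With (i) and (ii) both verified, the criterion yields that $M$ is a projective $A$-module, i.e.\ a projective object of $\mathcal O(R)$.

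There is no serious obstacle; the proof is an assembly of prior results. The only subtlety worth emphasizing is that the local hypothesis on $R$ is what collapses the "for each closed point" clause of the criterion to a single condition at $F$, so that the projectivity of $M\otimes_R F$ in $\mathcal O(F)$ suffices to conclude projectivity of $M$ in $\mathcal O(R)$ once $R$-projectivity has been secured via the $\Delta$-filtration.
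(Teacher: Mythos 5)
Your strategy is the same as the paper's: pass to the module-finite projective $R$-algebra $A$ of Theorem \ref{reduction to A-mod} and apply the stated criterion for projectivity of a finitely generated module over such an algebra, with the local hypothesis collapsing the closed-point condition to the single residue field $F$. Your treatment of condition (i) --- splitting the $\Delta$-filtration over $R$ using the $R$-projectivity of each $\Delta(E)$ from Theorem \ref{reduction to A-mod}(4) --- is correct, and in fact makes explicit a step the paper leaves implicit.

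There is, however, one point you pass over with the phrase ``both conditions translate to statements intrinsic to the categories,'' and it is precisely the point the paper isolates as the content of the proof. Writing $N=\Hom_{\mathcal O}(P,M)$ for the $A$-module corresponding to $M$ under the equivalence, condition (ii) of the criterion concerns $N\otimes_R F$, whereas your hypothesis concerns $\Hom_{\mathcal O}(P,M\otimes_R F)$, the $A\otimes_R F$-module corresponding to $M\otimes_R F$ under the equivalence $\mathcal O(F)\simeq A\otimes_R F\text{-}\fgMod$. These agree only if the functor $\Hom_{\mathcal O}(P,-)$ commutes with reduction modulo the maximal ideal on $M$, i.e.\ $\Hom_{\mathcal O}(P,M)\otimes_R F\simeq\Hom_{\mathcal O}(P,M\otimes_R F)$. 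This is not automatic for an arbitrary object of $\mathcal O(R)$, but it does hold for $\Delta$-filtered $M$; the paper cites exactly this isomorphism as the input from Theorem \ref{reduction to A-mod}, and it ultimately rests on Proposition \ref{key prop}, which guarantees that $\Hom_{\mathcal O}(P,M)$ is a projective $R$-module when $M$ is $\Delta$-filtered. Once this identification is inserted, your argument is complete and coincides with the paper's.
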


\subsection{Highest weight category}
The aim of the remaining part is to introduce Rouquier's theory of 
quasihereditary covers. 

\begin{definition}
Let $R$ be a commutative ring, $H$ an $R$-algebra, 
$\mathcal C$ an $R$-linear Abelian category which is a 
subcategory of $H\text{-}\Mod$. 
Let $\Lambda$ be a finite preordered set. Then, we say that 
$(\mathcal C,\Lambda)$ is a 
\emph{highest weight category in weak sense} if there exist objects 
$\{\Delta(\lambda)\mid \lambda\in\Lambda\}$ such that the following are satisfied. 
\begin{itemize}
\item[(i)]
$\Delta(\lambda)$ is a projective $R$-module.
\item[(ii)]
If $\Hom_{\mathcal C}(\Delta(\lambda'),\Delta(\lambda''))\neq0$ then 
$\lambda'\leq\lambda''$. 
\item[(iii)]
If $N\in\mathcal C$ is such that $\Hom_{\mathcal C}(\Delta(\lambda),N)=0$, 
for all $\lambda$, then $N=0$. 
\item[(iv)]
For each $\lambda$, there exists a projective object $P(\lambda)$ of $\mathcal C$ 
such that there is $P(\lambda)\rightarrow \Delta(\lambda)\rightarrow 0$ and 
that $\Ker(P(\lambda)\rightarrow \Delta(\lambda))$ 
has a finite $\Delta$-filtration in which only $\Delta(\lambda')$ with 
$\lambda'>\lambda$ appear.
\end{itemize}
A highest weight category in weak sense 
$(\mathcal C,\Lambda)$ is \emph{split} if it also satisfies 
\begin{itemize}
\item[(v)]
$\End_{\mathcal C}(\Delta(\lambda))=R$, for all $\lambda$.
\end{itemize}
\end{definition}

$\Delta(\lambda)$ are called \emph{standard objects}. 
This definition drops the requirement that a highest weight category should be  
Artin in some sense. Hence, we add the phrase \lq\lq in weak sense\rq\rq. 
Following \cite{R}, we define as follows. 

\begin{definition}
A highest weight category in weak sense $(\mathcal C,\Lambda)$ is a 
\emph{highest weight category} if $\mathcal C\simeq A\text{-}\fgMod$, 
for some module-finite projective $R$-algebra $A$.
\end{definition}

Let us recall the usual definition of a highest weight 
category over a field. Note that we only require that 
$\Lambda$ is preordered. But this is not essential. 

\begin{definition}
Let $R$ be a field. A category $\mathcal C$ is an \emph{Artin category} over $R$ if 
\begin{itemize}
\item[(i)]
$\mathcal C$ is an Abelian $R$-linear category,
\item[(ii)]
$\dim_R\Hom_{\mathcal C}(X,Y)<\infty$, for all $X,Y\in\mathcal C$,
\item[(iii)]
All objects are of finite length.
\end{itemize}
A highest weight category $(\mathcal C,\Lambda)$ is \emph{split} 
if it also satisfies 
\begin{itemize}
\item[(v)]
$\End_{\mathcal C}(\Delta(\lambda))=R$, for all $\lambda$.
\end{itemize}
\end{definition}

\begin{definition}
Let $R$ be a field, $\mathcal C$ an Artin category over $R$, 
$\Lambda$ a finite preordered set. Then, we say that 
$(\mathcal C,\Lambda)$ is a 
\emph{highest weight category} if there exist objects 
$\{\Delta(\lambda)\mid \lambda\in\Lambda\}$ such that the following are satisfied. 
\begin{itemize}
\item[(i)]
$L(\lambda)=\Top\Delta(\lambda)$ is an irreducible object and 
$\{L(\lambda)\mid \lambda\in\Lambda\}$ is a complete set of isomorphism classes of 
irreducible objects. 
\item[(ii)]
If $[\Rad\Delta(\lambda):L(\mu)]\neq0$ then $\mu<\lambda$. 
\item[(iii)]
For each $\lambda$, there exists a projective object $P(\lambda)$ of $\mathcal C$ 
such that there is $P(\lambda)\rightarrow \Delta(\lambda)\rightarrow 0$ and 
that $\Ker(P(\lambda)\rightarrow \Delta(\lambda))$ 
has a finite $\Delta$-filtration in which only $\Delta(\lambda')$ with 
$\lambda'>\lambda$ appear.
\end{itemize}
\end{definition}

Suppose that $(\mathcal C,\Lambda)$ is a highest weight category in weak sense. 
If $\lambda'\not<\lambda''$ then 
$\Ext_{\mathcal C}^1(\Delta(\lambda'),\Delta(\lambda''))=0$. Next suppose that 
$(\mathcal C,\Lambda)$ is a highest weight category over a field. 
If $\lambda'\neq\lambda''$ and $\lambda'\not<\lambda''$ then 
$\Hom_{\mathcal C}(\Delta(\lambda'),\Delta(\lambda''))=0$. If 
$(\mathcal C,\Lambda)$ is a highest weight category over a regular local ring 
$R$ whose residue field is $F$, then repeated use of Nakayama's lemma implies that 
if $A\otimes_R F\text{-}\fgMod$ is a highest weight category whose standard objects 
are $\{\Delta(\lambda)\otimes_R F\mid\lambda\in\Lambda\}$ and all $\Delta(\lambda)$ 
are projective $R$-modules, then 
$\lambda'\neq\lambda''$ and $\lambda'\not<\lambda''$ imply 
$\Hom_{\mathcal C}(\Delta(\lambda'),\Delta(\lambda''))=0$. 

\begin{lemma}
Suppose that $R$ is a field. Then the two definitions of split 
highest weight category coincide.
\end{lemma}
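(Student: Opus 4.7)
The plan is to prove both directions, assuming $R$ is a field throughout. For the direction from the Artin definition to the weak one, the standard projective generator $P = \bigoplus_{\lambda} P(\lambda)$ supplied by Artin axiom (iii) produces a Morita equivalence $\mathcal C \simeq A\text{-}\fgMod$ with $A = \End_{\mathcal C}(P)^{\mathrm{op}}$ finite-dimensional over the field $R$, so in particular $A$ is $R$-projective. The weak axioms then translate routinely: axiom (ii) follows from Artin (ii) applied to the image of a nonzero map $\Delta(\lambda') \to \Delta(\lambda'')$, which contains $L(\lambda')$ as a composition factor of $\Delta(\lambda'')$; axiom (iii) follows because any nonzero object $N$ has a simple submodule $L(\lambda)$ by finite length, and then $\Hom(\Delta(\lambda), L(\lambda)) \neq 0$ injects into $\Hom(\Delta(\lambda), N)$; axioms (iv) and (v) transcribe verbatim.

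For the harder direction, weak to Artin, the weak definition already gives $\mathcal C \simeq A\text{-}\fgMod$ with $A$ finite-dimensional, hence $\mathcal C$ is Artin. Passing to a Morita-equivalent basic algebra, I may assume $\End_A(L) = R$ for every simple $L$, so $\dim_R \Hom(P(L), M) = [M:L]$. The core claim is that $\Top \Delta(\lambda)$ is simple with $[\Delta(\lambda) : \Top \Delta(\lambda)] = 1$. Granting this, set $L(\lambda) := \Top \Delta(\lambda)$; exhaustion of simples follows because $\bigoplus_\lambda P(\lambda)$ is a projective generator (weak (iii) combined with the injection $\Hom(\Delta(\lambda), N) \hookrightarrow \Hom(P(\lambda), N)$), so every simple occurs as $\Top P(\lambda) = L(\lambda)$. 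For Artin (ii), the $\Delta$-filtration of $P(\mu)$ together with weak (ii) applied to each subquotient forces $\Hom(P(\mu), \Delta(\lambda)) \neq 0 \Rightarrow \mu \leq \lambda$; hence $[\Delta(\lambda) : L(\mu)] \neq 0 \Rightarrow \mu \leq \lambda$, and the claim rules out $\mu = \lambda$ when $L(\mu)$ lies in $\Rad \Delta(\lambda)$.

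To prove the core claim, I plan the following computation. Using Krull--Schmidt over the finite-dimensional $A$, I replace $P(\lambda)$ by the projective cover of $\Delta(\lambda)$; it is a direct summand of the given $P(\lambda)$ and retains a $\Delta$-filtered kernel $K$ with indices $\mu > \lambda$, by the summand-inheritance fact recorded as Claim 3 in the proof of Theorem~\ref{progenerator}. Applying $\Hom_{\mathcal C}(-, \Delta(\lambda))$ to $0 \to K \to P(\lambda) \to \Delta(\lambda) \to 0$, weak (ii) gives $\Hom(\Delta(\mu), \Delta(\lambda)) = 0$ for $\mu > \lambda$, and an induction on the $\Delta$-filtration of $K$ yields $\Hom(K, \Delta(\lambda)) = 0$, so $\End(\Delta(\lambda)) \simeq \Hom(P(\lambda), \Delta(\lambda))$. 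Writing $\Top \Delta(\lambda) = \bigoplus_i L_i^{a_i}$ with distinct simples so that $P(\lambda) = \bigoplus_i P(L_i)^{a_i}$, the basic-case formula gives
\[
1 = \dim_R \End(\Delta(\lambda)) = \dim_R \Hom(P(\lambda), \Delta(\lambda)) = \sum_i a_i [\Delta(\lambda) : L_i] \geq \sum_i a_i^2,
\]
forcing one $a_i = 1$ and the rest to vanish.

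The main obstacle is precisely this identification $\End(\Delta(\lambda)) \simeq \Hom(P(\lambda), \Delta(\lambda))$: it is the only place where strictness ($\mu > \lambda$, not merely $\mu \geq \lambda$) in the $\Delta$-filtration of the kernel is used, and without it the dimension count fails and $\Top \Delta(\lambda)$ could be properly semisimple. Everything else --- Morita reduction to the basic case, the Krull--Schmidt replacement, and the verification of the weak axioms in the easier direction --- is routine bookkeeping once this key vanishing is in place.
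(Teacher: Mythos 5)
Your proposal is correct and takes essentially the same route as the paper: the crux in both is that conditions (ii) and (iv) of the weak definition give $\Hom_{\mathcal C}(P(\lambda),\Delta(\lambda))\simeq\End_{\mathcal C}(\Delta(\lambda))=R$, and a dimension count then forces $\Top\Delta(\lambda)$ to be irreducible, after which Artin (i) and (ii) follow from the $\Delta$-filtrations of the $P(\mu)$ exactly as in the paper. The only slip is the claim that passing to a basic algebra gives $\End_A(L)=R$ for every simple $L$ (false over a non-algebraically-closed field), but your count survives unchanged since $\dim_R\Hom_A(P(L_i),\Delta(\lambda))=[\Delta(\lambda):L_i]\cdot\dim_R\End_A(L_i)\geq a_i$ in any case.
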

That the usual definition implies Rouquier's definition is clear. Hence 
we prove the converse. If $A$ is module-finite over a field $R$, it is a finite 
dimensional $R$-algebra, so $A\text{-}\fgMod$ is automatically 
an Artin category over $R$. By the conditions (ii) and (iv) we have 
$\Hom_{\mathcal C}(P(\lambda),\Delta(\lambda))=\End_{\mathcal C}(\Delta(\lambda))$. 
If $\Top\Delta(\lambda)$ is not irreducible, then 
$\dim_R\End_{\mathcal C}(\Delta(\lambda))\geq2$. Thus, 
$L(\lambda)=\Top\Delta(\lambda)$ is irreducible. Now (i) is clear. 
If $[\Rad\Delta(\lambda):L(\mu)]\neq0$ then we have a nonzero 
$\varphi:P(\mu)\rightarrow \Rad\Delta(\lambda)$. Consider the 
$\Delta$-filtration $P(\mu)=F_0\supseteq F_1=\Ker(P(\mu)\rightarrow\Delta(\mu))$. 
If $\varphi(F_1)=0$ then $\varphi$ induces a nonzero 
$\Delta(\lambda)\rightarrow\Rad\Delta(\lambda)$, which contradicts 
$\dim_R\End_{\mathcal C}(\Delta(\lambda))=1$. Hence,  
there exists $\nu>\mu$ such that $\varphi$ induces a nonzero homomorphism 
$\Delta(\nu)\rightarrow\Delta(\lambda)$. Thus, $\lambda\geq\nu>\mu$ and (ii) is proved. 

\begin{theorem}[Guay]
\label{Guay}
If $R$ is a field then the category $\mathcal O$ for 
$H_R(\underline\kappa,h)$ is a split highest weight category. 
\end{theorem}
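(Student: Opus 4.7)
The plan is to take $\Lambda = \Irr W$ with the preorder $E \le E'$ defined by $-\overline{c_E} \le -\overline{c_{E'}}$, and the $\Delta(E)$'s as standard objects, then verify axioms (i)--(v) of a split highest weight category. Axiom (i) is trivial over a field; axiom (v), $\End_{\mathcal O}(\Delta(E)) = R$, is Proposition \ref{Dunkl-Opdam}(3); and the identification $\mathcal O \simeq A\text{-}\fgMod$ with $A$ module-finite projective follows from Theorem \ref{reduction to A-mod} applied to $R$ (a field, hence a regular local ring), taking $A = \End_{\mathcal O}(P)^{\rm op}$ for the progenerator $P$ of Theorem \ref{progenerator}; projectivity of $A$ as an $R$-module is automatic over a field.

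For axiom (ii), given a nonzero $\varphi : \Delta(E) \to \Delta(E')$, I would split into cases. If $\Im \varphi \not\subseteq \Rad \Delta(E')$, then composing with the projection onto $L(E')$ yields a nonzero map from $L(E) = \Top \Delta(E)$ onto $L(E')$, forcing $E = E'$. Otherwise $L(E) = \Top \Im \varphi$ is a composition factor of $\Rad \Delta(E')$, so Proposition \ref{Dunkl-Opdam}(4) yields $-c_E \in -c_{E'} + \Z_{>0}$, which translates into $E < E'$ in the preorder. For axiom (iii), any nonzero $N$ has $N^{prim} \neq 0$; by semisimplicity of $\C W$, $N^{prim}$ contains an irreducible $W$-submodule $E$, and the resulting $S(V)\sharp RW$-linear inclusion $E \hookrightarrow N$ (with $V$ acting trivially on $E$) extends by the adjunction defining $\Delta(E) = H_R \otimes_{S(V)\sharp RW} E$ to a nonzero map $\Delta(E) \to N$.

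The crux is axiom (iv). I would start with the progenerator $P$ of Theorem \ref{progenerator}, which already carries a finite $\Delta$-filtration, and decompose it by Krull-Schmidt (Proposition \ref{Dunkl-Opdam}(7)) as $P = \bigoplus_E P(E)^{m_E}$ into indecomposable projectives with $\Top P(E) = L(E)$. Each summand $P(E)$ inherits its own finite $\Delta$-filtration by Claim 3 in the proof of Theorem \ref{progenerator}. The key move is a reordering argument: whenever two adjacent factors, say $\Delta(F_{(i)})$ below and $\Delta(F_{(i+1)})$ above, satisfy $F_{(i+1)} \not< F_{(i)}$, Lemma \ref{ext} gives $\Ext^1_{\mathcal O}(\Delta(F_{(i+1)}), \Delta(F_{(i)})) = 0$, so the two-step subquotient splits and the two factors can be exchanged inside the filtration of $P(E)$. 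Iterating until no such swap is available yields a $\Delta$-filtration whose factors are strictly decreasing in the preorder from bottom to top; its top factor $\Delta(F_{(n)})$ then satisfies $L(F_{(n)}) = L(E)$, hence $F_{(n)} = E$, and every lower factor satisfies $F_{(i)} > E$, which is exactly the content of (iv).

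The main obstacle I anticipate is the reordering in (iv): one must verify that each swap genuinely produces a new $\Delta$-filtration of the same indecomposable $P(E)$ (this is what splitness of the two-step extension provides), and that the iterative procedure terminates in finitely many steps at a filtration whose factors are strictly decreasing in the preorder --- a short combinatorial point that is harmless for a partial order but requires a little care in the presence of a genuine preorder.
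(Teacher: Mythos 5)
Your proposal is correct and follows essentially the same route as the paper: the abelian/Artin structure and progenerator come from Theorem \ref{reduction to A-mod}, the standard-object axioms from Proposition \ref{Dunkl-Opdam}, and axiom (iv) from the reordering of the $\Delta$-filtration of the projective cover via Lemma \ref{ext} and the irreducibility of $\Top P(E)$. The only cosmetic caveat is that one cannot literally achieve a \emph{strictly} decreasing filtration (factors with equal $c$-values, or a repeated $\Delta(E)$, block this); the paper instead argues that any factor $E'\not>E$ below the top can be swapped up to produce $L(E)\oplus L(E')$ in $\Top P(E)$, contradicting indecomposability --- which is exactly the contradiction your sorting implicitly relies on.
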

In fact, by the argument which uses eigenvalues of $\Eu$,
$\Hom_{\mathcal O}(X,Y)$ is a finitely generated $R$-module, for $X, Y\in\mathcal O$,
when $R$ is a Noetherian local ring. Hence $\mathcal O$ is Hom-finite.
The other conditions but the existence of projective objects $P(E)$ are 
already proved in Proposition \ref{Dunkl-Opdam}. 
By Theorem \ref{reduction to A-mod}, $\Delta(E)$ has the projective cover, 
which we denote by $P(E)$. Then, $P(E)$ is a direct summand of a 
$\Delta$-filtered object. Thus $P(E)$ is $\Delta$-filtered. 
Lemma \ref{ext} implies that any $M\in\mathcal O^\gamma$ with a finite 
$\Delta$-filtration admits a $\Delta$-filtration 
$M=F_0\supseteq F_1\supseteq\cdots$ with the property that 
$F_i/F_{i+1}=\Delta(E')$ and $F_{i+1}/F_{i+2}=\Delta(E'')$ then 
$E'\leq E''$. Thus, if the $\Delta$-filtration 
$$
\Ker(P(E)\rightarrow\Delta(E))=F_1\supseteq F_2\supseteq\cdots\supseteq F_k
\supseteq\cdots\cdots
$$ 
has the form $F_i/F_{i+1}=\Delta(E^{(i)})$ with $E^{(i)}>E$, for $1\leq i<k-1$, 
and $F_{k-1}/F_k=\Delta(E')$ with $E'\not>E$, then $E'\leq E$ and 
we may move $\Delta(E')$ to the top of the filtration. This implies that 
$\Top P(E)\supseteq L(E)\oplus L(E')$, which is a contradiction. 

\begin{definition}
Let $R$ be a commutative ring, $H$ an $R$-algebra, 
$\mathcal C$ an $R$-linear Abelian category which is a 
subcategory of $H\text{-}\Mod$.  
An object $L$ of $\mathcal C$ is \emph{$R$-split} if 
the canonical $R$-module homomorphism 
$$
L\otimes_{\End_A(L)} \Hom_{\mathcal C}(L,P)\rightarrow P
$$
is a split monomorphism in $R\text{-}\Mod$, for all projective objects 
$P$ of $\mathcal C$. 
\end{definition}

It is not named but the definition is in \cite{R}.
The next lemma is from \cite[4.10]{R}.

\begin{lemma}
\label{R-split}
Let $R$ be a Noetherian local ring whose residue field is $F=R/\mathfrak m$, 
$A$ a module-finite projective $R$-algebra, 
$\mathcal C(R)=A\text{-}\fgMod$. We denote the full subcategory 
$\{M\in\mathcal C\mid \mathfrak mM=0\}$ by $\mathcal C(F)=A\otimes_R F\text{-}\fgMod$. 
Let $L\in\mathcal C(R)$. Then, $L$ is an $R$-split projective object of 
$\mathcal C(R)$ if and only if
\begin{itemize}
\item[(i)]
$L$ is a projective $R$-module, 
\item[(ii)]
$L\otimes_R F$ is an $F$-split projective object of $\mathcal C(F)$. 
\end{itemize}
\end{lemma}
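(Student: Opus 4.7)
The plan is to handle the two directions separately; throughout, we use the natural identifications $\End_A(L)\otimes_R F\simeq\End_{A\otimes_R F}(L\otimes_R F)$ and $\Hom_A(L,P)\otimes_R F\simeq\Hom_{A\otimes_R F}(L\otimes_R F,P\otimes_R F)$, which hold for any $A$-module $P$ because $L$ is finitely generated projective over $A$ (write $A^n=L\oplus L'$ and reduce to the free case). For the forward direction, assume $L$ is $R$-split projective. Then $L$ is projective over $A$ and $A$ is projective over $R$, so $L$ is projective over $R$, giving (i). For (ii), $L\otimes_R F$ is a finitely generated projective $A\otimes_R F$-module by base change. Given any projective $P'\in\mathcal{C}(F)$, write $P'\oplus Q'=(A\otimes_R F)^n$ and apply the $R$-split hypothesis to $P:=A^n$: the map $\iota_P$ is split mono over $R$, so $\iota_P\otimes_R F$ is split mono over $F$, and under the above identifications it becomes the canonical map $\iota_{(A\otimes_R F)^n}$ for $L\otimes_R F$; by naturality and additivity of $\iota$ in its target, its restriction to the summand $P'$ remains split mono, yielding (ii).

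For the backward direction, (i) and (ii) together with the criterion recalled after Theorem \ref{reduction to A-mod} show that $L$ is projective over $A$. Let $P$ be a projective object of $\mathcal{C}(R)$; it is a summand of some $A^n$, hence free over $R$. Set $M:=L\otimes_{\End_A(L)}\Hom_A(L,P)$; a finite presentation of $L$ embeds $\Hom_A(L,P)$ into $P^n$, so $M$ is finitely generated over $R$ by Noetherianity. The identifications above turn $\iota_P\otimes_R F$ into the canonical map for $L\otimes_R F$ and $P\otimes_R F$, which is split mono over $F$ by hypothesis. Lift a retraction $\bar g\colon P\otimes_R F\to M\otimes_R F$ to an $R$-linear $g\colon P\to M$, which is possible because $P$ is $R$-free. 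Then $g\circ\iota_P\in\End_R(M)$ reduces to $\id_{M\otimes_R F}$ modulo $\mathfrak{m}$; Nakayama forces it to be surjective, and as a surjective endomorphism of a Noetherian $R$-module it is bijective. Composing with its inverse produces a retraction of $\iota_P$ in $R\text{-}\Mod$.

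The main obstacle is that $M$ itself is in general not projective over $R$, so one cannot split $\iota_P$ directly from a splitting modulo $\mathfrak{m}$; the detour is to lift a retraction into $M$ from the $R$-free source $P$ and then invert the resulting endomorphism of $M$ via the Nakayama--Noetherian argument. Everything else reduces to bookkeeping of the base-change isomorphisms for $\Hom$ that hold because $L$ is a finitely generated projective $A$-module.
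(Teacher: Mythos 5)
Your proof is correct, but the second half takes a genuinely different route from the paper's. The paper dismisses the forward direction as clear and, for the converse, works with $P=A$: after the same base-change identifications for $\Hom_A(L,A)$ and $\End_A(L)$, it observes that the canonical map becomes injective modulo $\mathfrak m$, deduces $\Tor_1^R(F,L')=0$ for the cokernel $L'$, and invokes the local criterion of flatness (\cite[Theorem 22.3]{Mat}) to conclude that $L'$ is flat, hence $R$-free, so the sequence splits; the case of a general projective $P$ is left implicit as a direct summand of some $A^{\oplus n}$. You instead split the map directly: lift a retraction of $\iota_P\otimes_RF$ along the $R$-free module $P$, and show $g\circ\iota_P$ is an automorphism of $M$ via Nakayama together with the fact that a surjective endomorphism of a finitely generated module over a commutative ring is bijective. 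Your argument is more self-contained (no appeal to the flatness criterion), correctly isolates the real difficulty (that $M$ itself need not be $R$-projective, so one cannot lift a splitting of $\iota_P$ naively), and treats arbitrary projective $P$ uniformly; what the paper's route buys in exchange is the extra information that the cokernel of $\iota_A$ is $R$-projective, which is the form of the statement reused in the proof of Lemma \ref{F-split to R-split}. Both proofs rest on the same base-change isomorphisms, justified as you do by writing $L$ as a summand of a finite free $A$-module.
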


Suppose (i) and (ii). Then, by our assumptions on $A$ and $R$, 
$L$ is a projective object of $\mathcal C(R)$. Now, both $A$ and $L$ are 
free $R$-modules of finite rank and $L$ is a direct summand of a free $A$-module
of finite rank. Thus 
\begin{gather*}
\Hom_A(L,A)\otimes_R F=\Hom_{A\otimes_R F}(L\otimes_R F,A\otimes_R F),\\
\End_A(L)\otimes_R F=\End_{A\otimes_R F}(L\otimes_R F).
\end{gather*}
Since $L\otimes_R F$ is $F$-split, 
$L\otimes_{\End_A(L)}\Hom_A(L,A)\otimes_R F\rightarrow A\otimes_R F$ is a monomorphism.
Thus if we write
$0\rightarrow L\otimes_{\End_A(L)}\Hom_A(L,A)\rightarrow A\rightarrow L'\rightarrow 0$
then $\Tor_1^R(F,L')=0$.  
Hence, $L'$ is flat by \cite[Theorem 22.3]{Mat}, 
and $L$ is $R$-split as desired. The other implication is clear. 

\begin{lemma}
\label{F-split to R-split}
Let $R$ be a regular local ring whose residue field is $F$, 
$A$ a module-finite projective $R$-algebra, 
$\mathcal C(R)=A\text{-}\fgMod$ and $\mathcal C(F)=A\otimes_R F\text{-}\fgMod$. 
Suppose that a collection of $R$-free objects 
$\{\Delta(\lambda)\mid \lambda\in\Lambda\}\subseteq\mathcal C(R)$ 
is given. If $\mathcal C(F)$ is a split highest weight category whose standard 
objects are $\{\Delta(\lambda)\otimes_RF\mid \lambda\in\Lambda\}$, and whose 
projective objects are $\Delta\otimes_R F$-filtered, 
then $\Delta(\lambda)$ is $R$-split, for all $\lambda$. 
\end{lemma}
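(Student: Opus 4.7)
The plan is to show, for $L = \Delta(\lambda)$ and an arbitrary projective $P$ of $\mathcal C(R)$, that the canonical map
\[
\phi_P : L \otimes_{\End_A(L)} \Hom_A(L,P) \longrightarrow P
\]
is a split monomorphism in $R$-Mod by reducing modulo the maximal ideal $\mathfrak m$ of $R$ and transferring the $F$-splitness of $L\otimes_R F$, which holds thanks to the split highest weight structure on $\mathcal C(F)$, back to $R$ via the local criterion of flatness.

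To set this up, I would first observe that because $\mathcal C(R)=A\text{-}\fgMod$ with $A$ module-finite and projective over $R$, all the modules $L,\,P,\,\End_A(L),\,\Hom_A(L,P)$ are finitely generated over $R$, and $P$ is $R$-projective (being a summand of some $A^n$); $L$ is $R$-free by hypothesis. Applying $\Hom_A(-,N)$ to a finite free presentation $A^m\to A^n\to L\to 0$ and tensoring with $F$, the $R$-projectivity of $N=L$ or $N=P$ yields the base-change isomorphisms
\[
\End_A(L)\otimes_R F \;\simeq\; \End_{A\otimes F}(L\otimes F), \qquad
\Hom_A(L,P)\otimes_R F \;\simeq\; \Hom_{A\otimes F}(L\otimes F, P\otimes F).
\]
Under these, $\phi_P\otimes_R F$ is identified with the canonical evaluation map in $\mathcal C(F)$ for $L\otimes F$ and $P\otimes F$. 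Since $\mathcal C(F)$ is split highest weight with $\Delta$-filtered projectives and $P\otimes F$ is projective, the usual Ext-vanishing ($\Ext^1(\Delta(\mu)\otimes F,\Delta(\nu)\otimes F)=0$ for $\mu\not<\nu$) allows me to rearrange the $\Delta$-filtration of $P\otimes F$ so that its $\Delta(\lambda)\otimes F$ subquotients form a direct-sum subobject; using $\End_{A\otimes F}(L\otimes F)=F$, the canonical map is then the inclusion of that subobject, hence injective.

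Finally I would promote this to a split mono over $R$. Since $P$ is $R$-projective, $\Tor_1^R(F,P)=0$; combined with injectivity of $\phi_P\otimes F$, the Tor long exact sequence first gives $\Ker(\phi_P)\otimes_R F = 0$, whence $\Ker\phi_P = 0$ by Nakayama. Setting $C=\Coker\phi_P$, the same sequence then gives $\Tor_1^R(F,C)=0$, and the local criterion of flatness (valid because $R$ is regular local and $C$ is finitely generated) makes $C$ flat, and hence free, over $R$. The short exact sequence $0\to L\otimes_{\End_A L}\Hom_A(L,P)\to P\to C\to 0$ therefore splits in $R$-Mod, completing the proof. The main obstacle will be the $F$-splitness step: the base-change identifications, Nakayama, and the flatness criterion are routine, but it takes some care (and the $\Delta$-filtered-projectives hypothesis) to rearrange a $\Delta$-filtration so as to display the trace of $L\otimes F$ in $P\otimes F$ as a direct summand.
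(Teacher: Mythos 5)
Your strategy --- reduce modulo $\mathfrak m$, prove injectivity of the evaluation map over $F$ using the split highest weight structure of $\mathcal C(F)$, then lift back via Nakayama and the local criterion of flatness --- is genuinely different from the paper's, and it founders on the base-change step. The isomorphisms $\End_A(L)\otimes_RF\simeq\End_{A\otimes F}(L\otimes F)$ and $\Hom_A(L,P)\otimes_RF\simeq\Hom_{A\otimes F}(L\otimes F,P\otimes F)$ do not follow from applying $\Hom_A(-,N)$ to a free presentation and tensoring with $F$: that procedure only yields a natural map, because $-\otimes_RF$ is right exact and does not commute with the kernel that defines $\Hom_A(L,N)$ inside $N^n$. (Already for $A=R$ a discrete valuation ring, $L=R/\mathfrak m$ and $N=R$ one has $\Hom_A(L,N)\otimes_RF=0$ while $\Hom_{A\otimes F}(L\otimes F,N\otimes F)=F$, even though $N$ is $R$-projective.) Base change does hold when $L$ is a direct summand of a finite free $A$-module --- this is precisely how the paper's proof of Lemma \ref{R-split} justifies it --- but $\Delta(\lambda)$ is a projective $A$-module only when $\lambda$ is maximal. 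For non-maximal $\lambda$, commutation of $\Hom_A(\Delta(\lambda),P)$ with $-\otimes_RF$ is a nontrivial fact that in Rouquier's development is itself deduced from the highest weight structure of $\mathcal C(R)$, which is what the present lemma is a step toward establishing; invoking it here is close to circular. Without it you cannot identify $\phi_P\otimes_RF$ with the evaluation map in $\mathcal C(F)$, so the injectivity you establish over $F$ does not transfer to a statement about $\phi_P$. A secondary gap: for non-maximal $\lambda$ the trace of $\Delta(\lambda)\otimes_RF$ in $P\otimes_RF$ is not the span of the $\Delta(\lambda)\otimes_RF$-layers of a rearranged filtration, because $\Hom(\Delta(\lambda)\otimes_RF,\Delta(\mu)\otimes_RF)$ can be nonzero for $\mu>\lambda$ and maps may land inside the part filtered by larger weights; your description of the evaluation map as the inclusion of a direct-sum subobject is valid only for $\lambda$ maximal.

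The paper circumvents both problems by a different architecture: for $\lambda$ maximal, $\Delta(\lambda)\otimes_RF$ is a projective object of $\mathcal C(F)$, hence $F$-split by Rouquier's Lemma 4.5, hence $\Delta(\lambda)$ is $R$-split by Lemma \ref{R-split} (where projectivity of $L$ over $A$ legitimizes the base change you need); one then forms the trace ideal $J=\Im(\Delta(\lambda)\otimes_R\Hom_A(\Delta(\lambda),A)\rightarrow A)$ and inducts on $|\Lambda|$ over the quotient $A/J$, where the next weight becomes maximal and its standard module becomes projective. Your concluding Nakayama/flatness argument is essentially sound once the order of deductions is corrected (one must first obtain $\Tor_1^R(F,\Coker\phi_P)=0$, hence $\Coker\phi_P$ free and $\Im\phi_P$ an $R$-direct summand of $P$, and only then conclude $\Ker\phi_P\otimes_RF=0$), but it rests on the unproved base-change identifications above.
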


As we only apply the result to $\mathcal O$, we add the assumption that 
projective objects are $\Delta$-filtered, but this is in fact automatic. 

Let $\lambda$ be a maximal element of $\Lambda$, and let 
$P$ be a projective object of $\mathcal C(F)$. Then, since $\mathcal C(F)$ is 
a split highest weight category, 
$\Delta(\lambda)\otimes_R F$ is a projective object of $\mathcal C(F)$, 
there is a subobject $P_0$ of $P$ such that 
$P_0\simeq(\Delta(\lambda)\otimes_R F)^{\oplus m}$, for some $m$, 
and $\Hom_{\mathcal C(F)}(\Delta(\lambda)\otimes_R F,P/P_0)=0$. 
We also have $\End_{\mathcal C(F)}(\Delta(\lambda)\otimes_R F)=F$. 
Thus, $\Delta(\lambda)\otimes_R F$ is an $F$-split projective object of 
$\mathcal C(F)$ by \cite[Lemma 4.5]{R}. As $\Delta(\lambda)$ is $R$-free, 
it is an $R$-split projective object of $\mathcal C(R)$ 
by Lemma \ref{R-split}. Define 
$$
J=\Im\left(\Delta(\lambda)\otimes_R\Hom_A(\Delta(\lambda),A)
\rightarrow A\right).
$$
$J$ is a two-sided ideal of $A$. As $\Delta(\lambda)$ is 
$R$-split and projective, \cite[Lemma 4.5]{R} implies that 
$A/J$ is a module-finite projective $R$-algebra, $\Hom_A(\Delta(\lambda),A/J)=0$ and 
$J\simeq \Delta(\lambda)^{\oplus m}$, for some $m$. 
Thus, we may prove Lemma \ref{F-split to R-split} by induction on $|\Lambda|$. 
Note that we need here 
$\Hom_{\mathcal C}(\Delta(\lambda'),\Delta(\lambda''))=0$, if 
$\lambda'\neq\lambda''$ and $\lambda'\not<\lambda''$, to guarantee that 
if $\lambda'\neq\lambda$ then $\Delta(\lambda')$ is an $A/J$-module. 
See \cite[Lemma 4.4]{R}. 

Now we have Theorem \ref{split hw cat} below.
The conditions (i), (ii) and (v) to
be a split highest weight category are easy to check. 
By Theorem \ref{reduction to A-mod}, 
we assume that objects of $\mathcal O(R)$ are finitely generated 
$R$-modules. Thus, $N=0$ if and only if $N\otimes_R F=0$. Hence 
Theorem \ref{Guay} implies that (iii) holds. 
To verify (iv) by induction, we show that if $\lambda$ is maximal 
and $Q$ is a finitely generated projective $A/J$-module such that some 
$\Delta(\mu)$ with $\mu<\lambda$ appears in its $\Delta$-filtration, 
then we may find a finitely generated projective 
$A$-module $P$ such that
$$
0\rightarrow \Delta(\lambda)^{\oplus m}\rightarrow P\rightarrow 
Q\rightarrow 0,\;\;\text{for some $m$.}
$$
This is proved in \cite[Lemma 4.9]{R}. Take a surjective map 
$f:R^{\oplus m}\rightarrow\Ext_A^1(Q,\Delta(\lambda))$. 
Then $f\in\Hom_R(R^{\oplus m},\Ext_A^1(Q,\Delta(\lambda)))\simeq 
\Ext_A^1(Q,\Delta(\lambda)^{\oplus m})$ gives the desired short 
exact sequence. 
Note that if $Q=Q_1\oplus Q_2$ then we consider the direct sum of 
the short exact sequences for $Q_1$ and $Q_2$, so we may prove the assertion that $P$ 
is a projective $A$-module only when $Q=(A/J)^{\oplus n}$, for some $n$. 

\begin{theorem}[Rouquier]
\label{split hw cat}
Let $R$ be a regular local ring whose residue field $F$ contains $\C$. 
Then the category $\mathcal O(R)$ is a split highest weight category.
\end{theorem}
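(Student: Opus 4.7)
The plan is to exploit Theorem~\ref{reduction to A-mod} to realize $\mathcal{O}(R)$ as $A\text{-}\fgMod$ for a module-finite projective $R$-algebra $A$, and to take $\{\Delta(E)\mid E\in\Irr W\}$ as the candidate standard objects with the preorder induced by $E\leq E'$ iff $-\overline{c_E}\leq -\overline{c_{E'}}$. Conditions (i), (ii) and (v) of the split highest weight definition should then be essentially immediate: (i) is part of Theorem~\ref{reduction to A-mod}(4); (v) follows from Proposition~\ref{Dunkl-Opdam}(3) together with the fact that $\End_{\mathcal O(R)}(\Delta(E))$ is a finitely generated $R$-module whose reduction modulo $\mathfrak m$ is $F$, so Nakayama forces it to be $R$; and (ii) reduces to the Euler-eigenvalue argument underlying Lemma~\ref{ext}, extended from the extension setting to the $\Hom$ setting.

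For condition (iii), I plan to use that $\mathcal O(R)$ has been identified with $A\text{-}\fgMod$, so every $N\in\mathcal O(R)$ is a finitely generated $R$-module and Nakayama gives $N=0$ iff $N\otimes_R F=0$. If $\Hom_{\mathcal O(R)}(\Delta(E),N)=0$ for all $E$, then because $\Delta(E)$ is $R$-projective one obtains $\Hom_{\mathcal O(F)}(\Delta(E)\otimes_RF,N\otimes_RF)=0$; Theorem~\ref{Guay} forces $N\otimes_R F=0$, hence $N=0$.

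The crux is condition (iv), where I would proceed by induction on $|\Irr W|$. For the inductive step, take a maximal element $\lambda$; Theorem~\ref{Guay} combined with Lemma~\ref{F-split to R-split} yields that $\Delta(\lambda)$ is an $R$-split projective object of $\mathcal O(R)$. Forming the two-sided ideal $J=\Im(\Delta(\lambda)\otimes_{\End_A(\Delta(\lambda))}\Hom_A(\Delta(\lambda),A)\to A)$, the $R$-splitness produces a module-finite projective $R$-algebra $A/J$ with $J\simeq \Delta(\lambda)^{\oplus m}$, to which the inductive hypothesis applies with one fewer standard object. The remaining $\Delta(E)$ factor through $A/J$ thanks to the $\Hom$-vanishing between non-comparable standards recorded above.

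The main obstacle I anticipate is the lifting step needed to turn a projective cover in $A/J\text{-}\fgMod$ into one in $A\text{-}\fgMod$: given a finitely generated projective $A/J$-module $Q$ whose $\Delta$-filtration involves some $\Delta(\mu)$ with $\mu<\lambda$, I must build a short exact sequence $0\to \Delta(\lambda)^{\oplus m}\to P\to Q\to 0$ with $P$ projective over $A$ and again $\Delta$-filtered. The approach is to pick a surjection $R^{\oplus m}\twoheadrightarrow \Ext_A^1(Q,\Delta(\lambda))$ and view it, via the natural isomorphism $\Hom_R(R^{\oplus m},\Ext_A^1(Q,\Delta(\lambda)))\simeq\Ext_A^1(Q,\Delta(\lambda)^{\oplus m})$, as an extension class defining the desired $P$. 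To check projectivity of $P$ it is enough, by additivity, to treat $Q=(A/J)^{\oplus n}$, and there one descends projectivity from the residue field (where it holds by Theorem~\ref{Guay}) back to $R$ by invoking $R$-splitness of $\Delta(\lambda)$ in the form of Lemma~\ref{R-split}. The delicate bookkeeping will be to ensure that the $\Delta$-filtration of $P$ places $\Delta(\lambda)$ only at the bottom and that no additional $\Delta(E)$ slips into $\Top P$, which is exactly the reordering phenomenon that was handled in the proof sketch of Theorem~\ref{Guay}.
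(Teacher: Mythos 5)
Your proposal follows the paper's own argument essentially step for step: conditions (i), (ii), (v) handled directly, (iii) via the identification $\mathcal O(R)\simeq A\text{-}\fgMod$, Nakayama, and Theorem~\ref{Guay}, and (iv) by induction on a maximal $\lambda$ using the $R$-splitness of $\Delta(\lambda)$ from Lemma~\ref{F-split to R-split}, the ideal $J$, the extension class in $\Hom_R(R^{\oplus m},\Ext_A^1(Q,\Delta(\lambda)))\simeq\Ext_A^1(Q,\Delta(\lambda)^{\oplus m})$, and the reduction to $Q=(A/J)^{\oplus n}$. This is correct and is the same route the paper takes.
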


\subsection{The Knizhnik-Zamolodchikov functor} 
In this subsection we consider the regular local ring
$$
R=\C[[\bk_1-\kappa_1,\dots,\bk_{d-1}-\kappa_{d-1},\bh-h]], 
$$
where $\bk_1,\dots,\bk_{d-1},\bh$ are 
indeterminates and $\kappa_1,\dots,\kappa_{d-1},h\in\C$. 
Let $H_R(\underline\bk,\bh)$ be the rational Cherednik algebra. Then 
$H_R(\underline\bk,\bh)\otimes_R\C=H_\C(\underline\kappa,h)$. We denote 
the category $\mathcal O$ for $H_R(\underline\bk,\bh)$ by $\mathcal O(R)$, 
and the category $\mathcal O$ for $H_\C(\underline\kappa,h)$ by 
$\mathcal O$ itself. 

\begin{definition}
For $M\in\mathcal O(R)$, we denote the sheaf 
$(\mathcal O_{V_{reg}}\otimes_\C R)\otimes_{\mathcal O(V)\otimes_\C R}M$ on 
$V_{reg}$ by $\tilde M$. Similarly, we define $\tilde M$, for $M\in\mathcal O$. 
\end{definition}

Let $X$ be a connected smooth algebraic variety over $\C$, let $\mathcal M$ be 
a free $\mathcal O_X$-module of finite rank with a flat connection $\nabla$. 
Then we say that $\mathcal M$ is \emph{regular} if 
$p^*\mathcal M=\mathcal D_{C\rightarrow X}\otimes_{p^{-1}\mathcal D_X}p^{-1}\mathcal M$, 
where $\mathcal D_{C\rightarrow X}=
\mathcal O_C\otimes_{p^{-1}\mathcal O_X}p^{-1}\mathcal D_X$, 
for any smooth curve $C$ and $p:C\rightarrow X$, has regular singularities. 

Let $X=V_{reg}$. Then $\mathcal D_X\sharp W\otimes_\C R$ is a localization 
of $H_R(\underline\bk,\bh)$ with respect to $\mathcal O(X)$. For each 
$M\in\mathcal O(R)$, we may consider $\tilde M$ as a 
$\mathcal D_X\sharp W\otimes_\C R$-module via
$$
\nabla_{\partial_y}=y-\sum_{H\in\mathcal A}\frac{\langle \alpha_H,y\rangle}{\alpha_H}a_H.
$$
Similarly, $\tilde M$ is a $\mathcal D_X\sharp W$-module, for $M\in\mathcal O$. 

\begin{example}
If $M=\Delta(E)=R[x_1,\dots,x_n]\otimes_\C E$, then, for $p\in R[x_1,\dots,x_n]$ and 
$v\in E$, we have 
\begin{equation*}
\begin{split}
\nabla_{\partial_y}(p\otimes v)&=
(\partial_yp)\otimes v-\sum_{i=1}^n\sum_{k=0}^{d-1}\kappa_ky_i\frac{p}{x_i}\otimes 
d\epsilon_{H_i,k}v\\
&\quad\quad-\sum_{1\leq i\leq j\leq n}\sum_{\alpha=0}^{d-1}h(y_i-\zeta^\alpha y_j)
\frac{p}{x_i-\zeta^\alpha x_j}\otimes(1-s_{ij;\alpha})v.
\end{split}
\end{equation*}
\end{example}

It is easy to see that any $\Delta(E)\in\mathcal O$ gives a regular holonomic 
$\mathcal D_X\sharp W$-module whose characteristic variety is $T^*_XX$. 
Hence, $\tilde M$ is regular, for all $M\in\mathcal O$. 

By Deligne's Riemann-Hilbert correspondence, the category of free 
$\mathcal O_X$-modules of finite rank with a flat connection is 
equivalent to the category of $\C_X$-modules of finite rank. 
Hence, $M\in\mathcal O$ defines a $\C_X$-module 
$\mathcal Hom_{\mathcal D_{X^{an}}}
(\mathcal O_{X^{an}},\mathcal O_{X^{an}}\otimes_{\mathcal O_X}\tilde M)$. 
Here, the $\mathcal O_{X^{an}}$ on the left hand side is the 
$\mathcal D_{X^{an}}$-module 
$\mathcal D_{X^{an}}/\sum_{i=1}^n\mathcal D_{X^{an}}\frac{\partial}{\partial x_i}$. 
The $\C_X$-module is nothing but the sheaf of horizontal sections of 
$\mathcal O_{X^{an}}\otimes_{\mathcal O_X}\tilde M$. 
As our connection satisfies  
$w\nabla_{\partial_y}w^{-1}=\nabla_{\partial_{w(y)}}$, for $w\in W$, 
it is in the category of $W$-equivariant $\C_X$-modules 
of finite rank. The latter is equivalent to 
the category of $\C_{X/W}$-modules of finite rank, and, by taking the monodromy, 
it is equivalent to the category of finite dimensional 
$\C\pi_1(X/W)$-modules. Let us denote it by $\C\pi_1(X/W)\text{-}\fgMod$. 
Hence, we have obtained an exact functor
$$
KZ:\mathcal O\longrightarrow \C\pi_1(X/W)\text{-}\fgMod.
$$

The following is proved in \cite[Proposition 5.9]{GGOR}.

\begin{lemma}
\label{fully faithful}
Suppose that $v_i=\exp(2\pi\sqrt{-1}(\kappa_i+\frac{i}{d}))$, 
$1\leq i\leq d-1$, are different from 
$1$ and pairwise distinct, $q=\exp(2\pi\sqrt{-1}h)\neq-1$. 
Let $M, N\in\mathcal O$ and suppose that $N$ is $\Delta$-filtered. 
Then 
$\Hom_{\mathcal O}(M,N)\simeq\Hom_{\C\pi_1(X/W)}(KZ(M),KZ(N))$. 
In particular, the KZ functor is fully faithful 
on the additive subcategory of $\Delta$-filtered objects.
\end{lemma}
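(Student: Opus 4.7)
The plan is to factor the KZ functor as a composition of three pieces and to control each one separately. Let $\delta=\prod_{H\in\mathcal A}\alpha_H$ and let $\mathcal O^{tor}$ be the Serre subcategory of objects on which $\delta$ acts locally nilpotently (equivalently, objects supported on the reflection arrangement). Since the construction of $\tilde M$ inverts $\delta$, the functor $KZ$ annihilates $\mathcal O^{tor}$ and factors as
$$\mathcal O \longrightarrow \mathcal O/\mathcal O^{tor} \longrightarrow \{\text{regular } \mathcal O_X\text{-coherent } \mathcal D_X\sharp W\text{-modules}\} \longrightarrow \C\pi_1(X/W)\text{-}\fgMod,$$
whose last two arrows are equivalences by Deligne's Riemann--Hilbert theorem and the monodromy functor. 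All of the work is therefore in controlling the first arrow on Hom spaces when the target is $\Delta$-filtered.

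For injectivity I will show that a $\Delta$-filtered $N$ has no nonzero subobject lying in $\mathcal O^{tor}$. Since $\Delta(E)\simeq R[x_1,\dots,x_n]\otimes_\C E$ is $\mathcal O(V)$-free, multiplication by $\delta$ on $\Delta(E)$ is injective, so $\Delta(E)$ has no nonzero subobject in $\mathcal O^{tor}$. An induction on the length of a $\Delta$-filtration of $N$, using that $\mathcal O^{tor}$ is closed under subobjects and that a subobject of $N$ intersects each filtration layer in a subobject of some $\Delta(E)$, propagates this to $N$. Any $\phi:M\to N$ with $KZ(\phi)=0$ has image equal to the image of $\phi$ inside $N$, and this image is $\delta$-torsion after localization, so it lies in a $\mathcal O^{tor}$-subobject of $N$; by what was just shown it is zero.

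For surjectivity I first reduce to the case of standard modules by d\'evissage. Every $M\in\mathcal O$ admits a presentation $\Delta_r(\C W)^{\oplus a}\to\Delta_r(\C W)^{\oplus b}\to M\to 0$ for some $r$ (by the progenerator construction in Theorem \ref{progenerator}), and each $\Delta_r(\C W)$ is $\Delta$-filtered. Since both $\Hom_{\mathcal O}(-,N)$ and $\Hom_{\C\pi_1(X/W)}(KZ(-),KZ(N))$ are left exact, it suffices to prove surjectivity for $M=\Delta(E)$ and, by a second d\'evissage on the $\Delta$-filtration of $N$, for $N=\Delta(E')$. Given a $\mathcal D_X\sharp W$-morphism $\tilde\phi:\widetilde{\Delta(E)}\to\widetilde{\Delta(E')}$, I identify $\Delta(E')$ with its image in $\widetilde{\Delta(E')}$ (using the $\delta$-torsion-freeness above) and must show $\tilde\phi(\Delta(E))\subseteq\Delta(E')$. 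This is a pole-order statement along each reflection hyperplane $H$: writing the KZ connection in local coordinates $(\alpha_H,\dots)$ at a generic point of $H$, the residue endomorphism is the scalar operator $a_H$ acting on $E$ through the isotypic decomposition $E=\bigoplus_k\Hom_{W_H}(\C\epsilon_{H,k},E)\otimes\C\epsilon_{H,k}$. The nonresonance hypothesis, namely that the numbers $v_i=\exp(2\pi\sqrt{-1}(\kappa_i+i/d))$ be pairwise distinct and different from $1$ (together with $q\neq-1$ on the $S_n$-hyperplanes), translates into the statement that no two eigenvalues of the residue of $\nabla$ at $H$ differ by a nonzero integer. Under this nonresonance the local $\mathcal D$-module is canonically isomorphic to its associated graded, so a horizontal section which extends meromorphically extends in fact holomorphically across $H$; applied to $\tilde\phi(\Delta(E))$ across every $H\in\mathcal A$, this forces $\tilde\phi(\Delta(E))\subseteq\Delta(E')$ as required.

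The main obstacle is the local calculation in the last step: everything else is a formal consequence of Riemann--Hilbert and the torsion-free description of $\Delta$-filtered objects, but the nonresonance argument needs an honest inspection of $\nabla_{\partial_y}$ in local coordinates along each $H$ and a verification that the hypothesis on $(v_1,\dots,v_{d-1},q)$ precisely rules out integer differences between eigenvalues of $a_H|_E$ for every $E\in\Irr W$. This is the technical heart of the proof, and the place where the specific form of the parameter condition $v_i\neq1$, $v_i\neq v_j$, $q\neq-1$ is exactly what is required.
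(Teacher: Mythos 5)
The paper offers no proof of this lemma to compare against --- it simply cites \cite[Proposition 5.9]{GGOR} --- so your argument must stand on its own. Its architecture is sensible and close in spirit to the source: reduce via Riemann--Hilbert to comparing $\Hom_{\mathcal O}(M,N)$ with $\Hom_{\mathcal D_X\sharp W}(\tilde M,\tilde N)$, get injectivity from the $\mathcal O(V)$-torsion-freeness of a $\Delta$-filtered $N$, and get surjectivity from a ``no poles along $\mathcal A$'' statement for maps between localized standard modules. But the step you yourself identify as the technical heart is wrong. The hypothesis does \emph{not} translate into ``no two eigenvalues of the residue of $\nabla$ at $H$ differ by a nonzero integer.'' Take $h=1$: then $q=\exp(2\pi\sqrt{-1})=1\neq-1$, so the hypothesis holds, yet $a_{H_{ij;\alpha}}=h(1-s_{ij;\alpha})$ has eigenvalues $0$ and $2h=2$, which differ by the nonzero integer $2$. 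Likewise $d=2$, $\kappa_1=1$ gives $v_1=\exp(3\pi\sqrt{-1})=-1\neq1$ while $a_{H_i}$ has eigenvalues $0$ and $d\kappa_1=2$. In general $v_k\neq v_l$ only constrains $e_H(c_{H,k}-c_{H,l})$ modulo $e_H\Z$ (it must avoid the single residue class determined by $l-k$), not modulo $\Z$. Consequently the nonresonance mechanism you invoke is unavailable, and its conclusion fails in the resonant range: a single-valued meromorphic horizontal section of $\mathcal Hom(\widetilde{\Delta(E)},\widetilde{\Delta(E')})$ can have a genuine pole, locally of the form $\alpha_H^{-2h}\cdot(\text{unit})$ when $2h\in\Z_{>0}$. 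The correct local argument at a generic point of $H$ must use the $W_H$-equivariance of $\tilde\phi$ in tandem with horizontality: the matrix entry from the $\epsilon_{H,k}$-isotypic part of $E$ to the $\epsilon_{H,l}$-isotypic part of $E'$ is forced to be $\alpha_H^m$ times a unit with $m$ lying both in $e_H(c_{H,l}-c_{H,k})+\Z$ and in a fixed residue class mod $e_H$, and the hypothesis (distinctness of $1,v_1,\dots,v_{d-1}$ and of $1,-q$) is precisely the incompatibility of these two constraints for $k\neq l$. As written, your key step is false, not merely incomplete.

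Two secondary points. First, the middle arrow of your factorization is not an equivalence onto all regular $\mathcal O_X$-coherent $\mathcal D_X\sharp W$-modules (the essential image consists only of connections whose monodromy factors through the Hecke algebra), and its full faithfulness on $\mathcal O/\mathcal O^{tor}$ is not formal --- it is essentially the content of the lemma; this does not damage your argument since you in fact work directly with $\Hom_{\mathcal D_X\sharp W}(\tilde M,\tilde N)$, but the work is mislocated. Second, the d\'evissage reducing surjectivity to $N=\Delta(E')$ does not follow from left-exactness of the two Hom functors alone (the obstruction to lifting lives in an $\Ext^1$ you have not yet controlled); it is cleaner to phrase surjectivity as the statement that any $H_R(\underline\kappa,h)$-submodule of $\tilde N$ lying in $\mathcal O$, containing $N$, and with torsion quotient must equal $N$ --- this version does reduce to a single standard module by the snake lemma, and it handles arbitrary $M$ at once since $\tilde\phi(M)+N$ is such a submodule.
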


Now, we consider $\mathcal O(R)$ and the functor
$$
KZ(R):\mathcal O(R)\longrightarrow R\pi_1(X/W)\text{-}\fgMod
$$
which is defined in the same way as above. By \cite[Theorem 2.23]{De}, 
$KZ(R)$ is an exact functor. 

\begin{lemma}
\label{KZ image}
If $M\in\mathcal O(R)$ is $\Delta$-filtered, then 
$\mathcal Hom_{\mathcal D_{X^{an}}}(\mathcal O_{X^{an}},\tilde M)$ is a locally constant 
free $\C_X\otimes_\C R$-module of 
rank $\sum_{E\in\Irr W}[M:\Delta(E)]\dim_\C E$. 
\end{lemma}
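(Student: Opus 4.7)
The plan is induction on the length $\ell$ of a $\Delta$-filtration of $M$, using the fact that on the additive subcategory of $\mathcal{O}(R)$ consisting of $\Delta$-filtered objects, $\tilde{M}$ is locally free over $\mathcal{O}_X\otimes_\C R$ and the horizontal-sections functor $\mathcal{H}om_{\mathcal{D}_{X^{an}}}(\mathcal{O}_{X^{an}},-)$ becomes exact.

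For the base case $\ell=1$, take $M=\Delta(E)$. By the explicit formula for $\nabla_{\partial_y}$ given above and the fact that $\Delta(E)=R[x_1,\ldots,x_n]\otimes_\C E$ as an $\mathcal{O}(V)\otimes_\C R$-module, $\tilde{\Delta(E)}$ is a free $\mathcal{O}_X\otimes_\C R$-module of rank $\dim_\C E$ with an integrable connection whose connection form has entries in $\mathcal{O}_X\otimes_\C R$. Writing $R=\C[[t_1,\ldots,t_r]]$ where $t_i\in\{\bk_j-\kappa_j,\bh-h\}$, I would solve $\nabla s=0$ locally on $X^{an}$ order by order in the $t_i$: the zeroth-order piece is an ordinary flat holomorphic connection on a complex manifold, which on any contractible open admits a full basis of flat sections by Frobenius/Cauchy--Kovalevskaya; each higher-order correction satisfies an inhomogeneous equation with the same principal part, whose local solvability is automatic. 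This produces, locally on $X^{an}$, a basis of $\dim_\C E$ flat sections over $R$, so $KZ(R)(\Delta(E))$ is a locally constant free $\C_X\otimes_\C R$-module of rank $\dim_\C E$.

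For the inductive step, pick a subobject $N\subseteq M$ with $M/N\simeq\Delta(E)$ and $N$ of $\Delta$-length $\ell-1$. The short exact sequence $0\to\tilde{N}\to\tilde{M}\to\tilde{\Delta(E)}\to 0$ consists of $\mathcal{O}_X\otimes_\C R$-modules that are locally free (on analytification), and the horizontal-sections functor is exact on this subcategory by the same formal-parameter argument: the connecting map is computed in the de Rham complex, whose local higher cohomology vanishes because local flat sections exist in arbitrary degree in $t$. So we obtain a short exact sequence of sheaves
\begin{equation*}
0\longrightarrow KZ(R)(N)\longrightarrow KZ(R)(M)\longrightarrow KZ(R)(\Delta(E))\longrightarrow 0
\end{equation*}
on $X^{an}$. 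By induction both outer terms are locally constant free $\C_X\otimes_\C R$-modules of the expected ranks. On a contractible open $U\subseteq X^{an}$ the outer terms are constant, and extensions of the constant sheaf $R_U^b$ by $R_U^a$ are classified by $H^1(U,\underline{\Hom}_R(R^b,R^a))$, which vanishes since $U$ is contractible; hence $KZ(R)(M)|_U$ is constant. At each stalk one has a short exact sequence $0\to R^a\to KZ(R)(M)_x\to R^b\to 0$ of $R$-modules; since $R^b$ is projective this splits, so $KZ(R)(M)_x\simeq R^{a+b}$ is free of the right rank.

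The main obstacle is the exactness of the horizontal-sections functor when parameters lie in $R$ rather than $\C$, since the naive de Rham complex now takes values in $\mathcal{O}_{X^{an}}\otimes_\C R$ and one must guarantee that obstructions to lifting flat sections from the residue field $\C$ to all orders in the parameters actually vanish. This is resolved by the order-by-order solvability sketched above, which rests only on the local triviality of flat holomorphic connections on the smooth complex manifold $X^{an}$; once exactness is established, the remainder is formal bookkeeping using contractibility of small opens and the fact that finitely generated projective modules over the local ring $R$ are free.
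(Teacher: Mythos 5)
Your proof is correct, and its analytic core --- expanding a would-be flat section as a formal power series in the deformation parameters $\bk_i-\kappa_i$, $\bh-h$ and solving $\nabla_{\partial_y}p=0$ recursively, order by order, starting from the locally trivial flat connection at the central parameter $(\underline\kappa,h)$ --- is exactly the paper's one-sentence argument. The only difference is organizational: since a $\Delta$-filtered $M$ is already free over $\mathcal O(V)\otimes_\C R$ of rank $\sum_{E\in\Irr W}[M:\Delta(E)]\dim_\C E$ (extensions of free modules split as modules), the paper applies the recursion directly to $\tilde M$ and thereby avoids both your induction on the $\Delta$-length and the $H^1$-vanishing step needed to untwist the extension of local systems.
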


To show this, set $\mathbf t_i=\bk_i-\kappa_i$, for $1\leq i<d$, and 
$\mathbf t_d=\bh-h$, then, we write 
$p=\sum_{\underline n} p_{\underline n}\mathbf t^{\underline n}\in\tilde M$, where 
$p_{\underline n}\in\mathcal O_X$, and solve $\nabla_{\partial_y}p=0$ by 
recursively solving the system of equations for $p_{\underline n}$. 

Let $K=\C((\bk_1-\kappa_1,\dots,\bk_{d-1}-\kappa_{d-1},\bh-h))$ and 
denote by $\mathcal O(K)$ the category $\mathcal O$ for 
$H_K(\underline\bk,\bh)$. We have
$KZ(K):\mathcal O(K)\longrightarrow K\pi_1(X/W)\text{-}\fgMod$.
Results in \cite{BMR} imply the following.

\begin{theorem}[Brou\'e-Malle-Rouquier]
There is an explicit choice of generators $\sigma_0,\dots,\sigma_{n-1}$ of 
$\pi_1(X/W)$ such that the defining relations are given by 
$$
(\sigma_0\sigma_1)^2=(\sigma_1\sigma_0)^2,\;\;
\sigma_i\sigma_{i+1}\sigma_i=\sigma_{i+1}\sigma_i\sigma_{i+1}\;(i\neq0),\;\;
\sigma_i\sigma_j=\sigma_j\sigma_i\;(j\geq i+2).
$$
Further, $KZ(K)(M)$, for $M\in\mathcal O(K)$, factors through
$$
(\sigma_0-1)(\sigma_0-\mathbf v_1)\cdots(\sigma_0-\mathbf v_{d-1})=0,\;\;
(\sigma_i-1)(\sigma_i+\mathbf q)=0,\;\text{if $i\neq0$,}
$$
where $\mathbf v_i=\exp(2\pi\sqrt{-1}(\bk_i+\frac{i}{d}))$, 
for $1\leq i\leq d-1$, and $\mathbf q=\exp(2\pi\sqrt{-1}\bh)$. 
\end{theorem}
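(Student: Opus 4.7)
The plan splits the theorem into (a) the topological presentation of $\pi_1(X/W)$ by the $\sigma_i$ with the displayed braid/commutation relations, and (b) the Hecke-type polynomial relations satisfied by $KZ(K)(M)$. For (a), I would simply appeal to the Broué-Malle-Rouquier result of \cite{BMR}: for $W = G(d,1,n)$ the braid group of the complement of $\mathcal A$ modulo $W$ is the Artin group of type $B_n$ with the stated presentation, and their explicit BMR generators $\sigma_0, \dots, \sigma_{n-1}$ lift the reflections $t_1, s_{1,2;0}, \dots, s_{n-1,n;0}$ respectively; here $\sigma_0$ is a fractional loop around the image of $H_1$ composed with the $t_1$-deck transformation, and each $\sigma_i$ ($i\ge 1$) is a half-meridian around $H_{i,i+1;0}$ composed with the transposition. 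This part is purely topological and I would take it for granted.

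For (b), my strategy is a local monodromy computation at each generator. Fix a reflection hyperplane $H\in\mathcal A$ and a generic point $p\in H$ lying on no other hyperplane of $\mathcal A$. Choose a $W_H$-stable transverse disc at $p$ with linear coordinate $\alpha = \alpha_H$; the regularity of $\tilde M$ together with the explicit form of $\nabla_{\partial_y}$ shows that after a holomorphic gauge transformation only the summand from $H'=H$ survives as a singular term, so locally the connection takes the form $\nabla = d - (a_H/\alpha)\,d\alpha$, where $a_H\in KW_H$ acts on the fiber of $\tilde M$. Decomposing the fiber into $W_H = \langle w_H\rangle$-isotypes $\bigoplus_{k=0}^{e_H-1}\tilde M^{(k)}$, with $w_H$ acting on $\tilde M^{(k)}$ as the scalar $\exp(-2\pi\sqrt{-1}\,k/e_H)$, the element $a_H$ reduces to $e_H c_{H,k}$ by its definition via the $\epsilon_{H,k}$. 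A flat local section of $\tilde M^{(k)}$ is thus $\alpha^{e_H c_{H,k}}v$ with $v\in\tilde M^{(k)}$.

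To extract the monodromy of $\sigma_H$ (the BMR generator attached to $H$) I would then descend to the local quotient with invariant coordinate $w = \alpha^{e_H}$; under $\sigma_H$ the base point traverses one positive loop around $w=0$. Because $w_H$ acts as $\exp(-2\pi\sqrt{-1}/e_H)$ on $\alpha$, a $W_H$-equivariant flat lift on $\tilde M^{(k)}$ must have the form $\alpha^{e_H-k} g(w)v$. Matching with $\alpha^{e_H c_{H,k}}v$ gives $g(w) = w^{c_{H,k}-1+k/e_H}$ and therefore a $\sigma_H$-eigenvalue $\exp(2\pi\sqrt{-1}(c_{H,k}+k/e_H))$ on $\tilde M^{(k)}$. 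Specializing to $H=H_1$ with $e_{H_1}=d$ and $c_{H_1,k}=\bk_k$ gives the eigenvalues $\mathbf v_k = \exp(2\pi\sqrt{-1}(\bk_k+k/d))$ for $\sigma_0$, hence $\prod_{k=0}^{d-1}(\sigma_0-\mathbf v_k)=0$; specializing to $H = H_{i,i+1;0}$ with $e_H=2$, $c_{H,0}=0$ and $c_{H,1}=\bh$ gives eigenvalues $1$ and $\exp(2\pi\sqrt{-1}(\bh+1/2)) = -\mathbf q$ for $\sigma_i$, hence $(\sigma_i-1)(\sigma_i+\mathbf q)=0$.

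The main obstacle is the bookkeeping for the equivariant descent that produces the shift $+k/e_H$ in the exponent; this is where one has to use the precise BMR normalization of $\sigma_H$ as a fractional loop combined with the deck transformation, rather than a full meridian in $X$, and an incorrect gauge easily produces the wrong sign or shift. A secondary subtlety is confirming that the contributions of the other reflection hyperplanes $H'\ne H$ really are gauge-equivalent to the zero connection near $p$; this follows from the $W_H$-equivariance of the connection and genericity of $p$, together with Lemma \ref{KZ image}, which guarantees that the $R$-flat structure of the local system persists in the family and so the eigenvalue computation over the field $K$ is the generic limit of the local computation over $R$. Once these points are settled the theorem follows.
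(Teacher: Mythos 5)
Your argument is correct in outline, but it takes a genuinely different route from the paper. For the second assertion the paper does not redo any monodromy computation: it quotes \cite{BMR} for the fact that $KZ(\Delta(\C W))$ factors through the Hecke relations when $(\underline\kappa,h)$ lies in an open dense set of generic parameters, and then transports this to the generic point $K$ by a flat base-change argument --- the K\"unneth spectral sequence for the functor $KZ(\C[\underline\bk,\bh])$ applied to $\Delta(\C W)\otimes_\C\C[\underline\bk,\bh]$, whose image is $\C[\underline\bk,\bh]$-flat by Lemma \ref{KZ image}, so that the relation, holding after specialization at a Zariski-dense set of points, holds over $\C[\underline\bk,\bh]$ and hence over $K$; the passage from $\Delta(\C W)\otimes_\C K$ to arbitrary $M\in\mathcal O(K)$ then uses only the semisimplicity of $\mathcal O(K)$. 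You instead carry out the local residue/exponent computation at a generic point of each reflection hyperplane, together with the $W_H$-equivariant descent, which in effect reproves the \cite{BMR} input rather than citing it. Your route is more self-contained and yields the eigenvalues of each $\sigma_H$ explicitly, but it carries all the risk in the normalization of the braid generators and in the gauge step: note that killing the holomorphic part of the connection and diagonalizing the local monodromy requires the residue $a_H$ to be non-resonant (no two eigenvalues $e_Hc_{H,k}$ differing by a nonzero integer), which holds automatically over $K$ because $\bk_i-\kappa_i$ and $\bh-h$ are independent formal variables, but can fail at the special fibre $(\underline\kappa,h)$ --- this is exactly why the theorem is stated over $K$ and why the paper's soft specialization argument is the efficient way to descend information to the closed fibre later. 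The paper's approach buys uniformity in the parameters at the cost of invoking \cite{BMR} as a black box; yours buys transparency of the eigenvalue formula $\exp(2\pi\sqrt{-1}(c_{H,k}+k/e_H))$ at the cost of the bookkeeping you yourself flag. Both are valid proofs of the statement as given.
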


For the proof of the second part, consider the KZ functor over 
$\C[\underline\bk,\bh]$, which we denote by $KZ(\C[\underline\bk,\bh])$. 
It is an exact functor by \cite[Theorem 2.23]{De}. 
If $(\underline\kappa,h)$ is in certain open dense subset of 
generic enough parameters, it is proved 
in \cite{BMR} that 
$KZ(\Delta(\C W))$ factors through the Hecke algebra. Now, we have 
$$
E^2_{pq}=\Tor_p^{\C[\underline\bk,\bh]}(R^{-q}KZ(\C[\underline\bk,\bh])(M),\C)\Longrightarrow 
R^{p+q}KZ(M\otimes_{\C[\underline\bk,\bh]}\C),
$$
if $M\in\mathcal O(\C[\underline\bk,\bh])$ is a flat $\C[\underline\bk,\bh]$-module, 
by the K\"unneth spectral sequence. $E^2_{pq}=0$ if $q\neq 0$. Let  
$M=\Delta(\C W)\otimes_\C\C[\underline\bk,\bh]$. Then 
$KZ(\C[\underline\bk,\bh])(M)$ is also a flat $\C[\underline\bk,\bh]$-module 
by Lemma \ref{KZ image}. 
Hence
$$
KZ(\C[\underline\bk,\bh])(\Delta(\C W)\otimes_\C\C[\underline\bk,\bh])
\otimes_{\C[\underline\bk,\bh]}\C\simeq 
KZ(\Delta(\C W)),
$$
and the right hand side factors through the Hecke algebra for the generic enough 
$(\underline\kappa,h)$'s. Thus, $KZ(K)(\Delta(\C W)\otimes_\C K)$ 
factors through the Hecke algebra. 
Now, $\mathcal O(K)$ is a semisimple category whose irreducible objects are 
standard objects, which are direct summands of $\Delta(\C W)\otimes_\C K$. 
Hence, the result follows. 

\begin{definition}
We denote by $\H_n(\underline{\mathbf v},\mathbf q)$ 
the quotient of $R\pi_1(X/W)$ by the two-sided ideal 
generated by $(\sigma_0-1)(\sigma_0-\mathbf v_1)\cdots(\sigma_0-\mathbf v_{d-1})$ 
and $(\sigma_i-1)(\sigma_i+\mathbf q)$, for $i\neq0$. Note that if 
$\mathbf v_i=\mathbf q^{\gamma_i}$, for $1\leq i\leq d-1$, then 
$\H_n(\underline{\mathbf v},\mathbf q)=\H_n^\Lambda(\mathbf q)$ with 
$\Lambda=\Lambda_0+\sum_{i=1}^{d-1}\Lambda_{\gamma_i}$. 
\end{definition}

\begin{lemma}
\label{representable}
Let $R$ be a Noetherian commutative ring. Let 
$A$ be a module-finite projective $R$-algebra, $B$ an $R$-algebra. 
Suppose that there exists an exact functor 
$\mathcal F:A\text{-}\fgMod\rightarrow B\text{-}\fgMod$ such that 
$\mathcal F(A)$ is a finitely generated projective $R$-module. Then, 
$\mathcal F\simeq\Hom_A(P,-)$, where the finitely generated projective 
$A$-module $P$ is given by the $(A,B)$-bimodule $P=\Hom_A(\mathcal F(A),A_A)$. 
\end{lemma}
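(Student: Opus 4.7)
The plan is to recognize $\mathcal F$ first as a tensor functor $\mathcal F(A)\otimes_A-$ via an Eilenberg--Watts style argument, and then to rewrite this as $\Hom_A(P,-)$ through duality for finitely generated projective modules. To begin, I equip $\mathcal F(A)$ with a natural right $A$-action commuting with the given left $B$-action: for $a\in A$, right multiplication $\rho_a\colon A\to A$ is a morphism of left $A$-modules, so $\mathcal F(\rho_a)$ is a $B$-linear endomorphism of $\mathcal F(A)$, and the assignment $a\mapsto\mathcal F(\rho_a)$ is an anti-homomorphism $A\to\End_B(\mathcal F(A))$. Thus $\mathcal F(A)$ becomes a $(B,A)$-bimodule.

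Next I construct the natural $B$-linear transformation
$$\eta_M\colon \mathcal F(A)\otimes_A M\longrightarrow \mathcal F(M), \qquad n\otimes m\longmapsto \mathcal F(\phi_m)(n),$$
where $\phi_m\colon A\to M$ sends $a$ to $am$. The identity $\phi_m\circ\rho_a=\phi_{am}$ shows $\eta_M$ is well defined on the tensor product, and $B$-linearity of $\mathcal F(\phi_m)$ gives $B$-linearity of $\eta_M$. For $M=A$ one checks directly that $\eta_A$ is the canonical isomorphism $\mathcal F(A)\otimes_A A\simeq \mathcal F(A)$, so $\eta_{A^k}$ is an isomorphism by additivity of the exact functor $\mathcal F$. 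For general $M\in A\text{-}\fgMod$, the ring $A$ is left Noetherian (being module-finite over Noetherian $R$), so $M$ admits a finite presentation $A^n\to A^m\to M\to 0$; applying the two right-exact functors $\mathcal F$ and $\mathcal F(A)\otimes_A-$ and comparing via the five-lemma shows that $\eta_M$ is an isomorphism for every $M$.

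Third, I show that $\mathcal F(A)$ is a finitely generated projective right $A$-module. It is finitely generated as an $R$-module by hypothesis, hence as a right $A$-module via the structure map $R\to Z(A)$. The exactness of $\mathcal F$, together with the isomorphism $\mathcal F\simeq \mathcal F(A)\otimes_A-$ just established, shows that $\mathcal F(A)$ is flat as a right $A$-module. A finitely generated flat module over the Noetherian ring $A$ is finitely presented, and finitely presented plus flat implies projective.

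Finally, setting $P=\Hom_A(\mathcal F(A),A_A)$, we obtain a finitely generated projective left $A$-module (the dual of a f.g.\ projective right $A$-module), with an additional compatible right $B$-action coming from pre-composition with the $B$-action on $\mathcal F(A)$; so $P$ is an $(A,B)$-bimodule as claimed. Using double-duality $\mathcal F(A)\simeq \Hom_A(P,A_A)$ and the standard natural isomorphism $\Hom_A(P,A)\otimes_A M\simeq \Hom_A(P,M)$ valid for f.g.\ projective $P$, one obtains the chain of natural $B$-linear isomorphisms
$$\mathcal F(M)\simeq \mathcal F(A)\otimes_A M\simeq \Hom_A(P,A_A)\otimes_A M\simeq \Hom_A(P,M).$$
The main obstacle is step three: getting projectivity of $\mathcal F(A)$ as a right $A$-module is where exactness of $\mathcal F$, $R$-finiteness of $\mathcal F(A)$, and Noetherianity of $A$ must all be used simultaneously. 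The remainder is standard Eilenberg--Watts bookkeeping adapted to the finitely generated setting.
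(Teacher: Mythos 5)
Your proof is correct and is exactly the standard Eilenberg--Watts argument that the lemma presupposes (the paper itself offers no proof beyond noting $\Hom_A(P,A)=\mathcal F(A)$, which is precisely the double-duality consequence of your step three). The only point worth making explicit is in the flatness step: the isomorphism $\mathcal F\simeq\mathcal F(A)\otimes_A-$ is established only on finitely generated modules, so one should invoke the ideal criterion for flatness --- $\Tor_1^A(\mathcal F(A),A/I)=0$ for all (automatically finitely generated, as $A$ is left Noetherian) left ideals $I$ --- which involves only objects of $A\text{-}\fgMod$, where exactness of $\mathcal F$ applies.
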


Note that $\Hom_A(P,A)=\mathcal F(A)$. Thus, 
$\mathcal G=\Hom_B(\mathcal F(A),-)$ gives the right adjoint of $\mathcal F$. 
If $B=\End_A(P)^{\rm op}$ then $\mathcal F\mathcal G(M)\simeq M$, for 
$M\in B\text{-}\fgMod$. 

The starting point of Rouquier's theory of quasihereditary 
covers is the theorem below, and we are reduced
to a purely algebraic setting.

\begin{theorem}
Let $R=\C[[\bk_1-\kappa_1,\dots,\bk_{d-1}-\kappa_{d-1},\bh-h]]$ and 
suppose that $v_i=\exp(2\pi\sqrt{-1}(\kappa_i+\frac{i}{d}))$, 
$1\leq i\leq d-1$, are different from 
$1$ and pairwise distinct, $q=\exp(2\pi\sqrt{-1}h)\neq-1$. 
Then there is a projective 
object $P_{KZ}\in\mathcal O(R)$ such that 
\begin{itemize}
\item[(1)]
$KZ(R)\simeq\Hom_{\mathcal O(R)}(P_{KZ},-):\mathcal O(R)\rightarrow 
\H_n(\underline{\mathbf v},\mathbf q)\text{-}\fgMod$, and it induces 
$KZ\simeq\Hom_{\mathcal O}(P_{KZ}\otimes_R\C,-):\mathcal O\rightarrow 
\H_n(\underline v,q)\text{-}\fgMod$.
\item[(2)]
$\H_n(\underline{\mathbf v},\mathbf q)\simeq\End_{\mathcal O(R)}(P_{KZ})^{\rm op}$ and 
$\H_n(\underline v,q)\simeq\End_{\mathcal O}(P_{KZ}\otimes_R\C)^{\rm op}$.
\item[(3)]
$\mathcal F=KZ(R)$ has a right adjoint functor 
$G:\H_n(\underline{\mathbf v},\mathbf q)\text{-}\fgMod
\rightarrow\mathcal O(R)$ and $\mathcal F\mathcal G\simeq\Id$. 
\end{itemize}
\end{theorem}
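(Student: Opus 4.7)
The plan is to realize $\mathcal O(R)\simeq A\text{-}\fgMod$ via Theorem~\ref{reduction to A-mod}, under which the progenerator $P$ built in Theorem~\ref{progenerator} corresponds to $A_A$, and then to apply Lemma~\ref{representable} to the functor $\mathcal F=KZ(R)$. This will give (1) and the existence of the right adjoint $\mathcal G$ in (3) almost immediately, leaving (2) and the counit formula $\mathcal F\mathcal G\simeq\Id$ as the substantive remaining tasks.

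The first step is to verify that $KZ(R)$ actually lands in $\H_n(\underline{\mathbf v},\mathbf q)\text{-}\fgMod$ rather than just in $R\pi_1(X/W)\text{-}\fgMod$. Because $P$ is $\Delta$-filtered, Lemma~\ref{KZ image} tells me that $\mathcal F(A)=KZ(R)(P)$ is a finite-rank free $R$-module, in particular $R$-torsion-free, so it embeds into $KZ(K)(P\otimes_R K)$ with $K=\mathrm{Frac}(R)$. The Brou\'e--Malle--Rouquier theorem forces the Hecke relations
\[
(\sigma_0-1)(\sigma_0-\mathbf v_1)\cdots(\sigma_0-\mathbf v_{d-1})\quad\text{and}\quad(\sigma_i-1)(\sigma_i+\mathbf q),\ i\neq 0,
\]
to annihilate $KZ(K)(P\otimes_R K)$, hence they annihilate $KZ(R)(P)$. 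Since every $M\in\mathcal O(R)$ is a quotient of some $P^{\oplus n}$, the exactness of $KZ(R)$ and its $R\pi_1(X/W)$-equivariance propagate the relations to all objects. With the hypotheses of Lemma~\ref{representable} now in place, I obtain a projective object $P_{KZ}\in\mathcal O(R)$ with $KZ(R)\simeq\Hom_{\mathcal O(R)}(P_{KZ},-)$ together with a right adjoint $\mathcal G=\Hom_{\H_n(\underline{\mathbf v},\mathbf q)}(\mathcal F(A),-)$. Base change along $R\to\C$ preserves $\Hom$ out of the projective $P_{KZ}$ and commutes with $KZ$ (both the connection and its horizontal sections specialize), giving (1) over $\C$.

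The heart of the argument is (2). The action of $\H_n(\underline{\mathbf v},\mathbf q)$ on $KZ(R)(P_{KZ})\simeq \End_{\mathcal O(R)}(P_{KZ})$ commutes with right multiplication, yielding a canonical algebra map $\varphi:\H_n(\underline{\mathbf v},\mathbf q)\to\End_{\mathcal O(R)}(P_{KZ})^{\mathrm{op}}$. I would prove $\varphi$ is an isomorphism by first passing to $K$: the genericity hypotheses on $v_i$ and $q$, combined with the formal nature of $R$, force $\mathcal O(K)$ to be semisimple (no two distinct central characters $c_E$ differ by an integer once the deformation parameters are viewed as formal variables), so $P_{KZ}\otimes_R K$ is a direct sum of standard objects, and Lemma~\ref{fully faithful} identifies $\End_{\mathcal O(K)}(P_{KZ}\otimes_R K)^{\mathrm{op}}$ with $\End_{\H_n(K)}(KZ(K)(P_{KZ}\otimes_R K))^{\mathrm{op}}$. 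Combined with the computation via Lemma~\ref{KZ image} that $KZ(K)(\Delta(\C W)\otimes_R K)$ is a faithful $\H_n(K)$-module of dimension equal to the known generic rank $|W|$ of $\H_n(\underline{\mathbf v},\mathbf q)$, a double centralizer argument in the semisimple algebra $\H_n(K)$ identifies $\varphi\otimes_R K$ with the identity. A final Nakayama-lemma descent, using that source and target of $\varphi$ are both finitely generated $R$-free modules of rank $|W|$, promotes the isomorphism over $K$ to an isomorphism over $R$.

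Once (2) is in hand, (3) becomes formal: the identification $\H_n(\underline{\mathbf v},\mathbf q)=\End_{\mathcal O(R)}(P_{KZ})^{\mathrm{op}}$ puts us in the situation $B=\End_A(P)^{\mathrm{op}}$ of the observation following Lemma~\ref{representable}, so $\mathcal F\mathcal G\simeq\Id$ is automatic. The main obstacle I anticipate is the argument in (2): reconciling the formal-parameter semisimplicity of $\mathcal O(K)$, the generic rank $|W|$ of the Hecke algebra, and the $R$-freeness of $\End_{\mathcal O(R)}(P_{KZ})$ requires genuine care, particularly in ensuring that $KZ(K)$ does not collapse any standard module; the hypotheses that the $v_i$ are pairwise distinct and differ from $1$, and that $q\neq -1$, are precisely what prevents such a collapse.
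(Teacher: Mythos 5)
Your overall architecture matches the paper's: represent $KZ(R)$ by a projective $P_{KZ}$ via Lemma \ref{representable} after checking the Hecke relations hold (your torsion-freeness argument for $KZ(R)(P)$ plus exactness is a fine way to do this), define the canonical map $\varphi\colon\H_n(\underline{\mathbf v},\mathbf q)\to\End_{\mathcal O(R)}(P_{KZ})^{\rm op}$, analyze the generic fibre, and descend. The generic-fibre analysis is essentially the paper's (semisimplicity of $\mathcal O(K)$, $P_{KZ}\otimes_RK\simeq\oplus_E\Delta(E)^{\oplus\dim E}$, dimension count against $|W|$), and it correctly yields injectivity of $\varphi$.

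The gap is in your final descent step. From ``$\varphi\otimes_RK$ is an isomorphism and both sides are $R$-free of rank $|W|$'' you cannot conclude that $\varphi$ is an isomorphism: the cokernel of such a map is merely $R$-torsion, and it need not vanish (already $\pi\cdot\colon R\to R$, or $y\mapsto t^2x$ from $R[y]/(y^2-t^2y)$ to $R[x]/(x^2-x)$ over $R=\C[[t]]$, shows that injective maps of free modules, even of algebras, of equal rank that become isomorphisms over $K$ need not be surjective). Nakayama's lemma requires surjectivity of $\varphi\otimes_R\C$ as input, and establishing that surjectivity at the \emph{non-semisimple} special fibre is exactly the hard part of the theorem. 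The paper does it by writing $P_{KZ}\otimes_R\C=\oplus_EP(E)^{\oplus\dim KZ(L(E))}$, invoking Lemma \ref{fully faithful} to identify $\End_{\mathcal O}(P_{KZ}\otimes_R\C)$ with $\End_{\H_n(\underline v,q)}(KZ(P_{KZ}\otimes_R\C))$, deducing that the $KZ(P(E))$ are indecomposable with pairwise non-isomorphic simple tops $KZ(L(E))$, so that $\Top\End_{\mathcal O}(P_{KZ}\otimes_R\C)\simeq\oplus_E\End_\C(KZ(L(E)))$ and the composite $\H_n(\underline v,q)\to\End^{\rm op}\to\Top\End^{\rm op}$ is surjective (Jacobson density for distinct simples); only then does Nakayama, applied to $\rho\otimes_R\C$ viewed as a map of $\H_n(\underline v,q)$-modules, give surjectivity mod $\mathfrak m$ and hence over $R$. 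None of this appears in your proposal, so (2) — and with it the $\mathcal F\mathcal G\simeq\Id$ part of (3), which you correctly reduce to (2) — remains unproved.
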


First, we consider the case that $R=\C[\underline{\bk},\bh]_{\mathfrak p}$ 
where $\mathfrak p=(\underline{\bk}-\underline\kappa,\bh-h)$. 
Then the KZ functor is representable by a projective object $P_{KZ}$ of 
$\mathcal O(R)$ by Lemma \ref{representable}. 
As $P_{KZ}$ is a right $\End_{\mathcal O(R)}(P_{KZ})^{\rm op}$-module, 
$KZ(P_{KZ})=\End_{\mathcal O(R)}(P_{KZ})$ is a 
$(\End_{\mathcal O(R)}(P_{KZ})^{\rm op},
\End_{\mathcal O(R)}(P_{KZ})^{\rm op})$-bimodule. Namely, 
$a\cdot\varphi\cdot b=ab\varphi$, for $\varphi\in KZ(P_{KZ})$ and 
$a,b\in\End_{\mathcal O(R)}(P_{KZ})^{\rm op}$. Using the right action, 
we may define a $R$-algebra homomorphism 
$$
\rho:\H_n(\underline{\mathbf v},\mathbf q)\rightarrow \End_{\mathcal O(R)}(P_{KZ})^{\rm op}
$$
by $h\cdot\varphi=\varphi\cdot\rho(h)(=\rho(h)\varphi)$, for 
$h\in \H_n(\underline{\mathbf v},\mathbf q)$ and $\varphi\in KZ(P_{KZ})$.
In particular, the $\H_n(\underline{\mathbf v},\mathbf q)$-module structure
on $KZ(P_{KZ})$ is the pullback of the $\End_{\mathcal O(R)}(P_{KZ})^{\rm op}$-module
structure via $\rho$.

Suppose that $(\underline\kappa,h)$ is generic enough, then $\mathcal O$ is a semisimple 
category and
$$
\End_{\H_n(\underline v,q)}(KZ(\Delta(E)))\simeq 
\End_{\mathcal O}(\Delta(E))=\C
$$
implies that $\{KZ(\Delta(E))\mid E\in\Irr W\}$ is 
a set of pairwise non-isomorphic simple $\H_n(\underline v,q)$-modules.
As $\dim_\C KZ(\Delta(E))=\dim E$, we have
$$
P_{KZ}\otimes_R\C=\oplus_{E\in\Irr W}\Delta(E)^{\oplus\dim E}
$$
and $\End_{\mathcal O}(P_{KZ}\otimes_R\C)\simeq 
\oplus_{E\in\Irr W}\End_\C(KZ(\Delta(E)))$. Therefore,
$$
\rho\otimes_R\C:\H_n(\underline v,q)\rightarrow 
\End_{\mathcal O}(P_{KZ}\otimes_R\C)^{\rm op}
$$
is surjective. Comparing the dimensions, we have
$$
\H_n(\underline{\mathbf v},\mathbf q)
\otimes_R\C\simeq \End_{\mathcal O}(P_{KZ}\otimes_R\C)^{\rm op}\simeq
\End_{\mathcal O(R)}(P_{KZ})^{\rm op}
\otimes_{\mathcal O(R)}\C
$$
and 
$\H_n(\underline{\mathbf v},\mathbf q)\otimes_R\C(\underline{\bk},\bh)\simeq
\End_{\mathcal O}(P_{KZ}\otimes_R\C(\underline{\bk},\bh))^{\rm op}$. 
Hence, we also have
$$
\H_n(\underline{\mathbf v},\mathbf q)\otimes_R K\simeq
\End_{\mathcal O}(P_{KZ}\otimes_R K)^{\rm op}. 
$$
Now we return to 
$R=\C[[\bk_1-\kappa_1,\dots,\bk_{d-1}-\kappa_{d-1},\bh-h]]$, for
arbitrary $(\underline\kappa,h)$. Then, we have 
a projective object $P_{KZ}$ of $\mathcal O(R)$ such that $P_{KZ}\otimes_R K$ also 
represents $KZ(K)$. Thus, $\rho:\H_n(\underline{\mathbf v},\mathbf q)\rightarrow 
\End_{\mathcal O(R)}(P_{KZ})^{\rm op}$ is injective. On the other hand, we have
$$
P_{KZ}\otimes_R\C=\oplus_{E\in\Irr W}P(E)^{\oplus\dim KZ(L(E))}.
$$
As $\End_{\mathcal O}(P_{KZ}\otimes_R\C)\simeq
\End_{\H_n(\underline v,q)}(KZ(P_{KZ}\otimes_R\C))$
by Lemma \ref{fully faithful},
$KZ(P(E))$, for $E$ such that $KZ(L(E))\neq 0$, are indecomposable
$\H_n(\underline v,q)$-modules and
$$
\Top KZ(P_{KZ}\otimes_R\C)=\sum_{E\in \Irr W}KZ(L(E))^{\oplus\dim KZ(L(E))}.
$$
Thus, $\Top\End_{\mathcal O}(P_{KZ}\otimes_R\C)$ is isomorhic to the
direct sum of $\End_\C(KZ(L(E)))$ over
$E$ such that $KZ(L(E))\neq 0$, and it follows that the composition map
$$
\H_n(\underline v,q)\rightarrow \End_{\mathcal O}(P_{KZ}\otimes_R\C)^{\rm op}
\rightarrow \Top\End_{\mathcal O}(P_{KZ}\otimes_R\C)^{\rm op}
$$
is surjective. Hence $\rho\otimes_R\C$ is surjective as well. 
This implies that $\rho$ is surjective, and (2) follows. 

\subsection{Faithful covers}

\begin{definition}
Let $R$ be a Noetherian commutative ring, $A$, $B$ module-finite $R$-algebras, 
$P$ a finitely generated projective $A$-module. Suppose that 
\begin{itemize}
\item[(i)]
$B\simeq\End_A(P)^{\rm op}$, 
\item[(ii)]
$A\text{-}\fgMod$ is a split highest weight category and projective $A$-modules 
are $\Delta$-filtered.
\end{itemize}
Then, $A$ (or $A\text{-}\fgMod$) is a \emph{$n$-faithful quasihereditary cover} of 
$B$ (or $B\text{-}\fgMod$) if  
$$
\mathcal F=\Hom_A(P,-):A\text{-}\fgMod\rightarrow B\text{-}\fgMod
$$
satisfies the condition
$$
\Ext_A^i(M,N)\simeq\Ext_B^i(\mathcal F(M),\mathcal F(N))\;\;
(0\leq i\leq n),
$$
for any $\Delta$-filtered $A$-modules $M, N$.

Let $S(\lambda)=\mathcal F(\Delta(\lambda))$, for $\lambda\in\Lambda$. 
We call them \emph{Specht modules}. 
\end{definition}

\begin{lemma}
\label{criterion}
Let $R$ be a Noetherian commutative ring, $A$, $B$ module-finite $R$-algebras, 
$P$ a finitely generated projective $A$-module, 
$\mathcal U$ an additive full subcategory of $A\text{-}\fgMod$. 
Suppose that 
$B\simeq\End_A(P)^{\rm op}$ and that $\mathcal U$ contains $A$. 
Define functors $\mathcal F=\Hom_A(P,-)$ and 
$\mathcal G=\Hom_B(\mathcal F(A),-)$. Then, we have the following. 
\begin{itemize}
\item[(1)]
The following are equivalent.
\begin{itemize}
\item[(a)]
$\Hom_A(M,N)\simeq\Hom_B(\mathcal F(M),\mathcal F(N))$, for 
$M,N\in\mathcal U$. 
\item[(b)]
$M\simeq\mathcal G(\mathcal F(M))$, for $M\in\mathcal U$. 
\end{itemize}
\item[(2)]
Suppose that $\Hom_A(M,N)\simeq\Hom_B(\mathcal F(M),\mathcal F(N))$, for 
$M,N\in\mathcal U$. Then, the following are equivalent.
\begin{itemize}
\item[(a)]
$\Ext^i_A(M,N)\simeq\Ext^i_B(\mathcal F(M),\mathcal F(N))$ $(i=0,1)$, for 
$M,N\in\mathcal U$. 
\item[(b)]
$R^1\mathcal G(\mathcal F(M))(=\Ext^1_B(\mathcal F(A),\mathcal F(M)))=0$, 
for $M\in\mathcal U$. 
\end{itemize}
\end{itemize}
\end{lemma}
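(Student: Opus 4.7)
The plan is to recognize $\mathcal F$ as the left adjoint of $\mathcal G$ and then reduce both parts to statements about the unit of the adjunction. Since $P$ is a finitely generated projective $A$-module, the canonical map $\mathcal F(A)\otimes_A M \to \mathcal F(M)$ is an isomorphism, where $\mathcal F(A) = \Hom_A(P,A)$ carries its natural $(B,A)$-bimodule structure. Combined with the tensor-hom adjunction this yields $\mathcal F\dashv\mathcal G$, and $\mathcal F$ is exact because $P$ is projective. Let $\eta:\Id\to\mathcal G\mathcal F$ denote the unit.

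For part (1), the canonical map $\mathcal F_{M,N}:\Hom_A(M,N)\to\Hom_B(\mathcal F(M),\mathcal F(N))$ factors as $\Hom_A(M,\eta_N)$ composed with the adjunction isomorphism $\Hom_A(M,\mathcal G\mathcal F(N))\simeq\Hom_B(\mathcal F(M),\mathcal F(N))$. Taking $M=A\in\mathcal U$ shows that (a) forces $\eta_N$ to be an isomorphism for every $N\in\mathcal U$, proving (a)$\Rightarrow$(b); the converse is immediate since $\Hom_A(M,\eta_N)$ is an isomorphism for arbitrary $M$ as soon as $\eta_N$ is.

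For part (2), the direction (a)$\Rightarrow$(b) is immediate: taking the first argument to be $A\in\mathcal U$ in (a), the left side $\Ext^1_A(A,M)$ vanishes by projectivity, so $R^1\mathcal G(\mathcal F(M)) = \Ext^1_B(\mathcal F(A),\mathcal F(M))=0$ for $M\in\mathcal U$. For (b)$\Rightarrow$(a), fix $M,N\in\mathcal U$ and choose a presentation $0\to K\to A^n\to M\to 0$. Applying the exact functor $\mathcal F$ and comparing the long exact sequences of $\Ext^\bullet_A(-,N)$ and $\Ext^\bullet_B(\mathcal F(-),\mathcal F(N))$ gives a five-term ladder. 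The crucial point is that $\eta_N$ being an isomorphism (by (1)(b)) implies $\Hom_A(-,N)\simeq\Hom_B(\mathcal F(-),\mathcal F(N))$ as functors on the whole of $A\text{-}\fgMod$; this identifies the three Hom-columns of the ladder even though $K$ need not lie in $\mathcal U$. The first $\Ext^1$-term vanishes on both sides: $\Ext^1_A(A^n,N)=0$ by projectivity and $\Ext^1_B(\mathcal F(A^n),\mathcal F(N)) = R^1\mathcal G(\mathcal F(N))^n=0$ by hypothesis (b). The five-lemma then produces the desired isomorphism $\Ext^1_A(M,N)\simeq\Ext^1_B(\mathcal F(M),\mathcal F(N))$.

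The main (small) obstacle is exactly the remark just made: in the ladder argument one encounters the syzygy $K$, which is not assumed to belong to $\mathcal U$. What saves the day is that part (1) actually gives the stronger statement that $\Hom_A(-,N)$ and $\Hom_B(\mathcal F(-),\mathcal F(N))$ agree on all of $A\text{-}\fgMod$ once $\eta_N$ is an isomorphism, so $K$ need not be in $\mathcal U$ for its Hom column to match. Apart from this bookkeeping point, everything is a formal consequence of the adjunction and the exactness of $\mathcal F$.
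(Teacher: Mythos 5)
Your proof is correct. The paper states Lemma \ref{criterion} without proof (it is imported from Rouquier's work), so there is nothing to compare against line by line, but your argument is the standard one: identify $\mathcal F\simeq\Hom_A(P,A)\otimes_A-$ using that $P$ is finitely generated projective, recognize $\mathcal G$ as its right adjoint, and reduce everything to the unit $\eta$. The one genuinely delicate point --- that the syzygy $K$ in $0\to K\to A^n\to M\to 0$ need not lie in $\mathcal U$ --- is exactly the right thing to worry about, and you resolve it correctly: once $\eta_N$ is invertible, $\Hom_A(-,N)\to\Hom_B(\mathcal F(-),\mathcal F(N))$ is an isomorphism on all of $A\text{-}\fgMod$, not just on $\mathcal U$, so the third column of the ladder matches and the five lemma applies.
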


Let $R$ be a regular local ring whose residue field contains $\C$, 
$A$ a module-finite projective $R$-algebra, 
$P$ a finitely generated projective $A$-module. We denote 
$$
\mathcal F=\Hom_{\mathcal C}(P,-):\mathcal C\rightarrow B\text{-}\fgMod,
$$
$B=\End_{\mathcal C}(P)^{\rm op}$ and 
$\mathcal C=A\text{-}\fgMod$, 
$\mathcal C(\mathfrak p)=A\otimes_R(R/\mathfrak p)\text{-}\fgMod$, for 
$\mathfrak p\in\Spec R$. We also denote the quotient field of $R/\mathfrak p$ by 
$Q(\mathfrak p)$. 

We consider the following conditions, for $\mathfrak p\in\Spec R$ such that 
$R/\mathfrak p$ is regular and $A\otimes_R Q(\mathfrak p)$ 
is split semisimple. 
\begin{itemize}
\item[(I)]
There exist a finite preordered set $\Lambda$ and 
$\{\Delta(\lambda)\in\mathcal C\mid\lambda\in\Lambda\}$ such that 
\begin{itemize}
\item[(i)]
$\Delta(\lambda)$ is $R$-free, for $\lambda\in\Lambda$. 
\item[(ii)]
$\mathcal C(\mathfrak p)$ is a split highest weight category 
whose standard objects are 
$$
\{\Delta(\lambda)\otimes_R(R/\mathfrak p)\in\mathcal C\mid\lambda\in\Lambda\}.
$$
\item[(iii)]
Projective objects of $\mathcal C(\mathfrak p)$ are $\Delta$-filtered. 
\end{itemize}
\item[(II)]
$B$ satisfies the following.
\begin{itemize}
\item[(i)]
$B\otimes_R(R/\mathfrak p)\simeq\End_{\mathcal C}(P\otimes_R(R/\mathfrak p))^{\rm op}$.
\item[(ii)]
$\mathcal F$ restricts to
$\mathcal F(\mathfrak p)=\Hom_{\mathcal C}(P\otimes_R(R/\mathfrak p),-):
\mathcal C(\mathfrak p)\rightarrow B\otimes_R(R/\mathfrak p)\text{-}\fgMod$. 
\end{itemize}
\item[(III)]
If $M\in\mathcal C(\mathfrak p)$ is $\Delta$-filtered then 
$\mathcal F(\mathfrak p)(M)$ is a finitely generated projective $R/\mathfrak p$-module. 
\end{itemize}
We also consider the following conditions. 
\begin{itemize}
\item[(IV)]
$\mathcal F(\mathfrak m):A\otimes_R(R/\mathfrak m)\text{-}\fgMod\rightarrow
B\otimes_R(R/\mathfrak m)\text{-}\fgMod$ is a $0$-faithful cover, for the maximal 
ideal $\mathfrak m\in\Spec R$. 
\item[(V)]
Let $K$ be the quotient field of $R$. Then $A\otimes_RK$ is a split semisimple 
$K$-algebra.
\end{itemize}

If $R=\C[[\bk_1-\kappa_1,\dots,\bk_{d-1}-\kappa_{d-1},\bh -h]]$ and 
suppose $v_i=\exp(2\pi\sqrt{-1}(\kappa_i+\frac{i}{d}))$, 
$1\leq i\leq d-1$, are different from 
$1$ and pairwise distinct, $q=\exp(2\pi\sqrt{-1}h)\neq-1$, then we already 
know that $\mathcal O(R)\simeq A\text{-}\fgMod$, for some module-finite 
projective $R$-algebra $A$, such that $A\otimes_R\C$ is a $0$-faithful cover of 
$\H_n(\underline v,q)$ and the above conditions are satisfied. 
The following is a key result \cite[Proposition 4.42]{R}. 

\begin{proposition}
\label{0to1-faithful}
Let $R$ be a regular local ring whose residue field contains $\C$, 
$A$ a module-finite projective $R$-algebra, 
$P$ a finitely generated projective $A$-module such that 
the conditions (I)-(V) are satisfied. Then $A$ is a $1$-faithful cover of $B$. 
\end{proposition}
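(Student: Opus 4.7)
My plan is to verify the two hypotheses of Lemma \ref{criterion} applied to the additive subcategory $\mathcal U$ of $\Delta$-filtered $A$-modules: (a) $\mathcal F$ is fully faithful on $\mathcal U$, i.e.\ $\Hom_A(M,N)\simeq\Hom_B(\mathcal F M,\mathcal F N)$ for $M,N\in\mathcal U$; (b) $R^1\mathcal G(\mathcal F M)=\Ext^1_B(\mathcal F A,\mathcal F M)=0$ for $M\in\mathcal U$.

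For (a), both Hom-groups are finitely generated $R$-modules and in fact $R$-free: the left side by induction on the length of a $\Delta$-filtration, using that each $\Delta(\lambda)$ is $R$-free by (I)(i) together with the vanishing of $\Ext^1_A(\Delta(\lambda),N)$ for $N$ $\Delta$-filtered in a split highest weight category; the right side by a universal-coefficient argument applied to a finite $B$-projective presentation of $\mathcal F M$ whose terms are $R$-flat, using the $R$-projectivity of $\mathcal F M$ and $\mathcal F N$ from (III). The comparison map is then an isomorphism because it is iso after $\otimes_R K$---here (V) makes $A\otimes K$ split semisimple, so that $B\otimes K\simeq\End_{A\otimes K}(P\otimes K)^{\mathrm{op}}$ is semisimple, and a direct dimension count shows both sides have the same $K$-rank---and it is iso modulo $\mathfrak m$ by (IV); a map of $R$-free modules of equal finite rank that becomes an iso mod $\mathfrak m$ is an iso by Nakayama.

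For (b), 0-faithfulness reduces the remaining task, via Lemma \ref{criterion}(2) and d\'evissage on a $\Delta$-filtration, to showing $E:=\Ext^1_B(\mathcal F A,\mathcal F\Delta(\mu))=0$ for each weight $\mu$. Note first that $E$ is $R$-torsion: since $\mathcal F A$ and $\mathcal F\Delta(\mu)$ are $R$-projective by (III), $E\otimes_R K$ embeds into $\Ext^1_{B\otimes K}(\mathcal F A\otimes K,\mathcal F\Delta(\mu)\otimes K)$, and the latter vanishes because $B\otimes K$ is semisimple. To kill the torsion I would induct on the Krull dimension of $R$. The base case $\dim R=0$ is immediate, since $R$ is then a field, (V) forces $A$ semisimple, and all higher Exts vanish. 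For the inductive step, choose a regular parameter $r\in\mathfrak m$ lying in a prime of $R$ at which the hypotheses of (I) apply, and feed the short exact sequence $0\to\mathcal F\Delta(\mu)\xrightarrow{r}\mathcal F\Delta(\mu)\to\mathcal F\Delta(\mu)/r\mathcal F\Delta(\mu)\to 0$ into $\Ext^\bullet_B(\mathcal F A,-)$ to obtain $E\xrightarrow{r}E\to\Ext^1_B(\mathcal F A,\mathcal F\Delta(\mu)/r\mathcal F\Delta(\mu))$. A change-of-rings identification with $\Ext^1_{B/rB}(\mathcal F A/r\mathcal F A,\mathcal F\Delta(\mu)/r\mathcal F\Delta(\mu))$, combined with the inductive hypothesis applied over $R/(r)$---whose conditions (I)--(V) descend from those of $R$ by the choice of $r$, noting in particular that the residue field is unchanged so (IV) carries over---forces the last term to vanish; thus $r$ acts surjectively on $E$, and Nakayama gives $E=0$.

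The main obstacle is precisely the inductive step in (b): ensuring that (I)--(V) descend cleanly to $R/(r)$ for a suitably chosen $r$, and validating the change-of-rings identification for $\Ext^1$. The $R$-projectivity of $\mathcal F A$ from (III) is what makes reduction modulo $r$ behave correctly for an $\Ext$-computation via a $B$-projective resolution with $R$-flat terms, but propagating the residue-field 0-faithfulness of (IV) through a codimension-one deformation so as to control $\Ext^1$ globally over $R$ is the true heart of the argument.
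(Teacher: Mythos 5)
Your part (b) is essentially the paper's argument and is sound: the same induction on $\dim R$, the same need to choose a regular parameter $\pi$ with $A\otimes_R\mathrm{Frac}(R/(\pi))$ split semisimple (the paper secures its existence from the UFD property of $R$, since $\bigcap_{\pi\in\mathfrak m\setminus\mathfrak m^2}(\pi)=0$ and the split semisimple locus contains a nonempty open $D(\alpha)$), and the same comparison of $E=\Ext^1_B(\mathcal F(A),\mathcal F(M))$ with its reduction modulo $\pi$; your direct Nakayama step $E=\pi E\Rightarrow E=0$ even lets you bypass the paper's reduction to $\supp E=V(\mathfrak m)$ via associated primes. The genuine gap is in part (a). First, $\Ext^1_A(\Delta(\lambda),N)$ does \emph{not} vanish for arbitrary $\Delta$-filtered $N$: the highest weight structure only gives $\Ext^1(\Delta(\lambda),\Delta(\mu))=0$ when $\mu\not>\lambda$ (cf. Lemma \ref{ext}), so your d\'evissage does not establish that $\Hom_A(M,N)$ is $R$-free. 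Second, and fatally, even freeness would not let you import (IV): the natural map $\Hom_A(M,N)\otimes_RF\to\Hom_{A\otimes_RF}(M\otimes_RF,N\otimes_RF)$ is not an isomorphism for $\Delta$-filtered $M,N$ in general. Indeed $\Hom_A(\Delta(\lambda),\Delta(\mu))$ is a torsion-free $R$-module of generic rank $0$ for $\lambda\neq\mu$ (the $\Delta(\lambda)\otimes_RK$ are pairwise non-isomorphic simples by (V)), hence is zero, whereas $\Hom_{A\otimes_RF}(\Delta(\lambda)\otimes_RF,\Delta(\mu)\otimes_RF)$ is nonzero whenever $L(\lambda)$ occurs in the top of a submodule of $\Delta(\mu)\otimes_RF$ with $\lambda<\mu$. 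So ``isomorphism mod $\mathfrak m$'' cannot be read off from $0$-faithfulness of the closed fibre, and the rank-count-plus-Nakayama argument for (a) never gets started.

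The paper closes exactly this gap by making $0$-faithfulness over $R$ part of the same induction rather than proving it directly. After establishing $R^1\mathcal G(\mathcal F(M))=0$ (your step (b)), it computes $\mathcal G=\Hom_B(\mathcal F(A),-)$ on a free $B$-resolution of $\mathcal F(A)$; the terms $\Hom_B(Q_i,\mathcal F(M))$ are $R$-flat by (III) and Proposition \ref{key prop}, so the K\"unneth spectral sequence yields an injection $\mathcal G\mathcal F(M)\otimes_RR/(\pi)\to\mathcal G\mathcal F(M\otimes_RR/(\pi))$ with cokernel $\Tor_1^R(R^1\mathcal G\mathcal F(M),R/(\pi))=0$. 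Combining this controlled base change with the inductive isomorphism $M\otimes_RR/(\pi)\simeq\mathcal G\mathcal F(M\otimes_RR/(\pi))$ and applying Nakayama to the kernel and cokernel of the unit $M\to\mathcal G\mathcal F(M)$ gives $M\simeq\mathcal G\mathcal F(M)$, hence $0$-faithfulness by Lemma \ref{criterion}(1). In short, base change is under control for $\mathcal G\mathcal F$ but not for $\Hom_A(-,-)$, which is why the argument must run through the unit map of the adjunction rather than through a comparison of Hom spaces at the generic and closed points.
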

\begin{proof}
The proof is induction on $\dim R$. If $\dim R=0$ then $A$ is split semisimple 
and $\Ext_B^1(\mathcal F(A),\mathcal F(M))=0$, for $M\in A\text{-}\fgMod$. 
Suppose $\dim R>0$. 
Let $M\in A\text{-}\fgMod$ be $\Delta$-filtered. 
We argue that $\Ext^1_B(\mathcal F(A),\mathcal F(M))\neq 0$ leads to a 
contradiction. Define 
$$
Z=\supp \Ext^1_B(\mathcal F(A),\mathcal F(M)), 
$$
which we write $V(\mathfrak p_1)\cup\cdots\cup V(\mathfrak p_r)$, where 
$\mathfrak p_1,\dots,\mathfrak p_r$ are minimal elements of
$\Ass(\Ext^1_B(\mathcal F(A),\mathcal F(M)))$, the
set of associated primes of $\Ext^1_B(\mathcal F(A),\mathcal F(M))$. 
As $P$ and $\mathcal F(A)$ are finitely presented, 
$$
\Ext^1_B(\mathcal F(A),\mathcal F(M))_{\mathfrak p}=
\Ext^1_B(\mathcal F(A_{\mathfrak p}),\mathcal F(M_{\mathfrak p})). 
$$
Thus, we may assume $r=1$ and $Z=V(\mathfrak m)$, where $\mathfrak m$ is 
the maximal ideal of $R$, without loss of generality. Take an open set 
$$
D(\alpha)\subseteq
\{\mathfrak p\in\Spec R\mid \text{$\mathcal C(\mathfrak p)$ is semisimple.}\}\neq
\emptyset.
$$
Then, $Z\subseteq V(\alpha)$. Let 
$\mathfrak m=(x_1,\dots,x_d)$ be a regular system of parameters. As $R$ is 
regular, it is UFD, and the intersection of the ideals $(\pi)$, for all 
$\pi\in\mathfrak m\setminus\mathfrak m^2$, is $0$. Hence we may choose 
$\pi\in\mathfrak m\setminus\mathfrak m^2$ such that $\alpha\not\in(\pi)$. 
Note that $R/(\pi)$ is a regular local ring. By $\alpha\not\in(\pi)$, 
we have $D(\alpha)\cap V(\pi)\neq\emptyset$. Thus, (V) holds for $R/(\pi)$. 
(I)-(IV) clearly hold for $R/(\pi)$. Therefore, $A\otimes_R(R/(\pi))$ is a 
$1$-faithful cover of $B\otimes_R(R/(\pi))$ by the induction hypothesis. Let 
$Q_\bullet\rightarrow \mathcal F(A)\rightarrow 0$ be a free resolution, and 
set $C^\bullet=\Hom_B(Q_\bullet,\mathcal F(M))$. 
By Proposition \ref{key prop}, $C^i$ are flat $R$-modules. Thus, 
for $\mathcal G=\Hom_B(\mathcal F(A),-)$, we have 
the K\"unneth spectral sequence
$$
E_2^{p,q}=\bigoplus_{-i+j=q}
\Tor_p^R(R^{-q}\mathcal G(\mathcal F(M)),R/(\pi))\Longrightarrow 
R^{p+q}\mathcal G(\mathcal F(M\otimes_R(R/(\pi))),
$$
since $H^{p+q}(C^\bullet\otimes_R(R/(\pi)))\simeq
R^{p+q}\mathcal G(\mathcal F(M\otimes_R(R/(\pi)))$. The spectral sequence 
degenerates at the $E_2$ page, and we obtain
\begin{multline*}
0\rightarrow R^n\mathcal G(\mathcal F(M))\otimes_R(R/(\pi))\rightarrow
R^n\mathcal G(\mathcal F(M\otimes_R(R/(\pi))))\\
\rightarrow
\Tor_1^R(R^{n+1}\mathcal G(\mathcal F(M)),R/(\pi))\rightarrow0.
\end{multline*}
We only need the exact sequence for $n=0$ below. Define
$$
\varphi:\mathcal G\mathcal F(M)\otimes_R(R/(\pi))\rightarrow
\mathcal G(\mathcal F(M)\otimes_R(R/(\pi)))\simeq
\mathcal G\mathcal F(M\otimes_R(R/(\pi))). 
$$
Then, the composition map 
$$
M\otimes_R(R/(\pi))\rightarrow \mathcal G\mathcal F(M)\otimes_R(R/(\pi))
\overset{\varphi}{\rightarrow}\mathcal G\mathcal F(M\otimes_R(R/(\pi)))
$$
is an isomorphism by the induction hypothesis and Lemma \ref{criterion}(1). 
Hence, $\varphi$ is surjective. Using the exact sequence above, we have 
\begin{itemize}
\item[(a)]
$M\otimes_R(R/(\pi))\simeq \mathcal G\mathcal F(M)\otimes_R(R/(\pi))
\overset{\varphi}{\simeq} 
\mathcal G\mathcal F(M\otimes_R(R/(\pi)))$,
\item[(b)]
$\Tor_1^R(R^1\mathcal G(\mathcal F(M),R/(\pi)))=0$.
\end{itemize}
(b) implies that 
$$
0\rightarrow R^1\mathcal G(\mathcal F(M))\overset{\pi\cdot}{\rightarrow}
R^1\mathcal G(\mathcal F(M))\rightarrow
R^1\mathcal G(\mathcal F(M))\otimes_R(R/(\pi))\rightarrow0.
$$
However, $\mathfrak m\in\Ass(R^1\mathcal G(\mathcal F(M)))$ implies that 
there exists $x\in R^1\mathcal G(\mathcal F(M))$ such that 
$\operatorname{Ann}_R(x)=\mathfrak m$, and $\pi\in\mathfrak m$ implies that 
$\pi x=0$. This is absurd. 
We have proved that $R^1\mathcal G(\mathcal F(M))=0$, for any $\Delta$-filtered 
$M\in A\text{-}\fgMod$. 

Since $\mathcal F(M)$ is projective as an $R$-module, we have 
$$
0\rightarrow\mathcal F(M)\overset{\pi\cdot}{\rightarrow}
\mathcal F(M)\rightarrow \mathcal F(M)\otimes_R(R/(\pi))\rightarrow0.
$$
We apply the functor $\mathcal G$ to the exact sequence and use (a). Then, 
$$
0\rightarrow\mathcal G\mathcal F(M)\overset{\pi\cdot}{\rightarrow}
\mathcal G\mathcal F(M)\rightarrow
\mathcal G\mathcal F(M)\otimes_R(R/(\pi))\rightarrow0
$$
and we have a morphism of exact sequences from 
$$
0\rightarrow M \overset{\pi\cdot}{\rightarrow}
M\rightarrow M\otimes_R(R/(\pi))\rightarrow0. 
$$
Let $K=\Ker(M\rightarrow \mathcal G\mathcal F(M))$. Then, we may show that 
$$
0\rightarrow K\otimes_R(R/(\pi))\rightarrow M\otimes_R(R/(\pi))
\simeq \mathcal G\mathcal F(M)\otimes_R(R/(\pi)).
$$
Thus, $K\otimes_R(R/(\pi))=0$ and Nakayama's lemma implies that $K=0$. Now, 
we consider $M\subseteq \mathcal G\mathcal F(M)$ and use (a) again. 
Then $(\mathcal G\mathcal F(M)/M)\otimes_R(R/(\pi))=0$ and Nakayama's 
lemma implies $\mathcal G\mathcal F(M)\simeq M$. Lemma \ref{criterion}(1) 
implies that $A$ is a $0$-faithful cover of $B$, and 
$A$ is a $1$-faithful cover of $B$ by Lemma \ref{criterion}(2). 
\end{proof}

\subsection{Uniqueness of quasihereditary covers}

Now we are prepared to state main results in \cite{R}.
Let us start with definitions and consequences of Proposition \ref{0to1-faithful}.

\begin{definition}
Suppose that $\mathcal C$ and $\mathcal C'$ are highest weight categories over $R$. 
Let $\{\Delta(\lambda)\mid \lambda\in\Lambda\}$ and 
$\{\Delta(\lambda')\mid \lambda'\in\Lambda'\}$ be standard objects of 
$\mathcal C$ and $\mathcal C'$ respectively. 
We say that $\mathcal C$ and $\mathcal C'$ are 
\emph{equivalent highest weight categories} if there is an equivalence 
$\mathcal C\simeq \mathcal C'$ and a bijection $\Lambda\simeq\Lambda'$ 
such that if $\lambda$ corresponds to $\lambda'$ then 
$\Delta(\lambda)$ goes to $\Delta(\lambda')\otimes_R U$, for 
an invertible $R$-module $U$.
\end{definition}

\begin{definition}
Let $R$ be a Noetherian commutative ring, $B$ a module-finite $R$-algebra. 
Let $\mathcal F:A\text{-}\fgMod\rightarrow B\text{-}\fgMod$ and 
$\mathcal F':A'\text{-}\fgMod\rightarrow B\text{-}\fgMod$ be quasihereditary covers. 
If there is an equivalence of highest weight categories 
$\mathcal K:A\text{-}\fgMod\simeq A'\text{-}\fgMod$ such that 
$\mathcal F'\mathcal K=\mathcal F$, we say that 
$A$ and $A'$ are \emph{equivalent quasihereditary covers}.
\end{definition}

\begin{theorem}[{\cite[Proposition 4.44, Corollary 4.45]{R}}]
Let $R$ be a Noetherian commutative ring, $B$ a module-finite $R$-algebra. 
Let $\mathcal F:A\text{-}\fgMod\rightarrow B\text{-}\fgMod$ and 
$\mathcal F':A'\text{-}\fgMod\rightarrow B\text{-}\fgMod$ be 
$1$-faithful quasihereditary covers. 
If the set of Specht modules coincide, that is, 
$\{S(\lambda)\mid \lambda\in\Lambda\}=\{S(\lambda')\mid \lambda\in\Lambda'\}$, 
then $A$ and $A'$ are equivalent quasihereditary covers.
\end{theorem}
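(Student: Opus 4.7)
The plan is to build the equivalence first on the additive subcategory $\mathcal C_\Delta \subseteq A\text{-}\fgMod$ of $\Delta$-filtered objects, then transport it to all of $A\text{-}\fgMod$ via Morita theory. By hypothesis there is a bijection $\lambda \leftrightarrow \lambda'$ between the weight sets with $\mathcal F(\Delta(\lambda)) \simeq \mathcal F'(\Delta'(\lambda')) \otimes_R U_\lambda$ for invertible $R$-modules $U_\lambda$; I suppress these twists and write $S(\lambda)$ for the common Specht module, tracking them only at the end. By $1$-faithfulness, $\mathcal F$ is fully faithful on $\mathcal C_\Delta$ and preserves $\Ext^1$ between $\Delta$-filtered objects, and similarly for $\mathcal F'$ on $\mathcal C'_{\Delta'}$.

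For each $M \in \mathcal C_\Delta$ with a fixed filtration $0 = M_0 \subseteq \cdots \subseteq M_n = M$ having $M_i/M_{i-1} \simeq \Delta(\lambda_i)$, I inductively produce $\mathcal K_\Delta(M) \in \mathcal C'_{\Delta'}$ with $\mathcal F'(\mathcal K_\Delta(M)) \simeq \mathcal F(M)$. The extension class in $\Ext^1_A(\Delta(\lambda_i), M_{i-1})$ corresponds via $\mathcal F$ to a class in $\Ext^1_B(S(\lambda_i), \mathcal F(M_{i-1}))$, which by $1$-faithfulness of $\mathcal F'$ pulls back uniquely to a class in $\Ext^1_{A'}(\Delta'(\lambda'_i), \mathcal K_\Delta(M_{i-1}))$, defining $\mathcal K_\Delta(M_i)$. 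Full faithfulness of $\mathcal F'$ ensures independence (up to canonical isomorphism) from the chosen filtration. Morphisms then transfer through $\mathcal F$ and $\mathcal F'^{-1}$, producing an additive equivalence $\mathcal K_\Delta: \mathcal C_\Delta \simeq \mathcal C'_{\Delta'}$ together with a natural isomorphism $\mathcal F' \mathcal K_\Delta \simeq \mathcal F|_{\mathcal C_\Delta}$.

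I next apply $\mathcal K_\Delta$ to the regular module $A_A \in \mathcal C_\Delta$, setting $\widetilde A = \mathcal K_\Delta(A_A) \in \mathcal C'_{\Delta'}$. The chain
$$\Ext^1_{A'}(\widetilde A, \Delta'(\mu')) \simeq \Ext^1_B(\mathcal F'(\widetilde A), S(\mu)) \simeq \Ext^1_B(\mathcal F(A), S(\mu)) \simeq \Ext^1_A(A, \Delta(\mu)) = 0,$$
combined with the standard fact (from the quasihereditary structure) that a $\Delta'$-filtered module is projective in $A'\text{-}\fgMod$ exactly when it has vanishing $\Ext^1$ against every $\Delta'(\mu')$, shows that $\widetilde A$ is projective. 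Since $\mathcal K_\Delta$ is an additive equivalence, it sends each indecomposable projective $P(\lambda)$, whose unique $\Delta$-top is $\Delta(\lambda)$, to an indecomposable with $\Delta'$-top $\Delta'(\lambda')$, which is forced to be $P'(\lambda')$. As $A$ is a progenerator, every $P'(\lambda')$ occurs as a summand of $\widetilde A$, so $\widetilde A$ is a progenerator of $A'\text{-}\fgMod$.

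Consequently, $\mathcal K_\Delta$ induces $A \simeq \End_A(A_A)^{\rm op} \simeq \End_{A'}(\widetilde A)^{\rm op}$, and Morita theory extends $\mathcal K_\Delta$ to an equivalence $\mathcal K: A\text{-}\fgMod \simeq A'\text{-}\fgMod$. The intertwining $\mathcal F' \mathcal K \simeq \mathcal F$ holds on projectives by construction; for arbitrary $M$, taking a projective presentation $P_1 \to P_0 \to M \to 0$ and using right-exactness of $\mathcal F$ and $\mathcal F'$ propagates the isomorphism to all of $A\text{-}\fgMod$. Reinstating the invertible twists $U_\lambda$ shows $\mathcal K$ is an equivalence of highest weight categories in the prescribed sense. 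The main obstacle is the second paragraph: the construction of $\mathcal K_\Delta$ genuinely relies on $\Ext^1$-preservation afforded by $1$-faithfulness (mere $0$-faithfulness would identify objects but not their extension classes), and one must verify well-definedness independently of the filtration by exploiting the unique-lifting provided by fullness of $\mathcal F'$ on $\Hom$ and injectivity on $\Ext^1$.
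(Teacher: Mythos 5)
The paper itself offers no proof of this statement---it is quoted verbatim from Rouquier \cite{R} (Proposition 4.44 and Corollary 4.45)---so the only meaningful comparison is with Rouquier's argument, and your proposal follows essentially that route: use $1$-faithfulness to identify the exact categories of $\Delta$- and $\Delta'$-filtered objects through their common image in $B\text{-}\fgMod$, recognize the image of $A$ as a progenerator of $A'\text{-}\fgMod$, and conclude by Morita theory. Your construction is sound; the cleanest packaging of your second paragraph is that $\mathcal F$ and $\mathcal F'$ are fully faithful on $\Delta$-filtered objects with images closed under extensions and generated by the same Specht modules, so the images coincide and $\mathcal K_\Delta=(\mathcal F'|_{\mathcal C'_{\Delta'}})^{-1}\circ\mathcal F|_{\mathcal C_\Delta}$ is automatically well defined and functorial, with no filtration-dependence to check. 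Two points deserve explicit justification rather than the label ``standard fact.'' First, the criterion that a $\Delta'$-filtered module $M$ with $\Ext^1_{A'}(M,\Delta'(\mu'))=0$ for all $\mu'$ is projective is genuinely a lemma of Rouquier's over a general Noetherian base: one needs that the first syzygy of a $\Delta$-filtered module is again $\Delta$-filtered, which over a field uses the $\nabla$-criterion and over $R$ requires his development of costandard objects. Second, $\mathcal K_\Delta(\Delta(\lambda))\simeq\Delta'(\lambda')$ on the nose (since $\mathcal F'$ is fully faithful on $\Delta'$-filtered objects and $S(\lambda)=S(\lambda')$ by hypothesis), so the invertible twists $U_\lambda$ you carry along are unnecessary here; they belong only to the general definition of equivalence of highest weight categories, not to this argument.
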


\begin{theorem}[{\cite[Theorem 4.48]{R}}]
\label{uniqueness}
Let $R$ be a Noetherian commutative domain, $K$ its quotient field. 
Suppose that $B$ is a module-finite projective $R$-algebra such that 
$B\otimes_RK$ is split semisimple. 
Let $\mathcal F:A\text{-}\fgMod\rightarrow B\text{-}\fgMod$ and 
$\mathcal F':A'\text{-}\fgMod\rightarrow B\text{-}\fgMod$ be 
$1$-faithful quasihereditary covers. 
Suppose that the preorders on $\Irr B$ are compatible. Then
$A$ and $A'$ are equivalent quasihereditary covers.
\end{theorem}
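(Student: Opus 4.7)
The strategy is to reduce to the preceding theorem (\cite[Proposition 4.44, Corollary 4.45]{R} in this excerpt), which asserts that two $1$-faithful quasihereditary covers of $B$ are equivalent whenever they share the same collection of Specht modules. So the entire task is to upgrade the compatibility of the preorders on $\Irr B$ to a label-preserving identification $S(\lambda)\simeq S'(\lambda')$ of Specht modules as $B$-modules.

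First I would analyse the situation over $K$. Because $B\otimes_R K$ is split semisimple and, by $1$-faithfulness and Lemma \ref{criterion}{\rm (1)}, $A\simeq\mathcal G\mathcal F(A)$ as $A$-modules, specialisation at $K$ identifies $A\otimes_R K$ with $\End_{B\otimes_R K}(\mathcal F(A)\otimes_R K)^{\rm op}$, which is a product of matrix algebras over $K$. Hence $A\otimes_R K$ is split semisimple, its simple modules are exactly $\{\Delta(\lambda)\otimes_R K\mid\lambda\in\Lambda\}$, and $\mathcal F_K$ furnishes a bijection $\Lambda\simeq\Irr(A_K)\simeq\Irr(B_K)=\Irr B$. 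The same argument applied to $\mathcal F'$ yields $\Lambda'\simeq\Irr B$, and the assumed compatibility of the two induced preorders on $\Irr B$ produces a preorder-preserving bijection $\Lambda\simeq\Lambda'$. For each matched pair $\lambda\leftrightarrow\lambda'$ one obtains a canonical isomorphism $S(\lambda)\otimes_R K\simeq S'(\lambda')\otimes_R K$ of simple $B_K$-modules.

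The heart of the argument is the integral identification $S(\lambda)\simeq S'(\lambda')$, which I would establish by induction on $|\Lambda|$ in the spirit of Lemma \ref{F-split to R-split}. Pick a maximal $\lambda$; then $\Delta(\lambda)=P(\lambda)$ is projective in $A\text{-}\fgMod$, so $S(\lambda)=\mathcal F(\Delta(\lambda))$ is a direct summand of $\mathcal F(A)$, and Lemma \ref{R-split} shows that $S(\lambda)$ is an $R$-split projective $B$-module. Form the trace ideal
$$
J_\lambda=\Im\bigl(S(\lambda)\otimes_R\Hom_B(S(\lambda),B)\longrightarrow B\bigr).
$$
I would argue that $J_\lambda$ is intrinsic to the label $\lambda\in\Irr B$: rationally it is the block of $B\otimes_R K$ cut out by the simple $S(\lambda)_K$, and integrally it has a unique (up to isomorphism) $R$-split projective direct summand whose $K$-localisation is that simple, namely $S(\lambda)$. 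The corresponding ideal $J'_{\lambda'}$ built from $S'(\lambda')$ therefore coincides with $J_\lambda$, whence $S(\lambda)\simeq S'(\lambda')$ as $B$-modules. Passing to the module-finite projective $R$-algebra $B/J_\lambda$, the truncations of $\mathcal F,\mathcal F'$ to the subcategories indexed by $\Lambda\setminus\{\lambda\}$ and $\Lambda'\setminus\{\lambda'\}$ remain $1$-faithful quasihereditary covers with compatible preorders on $\Irr(B/J_\lambda)$, and the inductive hypothesis finishes the identification.

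With all Specht modules matched, the preceding theorem yields the desired equivalence $\mathcal K:A\text{-}\fgMod\simeq A'\text{-}\fgMod$ with $\mathcal F'\mathcal K\simeq\mathcal F$. The principal obstacle is the integral step: deducing an $R$-isomorphism of Specht modules from the generic $K$-isomorphism relies on the $R$-splitness of $S(\lambda)$ at maximal $\lambda$ and on the preservation of the $1$-faithful cover hypothesis under truncation by $J_\lambda$, and it is here that the split semisimplicity of $B\otimes_R K$ combines essentially with the full strength of $1$-faithfulness.
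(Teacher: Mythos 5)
Your overall strategy --- reduce to the preceding theorem by showing the two covers have the same Specht modules --- is the right one (it is how Rouquier argues; note that the survey itself only cites \cite[Theorem 4.48]{R} and gives no proof), and your analysis over $K$ is correct: $A\otimes_RK\simeq\End_{B\otimes_RK}(\mathcal F(A)\otimes_RK)^{\rm op}$ is split semisimple, the $\Delta(\lambda)\otimes_RK$ are its simples, and one gets bijections $\Lambda\simeq\Irr(B\otimes_RK)\simeq\Lambda'$ compatible with the preorders. The integral step, however, fails at its first move. You assert that for $\lambda$ maximal, $S(\lambda)=\mathcal F(\Delta(\lambda))=\mathcal F(P(\lambda))$ is an ($R$-split) projective $B$-module because it is a summand of $\mathcal F(A)$. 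But $\mathcal F=\Hom_A(P,-)$ does not send projective $A$-modules to projective $B$-modules: $\mathcal F(A)=\Hom_A(P,A)$ is not projective over $B=\End_A(P)^{\rm op}$ unless $P$ is a generator. In the motivating example ($A$ the $q$-Schur algebra, $B=\H_n(q)$), the maximal weight is $(n)$, $\Delta((n))=P((n))$ is indeed projective over $A$, yet $\mathcal F(\Delta((n)))$ is the one-dimensional trivial $\H_n(q)$-module, which is not projective whenever $\H_n(q)$ is not semisimple. Lemma \ref{R-split} concerns projective objects of $A\text{-}\fgMod$, not their images under $\mathcal F$, so it cannot be invoked here.

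With that, the trace-ideal induction collapses: $B$ is typically a symmetric algebra of infinite global dimension and admits no heredity chain, so $J_\lambda=\Im(S(\lambda)\otimes_R\Hom_B(S(\lambda),B)\to B)$ is not a block-like ideal (for the trivial module of $\bk S_n$ in characteristic $p\leq n$ it is the one-dimensional ideal spanned by $\sum_{w}w$), $B/J_\lambda$ need not be $R$-projective, and there is no reason the truncated functors remain $1$-faithful covers. Moreover, even where $S(\lambda)$ happens to be projective, uniqueness of an $R$-split projective summand with prescribed generic fibre is not available over an arbitrary Noetherian domain (the theorem is not restricted to local $R$). The identification $S(\lambda)\simeq S'(\lambda')$ must instead be extracted from $1$-faithfulness on the $A'$-side: transport $\mathcal F(\Delta(\lambda))$ through the second cover via its adjoint $\mathcal G'=\Hom_B(\mathcal F'(A'),-)$, use the vanishing of $R^1\mathcal G'$ on images of $\Delta'$-filtered objects (Lemma \ref{criterion}(2)) to show $\mathcal G'(\mathcal F(\Delta(\lambda)))$ is $\Delta'$-filtered, and then pin it down as $\Delta'(\lambda')$ by comparing generic fibres and using the compatibility of the preorders. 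As written, the heart of your argument does not go through.
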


\subsection{The category $\mathcal O$ as quasihereditary covers}

Rouquier's motivation for developing the theory of quasihereditary covers 
which we have explained so far, is to prove the following. My motivation 
to write this survey is to explain and advertise his beautiful ideas to 
prove the theorem.

\begin{theorem}[Rouquier]
\label{main theorem}
Suppose that $v_i=\exp(2\pi\sqrt{-1}(\kappa_i+\frac{i}{d}))$, 
$1\leq i\leq d-1$, are different from 
$1$ and pairwise distinct, $q=\exp(2\pi\sqrt{-1}h)\neq-1$. 
Then the category $\mathcal O$ for the rational Cherednik algebra 
$H_\C(\underline\kappa,h)$ is a quasihereditary cover of the Hecke 
algebra $\H_n(\underline v,q)$. If it is the Hecke algebra associated 
with the symmetric group, then $\mathcal O$ and the module category of 
the $q$-Schur algebra over $\C$ are equivalent quasihereditary covers. 
\end{theorem}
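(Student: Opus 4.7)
The plan is to realize $\mathcal{O}$ as a $1$-faithful quasihereditary cover of $\H_n(\underline v,q)$ via the KZ functor, and then, in the symmetric-group case, apply Theorem~\ref{uniqueness} to identify it with the $q$-Schur algebra cover. Set $R=\C[[\bk_1-\kappa_1,\ldots,\bk_{d-1}-\kappa_{d-1},\bh-h]]$ and write $\mathcal{O}(R)\simeq A\text{-}\fgMod$ for a module-finite projective $R$-algebra $A$ as in Theorem~\ref{reduction to A-mod}. By Theorem~\ref{split hw cat}, $\mathcal{O}(R)$ is a split highest weight category, and the progenerator construction of Theorem~\ref{progenerator} combined with $\Ext$-vanishing (Lemma~\ref{ext}) forces projective objects to be $\Delta$-filtered. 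Under the hypotheses on $v_i$ and $q$, the KZ functor is representable by a projective $P_{KZ}\in\mathcal{O}(R)$ with $\End_{\mathcal{O}(R)}(P_{KZ})^{\rm op}\simeq \H_n(\underline{\mathbf v},\mathbf q)$; specializing to $\C$ already yields the first assertion.

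To upgrade this cover to a $1$-faithful one, I would verify hypotheses (I)--(V) of Proposition~\ref{0to1-faithful} for the triple $(A,\H_n(\underline{\mathbf v},\mathbf q),P_{KZ})$. Condition (I) is Theorem~\ref{split hw cat}; (II) follows from the base-change compatibility of the KZ functor together with the surjectivity of $\rho$ used to prove that $P_{KZ}$ represents KZ over $R$; (III) is precisely Lemma~\ref{KZ image}, which says that the KZ image of a $\Delta$-filtered module is $R$-free of the prescribed rank; (IV) is Lemma~\ref{fully faithful} at the closed point, where the spectral hypotheses on $v_i$ and $q$ are used in an essential way; and (V) holds because at the quotient field $K$ of $R$ the parameters become transcendental and both $\mathcal{O}(K)$ and $\H_n(\underline{\mathbf v},\mathbf q)\otimes_R K$ are split semisimple. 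Proposition~\ref{0to1-faithful} then gives $1$-faithfulness over $R$, which specializes well to $\C$. Applying the parallel argument to the $q$-Schur algebra $S_q(n,n)$ (whose split highest weight structure, cover functor, and $1$-faithfulness all follow from the classical Dipper--James--Donkin theory combined with Proposition~\ref{0to1-faithful}) exhibits it as a $1$-faithful cover of $\H_n(q)$ over $R$.

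Both covers of $\H_n(q)\otimes_R R$ have standard objects labeled by partitions of $n$, so Theorem~\ref{uniqueness} will deliver the desired equivalence once one checks that the two induced preorders on $\Irr(\H_n(q)\otimes_R K)$ are compatible. The Cherednik preorder is governed by the eigenvalues $c_E$ of the Euler element (a content sum scaled by $h$), while the Schur preorder is the dominance order on partitions, and one shows that the Cherednik order refines dominance for parameters satisfying $q=\exp(2\pi\sqrt{-1}h)$. This compatibility of preorders is the main obstacle: the abstract machinery (Proposition~\ref{0to1-faithful} and Theorem~\ref{uniqueness}) does the heavy lifting, but it will silently fail if the two orderings do not align, and the explicit combinatorial comparison of the $c_E$-induced order with dominance is the nontrivial input from the representation theory of $S_n$. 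Once this is established, Theorem~\ref{uniqueness} delivers the equivalence of $\mathcal{O}$ with the $q$-Schur algebra's module category as quasihereditary covers of $\H_n(q)$, and both parts of the theorem follow.
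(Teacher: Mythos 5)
Your proposal follows essentially the same route as the paper: represent the KZ functor by a projective object to obtain the cover, verify conditions (I)--(V) so that Proposition~\ref{0to1-faithful} upgrades the $0$-faithful cover over $\C$ to a $1$-faithful cover over $R=\C[[\bk_1-\kappa_1,\dots,\bk_{d-1}-\kappa_{d-1},\bh-h]]$, and then invoke the uniqueness result Theorem~\ref{uniqueness} against the $q$-Schur algebra cover. Your identification of the compatibility of the Euler-element preorder with dominance order as the remaining nontrivial input is also consistent with what the paper (and Rouquier's original argument) requires.
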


The idea of the proof of the second part is to lift the $0$-faithful cover
over $\C$ to a $1$-faithful cover 
over $R=\C[[\bk_1-\kappa_1,\dots,\bk_{d-1}-\kappa_{d-1},\bh -h]]$, by 
using Proposition \ref{0to1-faithful}, and 
apply the uniqueness result Theorem \ref{uniqueness}.

As we explained in 2.4,
Theorem \ref{main theorem} is closely related to the Fock space theory 
in the second part, 
categorification of JMMO deformed Fock spaces, and the results of Geck and Jacon 
on the canonical basic sets in the first part, cellular structures of 
Hecke algebras. See \cite[6.5]{R} for some conjectures in this field.
I also recommend reading papers \cite{G} and \cite{GM} by Iain Gordon.

\frenchspacing

\end{document}